\newtheorem*{ackno}{Acknowledgements}
\def\newaliasedtheorem#1[#2]#3{
  \newaliascnt{#1@alt}{#2}
  \newtheorem{#1}[#1@alt]{#3}
  \expandafter\newcommand\csname #1@altname\endcsname{#3}
}
\theoremstyle{plain}
\newtheorem{theorem}{Theorem}[section]
\newcommand{\vertiii}[1]{{\left\vert\kern-0.25ex\left\vert\kern-0.25ex\left\vert #1 
    \right\vert\kern-0.25ex\right\vert\kern-0.25ex\right\vert}}
\theoremstyle{definition}
\theoremstyle{remark}
\numberwithin{equation}{section}
\def\Im{\textrm{Im}}
\def\Re{\textrm{Re}} 
\def\11{{\rm 1~\hspace{-1.4ex}l} }
\def\R{\mathbb R}
\def\Z{\mathbb Z}
\def\N{\mathbb N}
\def\E{\mathbb E}
\def\T{\mathbb T}
\newcommand{\dd}{\mathrm{d}}
\newcommand{\supp}{\ensuremath{\mathrm{supp}\,}}
\newcommand{\D}{\nabla}
\newcommand{\DYe}{\D Y_\eps}
\newcommand{\Wick}[1]{\mathbf{:}#1\mathbf{:}}
\newcommand{\wDY}{\Wick{\D Y^2}}
\newcommand{\wDYe}{\Wick{\D Y^2_\eps}}
\newcommand{\tDY}{\widetilde{\wDY}}
\newcommand{\tDYe}{\widetilde{\wDYe}}
\newcommand{\w}{\langle x\rangle}
\newcommand{\eps}{\varepsilon}
\newcommand{\Sw}{\ensuremath{\mathcal{S}}}
\newcommand{\ii}{\ensuremath{\imath}}
\title
[GWP of the NLS with multiplicative spatial white noise on $\R^2$]
{Global well-posedness of the 2d nonlinear Schr\"odinger equation with multiplicative spatial white noise on the full space}
\author[A. Debussche, R. Liu, N. Tzvetkov \and N. Visciglia]
{A. Debussche, R. Liu, N. Tzvetkov \and N. Visciglia}
\address{A. Debussche,
 Univ Rennes, CNRS, IRMAR - UMR 6625, F-
35000 Rennes, France}
\email{arnaud.debussche@ens-rennes.fr}
\address{R. Liu, School of Mathematics, The University of Edinburgh and The Maxwell Institute for the Mathematical Sciences, James Clerk Maxwell Building, The King's Buildings, Peter Guthrie Tait Road, Edinburgh, EH9 3FD, United Kingdom}
\email{ruoyuan.liu@ed.ac.uk}
\address{N.~Tzvetkov, Ecole Normale Sup\'erieure de Lyon, UMPA, UMR CNRS-ENSL 5669, 46, all\'ee d'Italie, 69364-Lyon Cedex 07, France}
\email{nikolay.tzvetkov@ens-lyon.fr}
\address{N.~Visciglia, Dipartimento di Matematica, Universit\`a di Pisa, Largo Bruno Pontecorvo, 5, 56100 Pisa, Italy}
\email{nicola.visciglia@unipi.it}
\subjclass[2020]{35Q55, 60H15}
\keywords{nonlinear Schr\"odinger equation; multiplicative white noise; global well-posedness}
\begin{document}

\maketitle

\begin{abstract}
We consider the nonlinear Schr\"odinger equation with multiplicative spatial white noise and an arbitrary polynomial nonlinearity on the two-dimensional full space domain. We prove global well-posedness by using a gauge-transform introduced by Hairer and Labb\'e (2015) and constructing the solution as a limit of solutions to a family of approximating equations. This paper extends a previous result by Debussche and Martin (2019) with a sub-quadratic nonlinearity.
\end{abstract}

\section{Introduction}
\subsection{NLS with multiplicative space white noise}
We study the following Cauchy problem on $\R^2$ for the stochastic defocusing nonlinear Schr\"odinger equation (NLS)
\begin{equation}
\label{eq:NLS}
\ii \partial_t u =\Delta u+ u\xi-\lambda u|u|^{p} , \quad u(0)=u_0\,,
\end{equation} 
where $p>0,\,\lambda>0$ are parameters and $\xi\in \mathcal{S}'(\R^2)$ stands for white noise in space. The unknown $u$ is a complex valued process
and $u_0$ is a randomized initial datum (more precise assumptions are given below).

One can view the NLS \eqref{eq:NLS} as a stochastic version of the deterministic nonlinear Schr\"odinger equation with a power type nonlinearity, which has been studied extensively in recent decades. See \cite{BG80, Caz79, Caz03, Kat87} and the references therein. On the other hand, if we ignore the nonlinearity, \eqref{eq:NLS} can be viewed as the dispersive Anderson model, which is the dispersive counterpart of the well-studied parabolic Anderson model (see, for example, \cite{HL15, HL18}), i.e.~with $\ii \partial_t u$ replaced by $\partial_t u$.

The NLS \eqref{eq:NLS} was first considered by the first author and Weber in \cite{DW18} on a periodic domain $\T^2 = (\R / \Z)^2$. To deal with the ill-defined nature of the term $u \xi$, they used the gauge transform $v = e^{Y} u$ where $Y$ solves $\Delta Y = \xi$. This gauge transform, which resembles the so-called Doss-Sussmann transformation in \cite{Doss, Suss}, was first introduced by Hairer and Labb\'e in the context of the parabolic Anderson model on $\R^2$ in \cite{HL15} (the definition for $Y$ is slightly different on $\R^2$). The equation for $v$ now formally reads as
\begin{equation*}
\ii \partial_t v = \Delta v - 2 \D v \cdot \D Y + v \D Y^2 - \lambda e^{-p Y} v |v|^p, \quad
v(0) = e^Y u_0 \,,
\end{equation*}

\noindent
which is easier to solve since the most singular term is canceled. Still, the term $\D Y$ is merely a distribution, so that $\D Y^2$ is replaced by a Wick product $\wDY$ in \cite{HL15, DW18}. See the next subsection for more detailed explanations.

In \cite{DW18}, the first author and Weber showed global well-posedness of the following gauge-transformed NLS on $\T^2$ with a (sub-)cubic nonlinearity (i.e.~with $p \leq 2$):
\begin{equation}
\label{eq:NLSv1}
\ii \partial_t v = \Delta v - 2 \D v \cdot \D Y + v \, \wDY - \lambda e^{-p Y} v |v|^p, \quad
v(0) = e^Y u_0\,.
\end{equation}

\noindent
The main strategy is to consider a mollified noise $\xi_\eps$ and a smoothed process $Y_\eps = \Delta^{-1} \xi_\eps$, and then construct the solution $v$ as a limit of $v_\eps$ in probability, which is the solution of the following smoothed equation:
\begin{equation*}
\ii \partial_t v_\eps = \Delta v_\eps - 2 \D v_\eps \cdot \D Y_\eps + v_\eps \, \wDYe - \lambda e^{-p Y_\eps} v_\eps |v_\eps|^p, \quad
v_\eps (0) = e^Y u_0\,.
\end{equation*}

\noindent
The key ingredient for the convergence of $v_\eps$ is a suitable $H^2$ a-priori bound for $v_\eps$ with a logarithmic loss in $\eps$.

Later on, the third author and the fourth author \cite{TzV1} improved the result in \cite{DW18} by extending global well-posedness of the gauge-transformed NLS \eqref{eq:NLSv1} on $\T^2$ to the larger range $p \leq 3$. Specifically, they introduced modified energies that allow them to control the $H^2$ a-priori bound of $v_\eps$ for a larger range of $p$. In a subsequent paper \cite{TzV2}, the third author and the fourth author improved their global well-posedness result by covering all $p > 0$. In particular, they exploited the time averaging effect for dispersive equations and established Strichartz estimates to obtain the $H^2$ a-priori bound of $v_\eps$ for the whole range $p > 0$. Moreover, in \cite{TzV1, TzV2}, the authors improved \cite{DW18} by proving almost sure convergence of $v_\eps$ to $v$ instead of convergence in probability.

We now turn our attention to the NLS \eqref{eq:NLS} on the $\R^2$ setting, which is the main concern in this paper. In this situation, the additional difficulty comes from the logarithmic growth of the white noise. In \cite{DM}, the first author and Martin showed global well-posedness of a gauge-transformed NLS similar to \eqref{eq:NLSv1} (see \eqref{eq:NLSv} below) for $0 < p < 1$. To conquer the issue of the unboundedness of the noise, they used weighted Sobolev and Besov spaces and obtained a weighted $H^2$ a-priori bound for $v_\eps$ with a logarithmic loss in $\eps$. This approach of using the weighted Besov spaces in the study of stochastic PDEs was also used in \cite{HL15, HL18, MW17, OTWZ}. In the case of the NLS \eqref{eq:NLS}, such an approach requires more assumptions on the regularity of the initial datum than those on the $\T^2$ setting.

In this paper, we extend the result in \cite{DM} by including all $p > 0$ cases using an intricate combination of the methods mentioned above: the weighted Sobolev and Besov spaces estimates, the modified energies as in \cite{TzV1}, and the dispersive effect as in \cite{TzV2}. In addition, we are able to prove a stronger convergence result of $v_\eps$ to $v$ (almost sure convergence) than that in \cite{DM} (convergence in probability). This is obtained thanks to the estimates on the white noise on $\R^2$ and Wick products which are of independent interest. See the next subsection for a more detailed set-up and the main result of this paper.

\subsection{Set-up and the main result}
Let us recall that, given a probability space $(\Omega, \mathcal F,\mathbb P)$, a real valued white noise on $\R^2$  is a random variable $\xi\;:\; \Omega \to \mathcal D'(\R^2)$ such that for each $f\in \mathcal D(\R^2)$, $(\xi, f)$ is a real valued centered Gaussian random variable such that $\E((\xi,f)^2)=\|f\|_{L^2}^2$. 
Along the rest of the paper, $(\cdot,\cdot)$ will denote the duality $\mathcal D(\R^2)$, $\mathcal D'(\R^2)$
and hence  in the special case of classical functions, this is the usual $L^2$ scalar product.
Classically, $f\to (\xi,f)$ 
can be extended uniquely from $\mathcal D(\R^2)$ to $L^2(\R^2)$, and $(\xi,f)$ is a real valued centered Gaussian random variable such that $\E((\xi,f)^2)=\|f\|_{L^2}^2$ for all $f\in L^2(\R^2)$.

We proceed as in \cite{HL15} and use a truncated Green's function $G\in C^\infty(\R^2\backslash\{0\})$ that satisfies $\supp G \subseteq B(0,1)$ (the unit ball around 0) and $G(x)=-\frac{1}{2\pi} \log|x|$ for $|x|$ small enough, so that $Y:=G\ast \xi$ solves
\begin{align}
\label{eq:PoissonY}
\Delta Y=\xi+\varphi\ast \xi
\end{align} 
for some $\varphi\in C^\infty_c(\R^2)$. 
%
%
Hence, we introduce the new variable
\begin{equation}\label{gaude}
v=e^{Y} u
\end{equation} which converts \eqref{eq:NLS} into the following gauge-transformed NLS for $v$:
\begin{align*} 
\ii \partial_t v=\Delta v+v(\,\D Y^2-\varphi\ast \xi)-2\D v \cdot \D Y-\lambda v|v|^{p} e^{-p Y} ,\quad v(0)=v_0:=e^{Y} u_0\,.
\end{align*}
The term $\D Y^2$ is ill-defined as a square of a distribution, but can be replaced by a meaningful object $\wDY$, which is essentially the Wick product of $\D Y$ with itself.
As in \cite{HL15}, we introduce the random variable
\begin{align}
\label{eq:wDYdef}
\wDY  = \int_{\R^2} \int_{\R^2} \nabla G(\cdot -z_1) \nabla G(\cdot -z_2) \boldsymbol{\xi}(\dd z_1)\boldsymbol{\xi}(\dd z_2) 
\end{align} 
with $\boldsymbol{\xi}$ denoting the Gaussian stochastic measure on $\R^2$ induced by the white noise $\xi$ (see \cite[page 95-99]{Janson}). The relation \eqref{eq:wDYdef} should be read in the \emph{distributional} sense, i.e.~for $\phi \in \Sw(\R^2)$ we have
\begin{align*}
\wDY (\phi)  = \int_{\R^2} \int_{\R^2} \Big(\int_{\R^2} \, \phi(x)\nabla G(x -z_1) \nabla G(x -z_2) \dd x\Big) \boldsymbol{\xi}(\dd z_1)\boldsymbol{\xi}(\dd z_2),
\end{align*}
so that $\wDY$ is only defined (almost surely) as a distribution. Recall that for $f_1,\,f_2\in L^2(\R^2)$ we have the following identity for $X_1:=\int_{\R^2} f_1(z_1) \boldsymbol{\xi}(\dd z_1),\,X_2:=\int_{\R^2} f_2(z_2) \boldsymbol{\xi}(\dd z_2)$ (see \cite[Theorem 7.26]{Janson})
\begin{align}
\label{eq:WickFactorization}
\Wick{X_1\cdot X_2}=\int_{\R^2} \int_{\R^2} f_1(z_1)  f_2(z_2)\boldsymbol{\xi}(\dd z_1) \boldsymbol{\xi}(\dd z_2),
\end{align}
where $\Wick{X_1\cdot X_2}$ denotes the Wick product between the Gaussian random variables $X_1,\,X_2$. Note that the above integral is a multiple Wiener-Ito integral (see \cite{Nua06}). 
From this perspective, the definition $\wDY$ can be read as a Wick product of the distribution $\D Y$ with itself.
For an introduction to Wick calculus let us refer to \cite{Hida, Janson, Nua06}. Thus, we shall focus on the following equation
\begin{equation}
\label{eq:NLSv}
\ii \partial_t v=\Delta v+v\,\tDY -2\D v \cdot \D Y-\lambda v|v|^{p} e^{-p Y},\quad v(0)=v_0\,,
\end{equation}

\noindent
where 
\begin{equation}
\label{tDY2}
\tDY = \wDY - \varphi \ast \xi.
\end{equation}

In order to construct a solution to \eqref{eq:NLSv} we consider an approximation $v_\eps$ which solves
\begin{equation}
\label{eq:NLSve}
\ii \partial_t v_\eps=\Delta v_\eps+v_\eps\tDYe-2\D v_\eps \cdot \D Y_\eps-\lambda v_\eps  |v_\eps|^p e^{-pY_\eps}, \quad
v_\eps(0)=v_0\,.
\end{equation}
Here, $Y_\eps$ is defined as $$Y_\eps = \rho_\eps \ast Y = \rho_\eps \ast G \ast \xi = G \ast \xi_\eps,$$ 
where $\rho_\eps (x) = \eps^{-2} \rho(\eps^{-1} x)$, $\rho \in C_c^{\infty} (B(0,1))$, $\rho \geq 0$, $\int_{\R^2} \rho = 1$, and $\xi_\eps=\rho_\eps \ast \xi$ is a mollification of the considered noise. Also,
\begin{equation}
\label{tDY2e}
\tDYe=\wDYe-\varphi\ast \xi_\eps,
\end{equation}
where the Wick product $\wDYe$
is defined as follows:
\begin{equation}\label{WICK:}\wDYe(x) = \DYe^2(x)-c_\eps, \quad
c_\eps= \E\left( |\nabla Y_\eps|^2\right) = \|\rho_\eps \ast \nabla G\|_{L^2}^2.
\end{equation}
We show in this paper that $Y_\eps$ converges to $Y$, $\nabla Y_\eps$ converges to $\nabla Y$, and $\wDYe$ converges to $\wDY$ almost surely in certain function spaces (see Proposition \ref{prop:conv}).

In the sequel we shall use weighted Sobolev spaces, where the details are in Section \ref{prelimina}.
For the moment, we can consider the following equivalent norm on the space $H^s_{\delta}(\R^2)$:
$$\|\w^\delta \varphi\|_{H^s(\R^2)}, \quad s\geq 0, \quad \delta\in \R,$$
which reduces the weighted Sobolev spaces to the usual unweighted Sobolev spaces once the weight is pulled inside.

\medskip
We now state the following result regarding global well-posedness of the equation \eqref{eq:NLSve} for $v_\eps$.

\begin{theorem}\label{thmreg} There exists a full measure event $\Sigma\subset \Omega$ such that for every $\omega\in \Sigma$ and every $\eps \in (0, \frac 12)$, the following property holds.
For any $p \geq 1$, $\delta_0>0$, $s\in (1,2)$, and $\delta > 0$, there exists $\bar \delta > 0$ such that the Cauchy problem \eqref{eq:NLSve} with
$v_0 \in H^2_{\delta_0}(\R^2)$ admits one unique global solution $$v_\varepsilon(t,x)\in L_{\textup{loc}}^\infty((0, \infty); H^2_{-\delta}(\R^2))\cap 
{\mathcal C}([0, \infty); H^s_{\bar \delta}(\R^2)).$$
Moreover, for every $T>0$ and $\delta>0$ there exists constants $C, C(\omega) > 0$ independent of $\eps$ such that the following bound holds:
$$\|v_{\varepsilon}(t,x)\|_{L^\infty((0,T);H^2_{-\delta}(\R^2))}\leq C(\omega) |\ln \varepsilon|^{C}.$$  
\end{theorem}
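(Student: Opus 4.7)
I fix $\omega$ in the almost sure event $\Sigma$ of Proposition \ref{prop:conv}, on which $Y_\eps$, $\D Y_\eps$, and $\wDYe$ have $\eps$-uniform bounds in the relevant weighted Besov spaces, up to $|\ln\eps|$ factors. For a fixed $\eps$ these objects are smooth with at most logarithmic/polynomial growth at infinity, so \eqref{eq:NLSve} becomes a semilinear Schr\"odinger equation with smooth but spatially unbounded coefficients; a contraction on the Duhamel formula in $C([0,T_\eps];H^s_{\bar\delta}(\R^2))$ for $s\in(1,2)$ and $\bar\delta$ chosen large enough to absorb the weight growth of $\D Y_\eps$ acting on $v_0\in H^2_{\delta_0}$ yields a unique local solution. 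Here $s>1$ provides the algebra structure needed for the nonlinearity $\lambda v_\eps|v_\eps|^p e^{-pY_\eps}$, while $s<2$ accommodates the one-derivative loss from the transport-type term $\D v_\eps\cdot\D Y_\eps$.

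\textbf{Step 2 (Conserved quantities).} The smoothed gauge transform is invertible, so $u_\eps:=e^{-Y_\eps}v_\eps$ solves a regularized version of \eqref{eq:NLS}. From the unitarity of the flow generated by a real multiplicative potential, one reads off conservation of the weighted mass
\begin{equation*}
M_\eps(t)=\int_{\R^2}e^{-2Y_\eps(x)}|v_\eps(t,x)|^2\,\dd x,
\end{equation*}
while the corresponding Hamiltonian is also conserved, and its positivity (using $\lambda>0$) controls a weighted $H^1$ norm of $v_\eps$. Combined with the $\eps$-uniform bounds on $Y_\eps$ from Proposition \ref{prop:conv}, this yields weighted $L^2$ and $H^1$ a-priori bounds on $v_\eps$ that grow at most polynomially in $|\ln\eps|$.

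\textbf{Step 3 (Weighted $H^2$ bound with logarithmic loss).} This is the main obstacle, and is the full-space, weighted analogue of the strategy in \cite{TzV1,TzV2}. I would work with a modified energy of schematic form
\begin{equation*}
E_\eps(t)=\|\w^{-\delta}v_\eps\|_{H^2}^2+\text{correction terms},
\end{equation*}
where the corrections (polynomial in $v_\eps$, multiplied by $\D Y_\eps$ or $\tDYe$ with appropriate weights) are tailored to cancel precisely those contributions in $\partial_t E_\eps$ coming from $\D v_\eps\cdot\D Y_\eps$ that would otherwise lose a derivative. The remaining commutator and weight-remainder terms are estimated via weighted product/Besov estimates drawn from Proposition \ref{prop:conv}, paired with time-averaged Strichartz smoothing so as to cover all $p>0$ (the key input from \cite{TzV2} that pure energy methods fail to provide for large powers). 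A Gr\"onwall argument then yields $E_\eps(t)\le C(\omega)|\ln\eps|^{C}$ on $[0,T]$, and a standard continuation/blow-up alternative upgrades the local solution of Step 1 to a global one.

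\textbf{Main obstacle.} The delicate point is closing Step 3 uniformly in $\eps$: three difficulties must be handled simultaneously, namely the unboundedness of $Y$ at infinity (forcing weights that interact non-trivially with the Schr\"odinger flow), the apparent derivative loss in $\D v_\eps\cdot\D Y_\eps$ (absorbed by the modified-energy corrections), and the low spatial regularity of $\wDY$ (compensated only after time integration through dispersive smoothing). The resulting $|\ln\eps|^C$ control is exactly what is needed in later sections to pass to the limit $\eps\to 0$ and recover a solution of \eqref{eq:NLSv}.
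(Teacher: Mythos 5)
Your Steps 2 and 3 capture the correct mechanism behind the a-priori bound: the paper does use the conserved weighted mass and Hamiltonian for $L^2$ and $H^1$ control (Proposition \ref{prop:L2H1}), and the $|\ln\eps|^C$ weighted $H^2$ bound is indeed obtained from modified energies in the spirit of \cite{TzV1} combined with weighted Strichartz estimates in the spirit of \cite{TzV2} (Sections \ref{sec:lin}--\ref{sec:H2}). However, your Step 1 contains a genuine gap, and it is precisely the step the paper warns about: a contraction on the Duhamel formula in $\mathcal{C}([0,T_\eps];H^s_{\bar\delta})$ does not close. The transport term $-2\D v_\eps\cdot\D Y_\eps$ loses a full derivative ($v_\eps\in H^s$ only gives $\D v_\eps\cdot\D Y_\eps\in H^{s-1}$), and the Duhamel integral against $e^{\ii t\Delta}$ does not regain it; choosing $s<2$ does not remedy this, since the loss is relative to whatever space the fixed point lives in. Moreover, the coefficients $\D Y_\eps$, $\tDYe$, $e^{-pY_\eps}$ are unbounded at spatial infinity, so classical Strichartz or local-smoothing inputs are not available for the perturbed propagator. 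Consequently the continuation/blow-up alternative you invoke at the end of Step 3 has no local theory to continue from.

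The paper circumvents this by a second regularization: it introduces the cutoff $\theta_n$ and the equation \eqref{NLSfurthersmoothing} for $v_{\eps,n}$, observes that $u_{\eps,n}=e^{-\theta_n Y_\eps}v_{\eps,n}$ solves \eqref{NLSuen}, whose coefficients are smooth, bounded and decaying (since $\theta_n Y_\eps$ is compactly supported), so classical global well-posedness applies. The derivative-losing first-order term is thereby eliminated at the level of the approximating problem rather than treated perturbatively. Existence of $v_\eps$ then follows by compactness: the $n$-uniform bound \eqref{fundBound}, the equation, Arzel\`a--Ascoli with the compact embedding \eqref{inclusionHsdelta}, and Banach--Alaoglu produce a limit solving \eqref{eq:NLSve} together with the bound \eqref{fundBound2}. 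Uniqueness is a separate Gronwall argument on $\int|r_\eps|^2e^{-2Y_\eps}\,\dd x$ for the difference of two solutions. Your proposal is missing both the double-regularization/compactness construction and the uniqueness argument, so as written it does not yield the theorem.
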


The existence and uniqueness of solution to \eqref{eq:NLSve} is not obvious since the coefficients involved in the linear propagator associated with \eqref{eq:NLSve}
are smooth but unbounded. At the best 
of our knowledge, because of the lack of decay at the spatial infinity of the derivatives of $\xi_\eps$ the classical Strichartz estimates are not available in this framework, and so we cannot apply a classical contraction argument.
In fact, along the paper we shall establish some weighted Strichartz estimates that will allow us to perform a compactness argument as in \cite{DM} and to deduce the existence
of solutions to \eqref{eq:NLSve} as the limit of solutions $v_{\varepsilon, n}$ to the following further regularized equation at $\varepsilon>0$ fixed:
\begin{equation}\label{NLSfurthersmoothing}
\begin{split}
\ii \partial_t v_{\eps, n}&=\Delta v_{\eps, n}+v_{\eps, n} \theta_n\tDYe 
-2\D v_{\eps, n} \cdot \D (\theta_n Y_\eps )
-\lambda v_{\eps, n}  |v_{\eps, n}|^p e^{-p\theta_n Y_\eps}, \\
\quad v_{\eps, n}(0)&=v_0\,,
\end{split}
\end{equation}
where $\theta_n(x)=\theta (\frac xn )$ and $\theta\in C^\infty_0(\R^2)$, $\theta\geq 0$, $\theta(x)=1$ when $|x| \leq 1$. To see that \eqref{NLSfurthersmoothing} is globally well-posed, we let $u_{\eps, n} = e^{- \theta_n Y_\eps} v_{\eps, n}$ and consider the following equation for $u_{\eps, n}$:
\begin{equation}
\label{NLSuen}
\begin{split}
\ii \partial_t u_{\eps, n}&=\Delta u_{\eps, n}+u_{\eps, n} ( \theta_n\tDYe 
- \D (\theta_n Y_\eps )^2 + \Delta (\theta_n Y_\eps) ) -\lambda u_{\eps, n}  |u_{\eps, n}|^p, \\
u_{\eps, n}(0) &= e^{-\theta_n Y_\eps} v_0\,.
\end{split}
\end{equation}
 
\noindent
Note that $\theta_n Y_\eps$ is a Schwartz function, so that $e^{-\theta_n Y_\eps} v_0 \in H^2 (\R^2)$ given $v_0 \in H^2 (\R^2) \subset H^2_{\delta_0} (\R^2)$. Since the equation \eqref{NLSuen} contains only bounded and smooth terms, by classical results as in \cite{Caz03, GV79, Kat87}, there exists a unique solution $u_{\eps, n}$ to \eqref{NLSuen} in $\mathcal{C} ([0, \infty); H^2 (\R^2))$. This shows that there exists a unique solution $v_{\eps, n}$ to \eqref{NLSfurthersmoothing} in $\mathcal{C} ([0, \infty); H^2 (\R^2))$.

Once we establish a weighted $H^2$ a-priori bound (independent of $n$) for $v_{\eps, n}$, we can use a similar argument in \cite{DM} to prove Theorem \ref{thmreg}. See Section \ref{sec:thm} for a more detailed explanation.

\medskip
Next, we describe the behavior of the solutions $v_\varepsilon(t,x)$ in the limit $\varepsilon\rightarrow 0$.
It is remarkable that this result can be also interpreted in terms of the convergence, up to a phase shift, of $u_\varepsilon(t,x)$ solutions to the smoothed version of \eqref{eq:NLS}
\begin{equation}
\label{eq:NLSsmoothed}
\ii \partial_t u_\varepsilon =\Delta u_\varepsilon+ u_\varepsilon\xi_\varepsilon -\lambda u_\varepsilon|u_\varepsilon|^{p}, \quad u(0)=u_0\,.
\end{equation} 
It is worth mentioning that it is not obvious to prove directly the existence and uniqueness of solutions to the smoothed equation above. Nevertheless, by direct computation, one can show that 
\begin{equation}
u_\varepsilon=e^{- \ii c_\eps t} e^{-Y_\varepsilon} v_\varepsilon
\end{equation}
solves \eqref{eq:NLSsmoothed} provided that $v_\varepsilon$ is given by Theorem \ref{thmreg}.


We are now ready to state the main result of this paper.

\begin{theorem}\label{mainthm}
There exists a full measure event $\Sigma\subset \Omega$ such that for every $\omega\in \Sigma$, the following property holds. Let $\delta_0 > 0$ and $v_0 \in H^2_{\delta_0}$. For any $p \geq 1$, $s \in (1, 2)$, and $\delta > 0$, there exists $v \in \mathcal{C} ([0, \infty); H^s_{\bar \delta} (\R^2))$ for some $\bar \delta > 0,$ such that the following convergence holds:
\begin{equation}
\|v_\varepsilon(t,x)-v(t,x)\|_{\mathcal{C} ([0,T); H^s_{\bar \delta}(\R^2))}\overset{\varepsilon\rightarrow 0}\longrightarrow 0,
\end{equation}
where $v_\eps$ is given by Theorem \ref{thmreg} with $v_\eps (0) = v_0$.
In particular, $u_\varepsilon= e^{-\ii c_\eps t} e^{-Y_\varepsilon} v_\varepsilon$ solves \eqref{eq:NLSsmoothed} and
$$\|e^{\ii c_\varepsilon t} e^{Y_\varepsilon} u_\varepsilon(t,x)-v(t,x)\|_{\mathcal{C} ([0,T); H^s_{\bar \delta}(\R^2))}\overset{\varepsilon\rightarrow 0}\longrightarrow 0,$$
where $c_\varepsilon\sim |\ln \varepsilon|$ is the constant in \eqref{WICK:}.
Moreover, $v$ is the unique global solution to \eqref{eq:NLSv} in $\mathcal{C}([0, \infty); H^s_{\bar \delta}(\R^2))$.
\end{theorem}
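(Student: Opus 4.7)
The plan is to promote the uniform bound of Theorem \ref{thmreg} into a Cauchy property for $(v_\eps)_\eps$ in a weaker weighted topology, then upgrade to $H^s_{\bar\delta}$ by interpolation. Setting $w=v_{\eps_1}-v_{\eps_2}$ and subtracting two copies of \eqref{eq:NLSve} yields a Schr\"odinger equation for $w$ whose linear part is a copy of \eqref{eq:NLSve} at scale $\eps_1$, plus source terms
\begin{equation*}
v_{\eps_2}\bigl(\widetilde{\Wick{\D Y^2_{\eps_1}}}-\widetilde{\Wick{\D Y^2_{\eps_2}}}\bigr),\qquad \D v_{\eps_2}\cdot\D(Y_{\eps_1}-Y_{\eps_2}),
\end{equation*}
plus the nonlinear difference $\lambda(v_{\eps_1}|v_{\eps_1}|^p e^{-pY_{\eps_1}}-v_{\eps_2}|v_{\eps_2}|^p e^{-pY_{\eps_2}})$. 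By Proposition \ref{prop:conv} the stochastic factors in the source terms tend to zero almost surely at a polynomial-in-$\eps$ rate in the appropriate weighted Besov spaces, while their companion factors built from $v_{\eps_2}$ are uniformly controlled by Theorem \ref{thmreg}.

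The first step is a weighted $L^2_{\bar\delta}$ energy estimate for $w$. The only dangerous term is the gradient coupling $-2\D w\cdot \D Y_{\eps_1}$, which would lose a derivative if handled naively; following \cite{DM}, I would eliminate it either by conjugating to $\widetilde w=e^{-Y_{\eps_1}}w$ or by integrating by parts using $\Delta Y_{\eps_1}=\xi_{\eps_1}+\varphi\ast\xi_{\eps_1}$, absorbing the outcome into the Wick-renormalized potential $\widetilde{\Wick{\D Y^2_{\eps_1}}}$ already present. Controlling the nonlinear difference through the Sobolev embedding $H^s_{\bar\delta}\hookrightarrow L^\infty$ (valid for $s>1$) and applying Gronwall then yield
\begin{equation*}
\|w\|_{L^\infty_T L^2_{\bar\delta}}\le C(\omega,T,\|v_0\|_{H^2_{\delta_0}})\,\eta(\eps_1,\eps_2),
\end{equation*}
with $\eta(\eps_1,\eps_2)\to 0$ almost surely. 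Interpolating with the bound $\|v_\eps\|_{L^\infty_T H^2_{-\delta}}\le C(\omega)|\ln\eps|^C$ from Theorem \ref{thmreg} gives, for any $s\in(1,2)$ and an adjusted $\bar\delta$,
\begin{equation*}
\|w\|_{L^\infty_T H^s_{\bar\delta}}\lesssim \eta(\eps_1,\eps_2)^{1-s/2}\,|\ln(\eps_1\wedge\eps_2)|^{Cs/2}\longrightarrow 0\quad\text{a.s.,}
\end{equation*}
so $(v_\eps)$ is Cauchy in $\mathcal{C}([0,T];H^s_{\bar\delta})$ almost surely; call the limit $v$.

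Passing to the limit in \eqref{eq:NLSve} term by term, with the same weighted multiplicative estimates used in Theorem \ref{thmreg} (for $v\cdot\widetilde{\Wick{\D Y^2}}$ and $\D v\cdot\D Y$), shows that $v$ solves \eqref{eq:NLSv}. Uniqueness of $v\in\mathcal{C}([0,\infty);H^s_{\bar\delta})$ is obtained by repeating the low-regularity energy estimate on the difference of two such solutions: the source terms now vanish and Gronwall forces equality. The relation $u_\eps=e^{-\ii c_\eps t}e^{-Y_\eps}v_\eps$ as a solution of \eqref{eq:NLSsmoothed} is a direct chain rule check. The main obstacle is the weighted $L^2$ Cauchy estimate: one must simultaneously (i) absorb the gradient coupling $\D w\cdot \D Y_{\eps_1}$ via the Doss--Sussmann conjugation without letting the weights generate unbounded commutators, (ii) close the Besov pairing of $\widetilde{\Wick{\D Y^2_{\eps_1}}}$ against the weighted norm of $w$, and (iii) ensure that the rate $\eta(\eps_1,\eps_2)$ decays polynomially so as to beat the logarithmic loss $|\ln\eps|^{Cs/2}$ produced by interpolation with the $H^2_{-\delta}$ bound.
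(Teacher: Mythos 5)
Your proposal follows essentially the same route as the paper: form the difference $r=v_{\eps_1}-v_{\eps_2}$, run a weighted $L^2$ energy estimate with the weight $e^{-2Y_{\eps_1}}$ (which is precisely how the gradient coupling $-2\D r\cdot\D Y_{\eps_1}$ is absorbed), pair the source terms $v_{\eps_2}(\tDY_{\eps_1}-\tDY_{\eps_2})$ and $\D v_{\eps_2}\cdot\D(Y_{\eps_1}-Y_{\eps_2})$ against the almost sure polynomial rates of Proposition \ref{prop:conv} and the uniform bounds of Theorem \ref{thmreg} via the duality and product estimates, apply Gronwall, and interpolate with the $H^2_{-\delta}$ bound to upgrade $L^2$ convergence to $H^s_{\bar\delta}$; uniqueness comes from the same energy identity with vanishing sources. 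The architecture is correct and matches the paper (which additionally runs the argument along the dyadic sequence $\eps=2^{-k}$ and then fills in $\eps\in(2^{-(k+1)},2^{-k}]$ to get convergence of the whole family).

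The one step that does not close as written is the Gronwall coefficient for the nonlinear difference. You propose to control it through $H^s_{\bar\delta}\hookrightarrow L^\infty$ with $s>1$; but the only available bound on $\|v_\eps\|_{H^s}$ for $s>1$ comes from interpolating against the $H^2_{-\delta}$ a priori bound and therefore carries a factor $|\ln\eps|^{C}$ with $C$ not under control. Raised to the power $p$ and inserted into Gronwall, this yields a factor $\exp\bigl(C(\omega)T|\ln\eps_2|^{C'}\bigr)$, which for $C'\geq 1$ is a (possibly large) negative power of $\eps_2$ and annihilates the polynomial gain $\eps_1^{\kappa}$ from the noise convergence. The paper's resolution is the uniform-in-$\eps$ $H^1$ bound from the conservation laws (Proposition \ref{prop:L2H1}) combined with \eqref{fundamnew}: one estimates $\|v_\eps(t)\|_{L^\infty_{p\delta}}^{p}\lesssim \mathcal{P}(\|v_0\|_{H^1_{\delta_0}})\|v_\eps(t)\|_{H^2_{-\delta}}^{\eta_0 p}$ with $\eta_0$ chosen so small that the resulting power of $|\ln\eps_2|$ is some $\gamma<1$, and then uses that $e^{|\ln x|^{\gamma}}\leq Cx^{\tilde\gamma}$ for arbitrarily small $\tilde\gamma>0$. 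Your item (iii) correctly worries about logarithmic losses, but locates them in the final interpolation step, where any power of $|\ln\eps|$ is harmless against $\eps^{\kappa}$, rather than in the exponent of the Gronwall factor, which is the actual obstruction and where the sub-unit exponent $\gamma<1$ is essential.
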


Theorem \ref{mainthm} is an extension of previous result proved in \cite{DM} in the case $0 < p < 1$. In particular, we cover the relevant case of cubic nonlinearity.
Notice also that our convergence occurs almost surely, which is stronger than the result in \cite{DM} where the convergence is in probability.

\medskip
We conclude this subsection by stating several remarks.
\begin{remark} \rm
Throughout the whole paper, we restrict our attention to the case $\lambda > 0$ of the NLS \eqref{eq:NLS}, which refers to the defocusing case. For the focusing case (i.e.~$\lambda < 0$) of \eqref{eq:NLS}, Theorem \ref{mainthm} also holds for $0 < p < 2$. For $\lambda < 0$ and $p \geq 2$, we need to impose a smallness assumption on the initial data $\| v_0 \|_{H^1_{\delta_0}}$ in order to obtain Theorem \ref{mainthm}. Indeed, the only place that requires a different proof for this case is Proposition \ref{prop:L2H1}. See Remark \ref{rmk:foc} for details.
\end{remark}

\begin{remark} \rm
For the NLS \eqref{eq:NLS} in higher dimensions, it is not clear whether the approach in this paper based on a gauge-transform works in showing global well-posedness. The main challenge lies in the fact that the spatial white noise is too rough for our approach when $d \geq 3$. Another challenge is the weaker smoothing properties of the Schr\"odinger equation in higher dimensions. One can compare the situation with the parabolic setting in \cite{HL18}, where the authors used the theory of regularity structures due to Hairer \cite{Hai14}.
\end{remark}

\begin{remark} \rm
The authors in \cite{GUZ, MZ, Ugur} introduced another approach to the study of the NLS \eqref{eq:NLS} with $p \le 2$. Their method is based on the realization of the (formal) Anderson Hamiltonian $H = \Delta + \xi$ as a self-adjoint operator on the $L^2$ space. Specifically, \cite{GUZ} considered the equation on the torus, \cite{MZ} considered a compact manifold, and \cite{Ugur} considered the full space. In their settings, the initial data $u_0$ needs to belong to the domain of $H$. One can compare the initial condition in \cite{GUZ, Ugur} and that in this paper and in \cite{DM, DW18, TzV1, TzV2}, where the initial data is chosen to have a specific structure $e^{-Y} v_0$ with $v_0$ belonging to a weighted $H^2$ space. For more discussions on the Anderson Hamiltonian, see \cite{AC, BDM22, CZ, Lab19, Mou22}. 
\end{remark}

\subsection{Notations}
We denote by ${\mathcal P}(a,b,\dots)$ a polynomial function depending on the $a,b, \dots$.
The polynomial can change from line to line along the estimates.
We denote by $N$ dyadic numbers larger than or equal to $\frac 12$ and by $\Delta_N$ the corresponding Littelwood-Paley partition.
We shall use in special situations a second Littlewood-Paley partition $\tilde \Delta_N$ such that $\tilde \Delta_N\circ \Delta_N=\Delta_N$.
All the functional spaces that we shall use are based on $\R^2$. We denote by $C>0$ a deterministic constant that can change from line to line and $C(\omega)>0$
a stochastic constant which is finite almost surely.
We denote by $(\cdot, \cdot)$ the $L^2$ scalar product as well as the duality in ${\mathcal D(\R^2)}, {\mathcal D}'(\R^2)$.

\subsection{Organization of the paper}
This paper is organized as follows. In Section \ref{prelimina}, we recall the definitions of some useful functional spaces and their properties. In Section \ref{subsec:Noise}, we discuss properties of the process $Y$ and its related stochastic objects. In Section \ref{sec:lin} and Section \ref{sec:nonlin}, we establish some useful linear and nonlinear estimates for a generalized equation. In Section \ref{sec:energy}, we recall definitions of modified energies and provide estimates for them. In Section \ref{sec:H2}, we prove a key $H^2$ a-priori bound. Lastly, in Section \ref{sec:thm}, we prove the main result of this paper: Theorem \ref{mainthm}.

\section{Functional spaces and preliminary facts}\label{prelimina}

\subsection{Functional spaces}
Given $p\in [1,\infty],\,\mu\in \R$ we introduce respectively the weighted Lebesgue and Sobolev spaces as follows:
\begin{align*}
\|f\|_{L^p_\mu}=\Big(\int_{\R^2} \w^{\mu p}|f|^p dx\Big)^\frac 1p , \quad \w=\sqrt {1+|x|^2}
\end{align*}
and
\begin{align*}
\|f\|_{W^{1,p}_\mu}=\|f\|_{L^p_\mu}+\|\nabla f\|_{L^p_\mu}
\end{align*}
with the usual interpretation if $p=\infty$. If $\mu=0$ we simply write $L^p=L^p_0$ and  $W^{1,p}=W^{1,p}_0$.

Along the paper we shall make extensively use
of the Littlewood-Paley multipliers $\Delta_N$, namely
\begin{equation}\label{idenLP}
\hbox{Id}=\sum_{{N-{\rm dyadic}}}  \Delta_N
\end{equation}
where
$$\Delta_N f=N^2\int_{\R^2} K(N(x-y))f(y) dy, \quad N\geq 1$$
with $\hat K\in \mathcal S(\R^2)$ such that $\supp \hat K(\xi)\subset \{\frac 12 \leq |\xi|\leq 2\}$
and
$$\Delta_\frac 12 f=\int_{\R^2} L(x-y)f(y) dy$$
with $\hat L\in \mathcal S(\R^2)$ such that $\supp \hat L(\xi)\subset \{|\xi|<1\}$. From now on, the dyadic numbers are assumed to be larger than or equal to $\frac 12$.
%

We can then introduce the weighted inhomogeneous Besov spaces $\mathcal{B}^\alpha_{p,q,\mu}$ as follows:
\begin{equation}\label{besovdef}
\|f\|_{\mathcal{B}^\alpha_{p,q,\mu}} = \Big (\sum_{{N-{\rm dyadic}}} N^{\alpha q} \|\Delta_N f\|_{L^p_\mu}^q\Big)^\frac 1q
\end{equation}
for every $\alpha, \mu\in \R$, $p, q\in [1, \infty]$. Notice that for $\mu=0$ the space $\mathcal{B}^\alpha_{p,q,0}$ reduces to the usual Besov space
$\mathcal{B}^\alpha_{p,q}$. A convenient property of the spaces $\mathcal{B}^\alpha_{p,q,\mu}$ is that the weight can be ``pulled in'', namely we have the equivalent norms:
\begin{align}
\label{eq:PullWeight}
c \|f \w^\mu \|_{\mathcal{B}^{\alpha}_{p,q}}<\|f\|_{\mathcal{B}^{\alpha}_{p,q,\mu}} \leq C \|f \w^\mu \|_{\mathcal{B}^{\alpha}_{p,q}}\,,
\end{align} for suitable $c, C>0$ that depend on $\alpha,\,\mu\in \R$ and $p,q\in [1,\infty]$
(see \cite[Theorem 6.5]{TriebelIII}). Relation \eqref{eq:PullWeight} can be used to translate results from the unweighted spaces to their weighted analogues.

In the case $(p,q)=(2,2)$, the weighted Besov spaces are generalizations of weighted Sobolev spaces:
\begin{equation}\label{sobolev2}
\mathcal{B}^{\alpha}_{2,2, \mu} =H^\alpha_\mu, \quad \hbox{ where }
 \|f\|_{H^\alpha_\mu} =\|\mathcal{F}^{-1}\langle\cdot\rangle^\alpha \mathcal{F}f\|_{L^2_\mu}.
\end{equation}
In the sequel we shall make extensively use of the following obvious continuous embedding
\begin{equation}\label{inclusionHsdelta}
H^{s_1}_{\mu_1}\subset H^{s_2}_{\mu_2}, \quad s_1\geq s_2, \quad \mu_1\geq \mu_2.
\end{equation}
The embedding \eqref{inclusionHsdelta} is compact when $s_1 > s_2$ and $\mu_1 > \mu_2$ (see \cite[Section 4.2.3]{ET}).
We shall also use the notation 
\begin{equation}\label{holder}\mathcal{B}^{\alpha}_{\infty,\infty, \mu} =\mathcal{C}^{\alpha}_\mu.\end{equation}
In the special case $\alpha\in \R_{+}\backslash \{0,1,2,\ldots\}$, the space $\mathcal{C}^\alpha_\mu$ is in turn equivalent to the classical 
weighted H\"older-Zygmund space with the following norm
\begin{equation}\label{eq:ZygmundNorm}
\|f\|_{\mathcal{C}^\alpha_\mu}=\sum_{|k|\leq \lfloor \alpha\rfloor}\sup_{x\in \R^2} \,\w^{\mu}\,|\partial^k f(x)| + \sum_{|k|=\lfloor \alpha \rfloor }\sup_{\substack{x,y\in \R^2\\0<|x-y|\leq 1}}\w^\mu\frac{|\partial^k f(x)-\partial^k f(y)|}{|x-y|^{\alpha -\lfloor \alpha\rfloor}}\,.
\end{equation}
Setting $\mu = 0$ and restricting \eqref{eq:ZygmundNorm} to $x, y \in D$ for some domain $D \subset \R^2$ gives rise to the local H\"older space $\mathcal{C}^{\alpha} (D)$. For the other values of $\alpha$ (in particular for $\alpha<0$) we take \eqref{holder} as a definition of $C^\alpha_\mu$.

\subsection{Some properties of the Littlewood-Paley localization}
We now gather well--known properties of the weighted Besov spaces that will be used along the paper. We first give an elementary, but useful, property of the Littlewood-Paley decomposition in weighted spaces.
\begin{lemma}
Let $\gamma, \delta\geq 0$ and $\gamma_0>0$ be given. Then, there exists $C>0$ such that
\begin{equation}\label{sumupLP}
\sum_{{N-{\rm dyadic}}} N^\gamma \|\Delta_N f\|_{L^2_\delta}\leq C \|f\|_{H^{\gamma+\gamma_0}_{\delta}}.
\end{equation}
\end{lemma}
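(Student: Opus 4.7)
The plan is to reduce this to the Besov-square-function characterization of the weighted Sobolev space and then apply Cauchy--Schwarz to absorb the factor $N^{-\gamma_0}$.

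First, I would invoke the identification \eqref{sobolev2}, namely $H^{\gamma+\gamma_0}_{\delta} = \mathcal{B}^{\gamma+\gamma_0}_{2,2,\delta}$, together with the definition \eqref{besovdef}. Combined with the equivalence \eqref{eq:PullWeight} for pulling the weight inside (which ensures the two characterizations produce comparable norms), this gives
\begin{equation*}
\|f\|_{H^{\gamma+\gamma_0}_{\delta}} \sim \Big(\sum_{N-{\rm dyadic}} N^{2(\gamma+\gamma_0)}\, \|\Delta_N f\|_{L^2_{\delta}}^2\Big)^{1/2}.
\end{equation*}

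Next, I would split the exponent as $N^\gamma = N^{-\gamma_0} \cdot N^{\gamma+\gamma_0}$ and apply the Cauchy--Schwarz inequality over the dyadic index:
\begin{equation*}
\sum_{N-{\rm dyadic}} N^\gamma \|\Delta_N f\|_{L^2_{\delta}} \leq \Big(\sum_{N-{\rm dyadic}} N^{-2\gamma_0}\Big)^{1/2} \Big(\sum_{N-{\rm dyadic}} N^{2(\gamma+\gamma_0)} \|\Delta_N f\|_{L^2_{\delta}}^2\Big)^{1/2}.
\end{equation*}
Since $\gamma_0>0$ and the dyadic numbers $N$ range over $\{2^{-1}, 2^0, 2^1, \dots\}$, the first factor is a convergent geometric series and bounded by a constant depending only on $\gamma_0$. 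The second factor is comparable to $\|f\|_{H^{\gamma+\gamma_0}_{\delta}}$ by the first step, and the desired inequality follows.

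There is no real obstacle here; the only small point to double-check is that the Besov norm $\mathcal{B}^{\gamma+\gamma_0}_{2,2,\delta}$ and the Fourier-side norm $H^{\gamma+\gamma_0}_\delta$ really are equivalent in the presence of the polynomial weight $\w^\delta$, but this is already quoted in \eqref{sobolev2} via the identification $\mathcal{B}^{\alpha}_{2,2,\mu}=H^\alpha_\mu$ together with \eqref{eq:PullWeight}. The assumptions $\gamma\geq 0$ and $\delta\geq 0$ are not used in the argument itself; only $\gamma_0>0$ is needed to make the geometric series finite.
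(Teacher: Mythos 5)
Your proof is correct and follows essentially the same route as the paper: Cauchy--Schwarz over the dyadic index with the splitting $N^\gamma = N^{-\gamma_0}N^{\gamma+\gamma_0}$, followed by the identification of the resulting square sum with the $H^{\gamma+\gamma_0}_\delta$ norm via \eqref{besovdef} and \eqref{sobolev2}. You have simply spelled out the steps the paper leaves implicit.
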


\begin{proof}
We have by the Cauchy-Schwartz inequality
\begin{equation}
\sum_{{N-{\rm dyadic}}} N^\gamma \|\Delta_N f\|_{L^2_\delta}\leq C
\Big(\sum_{{N-{\rm dyadic}}} N^{2(\gamma+\gamma_0)} \|\Delta_N f\|_{L^2_\delta}^2\Big)^\frac 12
\end{equation}
and hence we conclude by recalling \eqref{besovdef} and \eqref{sobolev2}.
\end{proof}

We shall need the following commutator estimates.
\begin{lemma}
For every $\delta\in (0,1)$ and $p\in [1,\infty]$, there exists $C>0$ such that for every $N$ dyadic, we have
\begin{equation}\label{LPaley}
\|[\Delta_N, \w^{\delta}] f\|_{L^p}\leq CN^{-1} \|f\|_{L^p},
\end{equation}
\begin{equation}\label{LPaley2}\|[\nabla, [\Delta_N,\w^\delta]]f\|_{L^p}\leq CN^{-1}\|f\|_{L^p}.\end{equation}
\end{lemma}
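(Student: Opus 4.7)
The plan is to write each commutator as an integral operator whose kernel carries an extra factor of $|x-y|$, then use that $\w^\delta$ and $\nabla \w^\delta$ are both Lipschitz on $\R^2$ (thanks to $\delta \in (0,1)$), and finally absorb the factor $|x-y|$ into a kernel whose $L^1$ norm scales like $N^{-1}$ via the rescaling $y\mapsto x-u/N$.

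For \eqref{LPaley}, I would first write, using the kernel representation of $\Delta_N$ with $N\ge 1$,
\begin{equation*}
[\Delta_N,\w^\delta]f(x) = N^2\int_{\R^2} K\bigl(N(x-y)\bigr)\bigl(\w(y)^\delta-\w(x)^\delta\bigr)f(y)\,dy.
\end{equation*}
Since $\w$ is $1$-Lipschitz and $t\mapsto t^\delta$ has derivative bounded by $\delta$ on $[1,\infty)$ when $\delta\in(0,1)$, one has $|\w(y)^\delta-\w(x)^\delta|\le \delta|x-y|$. Substituting $u=N(x-y)$ yields the pointwise bound
\begin{equation*}
\bigl|[\Delta_N,\w^\delta]f(x)\bigr|\le \frac{\delta}{N}\int_{\R^2}|u|\,|K(u)|\,|f(x-u/N)|\,du,
\end{equation*}
so that \eqref{LPaley} follows from Young's inequality together with $|\cdot| K\in L^1(\R^2)$ (which holds because $\widehat K\in\mathcal S$). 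The case $N=\tfrac12$ is handled identically by replacing $K$ by $L$, and the bound $CN^{-1}$ is then trivial since $N^{-1}=2$.

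For \eqref{LPaley2}, the key observation is that after differentiation two of the four terms cancel. A direct computation (using $\nabla_y K(N(x-y))=-N\,\nabla K(N(x-y))$ and integrating by parts in the expression for $[\Delta_N,\w^\delta]\nabla f$) gives
\begin{equation*}
[\nabla,[\Delta_N,\w^\delta]]f(x) = N^2\int_{\R^2}K\bigl(N(x-y)\bigr)\bigl(\nabla\w^\delta(y)-\nabla\w^\delta(x)\bigr)f(y)\,dy;
\end{equation*}
the terms containing the difference $\w(y)^\delta-\w(x)^\delta$ drop out exactly. Now $\nabla\w^\delta(x)=\delta\,\w(x)^{\delta-2}x$ is bounded and, for $\delta\in(0,1)$, its own gradient $\delta(\delta-2)\w^{\delta-4}x\otimes x+\delta\w^{\delta-2}\mathrm{Id}$ is bounded on $\R^2$, hence $\nabla\w^\delta$ is uniformly Lipschitz. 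Thus $|\nabla\w^\delta(y)-\nabla\w^\delta(x)|\le C|x-y|$, and the same rescaling argument as above yields \eqref{LPaley2}.

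The only mildly delicate point is the cancellation that produces the identity for $[\nabla,[\Delta_N,\w^\delta]]f$; writing it out carefully, and then checking that $\nabla\w^\delta$ is Lipschitz (which relies on $\delta\in(0,1)$ and the concrete form of $\w$), are the two places where one has to be attentive. Once those are in hand, both estimates reduce to the same rescaling-plus-Young argument and the $N^{-1}$ gain comes out of the one factor of $|x-y|$ each kernel carries.
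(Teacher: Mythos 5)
Your proof is correct and follows essentially the same route as the paper: write the commutator as an integral operator whose kernel carries the difference $\w(y)^\delta-\w(x)^\delta$ (resp.\ $\nabla\w^\delta(y)-\nabla\w^\delta(x)$ after the cancellation of the $\nabla K$ terms in the double commutator), bound that difference by $C|x-y|$ using $\delta\in(0,1)$, and extract the factor $N^{-1}$ from the rescaled kernel. The paper concludes with the Schur test where you use a pointwise convolution majorant plus Young's inequality, but these are interchangeable here and the substance of the argument is identical.
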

\begin{proof}

It is easy to check
that 
$$[\Delta_N, \w^{\delta}] f= N^2 \int_{\R^2} (\langle y \rangle^\delta-\w^\delta) K(N(x-y))f(y) dy$$
and hence we easily conclude \eqref{LPaley} by the Schur test since
\begin{align*}
\sup_x &\Big(N^2 \int_{\R^2} |\langle y \rangle^\delta-\w^\delta| |K(N(x-y))|dy\Big)\\
&=\sup_y \Big ( N^2 \int_{\R^2} |\langle y \rangle^\delta-\w^\delta| |K(N(x-y))|dx\Big)\\
&\leq C N^{-1}\||x|K(x)\|_{L^1}
\end{align*} 
where we used that 
$|\langle y \rangle^\delta-\w^\delta|\leq C|x-y|$
for $\delta<1$.

Concerning \eqref{LPaley2} we denote by $\tilde K_N(x,y)=N^2[\langle y \rangle^\delta-\w^\delta] K(N(x-y))$
the kernel of the operator $[\Delta_N, \w^{\delta}]$.
Hence, we have
$$[\nabla, [\Delta_N,\w^\delta]]f=\int_{\R^2} (\nabla_x \tilde K_N(x,y)+ \nabla_y \tilde K_N(x,y))f(y) dy$$
and we conclude as above via the Schuur test since
$$\nabla_x \tilde K_N(x,y)+ \nabla_y \tilde K_N(x,y)=N^2[-\nabla_x \w^\delta + \nabla_y \langle y \rangle^\delta]K(N(x-y))$$
and $|-\nabla_x \w^\delta + \nabla_y \langle y\rangle^\delta| \leq C |x-y|$.
\end{proof}

Next, we show a useful property of the Littlewood-Paley multipliers $\Delta_N$ in weighted Sobolev spaces.
\begin{lemma}\label{LPeqHs}
For every $s\in [0,2]$ and $\delta \in (0, 1)$, there exists $C>0$ such that for every $N$ dyadic, we have
\begin{equation}\label{LPtins}\|\Delta_N f\|_{H^s_\delta}\leq C N^{s}\Big (\|\Delta_\frac N2 f\|_{L^{2}_\delta}+ \|\Delta_N f\|_{L^{2}_\delta}
+\|\Delta_{2N} f\|_{L^{2}_\delta}\Big ).
\end{equation}
\end{lemma}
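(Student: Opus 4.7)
The plan is to interpret the bound as a weighted version of Bernstein's inequality, and to absorb the failure of $\Delta_N$ to commute with $\w^\delta$ via a commutator estimate. First, since the Fourier supports of two dyadic projectors $\Delta_M$ and $\Delta_N$ overlap only when $M\in\{N/2,N,2N\}$, the fattened projector $\tilde\Delta_N:=\Delta_{N/2}+\Delta_N+\Delta_{2N}$ satisfies $\Delta_N\tilde\Delta_N=\Delta_N$. Setting $h:=\tilde\Delta_Nf$, one has $\Delta_Nf=\Delta_Nh$, while the triangle inequality bounds $\|h\|_{L^2_\delta}$ by the right-hand side of \eqref{LPtins}, so it suffices to prove the $h$-uniform estimate
\begin{equation*}
\|\Delta_Nh\|_{H^s_\delta}\le CN^s\|h\|_{L^2_\delta},\qquad s\in[0,2].
\end{equation*}

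Using the equivalence $\|g\|_{H^s_\delta}\simeq\|\w^\delta g\|_{H^s}$ coming from \eqref{eq:PullWeight}--\eqref{sobolev2}, I would split
\begin{equation*}
\w^\delta\Delta_Nh=\Delta_N(\w^\delta h)+[\w^\delta,\Delta_N]h.
\end{equation*}
The first piece is Fourier-supported in $\{|\xi|\sim N\}$, so the classical unweighted Bernstein inequality gives $\|\Delta_N(\w^\delta h)\|_{H^s}\le CN^s\|\w^\delta h\|_{L^2}=CN^s\|h\|_{L^2_\delta}$, which is of the required form.

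For the commutator term, since weighted Sobolev spaces interpolate as expected, it suffices to treat $s=0$ and $s=2$ and interpolate. The case $s=0$ is exactly \eqref{LPaley}. For $s=2$, I would differentiate the kernel $k_N(x,y)=N^2\bigl(\w^\delta(x)-\langle y\rangle^\delta\bigr)K(N(x-y))$ up to order two in $x$ via Leibniz, and estimate each of the resulting convolution kernels by Schur's test. In each Leibniz term, one uses either that $\delta<1$ makes the first and second derivatives of $\w^\delta$ uniformly bounded on $\R^2$, or the Lipschitz estimate $|\w^\delta(x)-\langle y\rangle^\delta|\le C|x-y|$ already employed in the proof of \eqref{LPaley}. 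Combined with the finite moments $\int|u|\,|\nabla^jK(u)|\,du<\infty$ (which hold since $K\in\mathcal S$), this yields $\|\partial^\alpha[\w^\delta,\Delta_N]h\|_{L^2}\le CN^{|\alpha|-1}\|h\|_{L^2}$ for $|\alpha|\le2$, and hence $\|[\w^\delta,\Delta_N]h\|_{H^2}\le CN\|h\|_{L^2_\delta}\le CN^2\|h\|_{L^2_\delta}$ (using $N\ge\frac12$, so that $N\le 2N^2$).

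The main obstacle is organizing the $s=2$ kernel analysis: expanding $\partial_i\partial_j k_N$ produces four Leibniz terms, and in each one must pair the correct factor (boundedness of $\nabla\w^\delta$ or $\nabla^2\w^\delta$, or else the Lipschitz cancellation) against the extra $N$-powers coming from differentiating $K(N(x-y))$ so that Schur's test applies with the desired scaling. The restriction $\delta\in(0,1)$ is essential here, as otherwise $\nabla^2\w^\delta$ would fail to be uniformly bounded and the whole scheme would break down.
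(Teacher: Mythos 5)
Your argument is correct, but it follows a genuinely different route from the paper's. The paper proves \eqref{LPtins} by a case analysis in $s$: for $s\in[0,1)$ it expands $\|\Delta_N f\|_{H^s_\delta}^2$ over a second dyadic sum in $M$, commutes $\Delta_M$ past $\w^\delta$ using \eqref{LPaley}, exploits that $\sum_M M^{2s-2}<\infty$ only for $s<1$, and absorbs the resulting error term $N^{-2s}\|\Delta_Nf\|_{H^s_\delta}^2$ into the left-hand side for $N$ large; for $s\in[1,2]$ it reduces to the previous case through $\|\Delta_Nf\|_{H^s_\delta}\le C(\|\Delta_Nf\|_{H^{s-1}_\delta}+\|\Delta_N\nabla f\|_{H^{s-1}_\delta})$ (see \eqref{equivs>1}) together with the frequency-localized bound $\|\Delta_M\nabla f\|_{L^2_\delta}\le CM\|\Delta_Mf\|_{L^2_\delta}$ of \eqref{deltatilde}, which is where the double commutator \eqref{LPaley2} enters. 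You instead fatten the projector, reduce to the single inequality $\|\Delta_Nh\|_{H^s_\delta}\le CN^s\|h\|_{L^2_\delta}$, split off $\Delta_N(\w^\delta h)$ (handled by unweighted Bernstein), and control $[\w^\delta,\Delta_N]$ by kernel/Schur estimates at the endpoints $s=0$ and $s=2$ followed by interpolation (which is available via \eqref{lem:BesovInterpolation} applied to the fixed function $[\w^\delta,\Delta_N]h$). The key ingredients are the same Schur-type commutator bounds, but your organization avoids both the case analysis and the absorption argument, at the price of computing second derivatives of the commutator kernel; the $N^{|\alpha|-1}$ scaling you claim does check out term by term. Two small remarks: for $N=\tfrac12$ the block $\Delta_{N/2}$ is not defined in the paper's convention, so the fattened projector should be read as $\Delta_{1/2}+\Delta_1$ there (a boundary case the paper also glosses over); and your closing sentence slightly misattributes the role of $\delta<1$ --- it is $\nabla\w^\delta$ and the Lipschitz bound $|\w^\delta-\langle y\rangle^\delta|\le C|x-y|$ that require $\delta\le1$, while $\nabla^2\w^\delta$ stays bounded up to $\delta\le2$.
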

\begin{proof}We split the proof in several cases.

\smallskip \noindent
{\bf Case 1:} $s\in [0,1)$.

\smallskip \noindent
By combining \eqref{eq:PullWeight} and \eqref{LPaley} we get:
\begin{align}
\label{stimrip}
\begin{split}
\|\Delta_N f\|_{H^s_\delta}^2 &\leq C\sum_{{M-{\rm dyadic}}} M^{2s} \|\Delta_M (\w^\delta \Delta_N f)\|_{L^2}^2 \\
&\leq C
\sum_{{M-{\rm dyadic}}} M^{2s} \|\w^\delta \Delta_M (\Delta_N f)\|_{L^2}^2 \\
&\quad + C \sum_{{M-{\rm dyadic}}} M^{2s-2} \|\Delta_N f\|_{L^2}^2 \\
&\leq C \sum_{M=\frac N2, N, 2N} M^{2s} \|\w^\delta \Delta_M f\|_{L^2}^2 + C \|\Delta_N f\|_{L^2}^2,
\end{split}
\end{align}
where we used that $\sum_{M-{\rm dyadic}} M^{2s-2} <\infty$ for $s\in [0,1)$.
Hence, we get
\begin{align}
\label{guidot}
\begin{split}
\|\Delta_N f\|_{H^s_\delta}^2
&\leq C N^{2s}\Big (\|\Delta_\frac N2 f\|_{L^2_\delta}^2 +
\|\Delta_N f\|_{L^2_\delta}^2+\|\Delta_{2N} f\|_{L^2_\delta}^2\Big ) \\
&\quad + C N^{-2s}\|\Delta_N f\|_{H^s_\delta}^2
\end{split}
\end{align}
From \eqref{guidot}, we can deduce \eqref{LPtins} provided that $N>\bar N$, with $\bar N$ choosen in such a way that 
the last term on the r.h.s. can be absorbed on the l.h.s. On the other hand, we have finitely dyadic numbers
$1\leq N\leq \bar N$ and hence the corresponding estimate \eqref{LPtins} for those values of $N$ holds, provided that we choose the multiplicative constant large enough on the r.h.s.

\smallskip \noindent
{\bf Case 2:} $s\in [1, 2)$.

\smallskip \noindent
We start by proving that for $s\in [1, 2]$, there exists $C>0$ such that for every $N$ dyadic:
\begin{equation}\label{equivs>1} 
\|\Delta_N f\|_{H^s_\delta}
 \leq C\Big ( \|\Delta_N f\|_{H^{s-1}_\delta}+ \|\Delta_N \nabla f\|_{H^{s-1}_\delta}\Big).\end{equation}
In fact, by \eqref{eq:PullWeight} we have
\begin{align}
\label{equivalences>1}
\begin{split}
\|\Delta_N f\|_{H^s_\delta} &\leq C \|\w^\delta \Delta_N f\|_{H^s} \\
&\leq C \Big(\|\w^\delta \Delta_N f\|_{L^2} + \|\nabla (\w^\delta \Delta_N f)\|_{H^{s-1}}\Big) \\ 
&\leq C\Big (\|\w^\delta \Delta_N f\|_{L^2} +\| \w^\delta (\Delta_N \nabla f)\|_{H^{s-1}}\\
&\quad +  \|[\nabla, \w^\delta] \Delta_N f\|_{H^{s-1}}\Big) \\ 
&\leq C\Big (\|\w^\delta \Delta_N f\|_{L^2} + \| \w^\delta (\Delta_N \nabla f)\|_{H^{s-1}}+ \|\Delta_N f\|_{H^{s-1}}\Big).
\end{split}
\end{align}
Here, we used the elementary commutator bound 
\begin{equation}\label{commutator2}\|[\nabla, \w^\delta] f\|_{H^\sigma}\leq C \|f\|_{H^\sigma}, \quad \sigma\in [0,1],\end{equation}
which follows from interpolating the $L^2 \to L^2$ bound and the $H^1 \to H^1$ bound of the commutator $[\nabla, \w^\delta]$.

Next, we show \eqref{LPtins} for $s\in [1,2)$. Notice that we have $s-1\in [0,1)$  and hence we can combine \eqref{equivs>1} with the estimate proved
in the first case in order to obtain
\begin{align}
\label{deltatilde2}
\begin{split}
\|\Delta_N f\|_{H^s_\delta} &\leq C N^{s-1}\Big (\|\Delta_\frac N2 f\|_{L^{2}_\delta}+ \|\Delta_N f\|_{L^{2}_\delta}
+\|\Delta_{2N} f\|_{L^{2}_\delta}\Big)
\\
&\quad + C N^{s-1}\Big (\|\Delta_\frac N2 \nabla f\|_{L^2_\delta}+ \|\Delta_N \nabla f\|_{L^2_\delta}
+\|\Delta_{2N} \nabla f\|_{L^{2}_\delta}\Big ).
\end{split}
\end{align}
Then, notice that if we denote by
$\tilde \Delta_M$ another Littlewood-Paley partition such that $\tilde \Delta_M\circ \Delta_M=\Delta_M$, then we get for a generic dyadic $M$
\begin{equation*}\|\Delta_M \nabla f\|_{L^2_\delta}
\leq C\Big( \|\nabla(\w^\delta \Delta_M f)\|_{L^2}+  \|[\w^\delta, \nabla] \Delta_M f\|_{L^2}\Big)
\end{equation*}
and by using \eqref{commutator2} we get
\begin{align*}
 \dots &\leq C\Big( \|\nabla  \tilde \Delta_M(\w^\delta \Delta_M f)\|_{L^2}
+\|\nabla( [\tilde \Delta_M,\w^\delta] \Delta_M f)\|_{L^2}+  \|\Delta_M f\|_{L^2}\Big) \\
&\leq C\Big (M\|\Delta_M f\|_{L^2_\delta}+\| [\tilde \Delta_M,\w^\delta] \Delta_M \nabla f)\|_{L^2}
+ \|[\nabla, [\tilde \Delta_M,\w^\delta]] \Delta_M f\|_{L^2}\Big).
\end{align*}
By using \eqref{LPaley} and \eqref{LPaley2}, we can summarize the estimates above as follows
\begin{equation}\label{deltatilde}
\|\Delta_M \nabla f\|_{L^2_\delta}\leq C M\|\Delta_M f\|_{L^2_\delta}.
\end{equation}
We get the desired conclusion by combining \eqref{deltatilde2} and \eqref{deltatilde}.

\smallskip \noindent
{\bf Case 3:} $s=2$.

\smallskip \noindent
We use \eqref{equivs>1} for $s=2$ and the fact that \eqref{LPtins} has been proved for $s=1$ to obtain
\begin{align*}
\|\Delta_N f\|_{H^2_\delta} &\leq C\Big ( \|\Delta_N f\|_{H^{1}_\delta}+ \|\Delta_N \nabla f\|_{H^{1}_\delta}\Big) \\
&\leq C N\Big (\|\Delta_\frac N2 f\|_{L^{2}_\delta}+ \|\Delta_N f\|_{L^{2}_\delta}
+\|\Delta_{2N} f\|_{L^{2}_\delta}\Big) \\
&\quad +C N\Big (\|\Delta_\frac N2 \nabla f\|_{L^2_\delta}+ \|\Delta_N \nabla f\|_{L^2_\delta}
+\|\Delta_{2N} \nabla f\|_{L^{2}_\delta}\Big ).
\end{align*}
We conclude by using \eqref{deltatilde}.
\end{proof}

We shall also use the following estimates.
\begin{lemma}
For every $s\in (0,1)$, $s_0 \in \R$, and $\delta>0$, there exists $C>0$ such that for every $N$ dyadic, we have
\begin{equation}\label{LPtinsnewnew}\|\Delta_N f\|_{H^{s}_\delta}\leq C N^{s_0}\Big (\|\tilde \Delta_\frac N2 f\|_{H^{s-s_0}_\delta}+ 
\|\tilde \Delta_N f\|_{H^{s-s_0}_\delta}+\|\tilde \Delta_{2N} f\|_{H^{s-s_0}_\delta}\Big ),
\end{equation}
\begin{equation}\label{LPtinsnew}\|\Delta_N f\|_{H^{1+s}_\delta}\leq C N^{s_0}\Big (\|\tilde \Delta_\frac N2 f\|_{H^{1+s-s_0}_\delta}+ 
\|\tilde \Delta_N f\|_{H^{1+s-s_0}_\delta}+\|\tilde \Delta_{2N} f\|_{H^{1+s-s_0}_\delta}\Big ),
\end{equation}
where $\tilde \Delta_N$ is another Littlewood-Paley decomposition.
\end{lemma}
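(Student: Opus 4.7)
The plan is to combine the frequency localization of $\Delta_N$ with a covering argument involving the second Littlewood-Paley partition $\tilde\Delta_M$, reducing matters to a weighted Bernstein-type inequality. Since both $\Delta_N$ and $\tilde\Delta_M$ are Fourier multipliers with symbols supported in dyadic annuli at scales $N$ and $M$ respectively, the composition $\Delta_N \tilde\Delta_M$ vanishes unless the annuli overlap, which forces $M \in \{N/2, N, 2N\}$. Applying the identity $\sum_M \tilde\Delta_M = \mathrm{Id}$ therefore gives
\[
\Delta_N f = \Delta_N \tilde\Delta_{N/2} f + \Delta_N \tilde\Delta_N f + \Delta_N \tilde\Delta_{2N} f,
\]
so by the triangle inequality it suffices to prove, for each $M\in\{N/2,N,2N\}$, the weighted Bernstein inequality
\[
\|\Delta_N \tilde\Delta_M f\|_{H^s_\delta} \le C\, N^{s_0}\, \|\tilde\Delta_M f\|_{H^{s-s_0}_\delta}.
\]

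Setting $g := \tilde\Delta_M f$, I would establish this by pulling the weight inside via \eqref{eq:PullWeight} and decomposing
\[
\w^\delta \Delta_N g = \Delta_N(\w^\delta g) + [\w^\delta, \Delta_N]\, g.
\]
The principal term $\Delta_N(\w^\delta g)$ is spectrally localized at scale $N$, so the classical (unweighted) Bernstein inequality gives
\[
\|\Delta_N(\w^\delta g)\|_{H^s} \le C\, N^{s_0}\, \|\Delta_N(\w^\delta g)\|_{H^{s-s_0}} \le C\, N^{s_0}\, \|\w^\delta g\|_{H^{s-s_0}},
\]
which is equivalent to $C N^{s_0} \|g\|_{H^{s-s_0}_\delta}$ by \eqref{eq:PullWeight} again.

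For the commutator term, I would exploit both \eqref{LPaley} and \eqref{LPaley2}: the first yields $L^2$ control of $[\w^\delta,\Delta_N]g$ with a gain of $N^{-1}$, while the second (combined with \eqref{LPaley} applied after commuting with $\nabla$) yields the analogous $H^1$ control. Interpolating between the $L^2$ and $H^1$ estimates gives
\[
\|[\w^\delta, \Delta_N] g\|_{H^s} \le C\, N^{-1}\, \|g\|_{H^s} \qquad s\in (0,1),
\]
and because $g=\tilde\Delta_M f$ is spectrally localized at scale $M\sim N$, Bernstein lets one replace $\|g\|_{H^s}$ by $N^{s_0}\|g\|_{H^{s-s_0}_\delta}$ up to a constant, so this commutator contribution is strictly smaller than the main term. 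The finitely many small dyadic $N$ are then handled by the same absorbing trick used in the proof of Lemma \ref{LPeqHs}.

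Finally, for \eqref{LPtinsnew} I would use the splitting established in the proof of Lemma \ref{LPeqHs}, in the spirit of \eqref{equivs>1}, namely
\[
\|\Delta_N f\|_{H^{1+s}_\delta} \le C\big(\|\Delta_N f\|_{H^s_\delta} + \|\Delta_N \nabla f\|_{H^s_\delta}\big),
\]
apply \eqref{LPtinsnewnew} to each of the two pieces, and then convert the norms of $\tilde\Delta_M \nabla f$ into norms of $\tilde\Delta_M f$ using \eqref{deltatilde} (or equivalently \eqref{equivs>1} again). The main obstacle I expect is the careful bookkeeping of the commutator in fractional weighted Sobolev spaces, since \eqref{LPaley} and \eqref{LPaley2} are formulated only in $L^p$; upgrading them to $H^s$ with $s\in(0,1)$ requires combining them with the spectral localization of $g$ and a short interpolation, which together ensure that the $N^{-1}$ gain is enough to beat the main contribution $N^{s_0}$ uniformly in the sign of $s_0$.
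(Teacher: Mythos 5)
Your argument is essentially sound and, once the weight is pulled inside, runs on the same engine as the paper's proof: the commutator bounds \eqref{LPaley}--\eqref{LPaley2}, a Bernstein estimate on the frequency-localized main term, and for \eqref{LPtinsnew} the reduction \eqref{equivs>1} followed by \eqref{deltatilde} to trade $\tilde\Delta_M\nabla f$ for $M\,\tilde\Delta_M f$. The one step that is not correct as written is the opening identity $\sum_M\tilde\Delta_M=\mathrm{Id}$. In this paper $\tilde\Delta_M$ is the \emph{fattened} family characterized by $\tilde\Delta_M\circ\Delta_M=\Delta_M$, so its symbol equals $1$ on the support of the symbol of $\Delta_M$; since consecutive supports overlap and the original symbols must sum to $1$ there, the symbols of $\tilde\Delta_M$ and $\tilde\Delta_{2M}$ are both equal to $1$ on a common annulus, and such a family cannot resolve the identity. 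The repair is immediate and is what the paper does: decompose with the \emph{original} partition, $\Delta_Nf=\sum_{M=N/2,N,2N}\Delta_N\Delta_Mf$, and only then insert $\tilde\Delta_M$ through $\Delta_Mf=\Delta_M\tilde\Delta_Mf$; your Bernstein-plus-commutator estimate then applies verbatim to $g=\tilde\Delta_Mf$ with the multiplier $\Delta_N\Delta_M$ in place of $\Delta_N$. (If you instead take $\tilde\Delta_M$ to be a genuine second resolution of the identity, the support overlap gives roughly five comparable blocks rather than three, which is harmless in substance but does not literally match the stated right-hand side.)

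Apart from this, your reduction to the single-block inequality $\|\Delta_N\tilde\Delta_Mf\|_{H^s_\delta}\le CN^{s_0}\|\tilde\Delta_Mf\|_{H^{s-s_0}_\delta}$ is a slightly cleaner packaging than the paper's, which instead recycles the earlier computation \eqref{stimrip} and shifts powers of $M$ by $M^{-2s_0}N^{2s_0}=O(1)$ before inserting $\tilde\Delta_M$. Note also that, thanks to the extra factor $N^{-1}$ produced by the commutator, you do not actually need the absorption trick you invoke at the end: the commutator contribution is already bounded directly by $CN^{s_0}\|\tilde\Delta_Mf\|_{H^{s-s_0}_\delta}$ uniformly in the sign of $s_0$, since $g=\tilde\Delta_Mf$ is localized at frequency $M\sim N\ge\tfrac12$.
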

\begin{proof}
We first prove \eqref{LPtinsnewnew} and notice that by \eqref{stimrip}
\begin{equation*}
\|\Delta_N f\|_{H^s_\delta}^2
\leq C \sum_{M=\frac N2, N, 2N} M^{2s} \|\w^\delta \Delta_M f\|_{L^2}^2 + C \|\Delta_N f\|_{L^2}^2
\end{equation*}
that can be continued as follows
\begin{align*}
\dots &\leq C N^{2s_0}\sum_{M=\frac N2, N, 2N} M^{2s-2s_0} \|\w^\delta \Delta_M f\|_{L^2}^2 + C \|\Delta_N f\|_{L^2}^2 \\ 
&\leq C N^{2s_0}\sum_{M=\frac N2, N, 2N} M^{2s-2s_0} \|\Delta_M(\w^\delta \tilde \Delta_M f)\|_{L^2}^2 \\ 
&\quad +C N^{2s_0}\sum_{M=\frac N2, N, 2N} M^{2s-2s_0-2} \|\tilde \Delta_M f\|_{L^2}^2 
+ C \|\Delta_N f\|_{L^2}^2,
\end{align*}
where we used the estimate \eqref{LPaley} and as usual $\tilde \Delta_M$ is another Littlewood-Paley partition such that
$\tilde \Delta_M\circ \Delta_M=\Delta_M$. The proof follows since exactly as in the proof of \eqref{LPtins},
the term $\|\Delta_N f\|_{L^2}$ on the r.h.s. can be absorbed by $\|\Delta_N f\|_{H^s_\delta}^2$.

Next, we focus on \eqref{LPtinsnew} and we recall that by  \eqref{equivs>1}
\begin{equation}\label{montec}
\|\Delta_N f\|_{H^{1+s}_\delta}
 \leq C\Big ( \|\Delta_N f\|_{H^{s}_\delta}+ \|\Delta_N \nabla f\|_{H^{s}_\delta}\Big).\end{equation}
Arguing as above, we get
\begin{equation*}
\|\Delta_N \psi\|_{H^s_\delta}^2
\leq C N^{2s_0}\sum_{M=\frac N2, N, 2N} \|\tilde \Delta_M\psi\|_{H^{s-s_0}_\delta}^2 
+ C \|\Delta_N \psi\|_{L^2}^2
\end{equation*}
and by choosing $\psi=f$ and $\psi = \nabla f$, we get from \eqref{montec}
\begin{align*}
\|\Delta_N f\|_{H^{1+s}_\delta}^2 &\leq C N^{2s_0}\sum_{M=\frac N2, N, 2N} \|\tilde \Delta_M f\|_{H^{s-s_0}_\delta}^2 
+ C \|\Delta_N f\|_{L^2}^2 \\
&\quad +C N^{2s_0}\sum_{M=\frac N2, N, 2N} \|\tilde \Delta_M\nabla f\|_{H^{s-s_0}_\delta}^2 
+ C \|\Delta_N \nabla f\|_{L^2}^2.
\end{align*}
As in the proof of \eqref{LPtins}, we can absorb $\|\Delta_N f\|_{L^2}^2$ and $\|\Delta_N \nabla f\|_{L^2}^2$ on the l.h.s.
and hence we obtain
\begin{align*}
\|\Delta_N f\|_{H^{1+s}_\delta}^2 
&\leq C N^{2s_0}\sum_{M=\frac N2, N, 2N} \|\tilde \Delta_M f\|_{H^{s-s_0}_\delta}^2 \\
&\quad +C N^{2s_0}\sum_{M=\frac N2, N, 2N} \|\tilde \Delta_M\nabla f\|_{H^{s-s_0}_\delta}^2 \\
&\leq C N^{2s_0}\sum_{M=\frac N2, N, 2N} \|\tilde \Delta_M f\|_{H^{s-s_0}_\delta}^2 
+C N^{2s} \sum_{M=\frac N2, N, 2N} \|\tilde \Delta_M \nabla f\|_{L^2_\delta}^2 \\
&\leq C N^{2s_0}\sum_{M=\frac N2, N, 2N} \|\tilde \Delta_M f\|_{H^{s-s_0}_\delta}^2 \\
&\quad +C N^{2s_0} \sum_{M=\frac N2, N, 2N} M^{2+2s-2s_0}\|\tilde \Delta_M f\|_{L^2_\delta}^2, 
\end{align*}
where we used \eqref{deltatilde}. The conclusion follows.
\end{proof}

We close this subsection with the following result.
\begin{lemma} For every $\beta,\gamma\in \R$, $\delta\geq 0$, and $\varphi\in C^\infty_c(\R^2)$, there exists $C>0$ such that
\begin{equation}
\label{eq:SmoothMollificationNoise}
\|\varphi\ast f \|_{\mathcal{C}^{\beta}_{-\delta}} \leq C\|f\|_{\mathcal{C}^{\gamma}_{-\delta}}.
\end{equation}
\end{lemma}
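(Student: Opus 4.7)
The plan is to pass through the Littlewood-Paley characterization
$\|g\|_{\mathcal{C}^\alpha_\mu} = \sup_N N^\alpha \|\Delta_N g\|_{L^\infty_\mu}$
of the weighted H\"older-Zygmund spaces and exploit the fact that $\varphi \in C_c^\infty(\R^2) \subset \Sw(\R^2)$, so that its Littlewood-Paley blocks $\Delta_N \varphi$ decay rapidly both in the dyadic parameter $N$ and in the spatial variable. The statement then reduces to showing that for every $K \geq 0$ there exists $C_K > 0$ such that
$$\|\Delta_N(\varphi \ast f)\|_{L^\infty_{-\delta}} \leq C_K N^{-K-\gamma} \|f\|_{\mathcal{C}^\gamma_{-\delta}}$$
uniformly in dyadic $N$; taking any $K$ with $K \geq \beta - \gamma$ yields the desired estimate after applying $\sup_N N^\beta (\cdot)$.

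The first step is to establish that for any $K, L \geq 0$,
$$|\Delta_N \varphi(x)| \leq C_{K, L}\, N^{-K}\, \w^{-L}, \qquad x \in \R^2, \quad N \geq 1 \text{ dyadic},$$
with the $N = 1/2$ block enjoying the same spatial decay but no $N^{-K}$ gain (it is a single harmless term). Spatial decay comes from writing $\Delta_N \varphi = K_N \ast \varphi$ and combining the Schwartz decay of the Littlewood-Paley kernel $K_N$ with the compactness of $\supp \varphi$, which forces $K_N$ to be evaluated far from the origin when $|x|$ is large. Frequency decay follows from $\widehat{\Delta_N \varphi}(\xi) = \psi(\xi/N)\hat\varphi(\xi)$ being supported in $\{|\xi| \sim N\}$ together with the Schwartz decay of $\hat\varphi$ on that annulus, which yields the $N^{-K}$ factor for any $K$ via an $L^1$-Fourier estimate.

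Next, since convolution commutes with Littlewood-Paley projections and disjoint Fourier supports annihilate,
$$\Delta_N(\varphi \ast f) = (\Delta_N \varphi) \ast f = \sum_{M \sim N} (\Delta_N \varphi) \ast \Delta_M f,$$
where the sum runs over the at most three dyadic $M$ whose frequency annulus overlaps that of $N$. The weighted Young-type inequality
$$\|g \ast h\|_{L^\infty_{-\delta}} \leq 2^{\delta/2} \|\w^{\delta} g\|_{L^1} \|h\|_{L^\infty_{-\delta}},$$
which itself follows from the Peetre-type bound $\w^{-\delta} \leq 2^{\delta/2} \langle x - y \rangle^{\delta} \langle y \rangle^{-\delta}$, combined with the decay bound of step one (choosing $L = \delta + 3$ so that $\|\w^\delta \Delta_N \varphi\|_{L^1} \leq C_{K,\delta} N^{-K}$) and the elementary inequality $\|\Delta_M f\|_{L^\infty_{-\delta}} \leq M^{-\gamma} \|f\|_{\mathcal{C}^{\gamma}_{-\delta}}$ with $M \sim N$, produces the required estimate.

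The main technical obstacle is step one: obtaining the joint $(N, x)$ rapid decay of $\Delta_N \varphi$ uniformly over all dyadic scales. Once this is in hand, the remainder is a routine combination of disjoint frequency support considerations and the weighted Young inequality, with the free parameter $K$ absorbing the gap $\beta - \gamma$.
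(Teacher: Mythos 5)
Your argument is correct and is essentially the paper's proof: both reduce matters to writing $\Delta_N(\varphi\ast f)$ as a Littlewood--Paley block of $\varphi$ convolved against the frequency-comparable blocks of $f$, apply the weighted Young inequality coming from $\langle x\rangle^{-\delta}\leq C\langle x-y\rangle^{\delta}\langle y\rangle^{-\delta}$, and use that the blocks of the Schwartz function $\varphi$ decay faster than any power of $N$ in $L^1_\delta$ to absorb the gap $\beta-\gamma$. The only cosmetic difference is that the paper realizes this via a second partition, writing $\Delta_N(\varphi\ast f)=\tilde\Delta_N\varphi\ast\Delta_N f$ and invoking $\varphi\in\mathcal{C}^{\beta-\gamma}_{1+2\delta}$, whereas you keep $\Delta_N\varphi$ and sum over the three overlapping blocks $\Delta_M f$ with $M\sim N$.
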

\begin{proof}
In \cite[Lemma 2.2]{BCD} this estimate is proved for $\delta=0$. For $\delta>0$ we proceed as follows.
We consider two Littlewood-Paley partitions $\Delta_N$ and $\tilde \Delta_N$ such that $\Delta_N=\tilde \Delta_N\circ \Delta_N$ and hence
\begin{align*}
N^\beta \|\Delta_N (\varphi \ast f) \|_{L^\infty_{-\delta}} &= N^\beta \|\tilde \Delta_N \varphi \ast \Delta_N f\|_{L^\infty_{-\delta}} \\
&\leq N^\beta \|\tilde \Delta_N \varphi\|_{L^1_\delta} \|\Delta_N f\|_{L^\infty_{-\delta}}\\ 
&\leq N^{\beta-\gamma} \|\tilde \Delta_N \varphi\|_{L^\infty_{1 + 2 \delta}} N^\gamma \|\Delta_N f\|_{L^\infty_{-\delta}},
\end{align*}
where we used $\w^{-1}\leq 2\langle y\rangle\langle x-y\rangle^{-1}$ and the H\"older inequality.
We conclude since 
$\varphi\in \mathcal{C}_{1 + 2 \delta}^{\beta-\gamma}$.
\end{proof}

\subsection{Some estimates on the approximation of the identity $\rho_\varepsilon$}

We shall need the following estimate proved in \cite{bourdaud1} and in a different case, but with a similar proof, in \cite[Lemma 8]{bourdaud2}. 
\begin{lemma}
For every $\delta>0$, there exists $C>0$ such that:
\begin{equation}\label{bentin}\|\rho_\varepsilon\|_{{\mathcal B}^0_{1,2,\delta}}
\le C \sqrt{ |\ln \eps|}, \quad \forall \varepsilon\in \Big (0, \frac 12\Big).\end{equation}
\end{lemma}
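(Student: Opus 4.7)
The plan is to reduce the weighted estimate to an unweighted one via \eqref{eq:PullWeight}, then exploit the scaling structure of $\rho_\varepsilon$ together with the smoothness of the Littlewood-Paley kernel.

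First I would apply \eqref{eq:PullWeight} to obtain
\[
\|\rho_\varepsilon\|_{\mathcal{B}^0_{1,2,\delta}} \le C\,\|\w^{\delta}\rho_\varepsilon\|_{\mathcal{B}^0_{1,2}}.
\]
Since $\supp \rho_\varepsilon\subseteq B(0,\varepsilon)\subseteq B(0,1/2)$ for $\varepsilon\in(0,1/2)$, the function $\tilde\rho_\varepsilon:=\w^{\delta}\rho_\varepsilon$ admits the rescaling $\tilde\rho_\varepsilon(x)=\varepsilon^{-2}\tilde\rho(x/\varepsilon)$ with $\tilde\rho(z):=\rho(z)\langle \varepsilon z\rangle^{\delta}$. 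Because $\langle \varepsilon z\rangle^{\delta}$ has all its derivatives in $z$ bounded uniformly in $\varepsilon\in(0,1/2)$ on $B(0,1)$, the function $\tilde\rho$ belongs to $C_c^\infty(B(0,1))$ with $\|\tilde\rho\|_{\mathcal{C}^k}\le C_k$ independently of $\varepsilon$. So matters reduce to proving that
\[
\sum_{N\text{-dyadic}} \|\Delta_N \tilde\rho_\varepsilon\|_{L^1}^{2}\le C\,|\ln\varepsilon|.
\]

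The second step is to compute $\|\Delta_N\tilde\rho_\varepsilon\|_{L^1}$ by scaling. Changing variables in the convolution against the Littlewood-Paley kernel, one obtains the identity
\[
\|\Delta_N \tilde\rho_\varepsilon\|_{L^1(\R^2)} = \|\Delta^{(N\varepsilon)}\tilde\rho\|_{L^1(\R^2)},
\]
where $\Delta^{(M)}$ denotes the Fourier multiplier with symbol $\hat K(\cdot/M)$ (which coincides with $\Delta_M$ when $M$ is dyadic $\ge 1$, and reduces to the low-frequency projector $\Delta_{1/2}$-type block when $M$ is small). For the low regime $N\le \varepsilon^{-1}$, I would just use Young's inequality $\|\Delta_N \tilde\rho_\varepsilon\|_{L^1}\le \|K\|_{L^1}\|\tilde\rho_\varepsilon\|_{L^1}\le C$, uniformly in $\varepsilon$ and $N$. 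For the high regime $N>\varepsilon^{-1}$, I would use $\int K=0$ together with the Schwartz decay of $\tilde\rho$ to bound, for any $k\ge 1$,
\[
\|\Delta^{(N\varepsilon)}\tilde\rho\|_{L^1} \le C_k\,(N\varepsilon)^{-k},
\]
which follows by integrating $\int K(y)[\tilde\rho(x-y/M)-P_k(x,y/M)]dy$ (with $P_k$ a Taylor polynomial absorbed by $\int y^\alpha K=0$ for $|\alpha|<k$) and using $\tilde\rho\in\mathcal{C}^k$ uniformly in $\varepsilon$.

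The final step is the summation: splitting the dyadic sum at $N_\varepsilon\sim\varepsilon^{-1}$,
\[
\sum_{N}\|\Delta_N\tilde\rho_\varepsilon\|_{L^1}^{2}
\le \sum_{N\le N_\varepsilon}C^{2} + \sum_{N>N_\varepsilon} C_k^{2}(N\varepsilon)^{-2k}
\le C\,\#\{N\le N_\varepsilon\} + C \le C\,|\ln\varepsilon|,
\]
which (after taking square roots and combining with the reduction step) yields \eqref{bentin}. The only mildly delicate point is the Schwartz-type high-frequency bound on $\|\Delta^{(N\varepsilon)}\tilde\rho\|_{L^1}$ with constants independent of $\varepsilon$; everything else is bookkeeping on dyadic sums. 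The logarithmic factor appears because exactly $O(|\ln\varepsilon|)$ dyadic scales lie below the natural frequency cutoff $\varepsilon^{-1}$ of $\rho_\varepsilon$.
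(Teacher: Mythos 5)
Your proof is correct, and while its skeleton coincides with the paper's (pull the weight in via \eqref{eq:PullWeight}, split the dyadic sum at $N\sim\varepsilon^{-1}$, get $O(1)$ per block from Young's inequality on the $O(|\ln\varepsilon|)$ low-frequency blocks, and show the high-frequency tail is summable), the mechanism you use for the tail is genuinely different. The paper never Taylor-expands: it writes $\varepsilon^{2}\w^{\delta}\rho_{\varepsilon}$ as the sum of the rescaled Littlewood--Paley pieces of $\langle\varepsilon x\rangle^{\delta}\rho$, observes by Fourier-support bookkeeping that $\Delta_N$ interacts with only six of these pieces when $N>8N_0$, and thereby bounds $\sum_{N>8N_0}\|\Delta_N(\w^{\delta}\rho_{\varepsilon})\|_{L^1}^2$ by $\|\langle\varepsilon x\rangle^{\delta}\rho\|_{\mathcal{B}^0_{1,2}}^2$, which is uniformly bounded in $\varepsilon$; this double-localization identity is then recycled in the proof of \eqref{Yzeta}. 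You instead exploit the vanishing moments of $K$ (all moments vanish since $\hat K$ is supported in an annulus) against the uniform $C^k_c$ bounds on $\tilde\rho(z)=\langle\varepsilon z\rangle^{\delta}\rho(z)$ to get the rapid decay $\|\Delta^{(N\varepsilon)}\tilde\rho\|_{L^1}\le C_k(N\varepsilon)^{-k}$. The one point you flag as delicate does go through: either control the $L^1_w$-norm of the Taylor remainder by noting that for fixed $y$ the integrand is supported in a region of area $O(1+|y|/M)$ and then integrate against the Schwartz weight $|y|^k|K(y)|$, or, more cleanly, factor $\hat K(\xi/M)=M^{-k}\sum_{|\alpha|=k}m_\alpha(\xi/M)\,\xi^{\alpha}$ with $m_\alpha$ Schwartz (possible since $\hat K$ vanishes near the origin) to get $\|\Delta^{(M)}f\|_{L^1}\le C_kM^{-k}\|f\|_{W^{k,1}}$ directly. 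Your route costs a bit more smoothness of $\rho$ (any finite $k\ge 1$ suffices, which is harmless here since $\rho\in C^\infty_c$) but is more self-contained; the paper's route is sharper in that it only needs a uniform $\mathcal{B}^0_{1,2}$ bound on $\langle\varepsilon x\rangle^{\delta}\rho$. The minor parenthetical identification of $\Delta^{(M)}$ with a ``$\Delta_{1/2}$-type block'' for small $M$ is inaccurate but unused, since you handle the regime $N\varepsilon\le 1$ by Young's inequality alone.
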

\begin{proof}
We give the proof for the sake of completeness. 
First we notice that
$$\Delta_N (\w^\delta \rho_{\varepsilon}) (\varepsilon x)=N^2 \int_{\R^2} K(\varepsilon N(x-y)) \langle \varepsilon y\rangle^\delta\rho(y) dy$$
and hence it is easy to deduce
\begin{align}
\label{impdouble}
\begin{split}
\varepsilon^{-2}\|\Delta_N (\w^\delta \rho_{\varepsilon})\|_{L^1} 
&\leq C N^2\|K(\varepsilon Nx)\|_{L^1}\|\langle \varepsilon x\rangle^\delta\rho\|_{L^1} \\
&=C\varepsilon^{-2} \|K\|_{L^1} \|\langle \varepsilon x\rangle^\delta\rho\|_{L^1} \\
&\leq C\varepsilon^{-2} \|K\|_{L^1} \|\rho\|_{L^1_\delta}.
\end{split}
\end{align}
We fix $N_0$ as the unique dyadic such that $N_0\leq \frac 1\varepsilon<2N_0$
and from \eqref{impdouble} we get
\begin{equation}\label{travis}\sum_{\frac 12\leq N\leq 8N_0} \|\Delta_N (\w^\delta \rho_{\varepsilon})\|_{L^1}^2\leq C |\ln \varepsilon|.\end{equation}
Next, we focus on the case $N>8N_0$. First notice the identity
\begin{equation}\label{doubleloc}\varepsilon^2 \w^\delta \rho_{\varepsilon}=\sum_{{M-{\rm dyadic}}} \Delta_M (\langle \varepsilon x\rangle^\delta \rho) \Big(\frac x\varepsilon \Big),\end{equation}
along with the following inclusion for the support of the Fourier transfom
$$\supp {\mathcal F} \Big[\Delta_M (\langle \varepsilon x\rangle^\delta \rho)\Big(\frac x\varepsilon \Big)\Big]\subset \Big \{\frac M{2 \varepsilon}\leq |\eta|\leq
\frac{2M}{\varepsilon}\Big\}\subset 
\Big \{\frac {MN_0}2\leq |\eta|\leq 4MN_0\Big \}.$$
As a consequence, we get $$\Delta_N \Big[\Delta_M (\langle \varepsilon x\rangle^\delta \rho) \Big(\frac x\varepsilon \Big)\Big]=0, \quad \forall N \hbox{ s.t. } \frac N2 >4MN_0 \hbox{ or }  2N<\frac {MN_0} 2.$$
Hence, by \eqref{doubleloc} we get
\begin{equation}\label{doublelocnew}\varepsilon^2 \Delta_N (\w^\delta \rho_{\varepsilon})=\sum_{\frac{N}{8N_0}\leq M\leq \frac{4N}{N_0}} \Delta_N\Big[\Delta_M (\langle \varepsilon x\rangle^\delta \rho) \Big(\frac x\varepsilon \Big)\Big],
\quad \forall N\geq 8N_0,\end{equation}
which in turn implies (since there are exactly six dyadic numbers in the interval $[\frac{N}{8N_0}, \frac{4N}{N_0}]$ )
\begin{align*}
\|\Delta_N (\w^\delta \rho_{\varepsilon})\|_{L^1}^2 &\leq \Big(\sum_{\frac{N}{8N_0}\leq M\leq \frac{4N}{N_0}} \varepsilon^{-2}\Big \|\Delta_M (\langle \varepsilon x\rangle^\delta \rho) \Big(\frac x\varepsilon \Big)\Big\|_{L^1}\Big)^2\\ 
& \leq 6 \sum_{\frac{N}{8N_0}\leq M\leq \frac{4N}{N_0}}\|\Delta_M (\langle \varepsilon x\rangle^\delta \rho)\|_{L^1}^2, \quad \forall N\geq 8N_0
\end{align*}
and then
\begin{align}
\label{traviss}
\sum_{N>8 N_0} \|\Delta_N (\w^\delta\rho_{\varepsilon})\|_{L^1}^2 &\leq
36 \sum_{{M-{\rm dyadic}}} \|\Delta_M (\langle \varepsilon x\rangle^\delta \rho)\|_{L^1}^2 \leq C \| \langle \eps x \rangle^\delta \rho \|_{\mathcal{B}_{1, 2}^0}^2.
\end{align}
By combining \eqref{travis}, \eqref{traviss}, and the obvious bound $\sup_{\eps (0, \frac 12)} \| \langle \eps x \rangle^\delta \rho \|_{\mathcal{B}_{1, 2}^0} < \infty$, we conclude our estimate.
\end{proof}

The following estimate will also be useful.
\begin{lemma} For every $\beta\in (0,1)$, $\zeta>0$ with $\beta+\zeta\in (0,1)$, and $\delta\geq 0$, there exists $C>0$ such that
\begin{equation}\label{Yzeta}
 \|\rho_\eps\ast f\|_{\mathcal{C}^{\beta+1}_{-\delta}}\le C\eps^{-\beta-\zeta}\|f\|_{\mathcal{C}^{1-\zeta}_{-\delta}}, \quad \forall \varepsilon \in (0,1).
\end{equation}
\end{lemma}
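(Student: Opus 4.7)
\medskip\noindent
\textbf{Proof plan.} The plan is to work dyadically using the identity
$\|g\|_{\mathcal{C}^{\alpha}_{-\delta}} = \sup_N N^{\alpha}\|\Delta_N g\|_{L^\infty_{-\delta}}$
which follows from $\mathcal{C}^{\alpha}_{-\delta}=\mathcal{B}^{\alpha}_{\infty,\infty,-\delta}$. Picking a companion partition with $\tilde\Delta_N\circ\Delta_N=\Delta_N$ (as in the paper's conventions), the Fourier localization of $\rho_\eps\ast f$ at scale $N$ allows us to write
\begin{equation*}
\Delta_N(\rho_\eps\ast f)=(\tilde\Delta_N\rho_\eps)\ast (\Delta_N f),
\end{equation*}
so the task reduces to estimating each factor. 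For the second factor we just use the assumption on $f$:
$\|\Delta_N f\|_{L^\infty_{-\delta}}\le N^{-(1-\zeta)}\|f\|_{\mathcal{C}^{1-\zeta}_{-\delta}}$.

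\medskip\noindent
The key step is a good bound on $\|\tilde\Delta_N\rho_\eps\|_{L^1_\delta}$. A weighted Young inequality (obtained from Peetre's inequality $\w^{-\delta}\le C\langle x-y\rangle^{\delta}\langle y\rangle^{-\delta}$, exactly as used in \eqref{eq:SmoothMollificationNoise}) gives
$\|h\ast g\|_{L^\infty_{-\delta}}\le C\|h\|_{L^1_\delta}\|g\|_{L^\infty_{-\delta}}$ and
$\|h\ast g\|_{L^1_\delta}\le C\|h\|_{L^1_\delta}\|g\|_{L^1_\delta}$.
Since the kernel of $\tilde\Delta_N$ is $N^2\tilde K(N\cdot)$ with $\tilde K\in\mathcal{S}$, a scaling argument gives $\|\tilde\Delta_N\rho_\eps\|_{L^1_\delta}\le C\|\rho_\eps\|_{L^1_\delta}\le C$ uniformly in $N$ and $\eps\in(0,1)$, which settles the ``low-frequency'' regime $N\eps\le 1$. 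For $N\eps\ge 1$ we exploit the smoothness of $\rho_\eps$: since $\hat{\tilde K}$ vanishes near the origin, $\tilde K=\nabla\cdot\Phi$ with $\Phi\in\mathcal{S}$, so integration by parts against $\rho_\eps$ yields $\tilde\Delta_N\rho_\eps=\Phi_N\ast\nabla\rho_\eps$ with $\|\Phi_N\|_{L^1_\delta}\le CN^{-1}$, and a direct rescaling gives $\|\nabla\rho_\eps\|_{L^1_\delta}\le C\eps^{-1}$; thus $\|\tilde\Delta_N\rho_\eps\|_{L^1_\delta}\le C(N\eps)^{-1}$. Combining the two regimes and using $\beta+\zeta\in(0,1)$ we obtain the uniform bound
\begin{equation*}
\|\tilde\Delta_N\rho_\eps\|_{L^1_\delta}\le C\min\bigl(1,(N\eps)^{-1}\bigr)\le C(N\eps)^{-(\beta+\zeta)}.
\end{equation*}

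\medskip\noindent
Putting the pieces together,
\begin{equation*}
N^{\beta+1}\|\Delta_N(\rho_\eps\ast f)\|_{L^\infty_{-\delta}}
\le C\,N^{\beta+1}(N\eps)^{-(\beta+\zeta)}N^{-(1-\zeta)}\|f\|_{\mathcal{C}^{1-\zeta}_{-\delta}}
= C\eps^{-(\beta+\zeta)}\|f\|_{\mathcal{C}^{1-\zeta}_{-\delta}},
\end{equation*}
and taking the supremum over dyadic $N$ yields \eqref{Yzeta}. The only mildly delicate point is the weighted Young inequality needed to handle the $\w^{-\delta}$ weight together with Littlewood-Paley localization; once Peetre's inequality is used as above, everything else is routine rescaling.
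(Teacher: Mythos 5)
Your proof is correct, and while it follows the same overall skeleton as the paper's (reduce, via the companion partition and the weighted Young inequality coming from $\w^{-\delta}\le C\langle x-y\rangle^{\delta}\langle y\rangle^{-\delta}$, to the single kernel estimate $(\eps N)^{\beta+\zeta}\|\Delta_N\rho_\eps\|_{L^1_\delta}\le C$), the way you prove that kernel estimate is genuinely different. The paper first uses the commutator bound \eqref{LPaley} to pull the weight inside the Littlewood--Paley block, then invokes the rescaled double-localization identity \eqref{doublelocnew} to identify $\Delta_N(\w^\delta\rho_\eps)$ with a few blocks $\Delta_M(\langle\eps x\rangle^\delta\rho)$ at $M\sim N\eps$, and concludes from the uniform finiteness of $\|\langle\eps x\rangle^\delta\rho\|_{\mathcal{B}^{\beta+\zeta}_{1,\infty}}$. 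You instead exploit that the frequency-$N$ kernel (for $N\ge 1$) is a divergence, integrate by parts onto $\rho_\eps$, and use the exact scaling $\|\nabla\rho_\eps\|_{L^1_\delta}\lesssim\eps^{-1}$ to get $\|\tilde\Delta_N\rho_\eps\|_{L^1_\delta}\le C\min\bigl(1,(N\eps)^{-1}\bigr)$. This is more elementary (no double localization, no Besov norm of $\rho$ needed, only $\rho,\nabla\rho\in L^1_\delta$) and quantitatively stronger, giving decay $(N\eps)^{-1}$ rather than just $(N\eps)^{-(\beta+\zeta)}$; the hypothesis $\beta+\zeta\in(0,1)$ then enters only when you interpolate $\min(1,(N\eps)^{-1})\le(N\eps)^{-(\beta+\zeta)}$, exactly as it enters in the paper through the exponent of the Besov space. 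One small point worth making explicit in a write-up: the divergence trick is unavailable for the block $\Delta_{1/2}$ (whose symbol does not vanish at the origin), but that block always falls in the regime $N\eps<1$ where only the trivial bound is used, so nothing is lost.
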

\begin{proof}
Arguing as in the proof of \eqref{eq:SmoothMollificationNoise},
we get
\begin{equation*}
N^{1+\beta} \|\Delta_N (\rho_\eps \ast f) \|_{L^\infty_{-\delta}}
\leq N^{\beta+\zeta} \|\Delta_N \rho_\eps\|_{L^1_\delta} N^{1-\zeta} \|\tilde \Delta_N f\|_{L^\infty_{-\delta}}
\end{equation*}
and hence we conclude provided that we show
\begin{equation}\label{lasttintut*}(\varepsilon N)^{\beta+\zeta} \| \Delta_N \rho_\eps\|_{L^1_\delta}\leq C, \quad \forall \varepsilon \in (0, 1), 
\quad \forall {N\hbox{ } {\rm dyadic}}.\end{equation}
By using \eqref{LPaley}, and since we are assuming $\beta+\zeta\in (0,1)$, it is sufficient to prove
\begin{equation}\label{lasttintut} (\varepsilon N)^{\beta+\zeta} \| \Delta_N (\w^{\delta} \rho_\eps)\|_{L^1}\leq C, \quad \forall \varepsilon \in (0, 1),
\quad \forall {N\hbox{ } {\rm dyadic}}.\end{equation}
In order to do that we introduce 
the unique dyadic $N_0$ such that $N_0\leq \frac 1\varepsilon<2N_0$. Notice that in case $\frac{N}{N_0}\leq 8$ the estimate above is trivial.
On the other hand from \eqref{doublelocnew}, we get
\begin{align*}
\| \Delta_N (\langle x \rangle^\delta \rho_\eps) \|_{L^1} \leq \sum_{\frac{N}{8N_0}\leq M\leq \frac{4N}{N_0}} \| \Delta_N (\langle \eps x \rangle^\delta \rho) \|_{L^1}, \quad \forall N > 8 N_0,
\end{align*}
and hence
\begin{align*}
\Big( \frac{N}{N_0} \Big)^{\beta + \zeta} \| \Delta_N (\langle x \rangle^\delta \rho_\eps) \|_{L^1} \leq C \sup_{\eps \in (0, 1)} \| \langle \eps x \rangle^\delta \rho \|_{\mathcal{B}_{1, \infty}^{\beta + \zeta}}, \quad \forall N > 8 N_0.
\end{align*}
We conclude since $\sup_{\eps \in (0, 1)} \| \langle \eps x \rangle^\delta \rho \|_{\mathcal{B}_{1, \infty}^{\beta + \zeta}} < \infty$.
\end{proof}


We close the subsection with the following result.
\begin{lemma}
\label{l2} 
For every $\alpha\in\R$, $\eta \in (0,1)$, and $\delta \geq 0$, there exists $C>0$ such that
\begin{equation}\label{eq:l2}
\|\rho_\eps*f-f \|_{{\mathcal C}^\alpha_{-\delta}}\le C\eps^\eta \|f\|_{{\mathcal C}^{\alpha+\eta}_{-\delta}}, \quad \forall \varepsilon \in (0, 1). 
\end{equation}
\end{lemma}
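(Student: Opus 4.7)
Since $\Delta_N$ is a Fourier multiplier and hence commutes with convolution by $\rho_\eps$, we have $\Delta_N(\rho_\eps \ast f - f) = \rho_\eps \ast \Delta_N f - \Delta_N f$, and it suffices to show
$$N^\alpha \|\rho_\eps \ast \Delta_N f - \Delta_N f\|_{L^\infty_{-\delta}} \leq C \eps^\eta \|f\|_{\mathcal{C}^{\alpha+\eta}_{-\delta}}$$
uniformly in the dyadic index $N$, after which we take the supremum in $N$ to obtain \eqref{eq:l2}. As a preliminary reduction, Peetre's inequality $\w^{-\delta} \langle x-y\rangle^\delta \leq C \langle y\rangle^\delta$ combined with $\supp \rho_\eps \subset B(0,1)$ yields the uniform bound $\|\rho_\eps \ast g\|_{L^\infty_{-\delta}} \leq C \|g\|_{L^\infty_{-\delta}}$ for all $\eps \in (0,1)$. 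I will then split into the regimes $N \leq 1/\eps$ (low frequencies) and $N > 1/\eps$ (high frequencies).

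\textbf{Low frequencies $N \leq 1/\eps$.} The first-order Taylor expansion
$$\rho_\eps \ast \Delta_N f(x) - \Delta_N f(x) = \int_{\R^2} \rho_\eps(y) \int_0^1 y \cdot \nabla \Delta_N f(x - \theta y) \, d\theta \, dy,$$
together with Peetre (using $|y| \leq \eps \leq 1$), gives $\|\rho_\eps \ast \Delta_N f - \Delta_N f\|_{L^\infty_{-\delta}} \leq C \eps \|\nabla \Delta_N f\|_{L^\infty_{-\delta}}$. A weighted Bernstein inequality $\|\nabla \Delta_N f\|_{L^\infty_{-\delta}} \leq CN \|\Delta_N f\|_{L^\infty_{-\delta}}$, obtained by writing $\Delta_N f = \tilde \Delta_N \Delta_N f$ and bounding the weighted $L^1$ norm of the kernel $\nabla \tilde K_N(z) = N^3 (\nabla \tilde K)(Nz)$ through a rescaling argument (using $\langle w/N\rangle \leq 2\langle w\rangle$ for $N \geq \tfrac12$ and $\tilde K \in \mathcal{S}(\R^2)$), then produces
$$N^\alpha \|\rho_\eps \ast \Delta_N f - \Delta_N f\|_{L^\infty_{-\delta}} \leq C \eps N^{\alpha+1} \|\Delta_N f\|_{L^\infty_{-\delta}} = C (\eps N)^{1-\eta} \eps^\eta N^{\alpha+\eta} \|\Delta_N f\|_{L^\infty_{-\delta}} \leq C \eps^\eta \|f\|_{\mathcal{C}^{\alpha+\eta}_{-\delta}},$$
since $\eps N \leq 1$ and $1-\eta > 0$.

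\textbf{High frequencies $N > 1/\eps$.} The triangle inequality and the preliminary uniform bound yield
$$N^\alpha \|\rho_\eps \ast \Delta_N f - \Delta_N f\|_{L^\infty_{-\delta}} \leq C N^\alpha \|\Delta_N f\|_{L^\infty_{-\delta}} = C N^{-\eta}\, N^{\alpha+\eta} \|\Delta_N f\|_{L^\infty_{-\delta}} \leq C \eps^\eta \|f\|_{\mathcal{C}^{\alpha+\eta}_{-\delta}}.$$
Combining the two regimes and taking the supremum in $N$ establishes \eqref{eq:l2}.

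\textbf{Main obstacle.} The argument is essentially routine. The only mildly technical ingredient is the weighted Bernstein inequality for $\nabla \Delta_N$; it is a straightforward weight-dependent refinement of the classical one, secured by the kernel rescaling sketched above. Everything else is a standard frequency-splitting argument with cutoff at $N \sim 1/\eps$.
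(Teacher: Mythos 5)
Your proof is correct. The overall architecture coincides with the paper's: both split at the threshold $N\sim 1/\eps$ and, in the regime $\eps N\ge 1$, both use the crude triangle inequality together with the uniform weighted $L^\infty$ (resp.\ weighted $L^1$) bound on $\rho_\eps$ coming from Peetre's inequality $\w^{-1}\le 2\langle y\rangle\langle x-y\rangle^{-1}$. Where you genuinely diverge is in the low-frequency regime $\eps N<1$, which is the heart of the lemma. The paper works with the explicit difference kernel $\tilde K_{\eps,N}(x,y)$ of the operator $f\mapsto \w^{-\delta}\big(\Delta_N(\rho_\eps*f)-\Delta_N f\big)$, applies the mean value theorem to the Littlewood--Paley kernel $K$, and runs a Schur test in both variables (introducing the auxiliary majorant $G(w)=\sup_{B(w,1)}|\nabla K|$) to get the operator norm $O(\eps N)$. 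You instead Taylor-expand $\Delta_N f$ itself against the mollifier, which costs a factor $\eps\,\|\nabla\Delta_N f\|_{L^\infty_{-\delta}}$, and then recover the factor $N$ via a weighted Bernstein inequality $\|\nabla\Delta_N f\|_{L^\infty_{-\delta}}\le CN\|\Delta_N f\|_{L^\infty_{-\delta}}$, proved by the standard kernel-rescaling argument (your use of $\langle w/N\rangle\le 2\langle w\rangle$ for $N\ge \tfrac12$ is exactly what makes the constant uniform in $N$). Both mechanisms ultimately rest on one derivative's worth of smoothness and produce the same $O(\eps N)$ gain, but your route is more modular: it isolates a reusable weighted Bernstein estimate and avoids the two separate Schur integrals and the somewhat delicate bookkeeping with $G$. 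The paper's kernel approach, on the other hand, would adapt more directly if one wanted the same statement for $L^p$-based weighted Besov norms with $p<\infty$, where a pointwise Taylor bound in $L^\infty_{-\delta}$ is not the natural object. Your interpolation $\eps N^{1}=(\eps N)^{1-\eta}\eps^\eta N^\eta$ is also exactly where the hypothesis $\eta<1$ enters, matching the paper.
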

\begin{proof}
For $\eps N \geq 1$, we have the bound 
\begin{align}
\label{fristweightdiff}
\begin{split}
N^\alpha \|\Delta_N (\rho_\eps* f)-\Delta_N f\|_{L^\infty_{-\delta}}
&\leq N^\alpha \|\rho_\eps* \Delta_N f\|_{L^\infty_{-\delta}}+N^\alpha \|\Delta_N f\|_{L^\infty_{-\delta}} \\
&\leq N^\alpha \|\rho_\eps\|_{L^1_{\delta}} \|\Delta_N f\|_{L^\infty_{-\delta}}+N^\alpha \|\Delta_N f\|_{L^\infty_{-\delta}} \\
&\leq C N^{\alpha+\eta} \varepsilon^{\eta}\|\Delta_N f\|_{L^\infty_{-\delta}},
\end{split}
\end{align}
where we used $\w^{-1}\leq 2\langle y\rangle\langle x-y\rangle^{-1}$.
To deal with the case $\varepsilon N<1$, we notice that
$$\w^{-\delta} \Big(\Delta_N (\rho_\eps* f)-\Delta_N f\Big)=\int_{\R^2} \tilde K_{\varepsilon,N} (x,y) \langle y \rangle^{-\delta} f(y) dy$$
where 
\begin{align*}
\tilde K_{\varepsilon,N} (x,y) &= N^2\frac{\langle y \rangle^\delta}{\w^\delta}\Big(\int_{\R^2} K(Nx-Nz)\rho_\varepsilon(z-y)dz-K(Nx-Ny)\Big) \\
&= N^2  \frac{\langle y \rangle^\delta}{\w^\delta}\int_{\R^2}\Big (K(Nx-Nz)-K(Nx-Ny)\Big )\rho_\varepsilon(z-y)dz.\end{align*}
Next, fox fixed $x$, we compute
\begin{align}
\label{strole}
\begin{split}
\int_{\R^2} &|\tilde K_{\varepsilon,N} (x,y)|dy\\& \leq \frac{N^2}{\w^\delta}  \int_{\R^2}\int_{\R^2} |K(Nx-Nz)-K(Nx-Ny)|\rho_\varepsilon(z-y)
\langle y \rangle^\delta dydz \\
&=\frac{(N \varepsilon)^2}{\w^\delta} \int_{\R^2}\int_{\R^2} |K(Nx-N\varepsilon z)-K(Nx-N\varepsilon y)|\rho(z-y)\langle \varepsilon y \rangle^\delta dydz \\
&\leq \frac{(N\varepsilon)^3}{\w^\delta} \int_{\R^2}\int_{\R^2} \Big( \sup_{[Nx-N\varepsilon z,Nx-N\varepsilon y ]} |\nabla K|\Big) |z-y| \rho(z-y) \langle \varepsilon y \rangle^\delta dydz
\end{split}
\end{align}
and notice that for every $\bar x\in \R^2$ and $\lambda\in (0,1)$, we get
\begin{align*}
\int_{\R^2} &\int_{\R^2} \Big( \sup_{[\bar x- \lambda z, \bar x-\lambda y ]} |\nabla K|\Big) |z-y| \rho(z-y) \langle \varepsilon y \rangle^\delta dydz \\
&\leq C\int_{\R^2} \int_{|z-y|<1} \Big( \sup_{B((\bar x- \lambda z), \lambda)} |\nabla K|\Big) \langle \varepsilon z \rangle^\delta dzdy \\
&\leq C  \int_{\R^2} \Big( \sup_{B((\bar x- \lambda z), 1)} |\nabla K|\Big) \langle \varepsilon z \rangle^\delta dz.
\end{align*}
Here, we denoted by $[a, b]$ the segment between $a$ and $b$, $B(x,r)$ the ball in $\R^2$ centered in $x$ of radius $r$ and we used the inclusion
$[\bar x- \lambda z, \bar x-\lambda y ]\subset B((\bar x- \lambda z), 1)$ since $|z-y|<1$ and $\lambda\in (0,1)$.
In particular, by introducing the function 
\begin{equation}\label{Gw}G(w)=\sup_{B(w, 1)} |\nabla K|,\end{equation} we get 
from the estimate above
\begin{align}
\label{putle}
\begin{split}
\int_{\R^2} &\int_{\R^2} \Big( \sup_{[\bar x- \lambda z, \bar x-\lambda y ]} |\nabla K|\Big) |z-y| \rho(z-y)\langle \varepsilon z \rangle^\delta
dydz \\
&\leq \int_{\R^2} G(\bar x - \lambda z) \langle \varepsilon z \rangle^\delta dz \\
&=\lambda^{-2}
\int_{\R^2} G(u) \langle \varepsilon \lambda^{-1}(\bar x - u) \rangle^\delta du.
\end{split}
\end{align}
By combining \eqref{strole} with \eqref{putle}, where we choose $\lambda=\varepsilon N$ and $\bar x=Nx$, we conclude
\begin{equation}\label{schux}\sup_{x,N} \int_{\R^2} |\tilde K_{\varepsilon,N} (x,y)|dy=O(\varepsilon N),\end{equation}
where we used
$$\sup_{\bar x, N} \int_{\R^2} G(u) \langle N^{-1}(\bar x - u) \rangle^\delta du<\infty.$$
Similarly, one can show 
\begin{equation}\label{schuy}\sup_{y,N} \int_{\R^2} |\tilde K_{\varepsilon,N} (x,y)|dx=O(\varepsilon N).\end{equation}
In fact for $y$ fixed we get:
\begin{align*}
\int_{\R^2} &|\tilde K_{\varepsilon,N} (x,y)|dx \\
&=N^2 \langle y \rangle^\delta \int_{\R^2}\int_{\R^2} |K(Nx-N\varepsilon z-Ny)-K(Nx-N y)|\rho(z)\langle x \rangle^{-\delta} dxdz
\\\nonumber&
\leq C \varepsilon N^3 \langle y \rangle^\delta \int_{\R^2}\int_{\R^2} \Big( \sup_{[Nx-N\varepsilon z-Ny,Nx-N y ]} |\nabla K|\Big) |z| \rho(z) \langle x \rangle^{-\delta} dxdz
\\\nonumber &
\leq C\varepsilon N^3 \langle y \rangle^\delta \int_{\R^2}\int_{|z|<1} G(N(x-y))  \langle x \rangle^{-\delta} dxdz,
\end{align*}
where the function $G(w)$ is defined in \eqref{Gw}
and we used the inclusion $$[Nx-N\varepsilon z-Ny,Nx-N y ]\subset B((Nx-Ny), 1)$$
for $|z|<1$ and $\varepsilon N<1$.
By a change of variable, we conclude
$$\int_{\R^2} |\tilde K_{\varepsilon,N} (x,y)|dx\leq C\varepsilon N \langle y \rangle^\delta \int_{\R^2}G(x)  \langle \frac xN+y \rangle^{-\delta} dx
\leq C \varepsilon N \int_{\R^2} G(x) \w^\delta dx.$$
Summarizing by \eqref{schux} and \eqref{schuy} we can apply the Schuur Lemma and we get
$$\|\Delta_N (\rho_\eps* f)-\Delta_N f\|_{L^\infty_{-\delta}}\leq C\varepsilon N \|\Delta_N f\|_{L^\infty_{-\delta}},\quad \varepsilon N<1,$$
which in turn implies
$$\|\Delta_N (\rho_\eps* f)-\Delta_N f\|_{L^\infty_{-\delta}}\leq C\varepsilon^\eta N^\eta \|\Delta_N f\|_{L^\infty_{-\delta}},\quad \varepsilon N<1.$$
We conclude by combining this estimate with \eqref{fristweightdiff}.
\end{proof}

\subsection{Embeddings and product rules}
We first record the useful interpolation inequality in weighted Besov spaces (see \cite[Theorem 3.8]{SSV14})
\begin{lemma}
Let $p_0,q_0,p_1,q_1,p,q\in [1,\infty],\,\alpha_0,\alpha_1,\mu_0,\mu_1, \alpha, \mu\in \R$ be such that
\begin{align*}
\frac{1}{p}&=\frac{1-\varTheta}{p_0}+\frac{\varTheta}{p_1},\quad\frac{1}{q}=\frac{1-\varTheta}{q_0}+\frac{\varTheta}{q_1},\\
\alpha &= (1-\varTheta)\alpha_0+\varTheta \alpha_1, \quad  \mu=(1-\varTheta)\mu_0+\varTheta \mu_1
\end{align*} 
for a suitable $\varTheta\in [0,1]$. 
Then there exists $C>0$ such that
\begin{equation}\label{lem:BesovInterpolation}
\|f\|_{\mathcal{B}^{\alpha}_{p,q,\mu}}\leq C \|f\|^{1-\varTheta}_{\mathcal{B}^{\alpha_0}_{p_0,q_0, \mu_0}} \|f\|^\varTheta_{\mathcal{B}^{\alpha_1}_{p_1,q_1,\mu_1}}\,.
\end{equation} 
\end{lemma}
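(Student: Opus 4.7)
The statement is a straightforward interpolation inequality, and I would prove it by applying Hölder's inequality at two levels — first on each Littlewood–Paley block in the weighted $L^p$ sense, and then on the $\ell^q$ sum over the dyadic parameter $N$. With this approach the constant turns out to be $C=1$, and no tool beyond the definition \eqref{besovdef} is required.

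First, using that $\mu = (1-\varTheta)\mu_0 + \varTheta\mu_1$, I would factorize the weight pointwise as
\begin{equation*}
\langle x \rangle^\mu = \langle x\rangle^{(1-\varTheta)\mu_0} \langle x\rangle^{\varTheta\mu_1}
\end{equation*}
and write
\begin{equation*}
|\Delta_N f(x)|\, \langle x\rangle^\mu = \bigl(|\Delta_N f(x)|\, \langle x\rangle^{\mu_0}\bigr)^{1-\varTheta}\bigl(|\Delta_N f(x)|\, \langle x\rangle^{\mu_1}\bigr)^{\varTheta}.
\end{equation*}
Applying Hölder's inequality in $L^p(\R^2)$ with the exponent relation $\tfrac{1}{p} = \tfrac{1-\varTheta}{p_0} + \tfrac{\varTheta}{p_1}$ yields
\begin{equation*}
\|\Delta_N f\|_{L^p_\mu} \leq \|\Delta_N f\|_{L^{p_0}_{\mu_0}}^{1-\varTheta}\|\Delta_N f\|_{L^{p_1}_{\mu_1}}^{\varTheta}.
\end{equation*}

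Next, using $\alpha = (1-\varTheta)\alpha_0 + \varTheta\alpha_1$ to split $N^\alpha = N^{(1-\varTheta)\alpha_0} N^{\varTheta\alpha_1}$, multiplying through gives
\begin{equation*}
N^\alpha \|\Delta_N f\|_{L^p_\mu} \leq \bigl(N^{\alpha_0}\|\Delta_N f\|_{L^{p_0}_{\mu_0}}\bigr)^{1-\varTheta}\bigl(N^{\alpha_1}\|\Delta_N f\|_{L^{p_1}_{\mu_1}}\bigr)^{\varTheta}.
\end{equation*}
Taking the $\ell^q$ norm in the dyadic variable $N$ and applying Hölder once more, now on the sequence side with $\tfrac{1}{q} = \tfrac{1-\varTheta}{q_0} + \tfrac{\varTheta}{q_1}$, delivers the claimed bound directly from \eqref{besovdef}.

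I do not foresee any genuine obstacle: the argument is essentially bookkeeping. The only mild points to watch are the endpoints where some of the $p_i, q_i$ equal $\infty$, handled by the essential-supremum form of Hölder, and the degenerate cases $\varTheta\in\{0,1\}$, which are trivial. The whole proof rests on the twin facts that the weight $\langle x\rangle^\mu$ splits multiplicatively under the interpolation exponent and that both $L^p$ norms and $\ell^q$ sums obey Hölder's inequality along the same exponent-splitting rule; no compactness, duality, or real interpolation machinery is needed, unlike the more general statement in \cite{SSV14} which is formulated for broader classes of weights.
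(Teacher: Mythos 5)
Your proof is correct, and it is worth noting that the paper does not actually prove this lemma at all: it simply cites \cite[Theorem 3.8]{SSV14}, which is a general complex interpolation theorem for weighted Besov spaces with much broader classes of weights. Your argument is therefore genuinely different in character. By exploiting the fact that all six parameters are interpolated along the same line with the same $\varTheta$, and that the three norms are built from the \emph{same} Littlewood--Paley blocks $\Delta_N f$, you reduce the statement to log-convexity: the pointwise factorization $\langle x\rangle^{\mu}=\langle x\rangle^{(1-\varTheta)\mu_0}\langle x\rangle^{\varTheta\mu_1}$, H\"older in $x$ with $\tfrac1p=\tfrac{1-\varTheta}{p_0}+\tfrac{\varTheta}{p_1}$, the splitting $N^\alpha=N^{(1-\varTheta)\alpha_0}N^{\varTheta\alpha_1}$, and H\"older in $N$ with $\tfrac1q=\tfrac{1-\varTheta}{q_0}+\tfrac{\varTheta}{q_1}$. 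Each step is checked correctly (including the exponent bookkeeping $r_1=p_0/(1-\varTheta)$, $r_2=p_1/\varTheta$, and the essential-supremum endpoints), and the constant is indeed $C=1$. What the citation buys is generality (it identifies the interpolation space itself, for weights not of the simple power form); what your argument buys is self-containedness and a sharper constant, at the cost of only proving the multiplicative inequality --- which is all the paper ever uses. This is a legitimate and arguably preferable route for the statement as written.
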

Next, we state a Sobolev embedding for weighted Sobolev spaces. The proof follows from the corresponding unweighted version along with \eqref{eq:PullWeight}.
\begin{lemma}
Let $p\in [2,\infty)$, $\alpha\geq 1-\frac{2}{p}$, and $\mu,\nu \in \R$ such that $\nu \leq \mu$. Then, 
we have the continuous embedding
\begin{equation}\label{Sobolevweighted}
H^{\alpha}_{\mu} \subset L^p_{\nu}.
\end{equation}
Moreover for every $\alpha>1$ we have
\begin{equation}\label{Sobolevweightedinfty}
H^{\alpha}_{\mu} \subset L^\infty_{\nu}.
\end{equation}

\end{lemma}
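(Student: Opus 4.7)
The plan is to reduce the weighted embedding to its unweighted counterpart by pulling the weight inside, and then invoke the standard Sobolev embedding on $\R^2$. More precisely, I would proceed in three short steps.

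First, I would combine \eqref{eq:PullWeight} with the identification \eqref{sobolev2} in the case $(p,q)=(2,2)$, which gives
\begin{equation*}
\|f\|_{H^\alpha_\mu} = \|f\|_{\mathcal B^{\alpha}_{2,2,\mu}} \sim \|\langle x\rangle^\mu f\|_{\mathcal B^\alpha_{2,2}} = \|\langle x\rangle^\mu f\|_{H^\alpha}.
\end{equation*}
Hence the left-hand side of the embedding is equivalent (up to a multiplicative constant) to the unweighted $H^\alpha$-norm of $\langle x\rangle^\mu f$. Similarly, unwinding the definition of $L^p_\nu$ one has $\|f\|_{L^p_\nu} = \|\langle x\rangle^\nu f\|_{L^p}$, so the right-hand side norm can be brought to unweighted form as well.

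Second, since $\nu\le \mu$, the pointwise inequality $\langle x\rangle^\nu \le \langle x\rangle^\mu$ immediately yields
\begin{equation*}
\|\langle x\rangle^\nu f\|_{L^p} \le \|\langle x\rangle^\mu f\|_{L^p}.
\end{equation*}
It then suffices to control $\|\langle x\rangle^\mu f\|_{L^p}$ by $\|\langle x\rangle^\mu f\|_{H^\alpha}$, which is exactly the classical unweighted Sobolev embedding on $\R^2$: for $p\in[2,\infty)$ and $\alpha \ge 1-\tfrac{2}{p}$ one has $H^\alpha(\R^2)\hookrightarrow L^p(\R^2)$, while for $\alpha>1$ one has $H^\alpha(\R^2)\hookrightarrow L^\infty(\R^2)$. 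Applying this embedding to the function $\langle x\rangle^\mu f$ and chaining the above inequalities yields \eqref{Sobolevweighted} and \eqref{Sobolevweightedinfty}, respectively.

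There is no real obstacle here: the only thing to be careful about is that \eqref{eq:PullWeight} is stated for weighted Besov spaces, so one must use \eqref{sobolev2} to convert back to the weighted Sobolev formulation, and one must keep track of the monotonicity of $\langle x\rangle^{\nu}$ in $\nu$ to exploit the assumption $\nu\le \mu$. Beyond these bookkeeping points the argument is entirely routine, which is why the authors sketch it in a single sentence.
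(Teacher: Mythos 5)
Your proposal is correct and is exactly the argument the paper intends: the authors dispose of this lemma with the single remark that it ``follows from the corresponding unweighted version along with \eqref{eq:PullWeight}'', and your three steps (pull the weight in via \eqref{eq:PullWeight} and \eqref{sobolev2}, use $\langle x\rangle^\nu\le\langle x\rangle^\mu$ for $\nu\le\mu$, then apply the classical Sobolev embedding on $\R^2$ to $\langle x\rangle^\mu f$) fill in precisely those details. No gaps.
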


As a consequence, we can prove the following estimate.
\begin{lemma} For every
$q\in [2,\infty)$ and $\delta \in (0, 1]$, there exists $C>0$ such that
\begin{equation}\label{*****}\|f\|_{W^{1,q}_{\delta}}\leq C \|f\|_{H^2_{-\delta}}^{1-\frac 1q} \|f\|_{L^2_{(2q-1)\delta }}^{\frac 1q}.
\end{equation}
\end{lemma}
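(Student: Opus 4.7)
The plan is to reduce the estimate to an interpolation in the scale of weighted Sobolev (Besov) spaces, with the Sobolev embedding \eqref{Sobolevweighted} used to pass from the $L^2$-based scale to $L^q$. The endpoint weights $-\delta$ and $(2q-1)\delta$ appearing in the statement are exactly the ones that reproduce the intermediate weight $\delta$ under the convex combination with parameter $\Theta = 1/q$, which motivates every choice below.

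\textbf{Step 1 (Reduction to $H^{2-2/q}_\delta$).} I would first show the embedding
\[
\|f\|_{W^{1,q}_\delta} \leq C\,\|f\|_{H^{2-2/q}_\delta}.
\]
Pulling in the weight via \eqref{eq:PullWeight} (together with the elementary bound $|\nabla\w^\delta| \leq C\w^\delta$) gives the equivalence $\|f\|_{W^{1,q}_\delta} \simeq \|\w^\delta f\|_{W^{1,q}}$. Then the standard two-dimensional unweighted Sobolev embedding yields $H^{2-2/q}(\R^2) \hookrightarrow W^{1,q}(\R^2)$ for $q \in [2,\infty)$ (this is the borderline $1-2/q \geq 1-2/q$ for $\nabla f \in L^q$), so $\|\w^\delta f\|_{W^{1,q}} \leq C\,\|\w^\delta f\|_{H^{2-2/q}}$; a second application of \eqref{eq:PullWeight} identifies the right-hand side with $\|f\|_{H^{2-2/q}_\delta}$.

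\textbf{Step 2 (Interpolation).} Identifying $H^{s}_\mu = \mathcal{B}^{s}_{2,2,\mu}$ as in \eqref{sobolev2}, I would apply the Besov interpolation inequality \eqref{lem:BesovInterpolation} with
\[
(p_0,q_0,\alpha_0,\mu_0) = (2,2,2,-\delta), \qquad (p_1,q_1,\alpha_1,\mu_1) = (2,2,0,(2q-1)\delta), \qquad \varTheta = \tfrac1q.
\]
A direct computation gives $\alpha = (1-\tfrac1q)\cdot 2 + \tfrac1q\cdot 0 = 2 - 2/q$ and
\[
\mu = \Bigl(1 - \tfrac1q\Bigr)(-\delta) + \tfrac1q\,(2q-1)\delta = -\delta + \tfrac{\delta}{q} + 2\delta - \tfrac{\delta}{q} = \delta,
\]
while $1/p = 1/q_{\text{interp}} = 1/2$. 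Hence \eqref{lem:BesovInterpolation} produces
\[
\|f\|_{H^{2-2/q}_{\delta}} \leq C\,\|f\|_{H^2_{-\delta}}^{\,1 - 1/q}\,\|f\|_{L^2_{(2q-1)\delta}}^{\,1/q}.
\]
Chaining this with Step 1 completes the proof.

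\textbf{Main obstacles.} The argument is essentially routine once the indices are correctly matched, so the only genuine point to check is that Step 1 really holds uniformly in $\delta \in (0,1]$: the commutator between $\nabla$ and $\w^\delta$ used when pulling the weight in or out must remain bounded, but this is covered by \eqref{commutator2} (or, when $\delta = 1$, by the explicit estimate $|\nabla \w^\delta| \lesssim \w^\delta$). The tightness of the Sobolev embedding $H^{2-2/q} \hookrightarrow W^{1,q}$ at the endpoint case (no $L^\infty$ slack) is what forces the precise exponent $2 - 2/q$ and hence the precise weight $(2q-1)\delta$ on the $L^2$ side; any smaller weight would break the interpolation identity for $\mu$.
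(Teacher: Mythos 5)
Your proof is correct and follows essentially the same route as the paper: both arguments first reduce to the weighted Sobolev embedding $\|f\|_{W^{1,q}_\delta}\leq C\|f\|_{H^{2-2/q}_\delta}$ via \eqref{Sobolevweighted} and \eqref{commutator2}, and then interpolate between $H^2_{-\delta}$ and $L^2_{(2q-1)\delta}$ using \eqref{lem:BesovInterpolation}. The only (immaterial) difference is that the paper carries out the interpolation in two steps, passing through the intermediate space $H^1_{(q-1)\delta}$, whereas you do it in a single application with $\varTheta=1/q$; the compositions give identical exponents and weights.
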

\begin{proof}
We have the following chain of estimates
\begin{align*}
\|f\|_{W^{1,q}_{\delta}} &\leq C \|f\|_{H^{2-\frac 2q}_{\delta}} \\
&\leq C \|f\|_{H^2_{-\delta}}^{1-\frac 2q} 
\|f\|_{H^1_{(q-1)\delta }}^\frac 2q \\
&\leq C \|f\|_{H^2_{-\delta}}^{1-\frac 2q} 
\|f\|_{H^2_{-\delta }}^\frac 1q \|f\|_{L^2_{(2q-1)\delta }}^{\frac 1q}
\end{align*}
where the first inequality is a consequence of \eqref{commutator2} and \eqref{Sobolevweighted}. Moreover we have also used twice special cases
of \eqref{lem:BesovInterpolation}.
\end{proof}

We shall also need the following product estimate (see \cite{BCD} or \cite{PT16}).
\begin{lemma} Let $\alpha_1, \alpha_2\in \R$ with $\alpha_1 + \alpha_2 > 0$, $\alpha = \min (\alpha_1, \alpha_2, \alpha_1 + \alpha_2)$, $\mu_1, \mu_2 \in \R$, $\mu = \mu_1 + \mu_2$, $p_1, p_2 \in [1, \infty]$, and $\frac{1}{p} = \frac{1}{p_1} + \frac{1}{p_2}$. Then, for any $\kappa > 0$, we have
\begin{equation}\label{prod1}
\| f_1 \cdot f_2 \|_{\mathcal{B}^{\alpha - \kappa}_{p, p, \mu}} \leq C \| f_1 \|_{\mathcal{B}^{\alpha_1}_{p_1, p_1, \mu_1}} \| f_2 \|_{\mathcal{B}^{\alpha_2}_{p_2, p_2, \mu_2}}.
\end{equation}
Also, we have
\begin{equation}\label{prodim}\|f_1\cdot f_2\|_{{\mathcal C}^{\alpha}_{\mu}}\leq C\|f_1\|_{{\mathcal C}^{\alpha_1}_{\mu_1}}
\|f_2\|_{{\mathcal C}^{\alpha_2}_{\mu_2}}.
\end{equation}
\end{lemma}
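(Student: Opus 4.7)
The plan is to reduce both product estimates to their classical unweighted counterparts by means of the equivalence \eqref{eq:PullWeight}, which allows us to pull the weight inside the Besov norm. Since the unweighted product inequalities are standard consequences of Bony's paraproduct decomposition (proved in \cite{BCD}), the weighted versions then follow almost automatically from the fact that $\mu = \mu_1 + \mu_2$ factorizes the weight across the product.

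Concretely, the first step is to observe that
\[
\|f_1 f_2\|_{\mathcal{B}^{\alpha-\kappa}_{p,p,\mu}} \leq C\,\|\langle x\rangle^{\mu} f_1 f_2\|_{\mathcal{B}^{\alpha-\kappa}_{p,p}}
= C\,\|(\langle x\rangle^{\mu_1} f_1)(\langle x\rangle^{\mu_2} f_2)\|_{\mathcal{B}^{\alpha-\kappa}_{p,p}},
\]
where the first step uses \eqref{eq:PullWeight} and the second uses the pointwise identity $\langle x\rangle^{\mu} = \langle x\rangle^{\mu_1}\langle x\rangle^{\mu_2}$. This reduces the problem to an unweighted product estimate applied to $g_i := \langle x\rangle^{\mu_i} f_i$.

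Next I would invoke Bony's decomposition $g_1 g_2 = T_{g_1} g_2 + T_{g_2} g_1 + R(g_1,g_2)$, with $T$ the paraproduct and $R$ the resonant term. The paraproduct bounds give, with no loss, $\|T_{g_1} g_2\|_{\mathcal{B}^{\alpha_2 + \min(\alpha_1,0)}_{p,p}} \lesssim \|g_1\|_{\mathcal{B}^{\alpha_1}_{p_1,p_1}} \|g_2\|_{\mathcal{B}^{\alpha_2}_{p_2,p_2}}$ and symmetrically for $T_{g_2} g_1$, while the resonant piece, thanks to the hypothesis $\alpha_1 + \alpha_2 > 0$, is controlled with a target regularity index $\alpha_1 + \alpha_2$. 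Taking the minimum of the three resulting regularities yields exactly $\alpha = \min(\alpha_1,\alpha_2,\alpha_1+\alpha_2)$; the small loss $\kappa > 0$ is only needed to absorb the borderline case $\alpha_j = 0$ in the paraproduct estimate via a trivial embedding $\mathcal{B}^{\alpha}_{p,p} \hookrightarrow \mathcal{B}^{\alpha - \kappa}_{p,p}$ (and can be dropped when the indices stay in the open stability range). This is exactly the unweighted product inequality proved in \cite{BCD}. Hence
\[
\|(\langle x\rangle^{\mu_1} f_1)(\langle x\rangle^{\mu_2} f_2)\|_{\mathcal{B}^{\alpha-\kappa}_{p,p}}
\leq C\,\|\langle x\rangle^{\mu_1} f_1\|_{\mathcal{B}^{\alpha_1}_{p_1,p_1}}\,\|\langle x\rangle^{\mu_2} f_2\|_{\mathcal{B}^{\alpha_2}_{p_2,p_2}},
\]
and applying \eqref{eq:PullWeight} once more in the reverse direction gives \eqref{prod1}.

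The second inequality \eqref{prodim} is just the specialization $p_1 = p_2 = \infty$, upon recalling the identification $\mathcal{C}^\alpha_\mu = \mathcal{B}^\alpha_{\infty,\infty,\mu}$ from \eqref{holder}, so the same argument applies verbatim. The only step requiring any care is verifying that the unweighted paraproduct estimate from \cite{BCD} is stated (or easily adapted) in the $p_1 = p_2 = \infty$ endpoint, which is indeed standard. I do not anticipate genuine obstacles: the weighted-to-unweighted reduction is purely algebraic via \eqref{eq:PullWeight}, and the heavy lifting is already packaged in the quoted references \cite{BCD, PT16}.
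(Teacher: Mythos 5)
Your proposal is correct and follows exactly the route the paper intends: the paper states this lemma with only a citation to \cite{BCD, PT16}, having already remarked after \eqref{eq:PullWeight} that this equivalence is the mechanism for translating unweighted results to their weighted analogues, and your reduction $\w^{\mu}f_1f_2=(\w^{\mu_1}f_1)(\w^{\mu_2}f_2)$ followed by the standard Bony paraproduct estimate is precisely that translation. The only minor caveat, which you already flag, is the borderline case $\alpha_j=0$ where the paraproduct incurs a logarithmic loss absorbed by the $\kappa$.
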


The following duality estimate will also be useful (see \cite[Theorem 2.11.2]{Tri83}).
\begin{lemma}
Let $\alpha, \mu \in \R$ and $p, q \in [1, \infty)$. Then, we have
\begin{equation}\label{dual}
\Big| \int_{\R^2} f_1 \cdot f_2 dx \Big| \leq C \| f_1 \|_{\mathcal{B}^\alpha_{p, q, \mu}} \| f_2 \|_{\mathcal{B}^{-\alpha}_{p', q', - \mu}},
\end{equation}
where $\frac{1}{p} + \frac{1}{p'} = 1$ and $\frac{1}{q} + \frac{1}{q'} = 1$.
\end{lemma}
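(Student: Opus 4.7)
The plan is to reduce the weighted duality estimate to its unweighted counterpart via the weight-pulling equivalence \eqref{eq:PullWeight}, and then to establish the unweighted case using a Littlewood-Paley decomposition and the almost-orthogonality of dyadic frequency blocks. This is the standard Triebel-style argument, rewritten in the weighted setting.

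For the reduction, first I write
\begin{equation*}
\int_{\R^2} f_1 f_2\, dx = \int_{\R^2}(f_1 \w^{\mu})(f_2 \w^{-\mu})\, dx,
\end{equation*}
so that it suffices to prove the unweighted bound $|\int g_1 g_2\, dx| \leq C \|g_1\|_{\mathcal{B}^{\alpha}_{p,q}} \|g_2\|_{\mathcal{B}^{-\alpha}_{p',q'}}$. Applying it with $g_1 = f_1 \w^{\mu}$ and $g_2 = f_2 \w^{-\mu}$ and invoking \eqref{eq:PullWeight} once for each factor (once with weight $\mu$ and once with weight $-\mu$, which is valid for arbitrary real $\mu$ and arbitrary real regularity including negative) immediately yields the desired weighted bound.

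For the unweighted case, I decompose $f_i = \sum_{N} \Delta_N f_i$. The Fourier support of $\Delta_N f_1$ is contained in an annulus of size $\sim N$ for $N\geq 1$ (and in a ball of radius one when $N = 1/2$), and similarly for $\Delta_M f_2$. Hence by Plancherel the pairing $\int (\Delta_N f_1)(\Delta_M f_2)\, dx$ vanishes unless the two Fourier supports intersect, which forces $M$ and $N$ to differ by at most a fixed number of dyadic steps. Grouping the handful of dyadic neighbours into a fattened block $\tilde \Delta_N$ and using H\"older in $x$ with $1/p+1/p'=1$, I obtain
\begin{align*}
\Big| \int_{\R^2} f_1 f_2\, dx \Big|
&\leq \sum_{N}\sum_{M \sim N} \big| (\Delta_N f_1, \Delta_M f_2) \big| \\
&\leq C\sum_N \|\Delta_N f_1\|_{L^p}\,\|\tilde \Delta_N f_2\|_{L^{p'}}.
\end{align*}
Multiplying the summand by $1=N^{\alpha}\cdot N^{-\alpha}$ and applying H\"older in the dyadic sum with exponents $q,q'$ (noting that $1/q+1/q'=1$) bounds this by
\begin{equation*}
C\Big(\sum_N N^{\alpha q}\|\Delta_N f_1\|_{L^p}^{q}\Big)^{1/q}\Big(\sum_N N^{-\alpha q'}\|\tilde \Delta_N f_2\|_{L^{p'}}^{q'}\Big)^{1/q'},
\end{equation*}
which is the required $C\|f_1\|_{\mathcal{B}^{\alpha}_{p,q}}\|f_2\|_{\mathcal{B}^{-\alpha}_{p',q'}}$ up to absorbing the fattening into a constant.

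The main subtlety is that the pairing $\int f_1 f_2\, dx$ need not make classical sense when one of the Besov regularities is negative, since the corresponding object is then only a distribution. This is handled by density: Schwartz functions are dense in $\mathcal{B}^{\alpha}_{p,q,\mu}$ whenever $p,q \in [1,\infty)$, so one first establishes the inequality for Schwartz $f_1,f_2$, and then extends the pairing continuously to the general case, the very inequality being proved justifying the extension. No analytic ingredient beyond the almost-orthogonality of the Littlewood-Paley decomposition and \eqref{eq:PullWeight} is needed.
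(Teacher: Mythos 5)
Your proof is correct, but note that the paper does not actually prove this lemma: it is stated with a bare citation to Triebel's duality theorem (\cite[Theorem 2.11.2]{Tri83}), together with the weight-pulling equivalence \eqref{eq:PullWeight} implicitly handling the weighted extension. What you have written is a self-contained reconstruction of the standard argument underlying that reference, and all the steps check out: the reduction $\int f_1 f_2 = \int (f_1\w^{\mu})(f_2\w^{-\mu})$ combined with \eqref{eq:PullWeight} (valid for all real $\alpha$ and $\mu$, as the paper's citation of \cite[Theorem 6.5]{TriebelIII} confirms) correctly reduces to the unweighted case; the almost-orthogonality of the dyadic blocks (the annuli being symmetric under $\xi\mapsto-\xi$, so the Parseval pairing vanishes unless $M\sim N$), H\"older in $x$, and H\"older in the dyadic index with exponents $q,q'$ give exactly the claimed bound, and the cases $p'=\infty$ or $q'=\infty$ arising from $p=1$ or $q=1$ cause no trouble. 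Your closing remark on density of Schwartz functions for $p,q<\infty$ is the right way to give meaning to the pairing when one regularity index is negative. The only difference from the paper is that you prove the inequality directly rather than quoting it; the cited theorem in Triebel is in fact a stronger statement (an identification of the dual space of $\mathcal{B}^{\alpha}_{p,q}$), of which your estimate is the easy direction.
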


\section{Stochastics bounds}
\label{subsec:Noise}

The following results are improvements of some results from \cite{HL15} and \cite{DM}, where similar estimates were
given in terms of moments. In this paper, these estimates hold almost surely. 
\subsection{Estimates in classical spaces}
We first show some estimates in classical spaces.

\begin{prop}\label{lem:NoiseLp}
Given $\delta\in (0,1)$, $r\in (2,\infty)$ such that $\delta\cdot  r>2$, and $\eps \in (0, \frac 12)$,  we have
\begin{equation}\label{firstnoiseLp}
\|\D Y_\eps\|_{L^r_{-\delta}}^2+\|\wDYe\|_{L^r_{-\delta}}\leq C(\omega) |\ln \eps|, \quad \hbox{ a.s. } \omega.
\end{equation}
Moreover, for $\alpha\in (0,1),\,\delta>0, \; \beta \in \R$, $\eps \in (0, 1)$, and $\varphi \in C_c^\infty (\R^2)$, there exists $\kappa\in (0,1)$ such that
\begin{equation}\label{convexpY}
\|Y_\eps-Y\|_{\mathcal{C}^{\alpha}_{-\delta}}
+\|\varphi\ast \xi_\eps-\varphi\ast \xi\|_{\mathcal{C}^{\beta}_{-\delta}}
\leq C(\omega) \eps^{\kappa}, \quad \hbox{ a.s. } \omega.
\end{equation}
\end{prop}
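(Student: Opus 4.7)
The plan is to treat the two groups of estimates separately. The $L^r_{-\delta}$ bounds on $\nabla Y_\eps$ and $\wDYe$ come from moment computations in the Wiener chaos, combined with a Borel--Cantelli-type promotion to an almost sure bound; the H\"older convergences $Y_\eps\to Y$ and $\varphi\ast\xi_\eps \to \varphi \ast \xi$ follow from the deterministic approximation Lemma~\ref{l2} once the target regularities of $Y$ and $\xi$ are established almost surely.

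For the first group, fix $x \in \R^2$. Since $\nabla Y_\eps(x)$ is a centered Gaussian vector and, by \eqref{eq:WickFactorization}, $\wDYex$ belongs to the second Wiener chaos, their variances are explicit: $\E|\nabla Y_\eps(x)|^2 = c_\eps$, and a direct Isserlis-type computation gives
\[
\E|\wDYex|^2 = 2\sum_{i,j}\Bigl(\int_{\R^2}\partial_i G_\eps(y)\,\partial_j G_\eps(y)\,dy\Bigr)^2,
\]
with both quantities bounded respectively by $C|\ln\eps|$ and $C|\ln\eps|^2$, owing to the logarithmic divergence of $\int_{|y|\gtrsim \eps}|y|^{-2}dy$. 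Gaussian hypercontractivity (the equivalence $\|F\|_{L^q}\leq (q-1)^{k/2}\|F\|_{L^2}$ for $F$ in the $k$-th chaos) yields, uniformly in $x$,
\[
\|\nabla Y_\eps(x)\|_{L^q(\Omega)}\leq C\sqrt{q}\,|\ln\eps|^{1/2},\qquad \|\wDYex\|_{L^q(\Omega)}\leq Cq\,|\ln\eps|,
\]
and the Minkowski integral inequality, applied with the weight $\w^{-\delta r}$ (integrable since $\delta r>2$), converts these pointwise bounds into, for all $q\ge r$,
\[
\bigl(\E\|\nabla Y_\eps\|_{L^r_{-\delta}}^q\bigr)^{1/q}\leq C\sqrt{q}\,|\ln\eps|^{1/2},\quad \bigl(\E\|\wDYe\|_{L^r_{-\delta}}^q\bigr)^{1/q}\leq Cq\,|\ln\eps|.
\]
To pass to an almost sure uniform-in-$\eps$ bound, I would evaluate these along a dyadic sequence $\eps_n=2^{-n}$, combine Chebyshev with $q=q(n)\to\infty$ slowly, and apply Borel--Cantelli. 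The interpolation to general $\eps\in[\eps_{n+1},\eps_n]$ is handled by the analogous covariance computation for the increment of the Wick square between two nearby scales, whose variance stays bounded within a single dyadic annulus.

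For the second group, a standard Kolmogorov-type argument applied to the Gaussian field $Y=G\ast\xi$, together with the near-origin behaviour $G(x)\sim -\tfrac{1}{2\pi}\log|x|$ and pairing against Schwartz test functions, yields that almost surely $Y \in \mathcal{C}^{1-\kappa}_{-\delta}$ for every $\kappa,\delta>0$, and similarly $\xi \in \mathcal{C}^{-1-\kappa}_{-\delta}$; then \eqref{eq:SmoothMollificationNoise} promotes $\varphi\ast\xi$ to $\mathcal{C}^{\beta'}_{-\delta}$ for any prescribed $\beta'$. Applying Lemma~\ref{l2} to $f=Y$ with $\eta$ small enough that $\alpha+\eta<1$ (respectively to $f=\varphi\ast\xi$ with any $\eta\in(0,1)$) one obtains
\[
\|Y_\eps-Y\|_{\mathcal{C}^\alpha_{-\delta}}\leq C\eps^\eta\,\|Y\|_{\mathcal{C}^{\alpha+\eta}_{-\delta}},\quad \|\varphi\ast\xi_\eps-\varphi\ast\xi\|_{\mathcal{C}^\beta_{-\delta}}\leq C\eps^\eta\,\|\varphi\ast\xi\|_{\mathcal{C}^{\beta+\eta}_{-\delta}},
\]
and both right-hand sides are almost surely finite, giving the claim with $\kappa=\eta$.

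The hardest step will be extracting the \emph{sharp} $|\ln\eps|$ rate (as opposed to $|\ln\eps|\cdot(\log|\ln\eps|)^K$) in the almost sure bound. A na\"ive dyadic Borel--Cantelli leaves precisely such a logarithmic correction, so to recover the stated rate one must exploit the subexponential tails of second-chaos random variables (via hypercontractivity with $q\asymp\log n$) or implement a continuity argument along a sufficiently fine partition of $\eps$. Once this bookkeeping is arranged, the rest reduces to the variance computations sketched above and the direct invocation of Lemma~\ref{l2}.
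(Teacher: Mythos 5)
Your treatment of \eqref{convexpY} coincides with the paper's: establish almost sure regularity of $Y$ and $\varphi\ast\xi$ (as in \eqref{eq:BoundNoiseP} and \eqref{eq:SmoothMollificationNoise}) and then apply the mollification rate of Lemma~\ref{l2}; that part is fine. The gap is in the first estimate \eqref{firstnoiseLp}, and it is exactly the one you flag yourself: the route via pointwise chaos moments, hypercontractivity, Minkowski and a dyadic Borel--Cantelli does not, as written, produce the stated rate. With $q\asymp\log n$ along $\eps_n=2^{-n}$, Chebyshev forces a threshold of order $\sqrt{q}\,|\ln\eps_n|^{1/2}\asymp\sqrt{n\log n}$, i.e.\ only $\|\D Y_{\eps_n}\|_{L^r_{-\delta}}^2\lesssim|\ln\eps_n|\,\log|\ln\eps_n|$; removing the spurious factor would require genuine Gaussian concentration (Borell--TIS with a fluctuation parameter bounded uniformly in $\eps$) together with a chaining argument over the continuum of intermediate $\eps$ (your one-sentence appeal to the ``covariance of the increment within a dyadic block'' is not enough to control a supremum over $\eps$). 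Saying that ``once this bookkeeping is arranged'' the proof is complete leaves precisely the hard step open.

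The paper avoids all of this with a deterministic factorization, which is the real content of the proof and the idea missing from your proposal. Since $\nabla Y_\eps=\rho_\eps\ast\nabla Y$, the weighted Young-type convolution inequality \eqref{B0p2-delta} gives $\|\rho_\eps\ast\nabla Y\|_{\mathcal{B}^0_{r,2,-\delta}}\le C\|\rho_\eps\|_{\mathcal{B}^0_{1,2,\delta}}\,\|\nabla Y\|_{\mathcal{B}^0_{r,\infty,-\delta}}$. The factor $\|\rho_\eps\|_{\mathcal{B}^0_{1,2,\delta}}\le C\sqrt{|\ln\eps|}$ is purely deterministic (estimate \eqref{bentin}), while all the randomness is confined to the single, $\eps$-independent variable $\|\nabla Y\|_{\mathcal{B}^0_{r,\infty,-\delta}}$, shown to be almost surely finite via the Hyt\"onen--Veraar Lemma~\ref{l1} applied to $\eta_N=N^{-1}\Delta_N\xi$ (this is \eqref{e1}). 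The almost sure bound, uniform in $\eps$ and with the sharp $|\ln\eps|$, then comes for free, and the Wick square is handled pointwise as $\wDYe=|\DY_\eps|^2-c_\eps$ with $c_\eps\sim|\ln\eps|$, rather than through a second-chaos variance computation. To salvage your approach you would have to either import this factorization or actually supply the Borell--TIS and chaining machinery; the variance computations alone do not suffice.
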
 
To prove Proposition \ref{lem:NoiseLp}, we shall use the following result (see \cite[Proposition 3.1]{hytonen-veraar} and \cite[Proposition 2.3]{veraar}).
\begin{lemma}\label{l1}
Let $(X, \|\cdot \|_{X})$ be a separable Banach space and $(\eta_n)$ be  a sequence of $X$-valued random variables.
Assume that there exists a sequence $\sigma_n$ of real numbers such that:
$$
\E\Big ((\eta_n,f)^2\Big )\le \sigma_n^2 \|f\|_{X'}^2, \quad \forall f\in X'.
$$
Then 
$$
\E \Big(\sup_n \|\eta_n\|_X\Big)\le \sup_n \E(\|\eta_n\|_X)+ 3 \rho(\sigma_n),
$$
where
$$
\rho(\sigma_n)= \inf\Big\{\delta \;|\;\sum_n \sigma_n^2\delta^{-2}\exp (-2^{-1}(\delta \sigma_n^{-1})^2)\le 1  \Big\}.
$$
\end{lemma}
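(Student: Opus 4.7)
The plan is to follow the standard Banach-space probability argument used in \cite{hytonen-veraar,veraar}, namely a truncation decomposition combined with Gaussian tail bounds. The starting point is the pointwise inequality, valid for every $\delta>0$ and measurable thanks to the separability of $X$,
\begin{equation*}
\sup_n \|\eta_n\|_X \le \delta + \sum_n (\|\eta_n\|_X - \delta)_+,
\end{equation*}
which holds trivially: if all norms lie below $\delta$ the first term suffices, and otherwise the corresponding positive part absorbs the excess. Taking expectation reduces the task to bounding $\E(\|\eta_n\|_X - \delta)_+$ for each $n$ in terms of $\sigma_n$ and $\delta$.

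To control these individual tails, I would invoke the Borell-Tsirelson-Sudakov concentration inequality for Banach-space-valued Gaussian variables, which is the setting in which Lemma \ref{l1} is eventually applied (the $\eta_n$ arise as Wiener integrals lying in a fixed-order Wiener chaos, where hypercontractivity upgrades second-moment bounds to Gaussian-type tails). The hypothesis $\E(\eta_n,f)^2 \le \sigma_n^2 \|f\|_{X'}^2$ identifies $\sigma_n^2$ with the weak variance, so Borell's inequality yields
\begin{equation*}
\mathbb P\bigl(\|\eta_n\|_X - \E\|\eta_n\|_X > t\bigr) \le \exp\bigl(-t^2/(2\sigma_n^2)\bigr), \quad t>0.
\end{equation*}
Writing $\delta = \sup_k \E\|\eta_k\|_X + \delta_0$ with $\delta_0>0$ and integrating this tail via Mill's ratio produces a per-term estimate which, after summation in $n$ and a two-scale optimization (splitting according to whether $\sigma_n$ is larger or smaller than $\delta_0$), matches the expression $\sigma_n^2 \delta_0^{-2}\exp(-\delta_0^2/(2\sigma_n^2))$ appearing in the infimum defining $\rho(\sigma_n)$. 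Choosing $\delta_0 = \rho(\sigma_n)$ forces the tail sum to be bounded by an absolute constant times $\rho(\sigma_n)$, and collecting all contributions gives the announced bound $\sup_k \E\|\eta_k\|_X + 3\rho(\sigma_n)$.

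The main obstacle is matching precisely the $\delta^{-2}$ weight in $\rho(\sigma_n)$: a direct application of Mill's ratio to $\E(\|\eta_n\|_X - \delta)_+$ yields only a factor $\sigma_n\delta^{-1}$ outside the Gaussian exponential, so an additional power of $\delta^{-1}$ needs to be produced. This is achieved either by an integration by parts on the tail integral (extracting an extra $\sigma_n^2\delta^{-1}$) or by decomposing $\E\sup_n(\|\eta_n\|_X - M)_+$ through $\int_0^\infty \min(1,\sum_n \mathbb P(\|\eta_n\|_X - M>t))\,dt$ and splitting at a well-chosen threshold. Once this refinement is in place, the numerical constant $3$ comes out of a careful accounting of the three contributions ($\delta_0$ itself, the tail integral after summation, and the slack between $\sup_n \E\|\eta_n\|_X$ and the truncation level), which is exactly the bookkeeping carried out in the cited references.
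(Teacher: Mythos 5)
The paper does not actually prove Lemma \ref{l1}: it is quoted verbatim from \cite[Proposition 3.1]{hytonen-veraar} and \cite[Proposition 2.3]{veraar}, so there is no in-text argument to compare against. Your reconstruction is essentially the proof given in those references: truncate at level $\sup_k \E\|\eta_k\|_X+\delta$, apply Borell--TIS with the weak variance $\sigma_n^2$, integrate the Gaussian tails, and choose $\delta=\rho(\sigma_n)$. This is the right route, and the ``obstacle'' you worry about is not actually one: the elementary bound $\int_\delta^\infty e^{-t^2/(2\sigma^2)}\,dt\le \sigma^2\delta^{-1}e^{-\delta^2/(2\sigma^2)}$ (obtained by inserting the factor $t/\delta\ge 1$ and integrating exactly, i.e.\ your ``integration by parts'') already produces the $\sigma_n^2\delta^{-2}$ weight after factoring out one $\delta$, and the resulting bookkeeping in fact yields the constant $2$ rather than $3$.

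Two caveats. First, as you correctly sense, the lemma as stated omits the hypothesis that the $\eta_n$ are (centered) Gaussian, which is indispensable for Borell's inequality; the weak second-moment bound alone cannot give the conclusion. The cited references state the result for Gaussian vectors, and in this paper it is only applied to $\eta_N=N^{-1}\Delta_N\xi$, which is a genuine first-chaos (Gaussian) object, so your reading is the intended one --- but your proof establishes the lemma only under that implicit hypothesis. Second, your parenthetical that hypercontractivity upgrades second-moment bounds to ``Gaussian-type tails'' on a fixed-order Wiener chaos is inaccurate for chaoses of order $k\ge 2$ (the tails there are of order $\exp(-ct^{2/k})$, not sub-Gaussian, and Borell--TIS does not apply); this does not affect the present application but should not be offered as the justification for invoking Borell's inequality.
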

\begin{remark}\label{rhoeasy}
As recalled in \cite{veraar}, if $\sigma_n\le \alpha^n$ then $\rho(\sigma_n)\sim \sqrt{\ln(1-\alpha)^{-1}}$.
\end{remark}

We also need the following result, which is an immediate consequence of \cite[Lemma 2.5]{DM}.
\begin{lemma}
\label{lem:BoundNoiseP}
For any  $\alpha\in (0,1)$ and $\delta>0$, we have 
\begin{equation}\label{eq:BoundNoiseP}
\|Y\|_{\mathcal{C}^\alpha_{-\delta}}+\|\xi\|_{\mathcal{C}^{\alpha-2}_{-\delta}}\leq C(\omega), \quad \hbox{ a.s. } \omega.
\end{equation}
\end{lemma}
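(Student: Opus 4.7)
The plan is to reduce the almost sure bound to an $L^p(\Omega)$ moment bound for $p$ sufficiently large, since the stated conclusion is that two random variables are finite a.s., and any random variable with finite $p$-th moment is a.s. finite. Concretely, I would establish
\begin{equation*}
\E\bigl[\|Y\|_{\mathcal{C}^\alpha_{-\delta}}^p\bigr] + \E\bigl[\|\xi\|_{\mathcal{C}^{\alpha-2}_{-\delta}}^p\bigr] < \infty
\end{equation*}
for $p$ large. To do this I would pass through a weighted Besov space using the embedding $\mathcal{B}^{s}_{p,p,-\delta/2} \hookrightarrow \mathcal{C}^{s-2/p}_{-\delta}$ (which follows from pulling the weight inside via \eqref{eq:PullWeight} and the unweighted Besov embedding, provided $\delta p$ is large enough for the weight $\langle x\rangle^{-\delta p/2}$ to be integrable on $\R^2$). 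Thus it suffices to bound, for some $\eta>0$ small and $p$ large,
\begin{equation*}
\E\bigl[\|Y\|_{\mathcal{B}^{\alpha+\eta}_{p,p,-\delta/2}}^p\bigr] \quad \text{and} \quad \E\bigl[\|\xi\|_{\mathcal{B}^{\alpha-2+\eta}_{p,p,-\delta/2}}^p\bigr].
\end{equation*}

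For the white noise piece, I would compute the variance of $\Delta_N\xi(x) = \int K_N(x-y)\,\boldsymbol{\xi}(\mathrm{d}y)$ with $K_N(z) = N^2 K(Nz)$ to get $\E[|\Delta_N\xi(x)|^2] = \|K_N(x-\cdot)\|_{L^2}^2 \sim N^2$. Since $\Delta_N\xi(x)$ is a centered Gaussian, Gaussian hypercontractivity yields $\E[|\Delta_N\xi(x)|^p] \lesssim (pN^2)^{p/2}$. Integrating against the weight $\langle x\rangle^{-\delta p/2}$ (which is in $L^1(\R^2)$ once $\delta p > 4$) gives
\begin{equation*}
\E\|\Delta_N\xi\|^p_{L^p_{-\delta/2}} \lesssim N^p,
\end{equation*}
and summing the $N^{(\alpha-2+\eta)p}$-weighted contributions produces a geometric series in $N^{(\alpha-1+\eta)p}$, which converges for $\eta < 1-\alpha$. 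For $Y = G\ast\xi$, the analogous variance computation $\E[|\Delta_N Y(x)|^2] = \|\Delta_N G(x-\cdot)\|_{L^2}^2 \sim N^{-2}$ holds because convolution with $G$ (which differs from the fundamental solution of $-\Delta$ by a smooth compactly supported piece) gains two derivatives on each dyadic block. The identical argument then produces $\E\|\Delta_N Y\|^p_{L^p_{-\delta/2}} \lesssim N^{-p}$, and summing $N^{(\alpha+\eta-1)p}$ again converges for $\eta < 1 - \alpha$.

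The main (modest) obstacle is controlling the weighted $L^p$ integration so that the $x$-integral is uniformly bounded in $N$: one must choose $p$ large enough that $\delta p > 4$, and small enough $\eta>0$ so that both geometric series converge simultaneously with the chosen $\alpha \in (0,1)$. Once this is arranged, applying the Besov--Hölder embedding and Chebyshev/Borel--Cantelli gives the two stated a.s.\ bounds; this is precisely the content of \cite[Lemma 2.5]{DM}, which already supplies the required $L^p(\Omega)$ moment estimates, so the present lemma is indeed an immediate consequence.
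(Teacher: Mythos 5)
Your argument is correct and coincides with the paper's treatment: the paper proves this lemma simply by invoking \cite[Lemma 2.5]{DM}, which supplies exactly the weighted Gaussian moment bounds on Littlewood--Paley blocks and the Besov--H\"older embedding that you reconstruct, and you arrive at the same citation yourself. (A minor remark: Borel--Cantelli is not needed at the last step, since a random variable with finite $p$-th moment is automatically almost surely finite.)
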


\begin{proof}[Proof of Prop. \ref{lem:NoiseLp}]
We split the proof in several steps.
First we show the following property:
\begin{equation}\label{e1}
 \E \Big (\|\nabla Y \|_{B_{r,\infty,-\delta}^{0}}\Big) <\infty, \quad \delta \cdot r>2, \quad r\in (2, \infty),
\end{equation}
whose  proof is a generalization of \cite[Theorem 3.4]{veraar}. 

For every fixed $x\in \R^2$, we have that $\Delta_N \xi(x)$ is a Gaussian random random variable
and hence
\begin{align*}
\E\left( \|\Delta_N \xi\|_{L^r_{-\delta}}^r\right)
&=\int_{\R^2} \E (\left|\Delta_N \xi(x)\right|^r) \w^{-\delta r}dx\\
& \le C \int_{\R^2} \left(\E \left|\Delta_N\xi(x)\right|^2\right)^{r/2} \w^{-\delta r}dx.
\end{align*}
Moreover, we have
$$
\E (\left|\Delta_N \xi(x)\right|^2) =N^4 \E \Big[\Big(\xi, K(N(x-\cdot)\Big)^2\Big] = N^{4} \| K(N(x-\cdot)\|_{L^2}^2= N^2\|K\|_{L^2}^2
$$
and hence 
\begin{equation}\label{Nfixed}
\E \left( \|\Delta_N \xi\|_{L^r_{-\delta}} \right) \le C N
\end{equation}
provided that $\delta \cdot r >2$. 
Also, for $f\in L^{r'}_\delta$, where $\frac 1r+\frac 1 {r'}=1$, we get
\begin{align}
\label{moments}
\begin{split}
\E\Big ((\Delta_N \xi,f)^2\Big) &= \E\Big ((\xi, \Delta_Nf)^2\Big) \\
&= N^{4} \| K(N\cdot)\ast f\|_{L^2}^2 \\
& \le N^4 \|K(N\cdot)\|_{L^\frac{2r}{r+2}}^2 \|f\|_{L^{r'}}^2 \\
&\le C N^{\frac{2(r-2)}{r}}\|f\|_{L^{r'}_\delta}^2.
\end{split}
\end{align}
By using the fact that $L^{r'}_{\delta}= (L^r_{-\delta})'$ and \eqref{Nfixed}, we can apply Lemma \ref{l1}, where we choose: $X=L^r_{-\delta}$,
$\eta_N=N^{-1}\Delta_N \xi$, $\sigma_N=N^{-\frac{1}{r}}$ (see \eqref{moments}).
Hence, we obtain
\begin{equation}\label{xiprime}
\E(\|\xi \|_{B_{r,\infty,-\delta}^{-1}}) =\E\left(\sup_N N^{-1}\|\Delta_N \xi\|_{L^r_{-\delta}}\right)\le C + C\rho(N^{-\frac 2r})\le C.
\end{equation}
where we have used Remark \ref{rhoeasy} at the last step.
The estimate \eqref{e1} clearly follows from \eqref{xiprime}.

Next, we claim that
\begin{equation}\label{B0p2-delta}
\| \rho_\eps*\nabla Y \|_{B^0_{r,2,-\delta}}   \le C \|\rho_\eps\|_{B^0_{1,2,\delta}}\|\nabla Y \|_{B_{r,\infty,-\delta}^{0}}.
\end{equation}
Once this estimate is established, by combining  \eqref{e1} with \eqref{bentin}, we get
$$\|\D Y_\eps\|_{L^r_{-\delta}}^2\leq C(\omega) |\ln \eps|$$
and in turn also
$$\|\wDYe\|_{L^r_{-\delta}}^2\leq C(\omega) |\ln \eps|$$
since $\wDYe=\nabla Y_\eps^2-c_\eps$ with $c_\eps \sim |\ln \eps|$. Hence, \eqref{firstnoiseLp} follows from \eqref{B0p2-delta}, 
whose proof
without weight is in \cite[ Theorem 2.2]{KS}. Here, we denote by
$\Delta_N$ and $\tilde \Delta_N$ two Littlewood-Paley decompositions such that $\tilde \Delta_N\circ \Delta_N=\Delta_N$, and we write 
\begin{align*}
\| \rho_\eps*\nabla Y \|_{B^0_{r,2,-\delta}}^2 &=\sum_{N-{\rm dyadic}} \| \Delta_N  ( \rho_\eps*\nabla Y)\|_{L^r_{-\delta}}^2 \\
&=\sum_{N-{\rm dyadic}} \| \tilde \Delta_N  \rho_\eps* \Delta_N (\nabla Y)\|_{L^r_{-\delta}}^2 \\
&\le C\sum_{N-{\rm dyadic}}\|\tilde\Delta_N  \rho_\eps \|_{L^1_\delta}^2 \|\Delta_N  \nabla Y\|_{L^r_{-\delta}}^2 \\
& \le C \|\rho_\eps\|_{B^0_{1,2,\delta}}^2 \|\nabla Y \|_{B_{r,\infty, -\delta}^0}^2,
\end{align*}
where we have used the inequality $\w^{-1}\le\;  2 \langle y\rangle \langle x-y\rangle^{-1}$. Thus, \eqref{B0p2-delta} follows.

Then, we consider \eqref{convexpY}. The bound 
$$\|\varphi\ast \xi_\eps-\varphi\ast \xi\|_{\mathcal{C}^{\beta}_{-\delta}}
\leq C(\omega) \eps^{\kappa}, \quad \hbox{ a.s. } \omega$$
follows by combining \eqref{eq:l2} with \eqref{eq:SmoothMollificationNoise} and \eqref{eq:BoundNoiseP}.
In order to conclude the proof of \eqref{convexpY}, 
we notice that again by \eqref{eq:l2} and \eqref{eq:BoundNoiseP} we get
\begin{equation*}
\|Y_\eps-Y\|_{\mathcal{C}^{\alpha}_{-\delta}}\leq C(\omega) \eps^{\kappa}, \quad \hbox{ a.s. } \omega,
\end{equation*}
as desired.
\end{proof}

We also establish the following uniform bound and convergence result for $e^{aY_\eps}$ for any $a \in \R$.
\begin{prop}
\label{prop:exp}
For $\alpha \in (0, 1)$, $\delta > 0$, and $a \in \R$, we have
\begin{equation}\label{exponential}\sup_{\varepsilon\in (0,1)}\|e^{a Y_\varepsilon}\|_{\mathcal{C}^\alpha_{-\delta}}\leq C(\omega), \quad \hbox{ a.s. } \omega.
\end{equation}
Moreover, there exists $\kappa \in (0, 1)$ such that
\begin{equation}\label{exp_conv}
\| e^{a Y_\eps} - e^{a Y} \|_{L^\infty_{- \delta}} \leq C(\omega) \eps^\kappa, \quad \hbox{ a.s. } \omega.
\end{equation}
\end{prop}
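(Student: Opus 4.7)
I would split the proof into the uniform bound \eqref{exponential} and the convergence \eqref{exp_conv}. The main obstacle is the first: Lemma \ref{lem:BoundNoiseP} only gives $|Y(x)|\lesssim\langle x\rangle^\delta$ for every $\delta>0$, which is not enough to conclude $e^{aY}\in L^\infty_{-\delta}$ for every $\delta>0$. Indeed, obtaining such a bound for every $\delta>0$ forces an essentially logarithmic growth of $Y$, and hence a genuinely Gaussian concentration input that does not follow from the Besov-space analysis of the rest of the section.

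The plan is therefore to first strengthen Lemma \ref{lem:BoundNoiseP} to a uniform-in-$\eps$ Gaussian concentration bound
\begin{equation*}
\sup_{\eps\in(0,1)}\,\sup_{x\in\R^2}\frac{|Y_\eps(x)|}{\sqrt{\log(2+|x|)}}\le C(\omega),\qquad\text{a.s.,}
\end{equation*}
together with, by the same scheme applied to the increments $Y_\eps(x)-Y_\eps(y)$, an analogous bound for the local Hölder seminorm of $Y_\eps$ on every unit ball, weighted by the same $\sqrt{\log}$-factor. This is done in the spirit of Proposition \ref{lem:NoiseLp}: apply Lemma \ref{l1} to the countable subfamily $(Y_\eps)_{\eps\in\mathbb{Q}\cap(0,1)}$, exploiting that $Y_\eps(x)$ is a stationary centered Gaussian field with variance $\|\rho_\eps\ast G\|_{L^2}^2\le\|G\|_{L^2}^2$ uniformly in $\eps$; combine the resulting pointwise Gaussian tail $\mathbb P(|Y_\eps(x)|>t)\le 2e^{-ct^2}$ with a Borel--Cantelli union bound over a spatial lattice of $\R^2$, and extend from rational to all $\eps\in(0,1)$ by continuity of $\eps\mapsto Y_\eps$ for fixed $\omega$.

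With the $\sqrt{\log}$-growth in hand, $e^{aY_\eps(x)}\le e^{|a|C(\omega)\sqrt{\log\langle x\rangle}}\le C(\omega,\delta)\langle x\rangle^{\delta}$ for every $\delta>0$, giving $\sup_\eps\|e^{aY_\eps}\|_{L^\infty_{-\delta}}\le C(\omega)$. For the Hölder seminorm I would use the mean-value identity
\begin{equation*}
e^{aY_\eps(x)}-e^{aY_\eps(y)}=a\bigl(Y_\eps(x)-Y_\eps(y)\bigr)\int_0^1 e^{a[(1-t)Y_\eps(y)+tY_\eps(x)]}\,dt,
\end{equation*}
observing that for $|x-y|\le 1$ the weights $\langle x\rangle$ and $\langle y\rangle$ are comparable, bounding the integrand pointwise by the $\sqrt{\log}$ estimate and multiplying by the local Hölder bound on $Y_\eps$; this yields \eqref{exponential}.

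For \eqref{exp_conv}, the same factorization gives
$|e^{aY_\eps(x)}-e^{aY(x)}|\le|a|\,|Y_\eps(x)-Y(x)|\sup_{t\in[0,1]}e^{a[(1-t)Y(x)+tY_\eps(x)]}$. The exponential factor is bounded in $L^\infty_{-\delta/2}$ uniformly in $\eps$ by applying \eqref{exponential} to $Y_\eps$ and (by passing to the limit $\eps\to 0$) to $Y$ itself, while $\|Y_\eps-Y\|_{L^\infty_{-\delta/2}}\le C(\omega)\eps^\kappa$ follows from \eqref{convexpY} combined with the embedding $\mathcal{C}^\alpha_{-\delta/2}\hookrightarrow L^\infty_{-\delta/2}$. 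Multiplying the two bounds produces the required $\eps^\kappa$ rate in $L^\infty_{-\delta}$. The principal obstacle throughout is the initial Gaussian concentration step, which is the only place truly Gaussian input enters.
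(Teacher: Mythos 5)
Your overall strategy coincides with the paper's: the heart of the matter is indeed a Gaussian concentration input giving $|Y_\eps(x)|\le C(\omega)\sqrt{\log\langle x\rangle}$ locally uniformly, hence $e^{aY_\eps(x)}\le C(\omega)\langle x\rangle^{\delta}$ for every $\delta>0$; and the convergence \eqref{exp_conv} then follows from the elementary inequality $|e^{x}-e^{y}|\le|x-y|(e^{x}+e^{y})$ together with \eqref{convexpY}, which is exactly what the paper does. The gap is in your mechanism for uniformity in $\eps$. Applying Lemma \ref{l1} to the family $(Y_\eps)_{\eps\in\mathbb{Q}\cap(0,1)}$ gives nothing: the variances are uniformly comparable along that family, so with $\sigma_n\equiv\sigma>0$ each term of the series defining $\rho(\sigma_n)$ is a fixed positive number, the series diverges for every $\delta$, and $\rho(\sigma_n)=+\infty$. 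Likewise a Borel--Cantelli union bound over lattice points \emph{and} over the countably many rational $\eps$ diverges, because the individual tail probabilities do not decay in $\eps$. So, as written, the uniform-in-$\eps$ concentration estimate is not established.

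The repair is elementary and is what the paper uses: since $\rho_\eps\ge 0$, $\int\rho_\eps=1$ and $\supp\rho_\eps\subseteq B(0,1)$, one has the deterministic, $\eps$-free bound $\|Y_\eps\|_{\mathcal{C}^\alpha([-k,k]^2)}=\|\rho_\eps\ast Y\|_{\mathcal{C}^\alpha([-k,k]^2)}\le\|Y\|_{\mathcal{C}^\alpha([-k-1,k+1]^2)}$, which reduces everything to the single field $Y$ and then, via the Schauder estimate and $\supp G\subseteq B(0,1)$, to $\|\chi_{k+2}\xi\|_{\mathcal{C}^{\alpha-2}}$. The Gaussian concentration is imported through the exponential moment bound \eqref{expnoise}, and the supremum over the spatial scale $k$ is controlled by taking a $p$-th moment with $\delta p\ge 2+\lambda'$, so that $k^{-\delta p}$ beats the $k^{\lambda'}$ growth --- morally your Borel--Cantelli in $k$, packaged as a moment estimate. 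Note also that your lattice argument would in any case need a quantitative modulus of continuity for the field between lattice points (a Kolmogorov/Fernique-type input, not just pointwise tails); the paper's route through the H\"older norm $\|\chi_k\xi\|_{\mathcal{C}^{\alpha-2}}$ handles this automatically. Your treatment of \eqref{exp_conv} is correct and matches the paper's.
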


To prove Proposition \ref{prop:exp}, we shall use the following lemma (see \cite[Corollary 2.6]{DM}).
\begin{lemma}
For any $\alpha \in (0, 1)$, $a \in \R$, and $\delta > 0$, we have
\begin{align}\label{expbound}
\| e^{a Y} \|_{C^\alpha_{-\delta}} \leq C(\omega), \quad \hbox{ a.s. } \omega.
\end{align}
\end{lemma}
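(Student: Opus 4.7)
The plan is to control the two pieces of the equivalent norm \eqref{eq:ZygmundNorm} separately: the weighted sup part $\sup_x \w^{-\delta}|e^{aY(x)}|$, and the weighted Hölder seminorm. The input from \eqref{eq:BoundNoiseP} alone is \emph{not} strong enough to bound $e^{aY}$, because it only yields $|Y(x)|\le C(\omega)\w^{\delta'}$ for $\delta'>0$ arbitrarily small, and $\exp(\w^{\delta'})$ grows faster than any polynomial. Instead, one needs to exploit the decisive structural feature of $Y$: since $G$ is supported in $B(0,1)$, the value $Y(x)=(G*\xi)(x)$ depends only on $\xi|_{B(x,1)}$, so $Y(x)$ is a centred Gaussian of variance $\|G\|_{L^2}^2$ \emph{independent of $x$}. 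This will ultimately force $|Y(x)|$ to grow only like $\sqrt{\log\w(x)}$ at infinity, which is what makes $e^{aY}$ admissible.

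For the weighted sup bound, I would first apply a Fernique (or Borell--TIS) estimate to the Gaussian field $Y$ restricted to each unit ball $B(x_0,1)$, $x_0\in\Z^2$, giving a uniform bound
\[
\mathbb P\Big(\sup_{x\in B(x_0,1)}|Y(x)|>t\Big)\le C\exp(-ct^2),
\]
with constants independent of $x_0$ thanks to the stationarity mentioned above (the $L^2$-norm used in Fernique can be estimated via the $\mathcal C^\alpha$-norm on $B(x_0,1)$, which has uniformly bounded Gaussian tails because $Y$ restricted to $B(x_0,1)$ depends only on finitely-correlated noise). Choosing $t_{x_0}=M\sqrt{\log(2+|x_0|)}$ with $M$ large and applying Borel--Cantelli over $x_0\in\Z^2$ yields almost surely
\[
|Y(x)|\le C(\omega)\sqrt{\log \w(x)}\qquad\text{for all }x\in\R^2.
\]
Consequently $|e^{aY(x)}|\le \exp(|a|C(\omega)\sqrt{\log\w(x)})\le C_{\delta}(\omega)\w(x)^{\delta}$ for any fixed $\delta>0$, which is exactly the $L^\infty_{-\delta}$ bound required.

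For the weighted Hölder seminorm, I would use the elementary pointwise inequality
\[
|e^{aY(x)}-e^{aY(y)}|\le |a|\big(e^{aY(x)}\vee e^{aY(y)}\big)|Y(x)-Y(y)|.
\]
Combining the bound from the previous step (applied with a fraction $\delta/2$ of the weight) with the weighted Hölder regularity $|Y(x)-Y(y)|\le C(\omega)\w(x)^{\delta/2}|x-y|^\alpha$ (for $|x-y|\le 1$) extracted from \eqref{eq:BoundNoiseP} with exponent $\delta/2$ in place of $\delta$, one gets
\[
\w(x)^{-\delta}\frac{|e^{aY(x)}-e^{aY(y)}|}{|x-y|^\alpha}\le C(\omega)
\]
uniformly in $0<|x-y|\le 1$, where we also use $\w(x)\sim\w(y)$ on this range. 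Together with (i) this gives \eqref{expbound} via \eqref{eq:ZygmundNorm}.

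The main obstacle is \emph{not} the Hölder step, which is a routine mean-value argument, but rather upgrading the weighted polynomial bound of \eqref{eq:BoundNoiseP} into the logarithmic growth $|Y(x)|\lesssim\sqrt{\log\w(x)}$; this requires genuinely probabilistic input (Fernique/Borell--TIS plus Borel--Cantelli) and crucially uses the compactness of $\supp G$. Once that growth is established, the rest of the argument is essentially algebraic and uses only \eqref{eq:BoundNoiseP}.
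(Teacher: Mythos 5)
Your argument is correct, but it is not the one the paper relies on: the lemma itself is quoted from \cite[Corollary 2.6]{DM}, and the proof the paper does write out for the closely analogous uniform bound on $e^{aY_\eps}$ (Proposition \ref{prop:exp}) proceeds by localizing to the boxes $[-k,k]^2$, using $\supp G\subseteq B(0,1)$ and a Schauder estimate to get $\|Y\|_{\mathcal{C}^\alpha([-k,k]^2)}\le C\|\chi_{k+2}\xi\|_{\mathcal{C}^{\alpha-2}}$, and then invoking the exponential moment bound \eqref{expnoise}, $\E\big(\exp(\lambda\|\chi_k\xi\|_{\mathcal{C}^{\alpha-2}}^2)\big)\le Ck^{\lambda'}$, to control $\E\big(\sup_k k^{-\delta}\exp(C|a|\,\|\chi_{k+2}\xi\|_{\mathcal{C}^{\alpha-2}})\big)^p$ by summation over $k$ for $p$ large. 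You replace the exponential-moment/$L^p(\Omega)$ summation by a Borell--TIS tail estimate on unit balls plus Borel--Cantelli, obtaining the pathwise growth $|Y(x)|\le C(\omega)\sqrt{\log\langle x\rangle}$ and hence $e^{aY(x)}\le C(\omega)\langle x\rangle^{\delta}$. Both routes rest on the same two structural facts --- the compact support of $G$, so that $Y$ on a unit region is a functional of the noise on a slightly larger region with law independent of the region by stationarity, and Gaussian concentration for the resulting local norms --- so they are morally the same estimate packaged differently. The paper's packaging buys moments of the random constant, $\E\|e^{aY}\|_{\mathcal{C}^\alpha_{-\delta}}^p<\infty$, rather than mere a.s.\ finiteness; yours makes the underlying mechanism (logarithmic growth of $Y$ at infinity) explicit. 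Your diagnosis that \eqref{eq:BoundNoiseP} alone cannot suffice is accurate, and your treatment of the H\"older seminorm via the mean-value inequality together with \eqref{eq:BoundNoiseP} at a fraction of the weight is sound, since for $\alpha\in(0,1)$ the Besov definition of $\mathcal{C}^\alpha_{-\delta}$ is equivalent to the norm \eqref{eq:ZygmundNorm}.
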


We also need the following result (see \cite[Lemma 2.3]{DM} and \cite[Lemma 5.3]{AC}). Below we shall use the functions $\chi_k \in C_c^\infty (\R^2)$ with $k \in \N$, $\supp \chi_k \subseteq [-k - 1, k + 1]^2$, and $\chi_k = 1$ on $[-k, k]^2$.
\begin{lemma}
For $\alpha < 1$, there exist $\lambda, \lambda' > 0$ such that
\begin{align}\label{expnoise}
\sup_{k \in \N} \frac{\E \Big( \exp (\lambda \| \chi_k \xi \|_{\mathcal{C}^{\alpha - 2}}^2) \Big)}{k^{\lambda'}} \leq C.
\end{align}
\end{lemma}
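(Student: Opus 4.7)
The plan is to apply the Borell--Tsirelson--Ibragimov--Sudakov (BTIS) concentration inequality to the centered Gaussian element $\chi_k \xi$ viewed in (a separable version of) $\mathcal{C}^{\alpha-2}$. Using the Littlewood--Paley characterization from \eqref{besovdef} and \eqref{holder}, I would realize the H\"older norm as a Gaussian supremum,
\[
\|\chi_k \xi\|_{\mathcal{C}^{\alpha-2}} = \sup_{N\text{ dyadic}}\;\sup_{x\in\R^2}\; N^{\alpha-2}\,|\Delta_N(\chi_k \xi)(x)|,
\]
so that BTIS reduces \eqref{expnoise} to two ingredients: a weak-variance bound uniform in $k$, and a bound of the expected supremum that grows at most like $\sqrt{\log(2+k)}$.

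For the weak variance, arguing exactly as in \eqref{moments} and using $|\chi_k|\leq 1$, I get
\[
\E\bigl[(\Delta_N(\chi_k\xi)(x))^2\bigr] = N^4\int_{\R^2} |K(N(x-y))|^2\chi_k(y)^2\,dy \leq CN^2,
\]
uniformly in $x,k$, hence $\sup_{N,x}\E[(N^{\alpha-2}\Delta_N(\chi_k\xi)(x))^2]\leq C\sup_N N^{2(\alpha-1)}\leq C$ since $\alpha<1$, with constant independent of $k$. For the expected supremum, the strategy is to first truncate $\sup_{x\in\R^2}\|\Delta_N(\chi_k\xi)\|$ to $\sup_{x\in[-2k,2k]^2}$ using the Schwartz decay of $K_N$ against $\supp\chi_k$, then apply Dudley's metric-entropy bound to the field $x\mapsto \Delta_N(\chi_k\xi)(x)$, whose canonical pseudo-metric satisfies $d(x_1,x_2)\leq CN^2|x_1-x_2|$. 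This yields $\E\|\Delta_N(\chi_k\xi)\|_{L^\infty}\leq CN\sqrt{\log(2+k)+\log(2+N)}$, and combining the trivial bound $\sup_N\leq\sum_N$ with the dyadic summability of $N^{\alpha-1}$ (which requires $\alpha<1$) produces
\[
\E\|\chi_k \xi\|_{\mathcal{C}^{\alpha-2}} \leq C\sqrt{\log(2+k)}.
\]

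Putting these together, BTIS yields constants $c,C>0$ independent of $k$ with
\[
\mathbb{P}\bigl(\|\chi_k\xi\|_{\mathcal{C}^{\alpha-2}} > C\sqrt{\log(2+k)}+t\bigr) \leq e^{-ct^2},\qquad t\geq 0.
\]
Integrating through $\E\,e^{\lambda X^2} = 1+\int_0^\infty 2\lambda s\,e^{\lambda s^2}\mathbb{P}(X>s)\,ds$ with $\lambda<c$ then gives, for a suitable $\lambda'=\lambda'(\lambda)>0$,
\[
\E\exp\bigl(\lambda\|\chi_k\xi\|_{\mathcal{C}^{\alpha-2}}^2\bigr)\leq C(\lambda)(2+k)^{\lambda'},
\]
which is \eqref{expnoise}. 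I expect the main obstacle to be the Dudley step: the Gaussian supremum is taken over the unbounded index set $\{N\text{ dyadic}\}\times\R^2$, and the polynomial-in-$k$ prefactor in \eqref{expnoise} must arise \emph{entirely} from the $\sqrt{\log(2+k)}$ behaviour of the first moment, since the weak variance is already forced to be bounded in $k$. The interaction of the $x$-entropy on the box $[-2k,2k]^2$ with the dyadic summability in $N$ has to be handled carefully, and this is the place where the hypothesis $\alpha<1$ enters in an essential way.
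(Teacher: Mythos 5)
Your proposal is correct in outline, but note that the paper does not actually prove this lemma: it imports it directly from \cite[Lemma 2.3]{DM} and \cite[Lemma 5.3]{AC}, so there is no internal argument to compare against. What you propose --- realizing $\|\chi_k\xi\|_{\mathcal{C}^{\alpha-2}}$ as the supremum of the centered Gaussian family $\pm N^{\alpha-2}\Delta_N(\chi_k\xi)(x)$, checking that the weak variance $\sup_{N,x}N^{2(\alpha-2)}\E\bigl[(\Delta_N(\chi_k\xi)(x))^2\bigr]\le C\sup_N N^{2(\alpha-1)}\le C$ is bounded uniformly in $k$ (this is exactly where $\alpha<1$ enters), proving $\E\|\chi_k\xi\|_{\mathcal{C}^{\alpha-2}}\le C\sqrt{\log(2+k)}$ by Dudley's entropy bound, and then converting the Borell--TIS tail $\mathbb{P}(X>\E X+t)\le e^{-t^2/(2\sigma^2)}$ into $\E e^{\lambda X^2}\le C(\lambda)\,e^{4\lambda(\E X)^2}\le C(\lambda)(2+k)^{\lambda'}$ for $\lambda$ small compared with $\sigma^{-2}$ --- is precisely the standard mechanism behind the cited results, and each of these steps is sound: the polynomial prefactor in \eqref{expnoise} is forced to come entirely from the mean, as you observe, since the variance proxy is $k$-independent. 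The one step that genuinely has to be written out is the one you flag yourself: the chaining over the non-compact index set $\{N\ \mathrm{dyadic}\}\times\R^2$. Restricting to $[-2k,2k]^2$ costs an entropy factor of order $\sqrt{\log(2+k)+\log(2+N)}$, which is absorbed by the geometric decay of $N^{\alpha-1}$, while the exterior $|x|>2k$ is handled by dyadic annuli on which the pointwise variance of $\Delta_N(\chi_k\xi)$ decays faster than any polynomial thanks to the Schwartz decay of $K$ tested against the compact support of $\chi_k$; both contributions sum. So your argument is a legitimate, self-contained replacement for the external citation: it buys independence from \cite{AC} and \cite{DM} at the cost of carrying out the Dudley/Borell--TIS estimates explicitly.
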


\begin{proof}[Proof of Prop. \ref{prop:exp}]
We first establish the uniform bound \eqref{exponential}, whose proof is similar to that of Corollary 2.6 in \cite{DM}. For every $k \in N$, we easily get
\begin{align}
\label{exp1}
\begin{split}
\| Y_\eps \|_{\mathcal{C}^\alpha ([-k, k]^2)} &= \| \rho_\eps \ast Y \|_{\mathcal{C}^\alpha ([-k, k]^2)} \\
&\leq \| Y \|_{\mathcal{C}^\alpha ([-k - 1, k + 1]^2)} \\
&\leq C \| G \ast \chi_{k+2} \xi \|_{\mathcal{C}^\alpha} \\
&\leq C \| \chi_{k + 2} \xi \|_{\mathcal{C}^{\alpha - 2}}
\end{split}
\end{align}
uniformly in $\eps \in (0, 1)$, where in the second inequality we used $Y = G \ast \xi$ and the fact that $\supp G \subseteq B(0, 1)$, and in the last inequality we used a Schauder estimate (see \cite{Sim}). We also note that
\begin{align}
\label{exp2}
\| e^{a Y_{\eps}} \|_{\mathcal{C}^\alpha_{-\delta}} \leq C \sup_{k \in \N} \frac{\| e^{a Y_\eps} \|_{\mathcal{C}^\alpha ([-k, k]^2)}}{k^\delta} \leq C \sup_{k \in \N} \frac{\exp (C|a| \| Y_\eps \|_{\mathcal{C}^\alpha ([-k, k]^2)})}{k^\delta}.
\end{align}
Thus, for $p > 1$ big enough, by \eqref{exp2}, \eqref{exp1}, and \eqref{expnoise}, we get
\begin{align*}
\E \Big( \sup_{\eps \in (0, 1)} \| e^{a Y_\eps} \|_{\mathcal{C}^\alpha_{- \delta}}^p \Big) &\leq C \E \bigg( \bigg| \sup_{\eps \in (0, 1)} \sup_{k \in \N} \frac{\exp (C|a| \| Y_\eps \|_{\mathcal{C}^\alpha ([-k, k]^2)})}{k^\delta} \bigg|^p \bigg) \\
&\leq C \E \bigg( \bigg| \sup_{k \in \N} \frac{\exp (C|a| \| \chi_{k + 2} \xi \|_{\mathcal{C}^{\alpha - 2} })}{k^\delta} \bigg|^p \bigg) \\
&\leq C \sum_{k = 1}^\infty \frac{\E \Big( \exp (p C |a| \| \chi_k \xi \|_{\mathcal{C}^{\alpha - 2}}) \Big)}{k^{\delta p}} \\
&\leq C \sum_{k = 1}^\infty \frac{\E \Big( \exp (\lambda \| \chi_k \xi \|_{\mathcal{C}^{\alpha - 2}}^2) \Big)}{k^{2 + \lambda'}} \\
&\leq C,
\end{align*}
where in the last step we used $\exp(p C |a| x) \leq C \exp (\lambda x^2)$ and we picked $p$ big enough such that $\delta p \geq 2 + \lambda'$. The bound \eqref{exponential} then follows.

We now consider \eqref{exp_conv}. By \eqref{convexpY}, \eqref{exponential}, and \eqref{expbound}, we get
\begin{align*}
\| e^{a Y_\eps} - e^{a Y} \|_{L^\infty_{- \delta}} \leq C \| Y_\eps - Y \|_{L^\infty_{- \frac{\delta}{2}}} \Big( \| e^{a Y_\eps} \|_{L^\infty_{- \frac{\delta}{2}}} + \| e^{a Y} \|_{L^\infty_{- \frac{\delta}{2}}} \Big) \leq C(\omega) \eps^\kappa,
\end{align*}
which is the desired estimate.
\end{proof}


\subsection{Estimates in spaces at negative regularity}

The main point of this subsection is the following result where the convergence in negative regularity occurs almost surely in $\omega \in \Omega$.

\begin{prop}
\label{prop:conv}
For $\alpha\in (0,1)$, $\delta>0$, and $\eps \in (0, \frac 12)$, there exists $\kappa\in (0,1)$ such that
\begin{equation}\label{lem:ConvergenceYnew}\|\nabla Y_\varepsilon- \nabla Y\|_{\mathcal{C}^{\alpha-1}_{-\delta}}+
\|\wDYe-\wDY\|_{\mathcal{C}^{\alpha-1}_{-\delta}}
\leq C(\omega) \eps^{\kappa}, \quad \hbox{ a.s. } \omega.
\end{equation}
Moreover, we have
\begin{equation}\label{tY2_conv}
\| \widetilde{\wDYe} - \widetilde{\wDY} \|_{\mathcal{C}^{\alpha - 1}_{-\delta}} \leq C(\omega) \eps^{\kappa}, \quad \hbox{ a.s. } \omega,
\end{equation}
where $\widetilde{\wDYe}$ and $\widetilde{\wDY}$ are defined in \eqref{tDY2e} and \eqref{tDY2}, respectively.
\end{prop}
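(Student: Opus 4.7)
\emph{Easy term.} The bound on $\nabla Y_\eps-\nabla Y$ follows immediately from already-established ingredients. Since $\nabla Y_\eps=\rho_\eps\ast \nabla Y$ and Lemma \ref{lem:BoundNoiseP} gives $Y\in \mathcal{C}^{\alpha'}_{-\delta}$ for any $\alpha'\in (0,1)$, one has $\nabla Y\in \mathcal{C}^{\alpha'-1}_{-\delta}$. Choosing $\alpha'=\alpha+\eta$ with $\eta\in (0,1-\alpha)$ and applying \eqref{eq:l2} yields
\begin{equation*}
\|\nabla Y_\eps-\nabla Y\|_{\mathcal{C}^{\alpha-1}_{-\delta}} = \|\rho_\eps\ast \nabla Y-\nabla Y\|_{\mathcal{C}^{\alpha-1}_{-\delta}} \leq C\eps^\eta \|\nabla Y\|_{\mathcal{C}^{\alpha-1+\eta}_{-\delta}} \leq C(\omega)\,\eps^\eta \quad\hbox{a.s.},
\end{equation*}
which is the first half of \eqref{lem:ConvergenceYnew}.

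\emph{Wick square.} For $\wDYe-\wDY$, Lemma \ref{l2} no longer applies directly, so my plan is first to establish an $L^p$ moment bound
\begin{equation*}
\E\Big(\|\wDYe-\wDY\|_{\mathcal{C}^{\alpha-1}_{-\delta}}^p\Big)\leq C_p\, \eps^{\kappa p} \qquad (p\gg 1),
\end{equation*}
and then to upgrade it to an almost sure $O(\eps^{\kappa'})$ bound via Chebyshev plus Borel-Cantelli applied along the dyadic sequence $\eps_k=2^{-k}$, filling in intermediate $\eps\in(2^{-k-1},2^{-k})$ by a triangle-inequality argument. Setting $G_\eps:=\rho_\eps\ast G$, the identity \eqref{eq:wDYdef} together with \eqref{WICK:} gives
\begin{equation*}
\wDYe(x)-\wDY(x) = \int_{\R^2}\int_{\R^2} \Big(\nabla G_\eps(x-z_1)\nabla G_\eps(x-z_2) - \nabla G(x-z_1)\nabla G(x-z_2)\Big)\boldsymbol{\xi}(\dd z_1)\boldsymbol{\xi}(\dd z_2),
\end{equation*}
so $\wDYe-\wDY$ lives in the second Wiener-It\^o chaos. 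Gaussian hypercontractivity reduces the $p$-th moment to a second moment, and the Wiener isometry gives, for fixed $x$ and dyadic $N$,
\begin{equation*}
\E\Big(|\Delta_N(\wDYe-\wDY)(x)|^2\Big) \leq 2\,\|K_{N,\eps}^{x}\|_{L^2(\R^2\times\R^2)}^2,
\end{equation*}
where $K_{N,\eps}^{x}(z_1,z_2)$ is $\Delta_N$ (in the $x$-variable) applied to the integrand above. The telescoping identity $a\otimes a-b\otimes b = (a-b)\otimes a + b\otimes (a-b)$ reduces the estimate to $L^2$ bounds on $\nabla G_\eps-\nabla G$ paired with $\nabla G$ or $\nabla G_\eps$, and Fourier analysis at frequency $N$ extracts a factor $(\eps N)^{\eta}$ from $\hat\rho(\eps\xi)-1$ combined with the localization $|\xi|\sim N$. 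This should produce $\|K_{N,\eps}^{x}\|_{L^2}^2\leq C\,\w^{2\delta}\,\eps^{2\kappa}\,N^{2(1-\alpha)-2\kappa'}$ for small $\kappa,\kappa'>0$, after which integrating in $x$ against a slightly heavier weight, taking the supremum in $N$ (the definition of $\mathcal{C}^{\alpha-1}_{-\delta}$), and invoking hypercontractivity to pass from $L^2$ to $L^p$ closes the moment bound.

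\emph{The tilded version and main obstacle.} The estimate \eqref{tY2_conv} follows at once by combining the bound just obtained with \eqref{convexpY}, via the identity
\begin{equation*}
\tDYe-\tDY = (\wDYe-\wDY)-(\varphi\ast \xi_\eps-\varphi\ast \xi).
\end{equation*}
The main obstacle is the computation of $\|K_{N,\eps}^{x}\|_{L^2(\R^2\times\R^2)}^2$ with the precise $(\eps,N)$-dependence: $\nabla G$ is not square-integrable near the origin (it behaves like $|x|^{-1}$), so the renormalization constant $c_\eps=\|\rho_\eps\ast \nabla G\|_{L^2}^2$ in \eqref{WICK:} diverges logarithmically, and one must use the exact cancellation in $\nabla G_\eps\otimes \nabla G_\eps-\nabla G\otimes \nabla G$ to avoid this divergence. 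Tracking the $\w^{-\delta}$ weight throughout---which is what distinguishes the full-space situation from the compact-domain cases treated in \cite{HL15,DW18,TzV1,TzV2}---is the delicate bookkeeping point, and is also what upgrades the in-probability convergence of \cite{DM} to the almost sure convergence claimed here.
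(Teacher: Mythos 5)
Your treatment of $\nabla Y_\eps-\nabla Y$ and of the tilded quantity \eqref{tY2_conv} matches the paper's (the paper uses \eqref{eq:l2} together with \eqref{lem:BoundNoise} for the first, and \eqref{convexpY} for the second), and your identification of $\wDYe-\wDY$ as a second-chaos element whose moments can be bounded by $C_k\eps^{k\kappa_0}$ is correct --- in fact that moment bound is exactly \eqref{e1.14bis}, imported wholesale from \cite{HL15}, so the kernel computation you flag as the ``main obstacle'' does not need to be redone.

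The genuine gap is in the upgrade from moments to the almost sure bound. The statement is used later with a single random constant valid simultaneously for all $\eps\in(0,\tfrac12)$ (e.g.\ $\sup_{\eps\in(2^{-(k+1)},2^{-k}]}$ appears in the proof of Theorem \ref{mainthm}), so Chebyshev plus Borel--Cantelli along $\eps_k=2^{-k}$ only controls a countable subfamily, and your ``triangle-inequality argument'' for intermediate $\eps$ is precisely where the difficulty sits: to bound $\|\wDYe-\Wick{\nabla Y_{2^{-k}}^2}\|_{\mathcal{C}^{\alpha-1}_{-\delta}}$ uniformly over the uncountably many $\eps$ in a dyadic block you need a \emph{pathwise} modulus of continuity in the parameter $\eps$. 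The only pathwise estimate available (bounding the non-Wick difference $\nabla Y_\eps^2-\nabla Y_\eta^2$ by product estimates and the renormalization constants by $|c_\eps-c_\eta|\le C\eps^{-\gamma}|\eps-\eta|$, as in \eqref{wicK}--\eqref{e1.15}) degenerates like $\eps^{-(2+\beta+\zeta)}|\eps-\eta|$, which is \emph{not} small when $|\eps-\eta|\sim 2^{-k}\sim\eps$; so the dyadic net is too coarse and the triangle inequality does not close. The paper resolves this by interpolating between the degenerate pathwise bound \eqref{e1.16} and the moment bound \eqref{e1.17} in three regimes of $(\eps,\eta,|\eps-\eta|)$ to obtain $\E\big(\|\Wick{\nabla Y_\eps^2}-\Wick{\nabla Y_\eta^2}\|^k\big)\le C|\eps-\eta|^{k\kappa_0/(2+\kappa_0+\beta+\zeta)}$ for \emph{all} pairs, and then invokes the Kolmogorov continuity criterion to get almost sure H\"older continuity of $\eps\mapsto\Wick{\nabla Y_\eps^2}$ on $[0,1]$. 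You would need to supply this step (or an equivalently fine, polynomially refined net with the same pathwise input) for the proof to be complete.
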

To prove Proposition \ref{prop:conv}, we need the following result which follows from \cite{HL15}.
\begin{lemma}

For any  $\alpha\in (0,1)$ and $\delta>0$, we have the bound 
\begin{equation}\label{lem:BoundNoise}
	\|\nabla Y\|_{\mathcal{C}^{\alpha-1}_{-\delta}}+\|\wDY\|_{\mathcal{C}^{\alpha-1}_{-\delta}}\leq C(\omega),
	\quad \hbox{ a.s. } \omega.
\end{equation}
\end{lemma}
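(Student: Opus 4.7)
The plan is to establish \eqref{lem:BoundNoise} by combining second-moment bounds, Gaussian hypercontractivity on fixed Wiener chaoses, and a Chebyshev/Borel-Cantelli argument applied scale-by-scale to the Littlewood-Paley decomposition. Since by \eqref{holder} and \eqref{besovdef},
$$\|f\|_{\mathcal{C}^{\alpha-1}_{-\delta}} = \sup_{N \text{ dyadic}} N^{\alpha-1} \|\Delta_N f\|_{L^\infty_{-\delta}},$$
it suffices, for each of $f = \nabla Y$ and $f = \wDY$, to exhibit some exponent $r$ large enough that
$$\E\, \|\Delta_N f\|_{L^\infty_{-\delta}}^{r} \leq C_r\, N^{2},$$
uniformly in dyadic $N \geq \tfrac12$, and to combine this with Chebyshev, Borel-Cantelli, and summability over dyadic $N$ of $N^{2 - (1-\alpha) r}$, which is ensured as soon as $r > 2/(1-\alpha)$.

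For $\nabla Y$, the random variable $\Delta_N \nabla Y(x) = (\xi,(\Delta_N \nabla G)(x - \cdot))$ is a centered real Gaussian with variance
$$\E\,|\Delta_N \nabla Y(x)|^2 \;=\; \|\Delta_N \nabla G\|_{L^2}^2,$$
independent of $x$. A Plancherel computation shows this quantity is $O(1)$ uniformly in $N$, since $\widehat{\nabla G}(\eta) \sim i\eta/|\eta|^2$ at high frequencies and $\int_{|\eta|\sim N}|\eta|^{-2}\dd\eta = O(1)$. Gaussian hypercontractivity on the first Wiener chaos then gives $\E\,|\Delta_N \nabla Y(x)|^r \leq C_r$ uniformly in $x$ and $N$. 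Integrating against $\w^{-\delta r}$ with $r$ chosen so that $\delta r > 2$ yields $\E\,\|\Delta_N \nabla Y\|_{L^r_{-\delta}}^r \leq C_r$. Finally, a weighted Bernstein-type inequality,
$$\|\Delta_N h\|_{L^\infty_{-\delta}} \;\leq\; C\,N^{2/r}\,\|\Delta_N h\|_{L^r_{-\delta}},$$
(which follows by using $\Delta_N = \tilde\Delta_N\circ \Delta_N$, the pointwise bound $\w^{-\delta}\leq 2^\delta \langle y\rangle^{-\delta}\langle x-y\rangle^{\delta}$, and integrability of $\tilde K(N\,\cdot)\langle \cdot\rangle^{\delta}$) upgrades this to $\E\,\|\Delta_N \nabla Y\|_{L^\infty_{-\delta}}^r \leq C_r\, N^{2}$, as required.

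For $\wDY$, the same scheme applies, but now $\Delta_N \wDY(x)$ belongs to the \emph{second} homogeneous Wiener chaos. Indeed, by \eqref{eq:wDYdef},
$$\Delta_N \wDY(x) \;=\; \int_{\R^2}\!\!\int_{\R^2} f_{N,x}(z_1,z_2)\,\boldsymbol{\xi}(\dd z_1)\boldsymbol{\xi}(\dd z_2),
\qquad f_{N,x}(z_1,z_2) = \int_{\R^2} K_N(x-y)\nabla G(y-z_1)\nabla G(y-z_2)\,\dd y,$$
and the isometry for double Wiener-Itô integrals gives $\E\,|\Delta_N \wDY(x)|^2 = 2\|f_{N,x}\|_{L^2(\R^2\times\R^2)}^2$. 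A Fourier-side estimate shows this is uniformly $O(1)$, the Wick renormalization precisely cancelling the diagonal contribution. Nelson's hypercontractive inequality on the second chaos then gives $\E\,|\Delta_N \wDY(x)|^r \leq C_r(\E\,|\Delta_N \wDY(x)|^2)^{r/2} \leq C_r'$ uniformly in $x$ and $N$, and the remainder of the argument—integration against the weight, weighted Bernstein, and Borel-Cantelli—proceeds exactly as for $\nabla Y$.

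\textbf{The main obstacle} is verifying the uniform second-moment bound for the Wick square: one must control $\|f_{N,x}\|_{L^2(\R^2\times\R^2)}$ uniformly in $x$ and $N$, despite the fact that $\nabla G$ is only of class $L^2_{\mathrm{loc}}$ near the origin with non-integrable tails. The control comes from computing the double integral in Fourier variables and exploiting the cancellation built into Wick ordering, which removes the $z_1 = z_2$ diagonal (the origin of the divergent constant $c_\eps \sim |\ln\eps|$ in \eqref{WICK:}) and leaves a kernel whose $L^2$ norm is frequency-uniformly bounded. The remaining low-frequency block $\Delta_{1/2}$ requires only a separate, easier argument since the corresponding convolution kernels are Schwartz.
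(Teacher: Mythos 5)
The paper does not actually prove this lemma: it is stated as a result that ``follows from \cite{HL15}'', so your self-contained argument is necessarily a different route --- in effect you are reconstructing the Hairer--Labb\'e moment-plus-chaos argument that the citation points to. Your architecture is sound: the reduction of $\|\cdot\|_{\mathcal{C}^{\alpha-1}_{-\delta}}$ to dyadic blocks, the Gaussian/second-chaos moment bounds with hypercontractivity, the weighted Bernstein inequality $\|\Delta_N h\|_{L^\infty_{-\delta}}\leq CN^{2/r}\|\Delta_N h\|_{L^r_{-\delta}}$ (your justification via $\Delta_N=\tilde\Delta_N\circ\Delta_N$ and $\w^{-\delta}\leq 2^{\delta}\langle y\rangle^{-\delta}\langle x-y\rangle^{\delta}$ is the same device the paper uses repeatedly, e.g.\ in the proof of \eqref{B0p2-delta}), and the Chebyshev/Borel--Cantelli summation over dyadic $N$ all work. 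The first-chaos computation for $\nabla Y$ is correct: since $\Delta G=\delta_0+\varphi$ with $\varphi$ Schwartz, one has $|\widehat{\nabla G}(\eta)|\lesssim\langle\eta\rangle^{-1}$ and hence $\|\Delta_N\nabla G\|_{L^2}^2=O(1)$ uniformly in $N$.

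One intermediate claim is inaccurate, though harmlessly so: for the Wick square, $\E|\Delta_N\wDY(x)|^2=2\|f_{N,x}\|_{L^2(\R^2\times\R^2)}^2$ is \emph{not} uniformly $O(1)$ in $N$. Passing to Fourier variables, $\|f_{N,x}\|_{L^2}^2\lesssim\int_{|\zeta_1+\zeta_2|\sim N}\langle\zeta_1\rangle^{-2}\langle\zeta_2\rangle^{-2}\,\dd\zeta_1\dd\zeta_2$, and the inner convolution $\int\langle\zeta\rangle^{-2}\langle\sigma-\zeta\rangle^{-2}\dd\zeta\sim\langle\sigma\rangle^{-2}\log\langle\sigma\rangle$ is logarithmically worse than the scaling-critical rate, so the block variance is of order $\log N$ (equivalently: the covariance of $\wDY$ blows up like $\log^2$ on the diagonal, not like a bounded function, and Wick ordering removes only the divergent constant $c_\eps$, not this off-diagonal logarithm). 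This does not damage your conclusion --- you end up with $\E\,\|\Delta_N \wDY\|_{L^\infty_{-\delta}}^{r}\leq C_r N^{2}(\log N)^{r/2}$, and $\sum_N N^{2-(1-\alpha)r}(\log N)^{r/2}<\infty$ for $r>2/(1-\alpha)$ exactly as before --- but the sentence asserting a frequency-uniform $L^2$ bound on the kernel, presented as the resolution of ``the main obstacle'', should be corrected to allow this logarithmic loss.
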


\begin{proof}[Proof of Proposition \ref{prop:conv}]
The estimate $\|\nabla Y_\varepsilon- \nabla Y\|_{\mathcal{C}^{\alpha-1}_{-\delta}} \leq C(\omega) \eps^{\kappa}$
follows by combining \eqref{eq:l2} with $\|\nabla Y\|_{\mathcal{C}^{\alpha-1}_{-\delta}}\leq C(\omega)$ (see \eqref{lem:BoundNoise}). Also, the estimate \eqref{tY2_conv} follows immediately from \eqref{lem:ConvergenceYnew} and \eqref{convexpY}

Hence, we focus on the proof of  $\|\wDYe-\wDY\|_{\mathcal{C}^{\alpha-1}_{-\delta}}\leq C(\omega) \eps^{\kappa}$. The argument is a little more complicated since Wick products cannot be estimated pathwise. It is shown in \cite{HL15} that there exists $\kappa_0>0$ such that for all $k\ge 1$ we have:
\begin{equation}\label{e1.14bis}
\E\left(\|\wDYe-\wDY\|_{\mathcal{C}^{\alpha-1}_{-\delta}}^k\right) \le C \eps^{k\kappa_0}.
\end{equation} 

Recall \eqref{WICK:} and notice that by elementary considerations we have:
\begin{align}
\label{tinbenfr}
\begin{split}
|c_\eps-c_\eta| &\le \|(\rho_\eps-\rho_\eta) \ast \nabla G\|_{L^2}\Big(\|\rho_\eps \ast \nabla G\|_{L^2}+\|\rho_\eta \ast \nabla G\|_{L^2}\Big) \\
& \le C|\ln \eps|^{\frac 12} \|\rho_\eps-\rho_\eta\|_{L^q}, \quad \forall q\in (2,\infty), \quad 0<\eps<\eta<\frac 12,
\end{split}
\end{align}
where we used $\nabla G\in L^p$ for any $p\in (1,2)$, along with Young convolution inequality and
 the classical bound $\|\rho_\eps \ast \nabla G\|_{L^2}^2=c_\eps \sim |\ln \eps|$ when $\eps\to 0$. 
By combining the following inequality (whose proof is elementary)
\begin{equation}\label{etaepseta}
\|\rho_\eps-\rho_\eta\|_{L^q} \le C \eps^{-3+2/q} |\eps-\eta|, \quad  0<\eps<\eta<\frac 12, \; q \ge 1,
\end{equation}
with \eqref{tinbenfr}, we get
\begin{equation}\label{e1.15}
  |c_\eps-c_\eta| \le C \eps^{-\gamma} |\eps-\eta|, \quad \quad  0<\eps<\eta<\frac 12, \quad \gamma>1.
 \end{equation}
 Moreover, by \eqref{prodim} and the estimate $\| \nabla G \ast f \|_{\mathcal{C}^{\gamma}_\chi} \leq C \| f \|_{\mathcal{C}^{\gamma - 1}_\chi}$ ($\gamma, \chi \in \R$) from \cite{HL15}, we obtain that for $1>\beta>1-\alpha$ and  $p> \frac 2{2-\alpha}$,
\begin{align}
\label{wicK}
\begin{split}
\| &\nabla Y_\eps^2-\nabla Y_\eta^2\|_{\mathcal{C}^{\alpha-1}_{-\delta}} \\
&\le C  \|\nabla Y_\eps- \nabla Y_\eta\|_{\mathcal{C}^{\alpha-1}_{-\frac \delta 2}}\left( \|\nabla Y_\eps\|_{\mathcal{C}^{\beta}_{-\frac \delta 2}}+   \|\nabla Y_\eta\|_{\mathcal{C}^{\beta}_{-\frac \delta 2}}\right)\\
&\le C \| \xi_\eps- \xi_\eta\|_{\mathcal{C}^{\alpha-2}_{-\frac \delta 2}}\left( \| Y_\eps\|_{\mathcal{C}^{\beta+1}_{-\frac \delta 2}}+\|Y_\eta\|_{\mathcal{C}^{\beta+1}_{-\frac \delta 2}}\right) \\
&\le C \| \xi_\eps- \xi_\eta\|_{L^p_{-\frac \delta 2}}\left( \| Y_\eps\|_{\mathcal{C}^{\beta+1}_{-\frac \delta 2}}+\|Y_\eta\|_{\mathcal{C}^{\beta+1}_{-\frac \delta 2}}\right),
\end{split}
\end{align}
where we used at the last step $L^p_{-\frac \delta 2}\subset \mathcal{C}^{\alpha-2}_{-\frac \delta 2}$
 for $p> \frac 2{2-\alpha}$. In fact, this embedding comes from
 the following computation (recall \eqref{eq:PullWeight}):
  \begin{equation*}\sup_N \Big(N^{\alpha-2}\|\Delta_N (\langle x \rangle^{-\frac \delta 2} f) \|_{L^\infty}\Big)
 \leq C \sup_N N^{\alpha-2}\|\Delta_N (\langle x \rangle^{-\frac \delta 2} f) \|_{W^{2-\alpha,p}}
 \le C \|f \|_{L^{p}_{-\frac \delta 2}},
\end{equation*}
where we used the classical Sobolev embedding $W^{2-\alpha,p}\subset L^\infty$ for $(2-\alpha)\cdot p>2$.
 \\
Now, using the Gaussianity of $\xi$ and Minkowski's inequality, we have the following estimates for $k\geq p$:
\begin{align}
\label{pkmin}
\begin{split}
\E\left(\| \xi_\eps- \xi_\eta\|_{L^p_{-\frac \delta2}}^k \right) &\leq \Big(\int_{\R^2} \w^{-\frac{p\delta}2}\E\left(|\xi_\eps(x)-\xi_\eta(x)|^k \right)^\frac pk dx\Big)^\frac kp \\
& \le C \Big(\int_{\R^2} \w^{-\frac{p\delta}2}\E\left(|\xi_\eps(x)-\xi_\eta(x)|^2 \right)^{\frac p2}dx\Big)^\frac kp \\
&= C\|\rho_\eps -\rho_\eta\|_{L^2}^k \\
&\le C \eps^{-2k} |\eps-\eta|^k,
\end{split}
\end{align}
where $C>0$ depends on $k$ and we have used \eqref{etaepseta} at the last step.
Then, by combining \eqref{wicK}, \eqref{pkmin}, and \eqref{Yzeta}, using Cauchy-Schwarz and \cite[Lemma 2.7]{DM}, we have that for any $k$ large enough,
 \begin{equation}\label{monteccv}
 \E\left( \|\nabla Y_\eps^2-\nabla Y_\eta^2\|_{\mathcal{C}^{\alpha-1}_{-\delta}}^k\right) \le C \eps^{-(2+\beta+\zeta)k} |\eps-\eta|^k.
 \end{equation}
By combining \eqref{monteccv} with \eqref{e1.15} and recalling \eqref{WICK:}, we get
 \begin{equation}\label{e1.16}
  \E\left( \|\Wick{\nabla Y_\eps^2}-\Wick{\nabla Y_\eta^2}\|_{\mathcal{C}^{\alpha-1}_{-\delta}}^k\right) \le C \eps^{-(2+\beta+\zeta)k} |\eps-\eta|^k.
\end{equation}
On the other hand, by \eqref{e1.14bis}, we have
\begin{equation}\label{e1.17}
  \E\left( \|\Wick{\nabla Y_\eps^2}-\Wick{\nabla Y_\eta^2}\|_{\mathcal{C}^{\alpha-1}(\w^{-\delta})}^k\right) \le C \eta^{k\kappa_0}, \quad 0<\varepsilon<\eta < \frac 12.
\end{equation}
Let us now consider several cases. 

\smallskip \noindent
{\bf Case 1:} $2\eps<\eta$.
We have necessarily $\eta< 2|\eps-\eta|$ and by  \eqref{e1.17},
$$
  \E\left( \|\Wick{\nabla Y_\eps^2}-\Wick{\nabla Y_\eta^2}\|_{\mathcal{C}^{\alpha-1}(\w^{-\delta})}^k\right) \le C |\eps-\eta|^{k\kappa_0}\le C  |\eps-\eta|^{\frac{k\kappa_0}{2+\kappa_0+\beta+\zeta}}.
$$
{\bf Case 2:} $\eps<\eta\le 2\eps$ and $\eps< |\eps-\eta|^{\frac 1{\kappa_0+2+\beta+\zeta}}$.
Then, again by  \eqref{e1.17} we get
$$
 \E\left( \|\Wick{\nabla Y_\eps^2}-\Wick{\nabla Y_\eta^2}\|_{\mathcal{C}^{\alpha-1}(\w^{-\delta})}^k\right) \le C\eps^{k\kappa_0}\le  C|\eps-\eta|^{\frac{k\kappa_0}{\kappa_0+2+\beta+\zeta}}.
$$
{\bf Case 3:} $\eps<\eta\le 2\eps$ and $\eps\ge  |\eps-\eta|^{\frac 1{\kappa_0+2+\beta+\zeta}}$.
In this case, we use \eqref{e1.16} to get
$$  
\E\left( \|\Wick{\nabla Y_\eps^2}-\Wick{\nabla Y_\eta^2}\|_{\mathcal{C}^{\alpha-1}(\w^{-\delta})}^k\right) \le C\eps^{-(2+\beta+\zeta)k} |\eps-\eta|^k \le C  |\eps-\eta|^{\frac{k\kappa_0}{2+\kappa_0+\beta+\zeta}}.
$$
Summarizing, we get
$$
  \E\left( \|\Wick{\nabla Y_\eps^2}-\Wick{\nabla Y_\eta^2}\|_{\mathcal{C}^{\alpha-1}_{-\delta}}^k\right) \le C  |\eps-\eta|^{\frac{k\kappa_0}{2+\kappa_0+\beta+\zeta}},
  \quad \forall \eps,\, \eta \in \Big( 0, \frac 12 \Big).
$$
It remains to choose $k$ large enough so that $\frac{k\kappa_0}{2+\kappa_0+\beta+\zeta}>1$ and we may invoke Kolmogorov continuity criterion (see \cite[Theorem 3.3]{DPZa14}) to deduce that 
$\eps\mapsto \Wick{\nabla Y_\eps^2}$ from $[0,1]$ to $\mathcal{C}^{\alpha-1}_{-\delta}$ is almost surely H\"older continuous of exponent arbitrarily less than $\frac{\kappa_0}{2+\kappa_0}-\frac 1k$ on $[0,1]$. The proof is complete. 
\end{proof}

\section{Linear estimates}
\label{sec:lin}

We introduce the propagator $S_{A,V}(t)$ associated with
\begin{equation}\label{LS}\ii \partial_t w=\Delta w- 2\nabla A\cdot \nabla w+V w.\end{equation}
We also denote for shortness 
\begin{equation}\label{eq:HAV}H_{A,V}=\Delta - 2\nabla A\cdot \nabla +V.\end{equation}
In the sequel we shall assume that $A$ and $V$ satisfy:
\begin{equation}\label{exponentialAV} 
\forall \delta>0, ~ \exists C>0 \hbox{ s.t. }
\|\langle x \rangle^{-\delta} e^{\pm A}\|_{L^\infty}\leq C;\end{equation}
\begin{equation}\label{functionalDMAV}
\forall \delta>0, ~ \exists C>0 \hbox{ s.t. } \Big |\int_{\R^2} V |\varphi|^2 e^{-2A} dx\Big | 
\leq C \|\varphi\|_{H^\frac 12_{\delta}}^2, \quad \forall \varphi\in H^\frac 12_{\delta}.
\end{equation}
Under the assumption \eqref{exponentialAV}, we have that by \eqref{eq:PullWeight} and classical elliptic regularity,
\begin{equation}\label{equivH2A} 
\forall \delta>0, ~ \exists c,C>0 \hbox{ s.t. } c\|u\|_{H^2_{-\delta}}\leq \|\Delta u e^{-A}\|_{L^2}+ \|u e^{-A}\|_{L^2}\leq C \|u\|_{H^2_\delta}.
\end{equation}
Here, we emphasize that the constants $c$ and $C$ in \eqref{equivH2A} depend only on the constant $C$ in \eqref{exponentialAV}.
It is easy to check that any solution to \eqref{LS} satisfies the following conservation laws:
\begin{equation}\label{LSL2}\frac{\dd}{\dd t} \int_{\R^2} |w(t)|^2 e^{-2A} dx=0;\end{equation}
\begin{equation}\label{LSH1}\frac {\dd }{\dd t} \int_{\R^2} \Big( |\nabla w(t)|^2 e^{-2A} - V |w(t)|^2 e^{-2A} \Big) dx=0.\end{equation}
Next we associate to any couple $(A,V)$ the following quantity for any given $\delta>0$, $r\in (2,\infty)$:
\begin{align}
\label{AVdeltar}
\begin{split}
|(A,V)|_{\delta, r} &=
\|\w^{-\delta} \nabla A e^{-A}\|_{L^r} 
+ \|\w^{-2\delta } \nabla A e^{-2A} V\|_{L^\frac r2} +\|\w^{-\delta}V e^{-A}\|_{L^r} \\
&\quad +\|\w^{-\delta}e^{-(p+2)A}\|_{L^\infty}+
\|\w^{-\delta} V e^{-(p+2)A}\|_{L^r} \\
&\quad +\|\w^{-\delta}\nabla A e^{-(p+2)A}\|_{L^r}
+ \|\w^{-\delta} e^{-(p+1)A}\|_{L^\infty} \\
&\quad + \|\w^{-\delta} \nabla A e^{-(p+1)A}\|_{L^r} + \|\w^{-\delta} e^{-pA}\|_{L^\infty} \\
&\quad + \|\w^{-\delta} \nabla A e^{-pA}\|_{L^r}.
\end{split}
\end{align}

\subsection{Linear Energy Estimates}

We first prove some $L^2$ weighted estimates for the propagator $S_{A,V}(t)$ associated with \eqref{LS}. 
\begin{prop}
\label{propkey} Assume $A, V$ satisfy \eqref{exponentialAV} and \eqref{functionalDMAV}.

\smallskip \noindent
\textup{(i)} For every $\delta > 0$, we have
\begin{equation}\label{linconserL2}
\|S_{A,V}(t)\varphi\|_{L^\infty((0,\infty);L^2_{-\delta})}\leq  C \| \varphi\|_{L^2_{\delta}}.
\end{equation}
\smallskip \noindent
\textup{(ii)} For every $T > 0$ and $0 < \delta < \delta^+$ satisfying $\frac \delta 2+2\delta^+<1$, we have
\begin{equation}\label{linconserL2weight}
\|S_{A,V}(t)\varphi\|_{L^\infty((0,T); L^2_{\delta})}\leq C \|\varphi\|_{H^1_{\delta^+}}.
\end{equation}
\smallskip \noindent
\textup{(iii)} For every $T > 0$, $s \in (0, 1)$, and $0 < \delta < \delta^+$ satisfying $\delta+9\delta^+<4s$, we have
\begin{equation}\label{L2Hs} 
\|S_{A,V}(t)\varphi\|_{L^\infty((0,T); L^2_{\delta})}\leq C \|\varphi\|_{H^{s^+}_{\frac{\delta^+}s}}.
\end{equation}
\smallskip \noindent
\textup{(iv)} For every $T > 0$, $r \in (2, \infty)$, and $0 < \delta < \delta^+$ satisfying $\frac \delta 2 + 2\delta^+<\frac{r-2}{3r+2}$, we have
\begin{equation}\label{H2} 
\|S_{A,V}(t)\varphi\|_{L^\infty((0, T);H^2_{-\delta})}\leq {\mathcal P} \Big (|(A,V)|_{\delta,r}\Big)\|\varphi\|_{H^2_{(\frac{3r+2}{r-2})\delta^+ }}.
\end{equation}

\end{prop}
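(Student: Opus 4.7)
The plan is to exploit the formal self-adjointness of $H_{A,V}$ with respect to the weighted inner product $\langle f,g\rangle_{e^{-2A}}=\int f\bar g\,e^{-2A}\,dx$, as encoded in the conservation laws \eqref{LSL2}--\eqref{LSH1}, together with the elliptic characterization \eqref{equivH2A}. Part (i) is then immediate: \eqref{LSL2} says that $\|w(t)e^{-A}\|_{L^2}^2$ is constant in $t$, and \eqref{exponentialAV} provides the two-sided sandwich $c\|f\|_{L^2_{-\delta}}\le \|fe^{-A}\|_{L^2}\le C\|f\|_{L^2_{\delta}}$ (with constants $c,C$ depending on the parameter $\delta$ in \eqref{exponentialAV}), which at once yields \eqref{linconserL2}.

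For part (ii), I will introduce the functional $F(t)=\int |w(t)|^2 \langle x\rangle^{2\mu}e^{-2A}\,dx$ with $\mu=\delta+\kappa$ for a small $\kappa>0$. The key observation is that the probability-current identity
\[
\partial_t |w|^2=2\nabla\cdot \Im(\bar w\nabla w)-4\nabla A\cdot \Im(\bar w\nabla w),
\]
combined with an integration by parts in the first term, produces the $\nabla A$-free formula
\[
\tfrac{d}{dt}F(t)=-2\int \Im(\bar w\nabla w)\cdot \nabla \langle x\rangle^{2\mu}\,e^{-2A}\,dx,
\]
in which the two $\nabla A$-contributions cancel identically. Cauchy--Schwarz together with $|\nabla \langle x\rangle^{2\mu}|\lesssim \langle x\rangle^{2\mu-1}$ and $\mu\le 1$ (so that $\langle x\rangle^{2\mu-2}$ is bounded) gives $|\tfrac{d}{dt}F|\lesssim \sqrt{F(t)}\,\|\nabla w(t)\|_{L^2(e^{-2A})}$. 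The $L^2(e^{-2A})$-norm of $\nabla w$ will be controlled using the energy conservation \eqref{LSH1}, the functional bound \eqref{functionalDMAV}, and the Besov interpolation \eqref{lem:BesovInterpolation}; after absorbing a small fraction of $\|\nabla w\|_{L^2(e^{-2A})}^2$ on the left-hand side and using (i) for the $L^2(e^{-2A})$-norm of $w$, one obtains $\|\nabla w(t)\|_{L^2(e^{-2A})}^2\lesssim \|\varphi\|_{H^1_{\delta^+}}^2+F(t)$. Young's inequality gives $\tfrac{d}{dt}F\lesssim \|\varphi\|_{H^1_{\delta^+}}^2+F$, Gronwall's lemma on $[0,T]$ closes the estimate, and $\|w(t)\|_{L^2_{\delta}}^2\lesssim F(t)$ follows from $\langle x\rangle^{2\kappa}e^{-2A}\gtrsim 1$ via \eqref{exponentialAV}. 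The constraint $\delta/2+2\delta^+<1$ is precisely what is needed for the auxiliary parameters (the loss $\kappa$, the weights appearing in \eqref{lem:BesovInterpolation}, etc.) to be chosen compatibly.

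Part (iii) will be proved by a fractional-order variant of the same scheme: one replaces $F$ by an $H^s$-type weighted analogue measuring $w$ in a fractional Sobolev space and runs the same cancellation-plus-Gronwall argument, with the condition $\delta+9\delta^+<4s$ playing the role of the weight constraint from (ii) at fractional regularity. For part (iv), the key observation is that $\partial_t w=-iH_{A,V}w$ satisfies the same linear equation \eqref{LS}, so applying (i) to $\partial_t w$ yields $\|\partial_t w(t)e^{-A}\|_{L^2}=\|H_{A,V}\varphi\cdot e^{-A}\|_{L^2}$. Expanding $H_{A,V}\varphi=\Delta\varphi-2\nabla A\cdot\nabla\varphi+V\varphi$ and estimating each piece by H\"older's inequality, the components of the seminorm $|(A,V)|_{\delta,r}$, and the weighted Sobolev interpolation \eqref{*****} applied at $q=\tfrac{2r}{r-2}$ (which produces exactly the weight $\tfrac{3r+2}{r-2}\delta^+$), I obtain the desired bound on $\|\partial_t w(t)e^{-A}\|_{L^2}$. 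Rewriting the equation as $\Delta w=i\partial_t w+2\nabla A\cdot \nabla w-Vw$ and invoking the elliptic equivalence \eqref{equivH2A}, together with analogous estimates on the two lower-order terms (using $|(A,V)|_{\delta,r}$, the $L^p$-type bounds on $w(t)$ and $\nabla w(t)$ from parts (ii)--(iii), and \eqref{*****}), produces \eqref{H2}.

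The main obstacle is in part (ii): the $H^1$-energy \eqref{LSH1} naturally controls only a negative-weight quantity ($\|\nabla w(t)\|_{L^2(e^{-2A})}$ dominates $L^2_{-\delta'}$-norms), whereas the conclusion demands the positive-weight bound $\|w(t)\|_{L^2_\delta}$. Everything hinges on the exact cancellation of the $\nabla A$-contributions in $\tfrac{d}{dt}F$; without this cancellation one would be confronted with an uncontrollable term involving $\nabla A$ (which only lies in spaces with negative-weight integrability), and no Gronwall-style closure would be possible.
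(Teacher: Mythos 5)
Parts (i) and (ii) of your proposal coincide with the paper's argument: (i) is mass conservation \eqref{LSL2} plus \eqref{exponentialAV}, and your cancellation identity for $\tfrac{\dd}{\dd t}\int |w|^2\w^{2\mu}e^{-2A}\,dx$ is exactly the computation \eqref{latersee} (borrowed from \cite{DM}), followed by the same control of $\|\nabla w\,e^{-A}\|_{L^2}$ via \eqref{LSH1}, \eqref{functionalDMAV}, interpolation and Gronwall. Part (iv) is correct but takes a genuinely different route: you conserve $\|\partial_t w(t)e^{-A}\|_{L^2}=\|H_{A,V}\varphi\,e^{-A}\|_{L^2}$ (since $\partial_t w$ solves the same equation with time-independent coefficients) and then recover $\|\Delta w\,e^{-A}\|_{L^2}$ by elliptic regularity \eqref{equivH2A}, estimating the lower-order terms $\nabla A\cdot\nabla w$ and $Vw$ with H\"older, $|(A,V)|_{\delta,r}$, \eqref{*****} at $q=\tfrac{2r}{r-2}$ and part (ii), and absorbing since the exponent $\tfrac{r+2}{2r}$ is below $1$. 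The paper instead uses the conservation of the modified energy $\mathcal{E}_{A,V}$ from Proposition \ref{modifen} with $\lambda=0$ together with the bound \eqref{intermF} on $\mathcal{F}_{A,V}$; your version is a bit more elementary for the linear flow, while the paper's reuses machinery that is needed anyway in the nonlinear Sections \ref{sec:energy}--\ref{sec:H2}. Both yield the weight $\tfrac{3r+2}{r-2}\delta^{+}$ under the same constraint on $\delta,\delta^{+}$.

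Part (iii) is a genuine gap. Your plan is to run "the same cancellation-plus-Gronwall argument" on an $H^{s}$-type weighted energy of the solution, but this does not go through under the stated hypotheses: $|D|^{s}w$ does not solve \eqref{LS}, and the commutators $[|D|^{s},\nabla A\cdot\nabla]$ and $[|D|^{s},V]$ that appear are not controlled by \eqref{exponentialAV} and \eqref{functionalDMAV} (the coefficients are rough and unbounded, and the exact cancellation you rely on in (ii) is tied to the specific quadratic form $\int|w|^2 h\,e^{-2A}$). Note also that the conclusion \eqref{L2Hs} asserts no fractional regularity of the solution -- only of the datum -- which signals that the right mechanism is not a fractional energy. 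The paper's proof interpolates the two bounds you already have: for each Littlewood--Paley piece it writes $\|S_{A,V}(t)\Delta_N\varphi\|_{L^2_{\delta}}\leq C\|S_{A,V}(t)\Delta_N\varphi\|_{L^2_{\eta}}^{s}\,\|S_{A,V}(t)\Delta_N\varphi\|_{L^2_{-\mu}}^{1-s}$ with weights chosen as in \eqref{necDTV}, bounds the first factor by \eqref{linconserL2weight} (costing one derivative, hence a factor $N^{s}$ after \eqref{LPtins}) and the second by \eqref{linconserL2} (costing none), and sums over $N$ using \eqref{sumupLP}; this is where the $H^{s^{+}}$ norm of the datum and the condition $\delta+9\delta^{+}<4s$ come from. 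You should replace your sketch of (iii) by this frequency-localized interpolation.
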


\begin{proof}
We denote for simplicity $w(t)=S_{A,V}(t)\varphi$. The conservation of mass
\eqref{LSL2}
along with \eqref{exponentialAV} imply \eqref{linconserL2}.
In order to prove \eqref{linconserL2weight} we rely on \eqref{LSL2} and \eqref{LSH1}.
After integration in time, by recalling \eqref{functionalDMAV} with $\chi=\frac{\delta}4$,
we get the following bound:
\begin{align*} 
\| &e^{-A} w(t)\|_{L^2}^2+
\|e^{-A}\nabla w(t)\|_{L^2}^2 \\
&\leq \|e^{-A} w(0)\|_{L^2}^2 +\|e^{-A}\nabla w(0)\|_{L^2}^2 + C \|w(0)\|_{H^\frac 12_{\frac{\delta}4}}^2
+ C \|w(t)\|_{H^\frac 12_{\frac{\delta}4}}^2 \\
&\leq C \|w(0)\|_{H^1_{\frac{\delta}2}}^2
+ C \|w(t)\|_{L^2_{\delta}}\|w(t)\|_{H^1_{-\frac{\delta}2}},
\end{align*}
where we used interpolation and \eqref{exponentialAV}.
By  using again \eqref{exponentialAV} we get 
the bound
\begin{equation}\label{epsilon}\|w(t)\|_{H^1_{-\frac{\delta}2}}\leq C
\|w(0)\|_{H^1_{\delta^+}} + C\|w(t)\|_{L^2_{\delta}}.\end{equation}
Next, by fixing $\bar \delta\in (\delta, \delta^+)$ and following the proof of Lemma 3.1 in \cite{DM}, we get
\begin{align}
\label{latersee}
\begin{split}
\frac{\dd}{\dd t} &\int_{\R^2}  \w^{2\bar \delta}|w(t)|^2 e^{-2A} \\
&=2\Re {\int_{\R^2} \w^{2\bar \delta} \,  \partial_t w(t) \overline{w}(t)\, e^{-2A}} dx\\
&= 2\,\Im{\int_{\R^2} \w^{2\bar \delta}(\Delta w(t) -2 \D w (t)\cdot  \D A\, )\bar w(t)\,e^{-2A}} dx \\
&=-2\,\Im{\int_{\R^2} \D\w^{2\bar \delta}\cdot \D w(t)\,\overline{w} (t) e^{-2A}} dx \\
&\leq C \,\int_{\R^2} \w^{2\bar \delta-1} |\D w(t)| \,|w(t)| \,e^{-2A} dx.
\end{split}
\end{align}
After integration in time, by using \eqref{exponentialAV} and the Cauchy-Schwartz inequality, we get
\begin{align}
\label{***}
\begin{split}
\|w(t) e^{-A}\|_{L^2_{\bar \delta}}^2 &\leq \|w(0) e^{-A}\|_{L^2_{\bar \delta}}^2
+C \int_0^t \|\nabla w(\tau)\|_{L^2_{2\bar \delta-1}} \|w(\tau)\|_{L^2_{\delta}}d\tau \\
&\leq  \|w(0) e^{-A}\|_{L^2_{\bar \delta}}^2
+C\int_0^t \|\nabla w(\tau)\|_{L^2_{-\frac{\delta}2}}^2 d\tau+ C \int_0^t \|w(\tau)\|_{L^2_{\delta}}^2d\tau,
\end{split}
\end{align}
where we used the condition $\frac \delta 2+2\delta^+<1$ in order to guarantee $\langle x\rangle^{2\bar \delta- 1}\leq \langle x\rangle^{-\frac{\delta}2}$.
By inserting in \eqref{***} the estimate \eqref{epsilon} and by using again \eqref{exponentialAV}, we deduce
$$\|w(t)\|_{L^2_{\delta}}^2\leq C (1+T) \|w(0)\|_{H^1_{\delta^+}}^2
+ C \int_0^t \|w(\tau)\|_{L^2_{\delta}}^2 d\tau$$
and we conclude by the Gronwall inequality. 

Next, we focus on \eqref{L2Hs} and we fix
$\eta, \mu>0$ such that
\begin{equation}\label{necDTV}
s \eta-(1-s)\mu=\delta, \quad \eta=\frac{\delta}s+\frac{(\delta^+-\delta)}{2s}.
\end{equation} 
Then, for every $t\in [0,T]$ we use interpolation to obtain
\begin{align*}
\|S_{A,V}(t) \varphi\|_{L^2_{\delta}} &\leq 
\sum_{{N-{\rm dyadic}}} \|S_{A,V} (t) \Delta_N \varphi\|_{L^2_{\delta}}\\
&\leq C
\sum_{{N-{\rm dyadic}}} \|S_{A,V}(t) \Delta_N \varphi\|_{L^2_{\eta}}^{s} \|S_{A,V}(t) \Delta_N \varphi\|_{L^2_{-\mu}}^{1-s} \\
&\leq C \sum_{{N-{\rm dyadic}}} \|S_{A,V}(t) \Delta_N \varphi\|_{L^2_{\eta}}^{s} \|\Delta_N \varphi\|_{L^2_{\frac \delta s}}^{1-s},
\end{align*}
where we have used \eqref{linconserL2} at the last step. By \eqref{linconserL2weight} (that we can use thanks to the conditions 
$\delta+9\delta^+<4s$ and $\delta^+>\delta$), we continue the estimate above as follows
\begin{align*}
\dots &\leq C \sum_{{N-{\rm dyadic}}} \|\Delta_N \varphi\|_{H^1_{\frac{\delta^+}{s}}}^{s} \|\Delta_N \varphi\|_{L^2_{\frac \delta s}}^{1-s} \\
&\leq C \sum_{{N-{\rm dyadic}}} N^s \|\Delta_N \varphi\|_{L^2_{\frac{\delta^+}s}}^{s} \|\Delta_N \varphi\|_{L^2_{\frac {\delta}{s}}}^{1-s}
\leq C \|\varphi\|_{H^{s^+}_{\frac{\delta^+}s}},
\end{align*}
where we used \eqref{sumupLP} and \eqref{LPtins}.

Concerning \eqref{H2}, we need to use \eqref{MODIFIED} (recall $\lambda=0$ since we are working with the linear flow) to obtain
$$\frac{\dd}{\dd t} \int_{\R^2} \Big(|\Delta w(t)|^2 e^{-2A} + \mathcal F_{A,V}(w(t))\Big) dx=0,$$
which in turn after integration in time implies 
$$\int_{\R^2} |\Delta w(t)|^2 e^{-2A} dx=\int_{\R^2} |\Delta w(0)|^2 e^{-2A}dx
- {\mathcal F}_{A,V}(w(t))+{\mathcal F}_{A,V}(w(0)).$$
By \eqref{intermF} (whose proof only involves the H\"older inequality) in conjunction with \eqref{LSL2},
we get that for every $\mu>0$,
\begin{align*}
\int_{\R^2} &(|w(t)|^2+ |\Delta w(t)|^2) e^{-2A} dx\\
&\leq \int_{\R^2} (|w(0)|^2 + |\Delta w(0)|^2) e^{-2A}dx\\
&\quad +
\mathcal P\Big( |(A,V)|_{\delta,r}\Big) \Big (\mu \|e^{-A} \Delta w(t)\|_{L^2}^2+ (1+ \frac 1{2\mu}) \| w(t)\|_{W_\delta^{1,\frac{2r}{r-2}}}^2\Big)\\
&\quad +\mathcal P\Big(|(A,V)|_{\delta,r}\Big) \Big (\|e^{-A} \Delta w(0)\|_{L^2} \|w(0)\|_{W_\delta^{1,\frac{2r}{r-2}}}
+\|w(0)\|_{W_\delta^{1,\frac{2r}{r-2}}}^2\Big).
\end{align*}
In particular, we can choose $\mu=\frac 1{2\mathcal P(|(A,V)|_{\delta,r})}$ and
get
\begin{align}
\label{mu}
\begin{split}
\frac 12 &\int_{\R^2} (|w(t)|^2+ |\Delta w(t)|^2) e^{-2A} dx \\
&\leq \int_{\R^2} (|w(0)|^2 + |\Delta w(0)|^2) e^{-2A}dx +
\mathcal P\Big (|(A,V)|_{\delta,r}\Big ) \|w(t)\|_{W_\delta^{1,\frac{2r}{r-2}}}^2\\
&\quad +\mathcal P\Big (|(A,V)|_{\delta,r}\Big) \Big (\|e^{-A} \Delta w(0)\|_{L^2} \|w(0)\|_{W_\delta^{1,\frac{2r}{r-2}}}
+\|w(0)\|_{W_\delta^{1,\frac{2r}{r-2}}}^2\Big)
\end{split}
\end{align}
Then, notice that by \eqref{*****}, where we choose $q=\frac{2r}{r-2}$, we get
\begin{equation}\label{tinmac}
\|w(t)\|_{W^{1,\frac{2r}{r-2}}_\delta}
\leq C \|w(t)\|_{H^2_{-\delta }}^\frac{r+2}{2r} \|w(t)\|_{L^2_{(\frac{3r+2}{r-2})\delta }}^{\frac {r-2}{2r}} 
\leq C \|w(t)\|_{H^2_{-\delta }}^\frac{r+2}{2r} \|w(0)\|_{H^1_{(\frac{3r+2}{r-2})\delta^+ }}^{\frac {r-2}{2r}},
\end{equation}
where we used at the last step \eqref{linconserL2weight} (recall our assumptions $\frac \delta 2 + 2\delta^+<\frac{r-2}{3r+2}$
and $\delta^+>\delta$). We conclude by combining \eqref{mu}, \eqref{tinmac}
and \eqref{equivH2A}.
\end{proof}


\subsection{Linear Strichartz Estimates}
In this subsection, we shall need the following norm associated with $A,V$:
\begin{equation}\label{AVdeltakappa}
\|(A,V)\|_{\delta,k}=\|V\|_{L^{k}_{-\delta}} +
 \|\nabla A \|_{L^{k}_{-\delta}}, \quad  k\in [1, \infty), \quad \delta>0.
\end{equation}
We start with some useful lemmas.
\begin{lemma}
For every $s\in (0,1)$, there exists $C>0$ such that
\begin{equation}\label{bound1} 
\|(H_{A,V} -\Delta) u\|_{L^2}\leq C  \|(A,V)\|_{\delta, \frac 2 s} \| u\|_{H^{1+s}_\delta}, \quad \forall \delta>0.
\end{equation} 
\end{lemma}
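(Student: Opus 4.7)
The plan is to simply expand $H_{A,V}-\Delta = -2\nabla A\cdot \nabla + V$ and estimate each of the two resulting terms using a weighted Hölder inequality followed by a (weighted) Sobolev embedding.

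First I would write
\begin{equation*}
\|(H_{A,V}-\Delta)u\|_{L^2} \leq 2\|\nabla A\cdot\nabla u\|_{L^2} + \|Vu\|_{L^2}.
\end{equation*}
The key Hölder observation is that $\langle x\rangle^{-\delta}\cdot \langle x\rangle^{\delta}=1$, together with the exponent relation $\tfrac12 = \tfrac{s}{2}+\tfrac{1-s}{2}$, which gives
\begin{equation*}
\|fg\|_{L^2} \leq \|f\|_{L^{2/s}_{-\delta}}\|g\|_{L^{2/(1-s)}_{\delta}}.
\end{equation*}
Applying this to $f=V$, $g=u$ and to $f=\nabla A$, $g=\nabla u$, I obtain
\begin{equation*}
\|Vu\|_{L^2} \leq \|V\|_{L^{2/s}_{-\delta}}\|u\|_{L^{2/(1-s)}_{\delta}}, \qquad \|\nabla A\cdot \nabla u\|_{L^2} \leq \|\nabla A\|_{L^{2/s}_{-\delta}}\|\nabla u\|_{L^{2/(1-s)}_{\delta}}.
\end{equation*}

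Next I would invoke the weighted Sobolev embedding on $\R^2$, namely $H^{s}_{\delta}\hookrightarrow L^{2/(1-s)}_{\delta}$, which is valid for $s\in(0,1)$ since in two dimensions the Sobolev index $\tfrac12-\tfrac{s}{2}=\tfrac{1-s}{2}$ matches the target Lebesgue exponent $2/(1-s)$. This embedding in the weighted setting follows directly from its unweighted analogue via the ``pulling in'' property \eqref{eq:PullWeight} (used to reduce matters to the unweighted $H^s\hookrightarrow L^{2/(1-s)}$). Consequently
\begin{equation*}
\|u\|_{L^{2/(1-s)}_{\delta}} \leq C\|u\|_{H^{s}_{\delta}} \leq C\|u\|_{H^{1+s}_{\delta}},\qquad \|\nabla u\|_{L^{2/(1-s)}_{\delta}}\leq C\|\nabla u\|_{H^{s}_{\delta}}\leq C\|u\|_{H^{1+s}_{\delta}}.
\end{equation*}

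Combining the two displays and recalling the definition \eqref{AVdeltakappa} of $\|(A,V)\|_{\delta,2/s}$, I would conclude
\begin{equation*}
\|(H_{A,V}-\Delta)u\|_{L^2} \leq C\bigl(\|V\|_{L^{2/s}_{-\delta}}+\|\nabla A\|_{L^{2/s}_{-\delta}}\bigr)\|u\|_{H^{1+s}_{\delta}} = C\|(A,V)\|_{\delta,2/s}\|u\|_{H^{1+s}_{\delta}}.
\end{equation*}
There is no real obstacle here: the proof is essentially bookkeeping of the dual pair of weights $\langle x\rangle^{\pm\delta}$ with the Hölder exponents $(2/s,2/(1-s))$, and a routine invocation of the weighted Sobolev embedding. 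The only thing to be slightly careful about is that $s\in(0,1)$ is needed both to make $2/(1-s)$ finite and to have the embedding $H^s\subset L^{2/(1-s)}$ strictly below the critical endpoint.
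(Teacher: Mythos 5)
Your proof is correct and follows essentially the same route as the paper: split $H_{A,V}-\Delta$ into $-2\nabla A\cdot\nabla$ and $V$, apply the weighted H\"older inequality with the dual pair $\langle x\rangle^{\pm\delta}$ and exponents $(2/s,\,2/(1-s))$, and conclude via the weighted Sobolev embedding $H^s_\delta\subset L^{2/(1-s)}_\delta$ from \eqref{Sobolevweighted}. The paper additionally cites the commutator bound \eqref{commutator2} together with \eqref{eq:PullWeight} to justify the step $\|\nabla u\|_{H^s_\delta}\leq C\|u\|_{H^{1+s}_\delta}$, which you assert without comment, but that is the same routine estimate.
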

\begin{proof} 
We have for every $s\in (0,1)$ and $\delta>0$,
$$\|\nabla u  \cdot \nabla A\|_{L^2}\leq \|\nabla A\|_{L^{\frac 2s}_{-\delta}} \|\nabla u\|_{L^{\frac 2{1-s}}_\delta}\leq C  \|\nabla A\|_{L^{\frac 2s}_{-\delta}}
\|u\|_{H^{1+s}_\delta},
$$
where we have used the embedding $H^s_\delta \subset L^{\frac 2{1-s}}_\delta$ (see \eqref{Sobolevweighted})
along with \eqref{commutator2} and \eqref{eq:PullWeight}.
By a similar argument,
$$
\|V u\|_{L^2}\leq C \|V\|_{L^{\frac 2 s}_{-\delta}}  \|u\|_{H^{s}_{\delta}}.
$$
The proof is complete.
\end{proof}

\begin{lemma} Let $T>0$ be fixed and $A,V$ satisfy \eqref{exponentialAV} and \eqref{functionalDMAV}. There exists $\bar \delta>0$ such that for $\delta\in (0, \bar \delta)$ and for every $r\in [4, \infty)$, we have
\begin{equation}\label{key}\|S_{A,V}(t)\varphi\|_{L^\infty((0,T); H^{1+\delta}_{\frac \delta 2(1-3\delta)})}\leq
{\mathcal P} \Big (|(A,V)|_{\delta,r}\Big ) \|\varphi\|_{H_{4\sqrt{\delta}}^{\frac{\sqrt{\delta}(1-\delta)}{\sqrt 2}+1+\delta^+}}.
\end{equation}
\end{lemma}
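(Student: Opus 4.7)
The plan is to derive \eqref{key} from Proposition~\ref{propkey}(iii) and (iv) by an interpolation of operators applied to the linear evolution $S_{A,V}(t)$. The target estimate has regularity $1+\delta$ on the left-hand side and $1+O(\sqrt\delta)$ on the right-hand side, with weights comparable to $\delta$ on the left and $\sqrt\delta$ on the right; this matches precisely what one obtains from interpolating between an $L^2$ bound coming from (iii) at a small parameter $s\sim\sqrt{2\delta}$ and an $H^2$ bound coming from (iv) at an auxiliary weight $\delta_2\sim 2\delta^2$, taking the interpolation parameter $\Theta=(1+\delta)/2$ (which produces the midpoint regularity $2\Theta=1+\delta$).

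More concretely, the first step is to apply (iii) with $\delta_1=\delta$, $s$ chosen slightly below $\sqrt{2\delta}$, and $\delta_1^+$ chosen close to $8\sqrt 2\,\delta/(1-\delta)$; the constraint $\delta_1+9\delta_1^+<4s$ then reads $O(\delta)<O(\sqrt\delta)$, which is satisfied for all $\delta$ smaller than some $\bar\delta>0$. Simultaneously, I apply (iv) with $\delta_2=2\delta^2/(1+\delta)$ and $\delta_2^+$ just above $\delta_2$ for any $r\in[4,\infty)$; the constraint $\delta_2/2+2\delta_2^+<(r-2)/(3r+2)$ is then trivial. Interpolation (complex, or real with $q=2$) of $S_{A,V}(t)$ between these two bounded maps, combined with the standard identification $[H^{a_0}_{\mu_0},H^{a_1}_{\mu_1}]_\Theta=H^{(1-\Theta)a_0+\Theta a_1}_{(1-\Theta)\mu_0+\Theta\mu_1}$ for the weighted Sobolev scale (which reduces via \eqref{eq:PullWeight} to the classical unweighted case), then yields
\begin{equation*}
\|S_{A,V}(t)\varphi\|_{H^{1+\delta}_{(1-\Theta)\delta_1-\Theta\delta_2}}\le C^{1-\Theta}\mathcal{P}^{\Theta}\|\varphi\|_{H^{(1-\Theta)s^{+}+2\Theta}_{(1-\Theta)\delta_1^{+}/s+\Theta(\frac{3r+2}{r-2})\delta_2^{+}}}.
\end{equation*}

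It then remains to check algebraically that the computed interpolated exponents and weights are compatible with the target: the LHS weight $(1-\Theta)\delta_1-\Theta\delta_2$ equals $\frac{(1-\delta)\delta-2\delta^{2}}{2}=\frac{\delta(1-3\delta)}{2}$ exactly; the RHS regularity $(1-\Theta)s^{+}+2\Theta=1+\delta+\frac{(1-\delta)\sqrt\delta}{\sqrt 2}+o(\sqrt\delta)$ is dominated by $1+\delta^{+}+\frac{\sqrt\delta(1-\delta)}{\sqrt 2}$ thanks to the slack in $\delta^{+}>\delta$; and the RHS weight $(1-\Theta)\delta_1^{+}/s+\Theta(\frac{3r+2}{r-2})\delta_2^{+}=4(1-\delta)\sqrt\delta+O(\delta^{2})$ does not exceed $4\sqrt\delta$, the remaining small discrepancies being absorbed by the trivial embedding $H^{a}_{\mu}\subset H^{a}_{\nu}$ for $\mu\ge\nu$. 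The main difficulty of the proof lies precisely in this parameter bookkeeping: one has to pick $s$, $\delta_1^{+}$, $\delta_2^{+}$ (and their "+" offsets) simultaneously so that the strict inequalities of (iii) and (iv) are met while the peculiar combinations $\frac{\sqrt{\delta}(1-\delta)}{\sqrt 2}$, $4\sqrt\delta$, and $\frac{\delta(1-3\delta)}{2}$ in the statement all emerge from the single interpolation parameter $\Theta=(1+\delta)/2$.
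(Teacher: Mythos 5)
Your proposal reaches the right exponents, but it runs through \emph{operator} interpolation, whereas the paper's proof deliberately avoids that. The paper decomposes $\varphi=\sum_N\Delta_N\varphi$, applies the \emph{norm} inequality \eqref{lem:BesovInterpolation} to the single function $S_{A,V}(t)\Delta_N\varphi$ with $\varTheta=\tfrac{1+\delta}{2}$ between $L^2_{2\delta}$ (controlled by \eqref{L2Hs} with $s=\sqrt\delta$, source $H^{\sqrt{2\delta}}_{4\sqrt\delta}$) and $H^2_{-\delta}$ (controlled by \eqref{H2}, source $H^2_{8\delta}$), then uses the frequency localization $\|\Delta_N\varphi\|_{H^{a}_{\mu}}\lesssim N^{a}\|\Delta_N\varphi\|_{L^2_{4\sqrt\delta}}$ to collapse the product of the two source norms into $N^{\frac{\sqrt\delta(1-\delta)}{\sqrt2}+1+\delta}\|\Delta_N\varphi\|_{L^2_{4\sqrt\delta}}$, and finally sums over $N$ via \eqref{sumupLP}, which is where the $\delta^+$ in the regularity index comes from. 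This route only ever interpolates norms of a fixed function, so no identification of interpolation spaces is needed.

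The genuine gap in your version is the step where you identify $[H^{a_0}_{\mu_0},H^{a_1}_{\mu_1}]_\varTheta=H^{a_\varTheta}_{\mu_\varTheta}$ and justify it by saying it ``reduces via \eqref{eq:PullWeight} to the classical unweighted case.'' That reduction does not work: \eqref{eq:PullWeight} pulls in \emph{one} weight, but your two endpoint spaces carry \emph{different} weights ($\mu_0=\delta_1^+/s\sim 4\sqrt\delta$ versus $\mu_1\sim(\tfrac{3r+2}{r-2})\delta_2^+$ on the source side, and $\delta_1$ versus $-\delta_2$ on the target side), so no single multiplication by $\w^{\mu}$ turns both into unweighted spaces. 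The identification you need is a nontrivial theorem about complex interpolation of weighted Besov scales (it is essentially the content of \cite{SSV14}, which the paper cites only for the norm inequality \eqref{lem:BesovInterpolation}); without invoking it explicitly, the central step of your argument is unsupported. Two smaller points: (i) taking the $H^2$ target weight to be $\delta_2\sim 2\delta^2/(1+\delta)$ makes \eqref{H2} produce the constant ${\mathcal P}(|(A,V)|_{\delta_2,r})$ with $\delta_2=O(\delta^2)<\delta$, which is \emph{not} dominated by ${\mathcal P}(|(A,V)|_{\delta,r})$ (the weights in \eqref{AVdeltar} are negative powers, so the quantity increases as the weight parameter decreases); the paper instead uses output weights $2\delta$ and $-\delta$, which give $\tfrac{\delta}{2}(1-3\delta)$ by the same arithmetic while keeping the constant at $|(A,V)|_{\delta,r}$. (ii) With your choice $\delta_1^+\approx 8\sqrt2\,\delta/(1-\delta)$ the interpolated source weight computes to $4\sqrt\delta+O(\delta^2)$, which slightly \emph{exceeds} $4\sqrt\delta$; you need to take $\delta_1^+$ strictly below that threshold (which the constraint $\delta_1+9\delta_1^+<4s$ still permits). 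If you replace the operator-interpolation step by the paper's pointwise norm interpolation on Littlewood--Paley blocks, all of these issues disappear at the cost of the harmless $\delta^+$ loss from \eqref{sumupLP}.
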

\begin{proof}
By \eqref{L2Hs} (where we choose $s=\sqrt \delta$), there exists $\bar \delta>0$ such that  for $0<\delta<\bar \delta$, we have
\begin{equation}\label{intil}\|S_{A,V}(t)\varphi\|_{L^\infty((0,T); L^2_{2\delta})}\leq C \|\varphi\|_{H^{\sqrt {2\delta}}_{4\sqrt {\delta}}}.
\end{equation}
From \eqref{H2}, under the extra assumption $r\geq 4$, we get
\begin{equation}\label{tinil}\|S_{A,V}(t)\varphi\|_{L^\infty((0, T);H^2_{-\delta})}\leq {\mathcal P} \Big (|(A,V)|_{\delta,r}\Big )\|\varphi\|_{H^2_{8\delta }}.\end{equation}
Next, we notice that by special case of \eqref{lem:BesovInterpolation} and by \eqref{intil}, \eqref{tinil} we get:
\begin{align*}
\| &S_{A,V}(t)\varphi\|_{L^\infty((0,T); H^{1+\delta}_{\frac \delta 2(1-3\delta)})} \\
&\leq \sum_{{N-{\rm dyadic}}} \|S_{A,V}(t) \Delta_N \varphi\|_{L^\infty((0,T); H^{1+\delta}_{\frac \delta 2(1-3\delta)})}\\\nonumber&
\leq \sum_{{N-{\rm dyadic}}}
\|S_{A,V}(t)\Delta_N \varphi\|_{L^\infty((0,T); L^2_{2\delta})}^\frac{1-\delta}2 \|S_{A,V}(t)\Delta_N \varphi\|_{L^\infty((0, T);H^2_{-\delta})}^\frac{1+\delta}2
\\\nonumber& \leq 
{\mathcal P} \Big (|(A,V)|_{\delta,r}\Big ) \sum_{{N-{\rm dyadic}}} \|\Delta_N \varphi\|_{H^{\sqrt {2\delta}}_{4\sqrt {\delta}}}^\frac{1-\delta}2 \|\Delta_N \varphi\|_{H^2_{8\delta}}^\frac{1+\delta}2
\\\nonumber&\leq {\mathcal P} \Big (|(A,V)|_{\delta,r}\Big ) \sum_{{N-{\rm dyadic}}} N^{\frac{\sqrt{\delta}(1-\delta)}{\sqrt 2}+1+\delta} \|\Delta_N \varphi\|_{L^2_{4\sqrt {\delta}}}.
\end{align*}
The conclusion follows by \eqref{sumupLP}.
\end{proof}

\begin{lemma} Let $T>0$ be fixed and $A,V$ satisfy \eqref{exponentialAV} and \eqref{functionalDMAV}.
There exists $\bar \delta>0$ such that for every $\delta\in (0, \bar \delta)$ and $r\in [4, \infty)$, we have
\begin{equation}\label{3/2}
\|S_{A,V}(t) \varphi\|_{L^\infty((0,T);H^{\frac 32})}\leq  {\mathcal P}\Big (|(A,V)|_{\delta, r}\Big ) \|\varphi\|_{H^{\frac 32+\frac{\sqrt \delta^+}4}_{\sqrt \delta^+}}.
\end{equation}
\end{lemma}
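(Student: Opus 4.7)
The plan is to imitate the proof of \eqref{key}, but with an interpolation pair chosen so that the weights cancel exactly at regularity $3/2$. First, I would decompose $\varphi = \sum_N \Delta_N \varphi$ and use the triangle inequality to reduce to
$$\|S_{A,V}(t)\varphi\|_{H^{3/2}} \leq \sum_N \|S_{A,V}(t) \Delta_N \varphi\|_{H^{3/2}}.$$
For each dyadic block, I would apply the interpolation inequality \eqref{lem:BesovInterpolation} with $\varTheta = 3/4$ between $(\alpha_0,\mu_0) = (0, 3\delta)$ and $(\alpha_1,\mu_1)=(2, -\delta)$: the resulting regularity is $\tfrac14\cdot 0 + \tfrac34\cdot 2 = 3/2$ and the resulting weight is $\tfrac14(3\delta) + \tfrac34(-\delta) = 0$, so we land in unweighted $H^{3/2}$. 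This yields
$$\|S_{A,V}(t) \Delta_N \varphi\|_{H^{3/2}} \leq C \|S_{A,V}(t) \Delta_N \varphi\|_{L^2_{3\delta}}^{1/4} \|S_{A,V}(t) \Delta_N \varphi\|_{H^2_{-\delta}}^{3/4}.$$

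Next, I would bound the first factor via \eqref{L2Hs} with its ``$\delta$'' taken to be $3\delta$ and with the parameter $s$ chosen as $\sqrt{\delta}$, provided the constraint $3\delta + 9\delta^+_\star < 4\sqrt{\delta}$ holds for a suitable $\delta^+_\star$ slightly above $3\delta$ (which is satisfied for $\delta$ small). This produces a bound in terms of $\|\Delta_N \varphi\|_{H^{(\sqrt{\delta})^+}_{\delta^+_\star/\sqrt{\delta}}}$. The second factor would be controlled via \eqref{H2} with $r \geq 4$, giving $\mathcal{P}(|(A,V)|_{\delta,r}) \|\Delta_N \varphi\|_{H^2_{((3r+2)/(r-2))\delta^+_{\star\star}}}$ for some small $\delta^+_{\star\star}$ obeying $\delta/2 + 2\delta^+_{\star\star} < (r-2)/(3r+2)$. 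Then \eqref{LPtins} converts each of the two $H^\sigma_\mu$ norms into $N^\sigma$ times a sum of $L^2_\mu$ norms of neighboring dyadic blocks. Taking the interpolation powers $1/4$ and $3/4$ and collecting powers of $N$ gives
$$\|S_{A,V}(t) \Delta_N \varphi\|_{H^{3/2}} \leq \mathcal{P}(|(A,V)|_{\delta,r}) \cdot N^{3/2 + \sqrt{\delta^+}/4} \sum_{M \sim N} \|\Delta_M \varphi\|_{L^2_{\sqrt{\delta^+}}},$$
after using the inclusion $L^2_{\mu'} \subset L^2_{\mu}$ for $\mu' \geq \mu$ to unify the two auxiliary weights into $\sqrt{\delta^+}$. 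Summing over $N$ via \eqref{sumupLP} closes the argument.

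The main obstacle will be the bookkeeping: one must simultaneously satisfy the admissibility conditions of \eqref{L2Hs} and \eqref{H2}, and verify that every auxiliary weight generated along the way is controlled by $\sqrt{\delta^+}$. For $r \geq 4$ the constant $(3r+2)/(r-2)$ is bounded (equal to $7$ at $r=4$), so the weight produced by \eqref{H2} is of order $\delta^+_{\star\star}$, which is dominated by $\sqrt{\delta^+}$ for $\delta$ small; likewise $\delta^+_\star/\sqrt{\delta}$ is of order $\sqrt{\delta}$ if $\delta^+_\star$ is only slightly above $3\delta$. The target regularity loss $\sqrt{\delta^+}/4$ already anticipates the $s^+ = \sqrt{\delta^+}$ produced by \eqref{L2Hs} applied with $s = \sqrt{\delta}$, so all the ``$+$'' adjustments should fit together consistently.
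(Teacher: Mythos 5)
Your proposal is correct and follows essentially the same route as the paper: Littlewood--Paley decomposition, interpolation between a positively weighted $L^2$ bound (exponent $1/4$) and a negatively weighted $H^2$ bound (exponent $3/4$) so the weights cancel at regularity $3/2$, then \eqref{L2Hs} with $s=\sqrt{\delta}$, \eqref{H2}, \eqref{LPtins}, and summation via \eqref{sumupLP}. The only difference is the cosmetic choice of the interpolation weights ($L^2_{3\delta}$ and $H^2_{-\delta}$ instead of the paper's $L^2_{\delta}$ and $H^2_{-\delta/3}$), and your bookkeeping of the admissibility conditions and of the auxiliary weights being dominated by $\sqrt{\delta}^{+}$ matches what the paper does.
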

\begin{proof} By combining a special case of \eqref{lem:BesovInterpolation}, \eqref{H2} and \eqref{L2Hs} (here we assume $\delta>0$ small enough in order to guarantee $\delta+9\delta^+<4\sqrt \delta$, and hence we can apply \eqref{L2Hs}
with $s=\sqrt \delta$), we get
\begin{align*}
\| &S_{A,V}(t) \varphi\|_{H^{\frac 32}}\\
&\leq  
\sum_{{N-{\rm dyadic}}} \|S_{A,V}(t) \Delta_N \varphi\|_{L^\infty((0,T);H^{\frac 32})}\\\nonumber&\leq C \sum_{{N-{\rm dyadic}}} \|S_{A,V}(t) \Delta_N 
\varphi\|_{L^\infty((0,T);H^{2}_{-\frac \delta 3})}^\frac 34
\|S_{A,V}(t) \Delta_{N}\varphi\|_{L^\infty((0,T);L^{2}_\delta)}^\frac 14
\\\nonumber& \leq {\mathcal P}\Big(|(A,V)|_{\delta, r}\Big )\sum_{{N-{\rm dyadic}}}  \|\Delta_N \varphi\|_{H^{2}_{8\delta}}^\frac{3}4
\|\Delta_N \varphi\|_{H^{\sqrt \delta^+}_{\sqrt \delta^+}}^\frac {1}4\\\nonumber&
\leq {\mathcal P}\Big (|(A,V)|_{\delta, r}\Big )\sum_{{N-{\rm dyadic}}} N^{\frac 32+\frac {\sqrt \delta^+}4} \|\Delta_N \varphi\|_{L^{2}_{\sqrt \delta^+}}.
\end{align*}
The conclusion follows by \eqref{sumupLP}.
\end{proof}

We are now ready to prove the Strichartz estimates associated with $S_{A,V}(t)$.

\begin{prop}\label{strichob}
Let $T>0$ be fixed and $A,V$ satisfy \eqref{exponentialAV} and \eqref{functionalDMAV}. For every $\delta,s>0$, there exist $\tilde \delta, \delta_1, s_1>0$ such that 
$\frac{\delta_1}{s_1}>1$ and for every $r\in [4,\infty)$,
\begin{equation}\label{strichnai}
\|S_{A,V}(t)\varphi\|_{L^l((0,T);L^q)}\\\leq {\mathcal P} \Big (|(A,V)|_{\tilde \delta,r}\Big ) \mathcal P \Big (\|(A,V)\|_{\delta_1, \frac 2{s_1}} \Big)  \|\varphi\|_{H^{\frac 1l+s}_{\delta}},
\end{equation}
where $\frac 1l+\frac 1q=\frac 12$ and $l>2$.
\end{prop}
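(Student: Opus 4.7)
The plan is to establish \eqref{strichnai} via a Duhamel argument combined with classical two-dimensional non-endpoint Strichartz estimates for the free Schr\"odinger propagator $S_0(t)$, and to handle the perturbation using \eqref{bound1} together with the energy bounds from Proposition~\ref{propkey}. Letting $w(t)=S_{A,V}(t)\varphi$ and rewriting the equation as $\ii\partial_t w = \Delta w + Rw$ with $R = H_{A,V}-\Delta = -2\nabla A\cdot\nabla+V$, Duhamel gives
$$ w(t) = S_0(t)\varphi - \ii\int_0^t S_0(t-\tau)Rw(\tau)\,d\tau. $$
Classical 2d Strichartz estimates for $l>2$ with $\frac{1}{l}+\frac{1}{q}=\frac{1}{2}$ yield $\|S_0(t)\psi\|_{L^l_tL^q_x}\leq C\|\psi\|_{L^2}$ and the inhomogeneous variant $\|\int_0^t S_0(t-\tau)F\,d\tau\|_{L^l_tL^q_x}\leq C\|F\|_{L^1_tL^2_x}$.

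Applying \eqref{bound1} with small $s_1>0$ and weight $\delta_1>s_1$ gives $\|Rw(\tau)\|_{L^2}\leq C\|(A,V)\|_{\delta_1,2/s_1}\|w(\tau)\|_{H^{1+s_1}_{\delta_1}}$, and after time integration $\|Rw\|_{L^1_tL^2_x}\leq CT\|(A,V)\|_{\delta_1,2/s_1}\|w\|_{L^\infty_t H^{1+s_1}_{\delta_1}}$. Invoking \eqref{key} with parameter $\delta\approx 2\delta_1$ (so that $(\delta/2)(1-3\delta)\geq \delta_1$) and an appropriate auxiliary $\tilde\delta$, I control $\|w\|_{L^\infty H^{1+s_1}_{\delta_1}}$ by $\mathcal{P}(|(A,V)|_{\tilde\delta,r})\|\varphi\|_{H^{1+s_1+\eps}_{\textup{weight}}}$. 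This yields a preliminary bound of the form
$$ \|w\|_{L^l_tL^q_x}\leq C\|\varphi\|_{L^2}+CT\,\mathcal{P}(\cdots)\|\varphi\|_{H^{1+s_1+\eps}_{\textup{weight}}}, $$
with loss of roughly $1+s_1$ derivatives rather than the target $\frac{1}{l}+s$.

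To close the gap I frequency-localize $\varphi = \sum_N \Delta_N\varphi$ and combine two dyadic bounds on $X_N := \|S_{A,V}(t)\Delta_N\varphi\|_{L^l_tL^q_x}$: on one hand, the Duhamel bound above applied to $\Delta_N\varphi$ (combined with Bernstein on the Duhamel remainder) gives $X_N\lesssim \|\Delta_N\varphi\|_{L^2}+ T\,\mathcal{P}(\cdots)\, N^{1+s_1+\eps}\|\Delta_N\varphi\|_{L^2_\delta}$; on the other hand, the Sobolev embedding $H^{1+s_1}\hookrightarrow L^q$ combined with H\"older in time, \eqref{key}, and Bernstein gives $X_N\lesssim T^{1/l}\,\mathcal{P}(\cdots)\, N^{1+s_1+\eps}\|\Delta_N\varphi\|_{L^2_\delta}$. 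Interpolating the two via the elementary inequality $X_N\leq X_N^{1-\theta}X_N^\theta$ with $\theta = (\tfrac{1}{l}+s)/(1+s_1+\eps)\in(0,1)$ produces a dyadic estimate of the form $X_N\lesssim \mathcal{P}(\cdots)\, N^{\frac{1}{l}+s}\|\Delta_N\varphi\|_{L^2_\delta}$, and Cauchy--Schwarz summation over dyadic $N$ yields \eqref{strichnai} after a harmless adjustment of $s$.

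The main obstacle is verifying that the elementary interpolation above genuinely delivers the clean $N^{\frac{1}{l}+s}$ dyadic factor rather than a weaker bound: the first Duhamel term $\|\Delta_N\varphi\|_{L^2}$ is crucial for low frequencies, while the scaling factor $N^{1+s_1+\eps}$ on the Duhamel remainder must be interpolated down by the Strichartz time-averaging factor $T^{1/l}$ to reach the required balance. Simultaneously satisfying the weight hypothesis $\delta_1>s_1$ of \eqref{bound1}, the parameter constraints of \eqref{key}, and the final weight $\delta$ prescribed by the statement requires a delicate choice of the small auxiliary parameters $\tilde\delta, \delta_1, s_1$ in terms of $\delta, s, l$.
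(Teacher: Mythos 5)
There is a genuine gap in your argument, and it sits exactly at the step you flag as "the main obstacle": the interpolation of the two dyadic bounds cannot deliver the factor $N^{\frac 1l+s}$. Both of the bounds you propose to interpolate, namely $X_N\lesssim \|\Delta_N\varphi\|_{L^2}+T\,\mathcal P(\cdots)\,N^{1+s_1+\eps}\|\Delta_N\varphi\|_{L^2_\delta}$ and $X_N\lesssim T^{1/l}\,\mathcal P(\cdots)\,N^{1+s_1+\eps}\|\Delta_N\varphi\|_{L^2_\delta}$, contain a term of order $N^{1+s_1+\eps}$. Writing $X_N\le A_N^{1-\theta}B_N^{\theta}$ with these two right-hand sides still produces a term of order $N^{(1+s_1+\eps)(1-\theta)}\cdot N^{(1+s_1+\eps)\theta}=N^{1+s_1+\eps}$: geometric interpolation between two bounds with the \emph{same} frequency growth does not lower that growth. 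The factor $T^{1/l}$ is a constant in $N$ and cannot "interpolate down" a power of $N$. So your global-in-time Duhamel argument is stuck with a loss of $1+s_1$ derivatives, and nothing in the proposal recovers the gap down to $\frac1l+s$.

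The missing idea — which is how the paper proceeds — is the semiclassical time-interval decomposition of \cite{BGT, KTz}: split $[0,T]=\bigcup_j I_j$ into $O(TN)$ essentially disjoint intervals of length $N^{-1}$, and run the Duhamel formula for $S_{A,V}(t)\Delta_N\varphi$ \emph{separately on each $I_j=[a,b]$}, starting from the data $S_{A,V}(a)\Delta_N\varphi$ at time $a$. On such a short interval the source term contributes $\int_{I_j}\|(H_{A,V}-\Delta)S_{A,V}(\tau)\Delta_N\varphi\|_{L^2}\,d\tau\lesssim |I_j|\cdot N^{1+}\|\Delta_N\varphi\|_{L^2_{\delta_2}}=N^{-1}\cdot N^{1+}\|\Delta_N\varphi\|_{L^2_{\delta_2}}$, so the full derivative lost through \eqref{bound1} is cancelled by the length of the interval; the initial-data term on each $I_j$ is controlled via the classical Strichartz estimate and \eqref{adria1}. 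Taking the $l$-th power of the bound on each $I_j$ and summing over the $O(TN)$ intervals costs a factor $(TN)^{1/l}$, which is precisely the origin of the $\frac1l$ in the exponent $\frac1l+s$ of the final estimate. Without this localization the exponent $\frac1l+s$ is not reachable by the ingredients you invoke, so the proof as proposed does not close.
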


\begin{proof}
Notice that by \eqref{key}, for every $\delta_2, s_2>0$, there exist $\tilde \delta, \delta_1, s_1>0$ such that
for every $r\in [4,\infty)$,
\begin{equation}\label{adria}\|S_{A,V}(t) \varphi\|_{L^\infty((0,T);H^{1+s_1}_{\delta_1})}\leq {\mathcal P}\Big(|(A,V)|_{\tilde \delta,r}\Big) \|\varphi\|_{H^{1+\frac{s_2}2}_{\delta_2}}.\end{equation}
Moreover, by \eqref{inclusionHsdelta} we can also assume
$\frac{\delta_1}{s_1}$ to be arbitrarily large and in particular larger than $1$.
We also have the following bound
for any given $s_2, \delta_2>0$ as a consequence of \eqref{L2Hs}:
\begin{equation}\label{adria1}
\|S_{A,V}(t) \varphi\|_{L^\infty((0,T);L^2)}\leq C\|\varphi\|_{H^{\frac{s_2}2}_{\delta_2}}.
\end{equation}

Next, following \cite{BGT, KTz} we split the interval $[0,T]$ in an essentially disjoint union of intervals of size $ N^{-1}$ as
\begin{equation}\label{spli}
[0,T]=\bigcup_j I_j
\end{equation}
and we aim to estimate 
$
\|S_{A,V}(t)\Delta_{N}\varphi\|_{L^l(I_j;L^q )}\, .
$
Suppose that $I_j=[a,b]$. Then, for $t\in [a,b]$ we can write:
\begin{align}
\label{decomposition}
\begin{split}
S_{A,V}(t)\Delta_{N}\varphi
&=
e^{i(t-a)\Delta} S_{A,V}(a)\Delta_{N}\varphi
\\
&\quad +
i\int_a^t e^{i(t-\tau)\Delta} (H_{A,V} -\Delta) S_{A,V}(\tau) \Delta_{N}\varphi d\tau.
\end{split}
\end{align}
We now estimate each term in the r.h.s. of \eqref{decomposition}. Using the Strichartz estimates on $\R^2$ (see \cite{Caz03, GV, KT}),
and \eqref{adria1}, we get
\begin{align*}
\| &e^{i(t-a)\Delta} S_{A,V}(a)\Delta_{N}\varphi\|_{L^l(I_j;L^q)}\\
&\leq C 
\|S_{A,V}(a)\Delta_{N}\varphi\|_{L^2} \\
&\leq C\|\Delta_N \varphi\|_{H^{\frac{s_2}2}_{\delta_2}}\\
&\leq C N^{-\frac 1l-\frac{s_2}2}
(\|\Delta_{\frac N2}\varphi\|_{H^{\frac 1l+s_2}_{\delta_2}}+\|\Delta_{N}\varphi\|_{H^{\frac 1l+s_2}_{\delta_2}}+\|\Delta_{2N}\varphi\|_{H^{\frac 1l+s_2}_{\delta_2}})
\\\nonumber&\leq C N^{-\frac 1l-\frac{s_2}2}
\|\varphi\|_{H^{\frac 1l+s_2}_{\delta_2}}
\end{align*}
where we used \eqref{LPtinsnewnew} at the last step.
Now we estimate the second term in the r.h.s. of \eqref{decomposition}.
Using Minkowski's inequality, the Strichartz estimates on $\R^2$, \eqref{bound1}, and \eqref{adria} (with $s_2$ small), we get
\begin{align*}
\Big \| &\int_a^t e^{i(t-\tau)\Delta} (H_{A,V} -\Delta) S_{A,V}(\tau) \Delta_{N}\varphi d\tau \Big \|_{L^l(I_j;L^q)}
\\\nonumber&
\leq C
\int_{I_j}
\|
(H_{A,V} -\Delta) S_{A,V}(\tau) \Delta_{N}\varphi\|_{L^2} d\tau\\\nonumber&\leq
 C \|(A,V)\|_{\delta_1, \frac 2{s_1}}  {\mathcal P} \Big(|(A,V)|_{\tilde \delta,r}\Big)
N^{-1} \|\Delta_{N}\varphi\|_{H^{1+\frac{s_2}2}_{\delta_2}}\\\nonumber&
\leq C \|(A,V)\|_{\delta_1, \frac 2{s_1}}  {\mathcal P} \Big(|(A,V)|_{\tilde \delta,r}\Big)
N^{-1}
N^{1-\frac{1}{l}-\frac{s_2}2}
 \|\varphi\|_{H^{\frac 1l+s_2}_{\delta_2}},
\end{align*}
where we used \eqref{LPtinsnew}.
Summarizing, we get
\begin{align*}
\| &S_{A,V}(t)\Delta_{N}\varphi\|_{L^l(I_j;L^q)}\\
& \leq 
C    {\mathcal P} \Big(|(A,V)|_{\tilde \delta,r}\Big) \Big (1+\|(A,V)\|_{\delta_1, \frac 2{s_1}} \Big )
N^{-\frac 1l-\frac{s_2}2}
 \|\varphi\|_{H^{\frac{1}{l}+s_2}_{\delta_2}}.
\end{align*}
Using that the number of $I_j$ in \eqref{spli} is $O(TN)$, taking the $l$'th power of the previous bound, and summing over $j$, we get the estimate 
\begin{align*}
\| &S_{A,V}(t)\Delta_{N}\varphi\|_{L^l((0,T);L^q)}\\
& \leq C T^\frac 1l  {\mathcal P} \Big(|(A,V)|_{\tilde \delta,r}\Big) \Big (1+ \|(A,V)\|_{\delta_1, \frac 2{s_1}}
\Big ) N^{-\frac{s_2}2}\|\varphi\|_{H^{\frac{1}{l}+s_2}_{\delta_2}}
\end{align*}
and hence we conclude by summation over $N$.
\end{proof}

In the sequel, we shall need the following Strichartz estimate.
\begin{prop}\label{strichcor} Assume \eqref{exponentialAV} and \eqref{functionalDMAV} and
let $T>0$ be fixed. For every $s, \delta>0$ small enough, there exist $s_1, \delta_1, \tilde \delta>0$ such that 
$\frac{\delta_1}{s_1}>1$ and for every $r\in [4,\infty)$, we have
\begin{align}
\label{noDuhamel}
\begin{split}
\|&S_{A,V}(t)\varphi\|_{L^4((0,T);W^{\frac 34-s,4})}\\
&\leq {\mathcal P} \Big(|(A,V)|_{\frac{\delta^2}4,r}\Big) 
 {\mathcal P} \Big(|(A,V)|_{\tilde \delta,r}\Big){\mathcal P}\Big(\|(A,V)\|_{\delta_1, \frac 2{s_1}}\Big)  \|\varphi\|_{H^1_{\delta}}
\end{split}
\end{align}
and
\begin{align}
\label{Duhamel}
\begin{split}
\Big \| &\int_0^t S_{A,V}(t-\tau) f(\tau) d\tau\Big \|_{L^4((0,T);W^{\frac 34-s,4})}\\
&\leq {\mathcal P} \Big(|(A,V)|_{\frac{\delta^2}4,r}\Big) 
 {\mathcal P} \Big(|(A,V)|_{\tilde \delta,r}\Big){\mathcal P}\Big(\|(A,V)\|_{\delta_1, \frac 2{s_1}}\Big)  \|f\|_{L^1((0,T);H^1_{\delta})}\, .
\end{split}
\end{align}
\end{prop}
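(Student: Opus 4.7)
The plan is to extend the Strichartz machinery of Proposition \ref{strichob} to the Sobolev--upgraded target $L^4_t W^{3/4-s,4}$ by combining a Littlewood--Paley decomposition with the short--interval Duhamel reduction already used in the proof of \eqref{strichnai}. The starting observation is that $(l,q)=(4,4)$ is an admissible Strichartz pair in dimension two, so the free estimate $\|e^{it\Delta}g\|_{L^4_t L^4_x}\lesssim\|g\|_{L^2}$ upgrades automatically, by commuting the Fourier multiplier $(1-\Delta)^{(3/4-s)/2}$ with the free propagator, to
\[
\|e^{it\Delta}g\|_{L^4(\R;W^{3/4-s,4}(\R^2))}\lesssim\|g\|_{H^{3/4-s}(\R^2)}.
\]

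Following the scheme \eqref{spli}--\eqref{decomposition}, I would decompose $\varphi=\sum_N\Delta_N\varphi$, partition $[0,T]$ into the $O(TN)$ intervals $I_j=[a_j,b_j]$ of length $\sim N^{-1}$, and on each $I_j$ use the representation
\[
S_{A,V}(t)\Delta_N\varphi=e^{i(t-a_j)\Delta}S_{A,V}(a_j)\Delta_N\varphi+i\int_{a_j}^t e^{i(t-\tau)\Delta}(H_{A,V}-\Delta)S_{A,V}(\tau)\Delta_N\varphi\,d\tau.
\]
The upgraded free Strichartz bounds the first piece by $\|S_{A,V}(a_j)\Delta_N\varphi\|_{H^{3/4-s}}$, which I would control by interpolating \eqref{adria1} (an $L^2$ bound) with \eqref{key} (an $H^{1+\delta'}$ bound) and invoking \eqref{LPtins} to convert fractional derivatives of $\Delta_N\varphi$ into $N^{3/4-s}$ factors, yielding $\mathcal P(\cdots)\,N^{3/4-s+\eta}\|\Delta_N\varphi\|_{L^2_{\delta_2}}$ with arbitrarily small $\eta>0$. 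The Duhamel remainder, via the inhomogeneous free Strichartz
\[
\Bigl\|\int_{a_j}^t e^{i(t-\tau)\Delta}g(\tau)\,d\tau\Bigr\|_{L^4(I_j;W^{3/4-s,4})}\lesssim\|g\|_{L^1(I_j;H^{3/4-s})},
\]
reduces to bounding $\|(H_{A,V}-\Delta)S_{A,V}(\tau)\Delta_N\varphi\|_{H^{3/4-s}}$, which I would obtain by interpolating the $L^2$ commutator estimate \eqref{bound1} (contributing the factor $\mathcal P(\|(A,V)\|_{\delta_1,2/s_1})$) with a higher--regularity variant built from \eqref{H2} applied with the smaller weight $\delta^2/4$ (contributing $\mathcal P(|(A,V)|_{\delta^2/4,r})$) and \eqref{key} (contributing $\mathcal P(|(A,V)|_{\tilde\delta,r})$); the time interval of length $N^{-1}$ supplies an additional $N^{-1}$ gain after integration in $\tau$.

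Summation over the $O(TN)$ subintervals through the $\ell^4$--inequality produces a factor $(TN)^{1/4}$, so the per--frequency bound becomes $\mathcal P(\cdots)\,N^{1-(s-\eta)}\|\Delta_N\varphi\|_{L^2_{\delta_2}}$. Choosing $\eta<s$ and $\delta_2\leq\delta$, the dyadic summation \eqref{sumupLP} then yields the factor $\|\varphi\|_{H^1_\delta}$ claimed in \eqref{noDuhamel}. The inhomogeneous estimate \eqref{Duhamel} follows immediately by writing $u(t)=\int_0^t S_{A,V}(t-\tau)f(\tau)\,d\tau$, applying Minkowski's inequality in the variable $\tau$, and invoking \eqref{noDuhamel} with initial datum $f(\tau)\in H^1_\delta$ for each fixed $\tau$.

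The main obstacle is producing a sharp $H^{3/4-s}$ bound on $(H_{A,V}-\Delta)S_{A,V}(\tau)\Delta_N\varphi$ without spending more than $(3/4-s)+\eta$ derivatives on $\Delta_N\varphi$, since any extra loss would destroy the final $H^1_\delta$ summation. This calls for a delicate interpolation between the $L^2$ commutator bound \eqref{bound1} and a fractional--derivative refinement exploiting the product rule \eqref{prod1} together with the weighted $L^r_{-\delta}$ control on $\nabla A$ and $V$ encoded in $|(A,V)|_{\delta^2/4,r}$.
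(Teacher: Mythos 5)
Your overall architecture (free Strichartz for the admissible pair $(4,4)$, intervals of length $N^{-1}$, Duhamel on each interval, summation over $O(TN)$ intervals, dyadic summation into $H^1_\delta$) is internally consistent at the level of exponent counting, and your derivation of \eqref{Duhamel} from \eqref{noDuhamel} via Minkowski is exactly what the paper does. However, there is a genuine gap in the Duhamel term: you need to bound $\|(H_{A,V}-\Delta)S_{A,V}(\tau)\Delta_N\varphi\|_{H^{3/4-s}}$, i.e.\ the $H^{3/4-s}$ norm of the products $\nabla A\cdot\nabla u$ and $Vu$. The quantities you are allowed to use, $|(A,V)|_{\delta,r}$ and $\|(A,V)\|_{\delta_1,2/s_1}$, control only weighted $L^r$-type norms of $\nabla A$ and $V$ (zero derivatives). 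Any fractional Leibniz or paraproduct estimate for $\|fg\|_{H^{\sigma}}$ with $\sigma=3/4-s>0$ unavoidably requires $\sigma$ derivatives of $f$ in some $L^p$ space for the term $T_g f$ (and for the resonant part); there is no product estimate placing all $3/4-s$ derivatives on $\nabla u$ alone. In the intended application $A=\theta_nY_\eps$, $V=\theta_n\widetilde{\Wick{\nabla Y_\eps^2}}$, these objects have uniformly (in $\eps$) only \emph{negative} H\"older regularity $\mathcal{C}^{\alpha-1}_{-\delta}$, so their positive-order norms blow up like a power of $\eps^{-1}$ rather than $|\ln\eps|^C$; your bound would therefore destroy the uniform-in-$\eps$ a priori estimate that the entire scheme rests on. Interpolating \eqref{bound1} with \eqref{H2} does not repair this, since \eqref{H2} is a bound on the solution itself, not on $(H_{A,V}-\Delta)$ applied to it.

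The paper circumvents this precisely by never running the perturbative Duhamel argument at positive spatial regularity of the $L^4_x$ target. It first proves the zero-derivative Strichartz estimate \eqref{strichnai} (where the Duhamel error only needs the $L^2$ bound \eqref{bound1}), and separately the energy-type bound \eqref{3/2} in $H^{3/2}$. Then the pointwise-in-time Gagliardo--Nirenberg inequality $\|u\|_{W^{3/4-s,4}}\leq C\|u\|_{L^q}^{1/2}\|u\|_{H^{3/2}}^{1/2}$, followed by H\"older in time, reduces \eqref{noDuhamel} to these two ingredients; the frequency localization and the summation via \eqref{sumupLP} come only at the very end, exploiting that $\tfrac{1}{2l}+\tfrac34<1$. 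If you want to salvage your route, you would have to replace the $H^{3/4-s}$ estimate on the inhomogeneity by an interpolation of this kind, at which point you have essentially reconstructed the paper's proof.
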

\begin{proof}
Notice that it is not restrictive to assume $\delta, s$ small enough (we will exploit this fact later). Notice also that \eqref{Duhamel} follows by combining \eqref{noDuhamel} with Minkowski's inequality.
Thus, we focus on the proof of \eqref{noDuhamel}.

For every $s\in (0,\frac 12)$, there exists $q\in (2, \infty)$ such that 
the following Gagliardo-Nirenberg inequality holdss:
$$\|u\|_{W^{\frac 34-s,4}}\leq C \|u\|_{L^q}^{\frac 12} \|u\|_{H^\frac 32}^{\frac 12},$$
and hence by integration in time and the H\"older inequality in time we get
\begin{align*}
\|S_{A,V}(t)\varphi\|_{L^4((0,T);W^{\frac 34-s,4})}^4 &\leq C \|S_{A,V}(t)\varphi\|_{L^{2}((0,T);L^q)}^{2} 
\|S_{A,V}(t)\varphi\|_{L^\infty((0,T);H^\frac 32)}^{2}
\\\nonumber&
\leq C {\mathcal P}\Big(|(A,V)|_{\frac{\delta^2}4, r}\Big) 
 \|S_{A,V}(t)\varphi\|_{L^{l}((0,T);L^q)}^{2} \|\varphi\|_{H^{\frac 32+(\frac{\delta}8)^+}_{(\frac \delta 2)^+}}^2,
\end{align*}
where $\frac 1l+\frac 1q=1, l>2$ are Strichartz admissible and we used \eqref{3/2} (where $r\in [4,\infty)$ is arbitrary and we have replaced 
$\delta$ by $\frac{\delta^2}4$).
By using the Strichartz estimates \eqref{strichnai},
we can continue the estimate above as follows
$$\dots \leq {\mathcal P}\Big(|(A,V)|_{\frac{\delta^2}4, r}\Big)  {\mathcal P} \Big(|(A,V)|_{\tilde \delta,r}\Big) \mathcal P
\Big (|(A,V)\|_{\delta_1, \frac 2{s_1}} \Big)  \|\varphi\|_{H^{\frac 1l+s}_{\delta}}^2
\|\varphi\|_{H^{\frac 32+(\frac{\delta}8)^+}_{(\frac \delta 2)^+}}^2,$$
where $\tilde \delta, \delta_1, s_1>0$ depend on $s, \delta$.
Notice that for initial datum $\varphi=\Delta_N \varphi$ which is localized at dyadic frequency $N$, we get from the previous bound that
\begin{align*}
\| &S_{A,V}(t)\Delta_N \varphi\|_{L^4((0,T);W^{\frac 34-s,4})} \\\nonumber&
\leq  {\mathcal P} \Big(|(A,V)|_{\frac{\delta^2}4,r}\Big)  {\mathcal P} \Big(|(A,V)|_{\tilde \delta,r}\Big)\mathcal P\Big (\|(A,V)\|_{\delta_1, \frac 2{s_1}}\Big) 
\|\Delta_N \varphi\|_{H^{\frac 1{2l}+\frac s2 +\frac 34+(\frac{\delta}{16})^+}_{\delta}}.
\end{align*}
We conclude \eqref{noDuhamel} by summing over $N$ and using \eqref{sumupLP}, once we notice that $\frac 1{2l}+\frac 34<1$ for $l>2$ and
$s, \delta>0$ are small enough.
\end{proof}

\section{Nonlinear estimates}
\label{sec:nonlin}

Along this section, we focus on solutions to the following nonlinear problem
\begin{equation}\label{NLSAV}
\ii \partial_t v= H_{A,V} v -\lambda e^{-pA} v|v|^p, \quad \lambda>0
\end{equation}
where $H_{A,V}$ is defined in \eqref{eq:HAV}.
\subsection{Nonlinear Energy Estimates}
We have the following conserved quantities for any solution to \eqref{NLSAV}:
\begin{equation}
\label{eq:MollifiedQuantitiesNAV}
\frac{\dd}{\dd t} \int_{\R^2}| v(t)|^2 e^{-2A} dx=0;
\end{equation}
\begin{equation}\label{eq:MollifiedQuantitiesHAV}
\frac{\dd}{\dd t} \int_{\R^2} \Big(\frac{1}{2}|\D v|^2e^{-2A}-\frac{1}{2}|v(t)|^2  V e^{-2A}
+\frac{\lambda}{p+2} |v(t) |^{p+2} e^{-(p+2)A}\Big) dx=0.
\end{equation}
\begin{prop} \label{prop:L2H1}
Assume \eqref{exponentialAV} and \eqref{functionalDMAV} and let $T>0$ be fixed. Then,
for every $\delta\in (0, \frac 19)$, there exists $C>0$ such that for every solution $v$ to \eqref{NLSAV}, we have
\begin{equation}\label{nonloin}\|v\|_{L^\infty((0,T);L^2_\delta)}\leq  C \Big (1+\|v(0)\|_{H^1_{4\delta}}^{\frac{p+2}2}\Big );\end{equation}
\begin{equation}\label{nonlin}\|v\|_{L^\infty((0,T);H^1_{-\delta})}\leq C\Big (1+\|v(0)\|_{H^1_{8\delta}}^{\frac{p+2}2}\Big).\end{equation}
Moreover, for every $q\in [2, \infty)$ and $\delta\in \Big(0, \frac 1{36(2q-1)}\Big)$, we have
\begin{equation}\label{soblweider}
\|v\|_{L^\infty((0,T);W^{1,q}_{\delta})}\leq C  \|v\|_{L^\infty((0,T);H^2_{-\delta})}^{1-\frac 1q} {\mathcal P}\Big( \|v(0)\|_{H^1_{4(2q-1)\delta}}\Big).
\end{equation}
\end{prop}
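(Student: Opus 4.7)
The strategy is to adapt the linear energy estimates of Proposition \ref{propkey} to the nonlinear flow \eqref{NLSAV}, crucially exploiting the defocusing sign $\lambda>0$. I would proceed in three steps, mirroring the proofs of \eqref{linconserL2weight} and \eqref{L2Hs}, and close with a direct interpolation for \eqref{soblweider}.

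First, I would derive an auxiliary bound on $\|v(t)\|_{H^1_{-\delta/2}}$ in terms of $\|v(t)\|_{L^2_\delta}$ and the initial data. From energy conservation \eqref{eq:MollifiedQuantitiesHAV},
\begin{equation*}
\tfrac12\|e^{-A}\D v(t)\|_{L^2}^2 + \tfrac{\lambda}{p+2}\int |v(t)|^{p+2}e^{-(p+2)A}\dd x = \tfrac12\|e^{-A}\D v(0)\|_{L^2}^2 + \tfrac{\lambda}{p+2}\int |v(0)|^{p+2}e^{-(p+2)A}\dd x + \tfrac12\int V\bigl(|v(t)|^2-|v(0)|^2\bigr)e^{-2A}\dd x,
\end{equation*}
and since $\lambda>0$ the $|v(t)|^{p+2}$ term can be discarded. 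The initial $L^{p+2}$ term is controlled by $\|v(0)\|_{H^1_{\delta^+}}^{p+2}$ via the weighted Sobolev embedding \eqref{Sobolevweighted} (valid for all $p$ on $\R^2$) and \eqref{exponentialAV}, while the $V$-term is handled by \eqref{functionalDMAV} together with the interpolation $\|\cdot\|_{H^{1/2}_{\delta/4}}^2\leq C\|\cdot\|_{L^2_\delta}\|\cdot\|_{H^1_{-\delta/2}}$. Exactly as in \eqref{epsilon}, absorbing and invoking mass conservation \eqref{eq:MollifiedQuantitiesNAV} gives
\begin{equation}\label{myH1bound}
\|v(t)\|_{H^1_{-\delta/2}}^2 \leq C\bigl(1+\|v(0)\|_{H^1_{\delta^+}}^{p+2}\bigr) + C\|v(t)\|_{L^2_\delta}^2.
\end{equation}

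Next I would compute $\frac{\dd}{\dd t}\int \w^{2\bar\delta}|v|^2 e^{-2A}\dd x$ for $\bar\delta\in(\delta,\delta^+)$ as in \eqref{latersee}. The key observation is that the nonlinearity contributes zero: when one takes $2\,\Im\int\w^{2\bar\delta}(H_{A,V}v-\lambda e^{-pA}v|v|^p)\bar v e^{-2A}\dd x$, the nonlinear piece is $-\lambda\w^{2\bar\delta}e^{-(p+2)A}|v|^{p+2}$, which is real. Hence the linear calculation goes through verbatim and, after integration in time and Cauchy--Schwarz (using $\w^{2\bar\delta-1}\leq\w^{-\delta/2}$ under the condition $\tfrac\delta2+2\delta^+<1$), one obtains the analogue of \eqref{***}. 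Feeding \eqref{myH1bound} in and using again \eqref{exponentialAV} yields
\begin{equation*}
\|v(t)\|_{L^2_\delta}^2 \leq C\bigl(1+\|v(0)\|_{H^1_{4\delta}}^{p+2}\bigr)+C\int_0^t\|v(s)\|_{L^2_\delta}^2\dd s,
\end{equation*}
and Gronwall delivers \eqref{nonloin} once one chooses $\delta^+$ admissibly (the constraint $\delta<\tfrac19$ arises from the requirement $\tfrac\delta2+2\delta^+<1$ with $\delta^+$ slightly larger than $4\delta$). The bound \eqref{nonlin} is then immediate by reinserting \eqref{nonloin} into \eqref{myH1bound} with $\delta$ replaced by $2\delta$.

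Finally, \eqref{soblweider} is a direct consequence of \eqref{*****} applied pointwise in $t$:
\begin{equation*}
\|v(t)\|_{W^{1,q}_\delta} \leq C\|v(t)\|_{H^2_{-\delta}}^{1-1/q}\|v(t)\|_{L^2_{(2q-1)\delta}}^{1/q},
\end{equation*}
combined with \eqref{nonloin} at weight level $(2q-1)\delta$, which is admissible under the smallness $\delta<\tfrac{1}{36(2q-1)}$.

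\textbf{Main obstacle.} The whole argument hinges on the ability to discard the $|v(t)|^{p+2}$ term from the energy identity, which is precisely what fails in the focusing regime $\lambda<0$; this is what forces the smallness assumption mentioned in Remark \ref{rmk:foc}. The other technical point is tracking the chain of weight indices $\delta<\bar\delta<\delta^+<4\delta<4(2q-1)\delta$ so that all of \eqref{functionalDMAV}, \eqref{Sobolevweighted}, and the absorption $\w^{2\bar\delta-1}\leq\w^{-\delta/2}$ simultaneously hold; this is the source of the explicit numerical thresholds $\tfrac19$ and $\tfrac1{36(2q-1)}$.
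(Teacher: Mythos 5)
Your proposal is correct and follows essentially the same route as the paper: discard the $|v(t)|^{p+2}$ term from the energy identity using $\lambda>0$, control the $V$-term via \eqref{functionalDMAV} and interpolation to get the analogue of \eqref{mattmaj}, run the weighted-mass computation of \eqref{latersee} (where, as you note, the nonlinearity drops out because $e^{-(p+2)A}|v|^{p+2}$ is real), close with Gronwall, and deduce \eqref{soblweider} from \eqref{*****}. The only cosmetic difference is in how the numerical weight thresholds are justified, which does not affect the argument.
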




\begin{proof}
By using conservations \eqref{eq:MollifiedQuantitiesNAV} and \eqref{eq:MollifiedQuantitiesHAV} and recalling that $\lambda>0$, we get
\begin{align*}
\int_{\R^2} &\frac{1}{2}(|\D v(t)|^2+|v(t)|^2) e^{-2A} dx-\int_{\R^2} \frac{1}{2}|v(t)|^2  V e^{-2A}dx
\\\nonumber&\leq \int_{\R^2} \frac{1}{2}(|\D v(0)|^2+|v(0)|^2)e^{-2A}dx-\int_{\R^2} \frac{1}{2}|v(0)|^2  V e^{-2A}dx
\\\nonumber& \quad +\int_{\R^2} \frac{\lambda}{p+2} |v(0) |^{p+2} e^{-(p+2)A}dx
\end{align*}
and by \eqref{functionalDMAV} we get
\begin{align}
\label{h1bdd-1}
\begin{split}
\int_{\R^2} &\frac{1}{2}(|\D v(t)|^2+|v(t)|^2) e^{-2A} dx\\&
\leq C \|v (t)\|_{H^\frac 12_{\chi}}^2
+ \int_{\R^2} \frac{1}{2}(|\D v(0)|^2+|v(0)|^2)e^{-2A} dx
\\&
\quad +\int_{\R^2} \frac{\lambda}{p+2} |v(0) |^{p+2} e^{-(p+2)A} dx.
\end{split}
\end{align}
In turn by \eqref{exponentialAV} and the Sobolev embedding $H^1_{\delta}\subset L^{p+2}_\delta$, we get
\begin{align}
\label{h1bdd-2}
\|v(t)\|_{H^1_{-\delta}}^2\leq C\Big (1+\|v(0)\|_{H^1_{\delta}}^{p+2}\Big )+C \|v(t)\|_{H^1_{-\delta}}\|v(t)\|_{L^2_{2\delta}},
\end{align}
where we used a special case of \eqref{lem:BesovInterpolation} and we chose $\chi = \frac{\delta}{2}$.
Hence, by elementary considerations, we get
\begin{equation}\label{mattmaj}\|v(t)\|_{H^1_{-\delta}}^2\leq C\Big (1+\|v(0)\|_{H^1_{\delta}}^{p+2}\Big )+ C \|v(t)\|_{L^2_{2\delta}}^2.
\end{equation}
Next, by following the computation in \cite[Lemma 3.1]{DM} (see also \eqref{latersee}), we get
\begin{equation*}
\frac{\dd}{\dd t} \int_{\R^2}  |\w^{\delta} \,v(t)|^2 e^{-2A} dx
\leq C \,\int_{\R^2} \w^{2\delta-1} |\D v(t)| \,|v(t)| \,e^{-2A} dx,
\end{equation*}
which, by integration in time, \eqref{exponentialAV}, and the Cauchy-Schwartz inequality, implies that
\begin{equation*}
\|v(t) e^{-A}\|_{L^2_\delta}^2 \leq \|e^{-A}v(0)\|_{L^2_\delta}^2
+C \int_0^t \|\nabla v(\tau)\|_{L^2_{2\delta-1}} \|v(\tau)\|_{L^2_{\frac \delta 2}}d\tau,
\end{equation*}
so that by using again \eqref{exponentialAV},
\begin{equation*}
\|v(t)\|_{L^2_\frac \delta 2}^2 \leq  C \|v(0)\|_{L^2_{2\delta}}^2
+C \int_0^t \|\nabla v(\tau)\|_{L^2_{-\frac \delta 4}}^2 d\tau+ C \int_0^t \|v(\tau)\|_{L^2_{\frac \delta 2}}^2d\tau
\end{equation*}
where we assumed $\delta\in (0, \frac 19)$  in order to guarantee $\langle x\rangle^{2\delta- 1}\leq \langle x\rangle^{-\frac{\delta}4}$.
By combining this estimate with \eqref{mattmaj}, where we replace $\delta$ by $\frac \delta 4$, we get
\begin{equation*}
\|v(t)\|_{L^2_\frac \delta 2}^2 \leq  C \|v(0)\|_{L^2_{2\delta}}^2
+C t  \Big (1+\|v(0)\|_{H^1_{\frac \delta 4}}^{p+2}\Big ) + C \int_0^t \|v(\tau)\|_{L^2_{\frac \delta 2}}^2d\tau.
\end{equation*}
We deduce \eqref{nonloin} by the Gronwall inequality. The estimate \eqref{nonlin} follows by combining \eqref{nonloin} and \eqref{mattmaj}.
Concerning \eqref{soblweider} we can combine \eqref{*****} with \eqref{nonloin}.
\end{proof}

\begin{remark}
\label{rmk:foc}
We consider \eqref{NLSAV} with $\lambda < 0$. If $0 < p < 2$, \eqref{nonlin} and \eqref{nonloin} follow from Proposition 3.2 and Corollary 3.3 in \cite{DM}, respectively. If $p \geq 2$, we further assume that $\| v(0) \|_{H^1_{\delta_0}} \ll 1$ for some $\delta_0 > 0$. In this case, \eqref{h1bdd-1} needs to be replaced by
\begin{align*}
\int_{\R^2} &\frac{1}{2}(|\D v(t)|^2+|v(t)|^2) e^{-2A} dx\\\nonumber&
\leq C \|v (t)\|_{H^\frac 12_{\frac{\delta}{2}}}^2
+ \int_{\R^2} \frac{1}{2}(|\D v(0)|^2+|v(0)|^2)e^{-2A} dx
\\\nonumber&
\quad +\int_{\R^2} \frac{\lambda}{p+2} |v(0) |^{p+2} e^{-(p+2)A} dx - \int_{\R^2} \frac{\lambda}{p+2} |v(t) |^{p+2} e^{-(p+2)A} dx.
\end{align*}
Thus, instead of \eqref{h1bdd-2}, we obtain
\begin{align*}
\|v(t)\|_{H^1_{-\delta}}^2 &\leq C \|v(0)\|_{H^1_{\delta}}^{2} +C \|v(t)\|_{H^1_{-\delta}}\|v(t)\|_{L^2_{2p\delta}} + \| v(t) \|_{L^{p + 2}_{\frac{3p \delta}{p+2}}}^{p + 2} \\
&\leq C \|v(0)\|_{H^1_{\delta}}^{2} + \frac 12 \|v(t)\|_{H^1_{-\delta}}^2 + C \|v(t)\|_{L^2_{2p\delta}}^2 + \| v(t) \|_{H^{\frac{p}{p + 2}}_{\frac{3p \delta}{p+2}}}^{p + 2} \\
&\leq C \|v(0)\|_{H^1_{\delta}}^{2} + \frac 12 \|v(t)\|_{H^1_{-\delta}}^2 + C \|v(t)\|_{L^2_{2p\delta}}^2 + \| v(t) \|_{H^1_{-\delta}}^p \| v(t) \|_{L^2_{2p \delta}}^2
\end{align*}
where we take $\delta > 0$ to be sufficiently small and we used \eqref{exponentialAV}, the Sobolev embedding $H^{\frac{p}{p+2}}_{
\chi} \subset L^{p+2}_{\chi}$ for any $\chi \in \R$, and \eqref{lem:BesovInterpolation}. Assume that $\|v(t)\|_{H^1_{-\delta}} \leq 1$, so that we have the bound
\begin{align}
\label{h1bdd-3}
\|v(t)\|_{H^1_{-\delta}}^2 \leq C \|v(0)\|_{H^1_{\delta}}^{2} + C \|v(t)\|_{L^2_{2p\delta}}^2.
\end{align}
Then, by arguing as in the proof of Proposition \ref{prop:L2H1} and using \eqref{h1bdd-3} instead of \eqref{mattmaj}, we get
\begin{align*}
\| v(t) \|_{L^2_{2p \delta}}^2 \leq C(1 + t) \| v(0) \|_{H^1_{4p \delta}}^2 + C \int_0^t \| v(\tau) \|_{L^2_{2p \delta}}^2 d\tau.
\end{align*}
By applying the Gronwall inequality and taking $\| v(0) \|_{H^1_{4p \delta}}^2$ to be sufficiently small, we obtain 
$\| v(t) \|_{L^2_{2p \delta}}^2 \leq \frac{1}{100C}$, so that \eqref{h1bdd-3} gives $\| v(t) \|_{H^1_{-\delta}} \leq \frac 12$. Thus, by a standard continuity argument, we get the following two uniform bounds for $\delta > 0$ sufficiently small:
\begin{align*}
\| v \|_{L^\infty ((0, T); L^2_\delta)} &\leq 1, \\
\| v \|_{L^\infty ((0, T); H^1_{-\delta})} &\leq 1.
\end{align*}
\end{remark}

\medskip
The next proposition will also be useful.

\begin{prop} Assume \eqref{exponentialAV} and \eqref{functionalDMAV}. Let $0<\delta<\min \{\frac 19, \frac{\bar \eta}{9(6-4\bar \eta)} \}$ and $\bar \eta\in (0, 1)$.
For every $t>0$  and for every solution $v$ to \eqref{NLSAV}, we have the bound
\begin{equation}\label{fundam}\|v (t)\|_{H^{\frac 2{2-\bar \eta}}_\delta} \leq C 
{\mathcal P} \Big (\|v(0)\|_{H^1_{4\delta(\frac {6-4\bar \eta}{\bar \eta})}}\Big)\|v (t)\|_{H^{2}_{-\delta (\frac{1-2\bar \eta}{\bar \eta})}}^{\bar \eta}.
\end{equation}
In particular, for every $\eta_0\in (0, \frac 13), \delta_0>0$, there exists $\bar \delta>0$ such that
\begin{equation}\label{fundamnew}\|v (t)\|_{H^{\frac 2{2-\eta_0}}_\delta} \leq C 
{\mathcal P} \Big (\|v(0)\|_{H^1_{\delta_0}}\Big )\|v (t)\|_{H^{2}_{-\delta}}^{\eta_0}, \quad \forall \delta\in (0, \bar \delta).
\end{equation}
\end{prop}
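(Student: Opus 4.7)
The plan is to combine two applications of the weighted Besov interpolation inequality \eqref{lem:BesovInterpolation} with the conserved-quantity bounds of Proposition \ref{prop:L2H1}. The algebra is forced by the requirement that the power $\bar\eta$ in front of $\|v(t)\|_{H^2}$ come from a single interpolation parameter; this choice then dictates the intermediate Sobolev exponent and the intermediate weight.

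First, I would set
$$s_0 := \frac{2(1-\bar\eta)}{2-\bar\eta} \in (0,1), \qquad \mu_1 := -\delta\,\frac{1-2\bar\eta}{\bar\eta},$$
and apply \eqref{lem:BesovInterpolation} (with $p_j=q_j=2$) to interpolate $H^{\frac{2}{2-\bar\eta}}_\delta$ between $H^{s_0}_{2\delta}$ and $H^2_{\mu_1}$ with parameter $\Theta = \bar\eta$. A direct check gives
$$(1-\bar\eta)s_0 + 2\bar\eta = \frac{2}{2-\bar\eta}, \qquad (1-\bar\eta)\cdot 2\delta + \bar\eta\,\mu_1 = \delta,$$
so the indices and weights match, yielding
$$\|v(t)\|_{H^{\frac{2}{2-\bar\eta}}_\delta} \leq C\,\|v(t)\|_{H^{s_0}_{2\delta}}^{1-\bar\eta}\,\|v(t)\|_{H^2_{-\delta(1-2\bar\eta)/\bar\eta}}^{\bar\eta}.$$

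Second, I would bound $\|v(t)\|_{H^{s_0}_{2\delta}}$ by another application of \eqref{lem:BesovInterpolation}, interpolating between $L^2_{\mu_0'}$ and $H^1_{-\delta}$ with parameter $\Theta = s_0$. Balancing the target weight forces
$$\mu_0' = \frac{2\delta + s_0\,\delta}{1-s_0} = \delta\,\frac{6-4\bar\eta}{\bar\eta},$$
and the hypothesis $\delta < \bar\eta/(9(6-4\bar\eta))$ ensures $\mu_0' < 1/9$, while $\delta < 1/9$ permits the use of \eqref{nonlin}. Proposition \ref{prop:L2H1} then gives
$$\|v(t)\|_{L^2_{\mu_0'}} \leq C\,\mathcal{P}(\|v(0)\|_{H^1_{4\mu_0'}}), \qquad \|v(t)\|_{H^1_{-\delta}} \leq C\,\mathcal{P}(\|v(0)\|_{H^1_{8\delta}}),$$
and since $4\mu_0' = 4\delta(6-4\bar\eta)/\bar\eta \geq 8\delta$ for $\bar\eta < 1$, the embedding \eqref{inclusionHsdelta} bounds both factors by $\mathcal{P}(\|v(0)\|_{H^1_{4\delta(6-4\bar\eta)/\bar\eta}})$. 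Absorbing the $(1-\bar\eta)$-power into $\mathcal{P}$ and combining with the first step establishes \eqref{fundam}.

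For \eqref{fundamnew} with $\eta_0 \in (0, 1/3)$, I would set $\bar\eta := \eta_0$ and observe that $(1-2\eta_0)/\eta_0 > 1$, so $-\delta(1-2\eta_0)/\eta_0 \leq -\delta$ and therefore $H^2_{-\delta} \hookrightarrow H^2_{-\delta(1-2\eta_0)/\eta_0}$ via \eqref{inclusionHsdelta}. Choosing $\bar\delta := \min\bigl\{\eta_0/(9(6-4\eta_0)),\; \eta_0\delta_0/(4(6-4\eta_0))\bigr\}$ guarantees that for every $\delta \in (0,\bar\delta)$ the hypothesis of \eqref{fundam} holds and $4\delta(6-4\eta_0)/\eta_0 \leq \delta_0$, so $H^1_{\delta_0} \hookrightarrow H^1_{4\delta(6-4\eta_0)/\eta_0}$; this yields \eqref{fundamnew}. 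The only subtle point is recognizing that $s_0 = 2(1-\bar\eta)/(2-\bar\eta)$ and $\mu_0' = \delta(6-4\bar\eta)/\bar\eta$ are forced by the shape of \eqref{fundam}, and then verifying that the weight conditions needed to invoke Proposition \ref{prop:L2H1} are met precisely under $\delta < \bar\eta/(9(6-4\bar\eta))$.
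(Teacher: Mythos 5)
Your proposal is correct and follows essentially the same route as the paper: two applications of the weighted interpolation inequality \eqref{lem:BesovInterpolation} (first writing $H^{2/(2-\bar\eta)}_{\delta}$ as an interpolate of $H^{s_0}_{2\delta}$ and $H^{2}_{-\delta(1-2\bar\eta)/\bar\eta}$, then $H^{s_0}_{2\delta}$ as an interpolate of $L^2_{\delta(6-4\bar\eta)/\bar\eta}$ and $H^1_{-\delta}$), followed by \eqref{nonloin} and \eqref{nonlin} and the embedding \eqref{inclusionHsdelta}. The only difference is cosmetic: the paper parametrizes by $s=\bar\eta/(2-\bar\eta)$ so that $H^{1+s}_\delta=H^{2/(2-\bar\eta)}_\delta$ and $H^{1-s}_{2\delta}=H^{s_0}_{2\delta}$, and your intermediate spaces, weights, and exponents all coincide with theirs.
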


\begin{proof}
By special case of \eqref{lem:BesovInterpolation}, we get
\begin{equation*}\|f\|_{H^{1+s}_\delta} \leq C \|f\|_{H^{1-s}_{2\delta}}^{\frac{1-s}{1+s}} \|f\|_{H^{2}_{-\delta(\frac{1-3s}{2s})}}^{\frac{2s}{1+s}},
\quad \delta>0, \quad  s\in (0,1)\end{equation*}
and also
\begin{equation*}\|f\|_{H^{1-s}_{2\delta}} \leq C \|f\|_{H^{1}_{-\delta}}^{1-s}  \|f\|_{L^2_{\delta(\frac{3-s}s)}}^{s},
\quad  \delta>0, \quad  s\in (0,1).\end{equation*}
Summarizing, we obtain
\begin{equation*}\|f\|_{H^{1+s}_\delta} \leq C\|f\|_{H^{1}_{-\delta}}^{\frac{(1-s)^2}{1+s}}  \|f\|_{L^2_{\delta(\frac{3-s}s)}}^{\frac{s(1-s)}{1+s}}   \|f\|_{H^{2}_{-\delta (\frac{1-3s}{2s})}}^{\frac{2s}{1+s}}, \quad \delta>0, \quad s\in (0,1)
\end{equation*}
Next, we select $s\in (0,1)$ such that $\bar \eta=\frac{2s}{s+1}$, namely $s=\frac{\bar \eta}{2-\bar \eta}$.
We recall that \eqref{nonloin} and \eqref{nonlin} are available for solutions $v$ to \eqref{NLSAV} and so we get
\begin{equation*}\|v (t)\|_{H^{\frac 2{2-\bar \eta}}_\delta} \leq C {\mathcal P} \Big(\|v(0)\|_{H^1_{8\delta}}\Big) {\mathcal P} \Big(\|v(0)\|_{H^1_{4\delta(\frac {6-4\bar \eta}{\bar \eta})}}\Big)\|v (t)\|_{H^{2}_{-\delta (\frac{1-2\bar \eta}{\bar \eta})}}^{\bar \eta}.
\end{equation*}
The conclusion follows since $8\delta\leq 4\delta(\frac {6-4\bar \eta}{\bar \eta})$ for every $\bar \eta\in (0,1)$.
The bound \eqref{fundamnew} is an easy consequence of \eqref{fundam}.
\end{proof}

\subsection{Nonlinear Strichartz Estimates}

Along this subsection, $v$ denotes any solution to \eqref{NLSAV} and $\delta_0>0$ is a fixed number
such that
$v(0)\in H^2_{\delta_0}$.
We aim at proving local in time Strichartz space-time bounds for the solution $v$.
We shall use the quantities introduced respectively in 
\eqref{AVdeltar} and \eqref{AVdeltakappa} .

\begin{prop} Assume \eqref{exponentialAV} and \eqref{functionalDMAV}. 
Let $T>0$ and $\bar \eta\in (0,1)$ be fixed. Then, for every $s>0$ and $0<\delta<\min\{\frac 1{18}, \frac{\bar \eta}{18(6-4\bar \eta)}\}$, there exist $s_1, \delta_1,\tilde \delta>0$ such that 
$\frac {\delta_1}{s_1}>1$ and for every $r\in [4, \infty)$, we have the following bound:
\begin{align}
\label{BF}
\begin{split}
\| &v\|_{L^4((0,T);W^{{\frac 34-s},4})} \\
& \leq {\mathcal P} \Big(|(A,V)|_{\delta,r}\Big) {\mathcal P} \Big(|(A,V)|_{\tilde \delta,r}\Big) {\mathcal P} \Big(|(A,V)|_{\frac{\delta^2}4,r}\Big)  \mathcal P \Big (\|(A,V)\|_{\delta_1, \frac 2{s_1}}^2 \Big) \\
&\quad \times \Big [\|v(0)\|_{H^1_{\delta}}
 +  {\mathcal P} \Big (\|v(0)\|_{H^1_{8\delta(\frac {6-4\bar \eta}{\bar \eta})}}\Big)\|v\|_{L^\infty((0,T);H^{2}_{-2\delta (\frac{1-2\bar \eta}{\bar \eta})})}^{(p+1)\bar \eta}\Big].
\end{split}
\end{align}
In particular, for every given $\eta_0\in (0,\frac 25)$ and $s>0$, there exists $\bar \delta>0$ such that for every $\delta\in (0, \bar \delta)$,
there are $s_1, \delta_1, \tilde \delta>0$ with $\frac{\delta_1}{s_1}>1$ and for every $r\in [4,\infty)$,
\begin{align}
\label{...}
\begin{split}
\|&v\|_{L^4((0,T);W^{{\frac 34-s},4})} \\
&\leq {\mathcal P} \Big(|(A,V)|_{\delta,r}\Big) {\mathcal P} \Big(|(A,V)|_{\tilde \delta,r}\Big) {\mathcal P} \Big(|(A,V)|_{\frac{\delta^2}4,r}\Big)  {\mathcal P}\Big(\|(A,V)\|_{\delta_1, \frac 2{s_1}}\Big) \\
&\quad \times {\mathcal P} \Big(\|v(0)\|_{H^1_{\delta_0}}\Big) \|v\|_{L^\infty((0,T);H^{2}_{-\delta})}^{(p+1)\eta_0}.
\end{split}
\end{align}

\end{prop}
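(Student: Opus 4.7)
My plan is to combine Duhamel's principle with the linear Strichartz bounds of Proposition \ref{strichcor}. Writing
\[
v(t) = S_{A,V}(t)v(0) + i\lambda \int_0^t S_{A,V}(t-\tau)\bigl(e^{-pA}v|v|^p\bigr)(\tau)\,d\tau,
\]
the inequality \eqref{noDuhamel} applied to the free evolution directly produces the $\|v(0)\|_{H^1_\delta}$ contribution on the right-hand side of \eqref{BF}. The inhomogeneous estimate \eqref{Duhamel} applied to the Duhamel term then reduces the problem to controlling $\|e^{-pA}v|v|^p\|_{L^1((0,T);H^1_\delta)}$, which I will simply bound by $T\|e^{-pA}v|v|^p\|_{L^\infty((0,T);H^1_\delta)}$, all constants being absorbed into the polynomial factors.

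For the pointwise-in-time nonlinear estimate, I use the chain rule
\[
|\nabla(e^{-pA}v|v|^p)| \leq p\,e^{-pA}|\nabla A||v|^{p+1} + (p+1)\,e^{-pA}|v|^p|\nabla v|
\]
together with H\"older's inequality in which the polynomial weight $\langle x\rangle^\delta$ is split as $\langle x\rangle^\delta = \langle x\rangle^{-\delta'}\langle x\rangle^{\delta+\delta'}$ for a small auxiliary $\delta'>0$. The negative power is absorbed into factors such as $\|\langle x\rangle^{-\delta'}e^{-pA}\|_{L^\infty}$ and $\|\langle x\rangle^{-\delta'}\nabla A\,e^{-pA}\|_{L^r}$, which are exactly those controlled by $|(A,V)|_{\delta,r}$ as defined in \eqref{AVdeltar}, while the positive weight $\delta+\delta'$ stays with the $v$-dependent factors. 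Using Sobolev embedding in dimension two applied to the weighted space $H^{2/(2-\bar\eta)}_{\delta''}$ (whose index $2/(2-\bar\eta)>1$ yields continuous embeddings into every $L^q_{\delta''/(p+1)}$ with $q<\infty$ and into $L^\infty_{\delta''}$), I then obtain
\[
\|e^{-pA}v|v|^p\|_{H^1_\delta} \leq \mathcal{P}\bigl(|(A,V)|_{\delta,r}\bigr)\,\|v\|_{H^{2/(2-\bar\eta)}_{\delta''}}^{\,p+1}
\]
at each time. Invoking \eqref{fundam} on each of the $p+1$ factors of $v$ then produces the bound $\mathcal{P}\bigl(\|v(0)\|_{H^1_{8\delta(6-4\bar\eta)/\bar\eta}}\bigr)\,\|v\|_{L^\infty H^2_{-2\delta(1-2\bar\eta)/\bar\eta}}^{(p+1)\bar\eta}$, completing the proof of \eqref{BF}.

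The second estimate \eqref{...} will be deduced as an easy consequence of \eqref{BF}: given $\eta_0\in(0,2/5)$ and an arbitrarily small target weight $\delta$, I set $\bar\eta=\eta_0$ and apply \eqref{BF} with a rescaled parameter $\delta_{\mathrm{new}}=\delta\bar\eta/\bigl(2(1-2\bar\eta)\bigr)$, which exactly turns the $H^2$-weight in \eqref{BF} into $\delta$ and makes the $H^1$-weight on $v(0)$ a bounded (depending only on $\eta_0$) multiple of $\delta$; choosing $\bar\delta$ small enough forces this multiple to be at most $\delta_0$, so that $\mathcal{P}(\|v(0)\|_{\cdot})$ is absorbed into $\mathcal{P}(\|v(0)\|_{H^1_{\delta_0}})$. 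The main obstacle in the whole argument is the careful bookkeeping of the various weights: each H\"older split introduces a free parameter $\delta'$ that must simultaneously be small enough for the $v$-factors to land in a weighted Sobolev space where \eqref{fundam} delivers the announced final weight $-2\delta(1-2\bar\eta)/\bar\eta$, and large enough for $e^{-pA}$ and $\nabla A\,e^{-pA}$ to still be controlled by $|(A,V)|_{\delta,r}$. A related subtlety is handling the commutators that arise when pulling $\langle x\rangle^\delta$ through the gradient in the equivalent definition of $H^1_\delta$, but these are controlled by \eqref{commutator2}.
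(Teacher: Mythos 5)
Your proposal is correct and follows essentially the same route as the paper: Duhamel's formula combined with \eqref{noDuhamel}--\eqref{Duhamel}, the chain-rule/H\"older splitting of $\|e^{-pA}v|v|^p\|_{H^1_\delta}$ into the factors controlled by $|(A,V)|_{\delta,r}$, the embedding $H^{2/(2-\bar\eta)}_{2\delta}\subset L^q_\delta$, and then \eqref{fundam} to produce the $(p+1)\bar\eta$ power. Your more explicit rescaling argument for deducing \eqref{...} from \eqref{BF} matches what the paper leaves as an ``easy consequence.''
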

\begin{proof}
We get by \eqref{noDuhamel} and \eqref{Duhamel} that
\begin{align*}
\|v\|_{L^4((0,T);W^{{\frac 34-s},4})}&\leq {\mathcal P} \Big(|(A,V)|_{\tilde \delta,r}\Big)
 {\mathcal P} \Big(|(A,V)|_{\frac{\delta^2}4,r}\Big) \mathcal P \Big (\|(A,V)\|_{\delta_1, \frac 2{s_1}}^2 \Big) \\\nonumber&
\quad \times \Big (\|v(0)\|_{H^1_{\delta}} 
+ |\lambda|\int_0^T  \|e^{-pA} v(\tau) |v(\tau)|^p\|_{H^1_{\delta}} d\tau\Big).
\end{align*}
On the other hand, we have 
\begin{align*}
\| &e^{-pA} v |v|^p\|_{H^1_{\delta}} \\
&\leq C \|e^{-pA} v |v|^p\|_{L^2_{\delta}}+ C \|\nabla A e^{-pA} v |v|^p\|_{L^2_{\delta}}+
C\|e^{-pA} \nabla v |v|^p\|_{L^2_{\delta}}\\\nonumber&
\leq C \|\w^{-\delta} e^{-pA}\|_{L^\infty} \|v\|_{H^1_{2\delta}}  \|v\|_{L^\infty}^p + C \|\w^{-\delta} \nabla A e^{-pA}\|_{L^r} \|v\|_{L^\frac{2r}{r-2}_{2\delta}}
\|v\|_{L^\infty}^p\\\nonumber&
\leq  C |(A,V)|_{\delta,r}  \|v\|_{H^{\frac 2{2-\bar \eta}}_{2\delta}}^{p+1},
\end{align*}
where $v=v(t)$ for some fixed $t\in [0, T]$ and we used the Sobolev embedding $H^{\frac 2{2-\bar \eta}}_{2\delta}\subset L^q_{\delta}$ for every $q\in [2, \infty]$.
By using \eqref{fundam}, we get
$$\|e^{-pA} v |v|^p\|_{L^\infty((0,T);H^1_{\delta})}\leq C{\mathcal P} \Big (\|v(0)\|_{H^1_{8\delta(\frac {6-4\bar \eta}{\bar \eta})}}\Big)
\|v\|_{L^\infty((0,T);H^{2}_{-2\delta (\frac{1-2\bar \eta}{\bar \eta})})}^{(p+1)\bar \eta}.$$
The bound \eqref{...} easily follows from \eqref{BF}.
\end{proof}


As a consequence, we can show the following estimates.

\begin{prop}\label{maiM} Assume \eqref{exponentialAV} and \eqref{functionalDMAV}. 
Let $T>0$, $\eta_0\in (0,\frac 25)$, and $s\in (0,\frac 18)$ be given. Then, there exists $\bar \delta>0$ such that for every $\delta\in (0, \bar \delta)$,
there exist $s_1, \delta_1, \tilde \delta>0$ with $\frac{\delta_1}{s_1}>1$ and for every $r\in [4,\infty)$,
\begin{align}
\label{strimon}
\begin{split}
\|&v\|_{L^2((0,T);W^{1,4})}\\
&\leq
{\mathcal P} \Big(|(A,V)|_{\delta,r}\Big) {\mathcal P} \Big(|(A,V)|_{\tilde \delta,r}\Big)
{\mathcal P} \Big(|(A,V)|_{\frac{\delta^2}4,r}\Big)  {\mathcal P}\Big(\|(A,V)\|_{\delta_1, \frac 2{s_1}}\Big)
\\& \quad \times{\mathcal P} \Big(\|v(0)\|_{H^1_{\delta_0}}\Big)\|v\|_{L^\infty((0,T);H^{2}_{-\delta})}^{\frac{2(1-2s)(p+1)\eta_0}{3}}
 \|v\|_{L^\infty((0,T);H^2_{-\frac{s\delta_0}{4(4-s)}})}^{\frac{(1+4s)(2-s)}6}.
\end{split}
\end{align}
In particular, by choosing $s=\frac 1{16}$ and
$0<\delta<\min\Big \{\bar \delta, \frac{\delta_0}{64(4-\frac 1{16})}\Big\}$,
we get
\begin{align}
\label{strigi}
\begin{split}
\|&v\|_{L^2((0,T);W^{1,4})}\\
&\leq 
{\mathcal P} \Big(|(A,V)|_{\delta,r}\Big) {\mathcal P} \Big(|(A,V)|_{\tilde \delta,r}\Big)
{\mathcal P} \Big(|(A,V)|_{\frac{\delta^2}4,r}\Big)  {\mathcal P}\Big(\|(A,V)\|_{\delta_1, \frac 2{s_1}}\Big)\\
&\quad \times {\mathcal P} \Big(\|v(0)\|_{H^1_{\delta_0}}\Big)\|v\|_{L^\infty((0,T);H^{2}_{-\delta})}^{\frac 7{12}(p+1)\eta_0+\frac{155}{384}}
\end{split}
\end{align}
\end{prop}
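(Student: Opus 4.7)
The plan is to derive \eqref{strimon} by interpolating the spatial $W^{1,4}$-norm between the lower-regularity endpoint $W^{3/4-s,4}$, already controlled in $L^4_t$ by the previous estimate \eqref{...}, and the higher-regularity endpoint $W^{3/2-s,4}$, which via the unweighted $\R^2$-Sobolev embedding $H^{2-s}\hookrightarrow W^{3/2-s,4}$ is dominated by the unweighted $H^{2-s}$-norm. A second interpolation between a weighted $L^2$ (controlled by the energy bound \eqref{nonloin}) and the weighted $H^2$ appearing on the right-hand side of \eqref{strimon} then reshuffles the regularity so that no uncontrolled weight is left.

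\textbf{Step 1: spatial interpolation.} Interpolating Sobolev/Besov spaces at fixed $p=q=4$ gives
\begin{equation*}
\|v(t)\|_{W^{1,4}}\leq C\,\|v(t)\|_{W^{3/4-s,4}}^{\theta}\,\|v(t)\|_{H^{2-s}}^{1-\theta},\qquad \theta=\tfrac{2(1-2s)}{3},\quad 1-\theta=\tfrac{1+4s}{3},
\end{equation*}
since $\theta(\tfrac34-s)+(1-\theta)(\tfrac32-s)=1$ and Sobolev embedding bounds $\|v(t)\|_{W^{3/2-s,4}}$ by $\|v(t)\|_{H^{2-s}}$. A further application of \eqref{lem:BesovInterpolation} yields
\begin{equation*}
\|v(t)\|_{H^{2-s}}\leq C\,\|v(t)\|_{L^2_{\mu_0}}^{s/2}\,\|v(t)\|_{H^2_{-\mu_1}}^{(2-s)/2},
\end{equation*}
with $\mu_1:=\tfrac{s\delta_0}{4(4-s)}$ chosen to match the target weight in \eqref{strimon} and $\mu_0:=\tfrac{(2-s)\mu_1}{s}=\tfrac{(2-s)\delta_0}{4(4-s)}$ forced by the requirement that the interpolated weight $\tfrac{s}{2}\mu_0-\tfrac{2-s}{2}\mu_1$ vanish. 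For $\delta_0$ in the admissible range and $s\in (0,\tfrac18)$ one has $4\mu_0\leq\delta_0$ and $\mu_0<\tfrac19$, so \eqref{nonloin} applies and bounds $\|v\|_{L^\infty_t L^2_{\mu_0}}$ by $\mathcal{P}(\|v(0)\|_{H^1_{\delta_0}})$.

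\textbf{Step 2: time integration.} Placing the two weighted factors in $L^\infty_t$ and handling the remaining factor by H\"older in time $\bigl(\int_0^T h^{2\theta}\,dt\leq T^{1-\theta/2}\|h\|_{L^4_t}^{2\theta}$, admissible since $\theta\in(\tfrac12,\tfrac23)$ for $s\in(0,\tfrac18)$, in particular $2\theta\leq 4\bigr)$ one gets
\begin{equation*}
\|v\|_{L^2_t W^{1,4}_x}\leq C\,T^{\frac12-\frac{\theta}{4}}\,\|v\|_{L^4_t W^{3/4-s,4}_x}^{\theta}\,\|v\|_{L^\infty_t L^2_{\mu_0}}^{s(1-\theta)/2}\,\|v\|_{L^\infty_t H^2_{-\mu_1}}^{(2-s)(1-\theta)/2}.
\end{equation*}
Inserting \eqref{...} into the first factor produces the exponent $\theta(p+1)\eta_0=\tfrac{2(1-2s)(p+1)\eta_0}{3}$ on $\|v\|_{L^\infty_t H^2_{-\delta}}$, while the algebraic identity $(1-\theta)(2-s)/2=\tfrac{(1+4s)(2-s)}{6}$ recovers the second weighted $H^2$ exponent in \eqref{strimon}, and the $L^2_{\mu_0}$ factor is absorbed into $\mathcal P(\|v(0)\|_{H^1_{\delta_0}})$. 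All the $|(A,V)|$ and $\|(A,V)\|$ polynomials come directly from \eqref{...}; this proves \eqref{strimon}.

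\textbf{Step 3: specialization.} The bound \eqref{strigi} follows by plugging $s=\tfrac{1}{16}$, which gives the numerical exponents $\tfrac{7(p+1)\eta_0}{12}$ and $\tfrac{155}{384}$, and by using the elementary weighted embedding $H^2_{-\delta}\subseteq H^2_{-\mu_1}$ (valid as soon as $\delta\leq\mu_1=\tfrac{\delta_0}{64(4-1/16)}=\tfrac{\delta_0}{252}$) to merge both weighted $H^2$ factors into a single $\|v\|_{L^\infty_t H^2_{-\delta}}$ raised to $\tfrac{7(p+1)\eta_0}{12}+\tfrac{155}{384}$. The main subtlety is Step~1: the identity $\mu_0=\tfrac{2-s}{s}\mu_1$ is forced by the interpolation, so $\mu_1$ must be chosen proportional to $s\delta_0$ to keep $\mu_0$ below the threshold $\tfrac{1}{9}$ where \eqref{nonloin} is valid --- this is precisely what determines the peculiar factor $\tfrac{s\delta_0}{4(4-s)}$ appearing in the statement.
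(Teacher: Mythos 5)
Your proposal is correct and follows essentially the same route as the paper: the fixed-time interpolation $\|v\|_{W^{1,4}}\leq C\|v\|_{W^{3/4-s,4}}^{2(1-2s)/3}\|v\|_{H^{2-s}}^{(1+4s)/3}$, Hölder in time against the $L^4_t W^{3/4-s,4}$ Strichartz bound \eqref{...}, a second weighted interpolation of $H^{2-s}$ between an $L^2$-weighted norm controlled by \eqref{nonloin} and $H^2_{-s\delta_0/(4(4-s))}$, and finally the specialization $s=\tfrac1{16}$ with the embedding $H^2_{-\delta}\subset H^2_{-\delta_0/252}$. The only cosmetic difference is that you tune the $L^2$ weight to $\mu_0=\tfrac{(2-s)\delta_0}{4(4-s)}$ so the interpolated weight is exactly zero, whereas the paper uses $L^2_{\delta_0/4}$ and lands in $H^{2-s}$ with a harmless positive weight; both yield the same exponents and the same conclusion.
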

\begin{proof}
We have the following interpolation bound at time fixed:
$$\|v(t)\|_{W^{1,4}}\leq C \|v(t)\|_{W^{\frac 34-s, 4}}^{\frac{2(1-2s)}{3}} \|v(t)\|_{H^{2-s}}^{\frac{1+4s}3}, \quad s\in \Big(0,\frac 12\Big)$$
and hence by integration in time and using H\"older in time, we get 
\begin{align}
\label{Palom}
\begin{split}
\|&v(t)\|_{L^2((0,T);W^{1,4})}\\
&\leq C \|v(t)\|_{L^4((0,T);W^{\frac 34-s, 4})}^{\frac{2(1-2s)}{3}} \|v(t)\|_{L^\infty((0,T);H^{2-s})}^{\frac{1+4s}3}, \quad s\in \Big(0,\frac 18\Big).
\end{split}
\end{align}
Next, notice that we have
\begin{equation}\label{palom}\|v(t)\|_{H^{2-s}_\frac{s\delta_0}{4(4-s)}} \leq C \|v(t)\|_{L^2_{\frac{\delta_0}4}}^\frac s2 \|v(t)\|_{H^2_{-\frac{s\delta_0}{4(4-s)}}}^\frac{2-s}2.
\end{equation}
We conclude \eqref{strimon} by combining \eqref{nonloin} with \eqref{...}, \eqref{Palom}, and \eqref{palom}.
The estimate \eqref{strigi} follows by \eqref{strimon} once we notice that the condition $0<\delta<\frac{\delta_0}{64(4-\frac 1{16})}$
implies that we have the embedding $H^2_{-\delta}\subset H^2_{-\frac{s\delta_0}{4(4-s)}}$ for $s=\frac 1{16}$. Then one can abosorb the term 
$ \|v\|_{L^\infty((0,T);H^2_{-\frac{s\delta_0}{4(4-s)}})}$ in the factor $\|v\|_{L^\infty((0,T);H^2_{-\delta})}$ on the r.h.s. in \eqref{strimon}.
\end{proof}

\section{Modified energies}
\label{sec:energy}

Along this section we denote by $v$ a solution to the following equation with time-independent $A$ and $V$:
\begin{align}
\label{eq:IntroductionMollifiedEquationAV}
\ii \partial_t v=\Delta v- 2\nabla A\cdot \nabla v+V v-\lambda e^{-pA}v |v|^{p}, \quad \lambda\geq 0
\end{align}
and $\delta_0>0$ will denote a fixed given number such that 
$v(0)\in H^2_{\delta_0}$.
The following result has already been used in the linear case ($\lambda=0$).

\begin{prop}\label{modifen} 
Let $v$ be solution to \eqref{eq:IntroductionMollifiedEquationAV}, then we have the following identity:
\begin{equation}\label{MODIFIED}
\frac {\dd}{\dd t} {\mathcal E}_{A,V}(v(t))= -\lambda \mathcal H_{A,V}(v(t)),
\end{equation}
where
\begin{equation*}
{\mathcal E}_{A,V}(v(t))= \int_{\R^2} |\Delta v(t)|^2 e^{-2A} dx+{\mathcal F}_{A,V}(v(t))
- \lambda{\mathcal G}_{A,V}(v(t))
\end{equation*}
and the energies ${\mathcal F}_{A,V}, {\mathcal G}_{A,V}, \mathcal H_{A,V}$
are defined as follows:
\rm
\begin{align*}
 {\mathcal F}_{A,V}(v(t)) :\!&=
-4 \Re \int_{\R^2} \Delta v(t) \nabla A \cdot \nabla \bar v(t) e^{-2A} dx
\\\nonumber &
\quad - 4 \int_{\R^2} (\nabla A\cdot \nabla v)^2 e^{-2A}dx
+2 \Re \int_{\R^2} v(t) V  \nabla \bar v(t) \cdot \nabla (e^{-2A})dx
\\\nonumber&
\quad +2 \Re \int_{\R^2} \Delta v(t) \bar v(t) V e^{-2A}dx
+ \int_{\R^2}  |v(t)|^2 V^2 e^{-2A}dx, \\
{\mathcal G}_{A,V}(v(t)) :\!&= 
- \int_{\R^2} |\nabla v(t)|^2|v(t)|^p e^{-(p+2)A} dx \\
&\quad -2 \Re \int_{\R^2} v(t) \nabla (|v(t)|^p) \cdot \nabla \bar v(t)  e^{-(p+2)A}dx
\\\nonumber&
\quad +\frac p4 \int_{\R^2} |\nabla(|v(t)|^2)|^2|v(t)|^{p-2}  e^{-(p+2)A}dx \\
&\quad +\frac {2 }{p+2} \int_{\R^2} |v(t)|^{p+2} V e^{-(p+2)A}dx\\\nonumber&
\quad +2p\Re \int_{\R^2} v(t) |v(t)|^p \nabla A  \cdot \nabla \bar v(t) e^{-(p+2)A}dx, \\
\mathcal H_{A,V}(v(t)):\!&= 
-\int_{\R^2} |\nabla v(t)|^2 \partial_t (|v(t)|^p)  e^{-(p+2)A} dx\\\nonumber&
\quad -2 \Re \int_{\R^2} \partial_t v(t) \nabla (|v(t)|^p) \cdot \nabla \bar v(t)
 e^{-(p+2) A} dx\\\nonumber&
\quad -\frac p4 \int_{\R^2} |\nabla(|v(t)|^2)|^2\partial_t (|v(t)|^{p-2})
e^{-(p+2)A} dx \\
&\quad +2p \Re \int_{\R^2} \partial_t (v(t) |v(t)|^p) 
\nabla A \cdot \nabla \bar v(t) e^{-(p+2)A}dx. 
\end{align*}
\end{prop}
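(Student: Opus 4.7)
The plan is to derive the identity by direct time-differentiation of $\mathcal E_{A,V}(v(t))$, using the equation $\ii\partial_t v = H_{A,V} v - \lambda e^{-pA} v|v|^p$ to eliminate $\partial_t v$ and then reorganizing via spatial integration by parts. First I would differentiate $\int_{\R^2} |\Delta v|^2 e^{-2A} dx$ using $\partial_t v = -\ii H_{A,V} v + \ii \lambda e^{-pA} v|v|^p$, producing a term $-2\Im \int \Delta \bar v \cdot \Delta(H_{A,V} v) e^{-2A} dx$ plus a nonlinear piece involving $\Delta(e^{-pA} v|v|^p)$. Writing $\Delta \circ H_{A,V} = H_{A,V} \circ \Delta + [\Delta, H_{A,V}]$, the symmetric contribution is handled by moving it across the inner product (with weight $e^{-2A}$) and using that $\int \bar f \cdot H_{A,V} f \cdot e^{-2A}$ is real modulo boundary-type terms from $[H_{A,V}, e^{-2A}]$; these boundary terms, together with the commutator contributions, assemble precisely into the time derivative of $\mathcal F_{A,V}(v)$.

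The cleanest conceptual organization, and the one I would actually implement, is to introduce the gauge-transformed variable $u = e^{-A} v$. Since $A$ is time-independent, $u$ satisfies
\begin{equation*}
\ii \partial_t u = \widetilde H u - \lambda u |u|^p, \qquad \widetilde H := \Delta + (\Delta A - |\nabla A|^2 + V),
\end{equation*}
with $\widetilde H$ symmetric, and the identity $e^{-A} H_{A,V} v = \widetilde H u$ yields
\begin{equation*}
\int_{\R^2} |\widetilde H u|^2 dx = \int_{\R^2} |\Delta v|^2 e^{-2A} dx + \mathcal F_{A,V}(v)
\end{equation*}
after one integration by parts on the cross term $\Re \int \Delta v \nabla A \cdot \nabla \bar v\, e^{-2A}$. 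Differentiating $\int |\widetilde H u|^2 dx$ in time and using self-adjointness of $\widetilde H$ together with $\widetilde H u = \ii \partial_t u + \lambda u|u|^p$, the terms involving $\widetilde H^2 u \cdot \widetilde H u$ and $\widetilde H(u|u|^p) \cdot u|u|^p$ contribute real quantities whose imaginary parts vanish, leaving
\begin{equation*}
\frac{d}{dt} \int_{\R^2} |\widetilde H u|^2 dx = 2\lambda \Re \int_{\R^2} \partial_t(\overline{\widetilde H u}) \cdot u |u|^p dx.
\end{equation*}

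A further integration by parts in time transfers $\partial_t$ off $\overline{\widetilde H u}$ and produces
\begin{equation*}
\frac{d}{dt} \Bigl[\int_{\R^2} |\widetilde H u|^2 dx - 2\lambda \Re \int_{\R^2} \overline{\widetilde H u} \cdot u |u|^p dx\Bigr] = -2\lambda \Re \int_{\R^2} \overline{\widetilde H u} \cdot \partial_t(u |u|^p) dx.
\end{equation*}
Substituting $u = e^{-A} v$ and expanding $\widetilde H u$, $\nabla u$, $|u|^p$ and $\partial_t(u|u|^p)$ in terms of $v$, $\nabla v$, $\Delta v$, $|v|^p$, $\partial_t v$ and the weights $e^{-jA}$, the bracket on the left-hand side becomes $\mathcal E_{A,V}(v)$: the $\mathcal G$-correction $2\Re \int \overline{\widetilde H u} u|u|^p dx$ reads, after distributing $\widetilde H u = \Delta u + \widetilde V u$ and using $\nabla|v|^p = \tfrac p 2 |v|^{p-2} \nabla|v|^2$ and $2\Re(\bar v \nabla v) = \nabla|v|^2$, exactly as the stated $\mathcal G_{A,V}(v)$. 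The right-hand side similarly becomes $-\lambda \mathcal H_{A,V}(v)$, the time derivative distributing over $u |u|^p = e^{-(p+1)A}(v|v|^p)$ and producing the four integrals in the statement.

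The main obstacle is not conceptual but entirely algebraic: verifying that, after re-expanding the clean $u$-level formulas through $u = e^{-A} v$ and performing the numerous spatial integrations by parts required to distribute the weights, the precise five-, five-, and four-term expressions stated for $\mathcal F_{A,V}$, $\mathcal G_{A,V}$, and $\mathcal H_{A,V}$ -- with their specific $p$-dependent coefficients $\tfrac{p}{4}$, $\tfrac{2}{p+2}$, $2p$ in $\mathcal G$ -- emerge correctly. These coefficients arise only after collecting contributions of several different origins, and the bookkeeping, while routine, must be carried out carefully.
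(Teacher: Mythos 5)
Your gauge-transformed framework ($u=e^{-A}v$, the symmetric operator $\widetilde H$, differentiating $\|\widetilde Hu\|_{L^2}^2$ and integrating by parts once in time) is sound up to and including the identity
\[
\frac{\dd}{\dd t}\Big[\int_{\R^2}|\widetilde Hu|^2\,dx-2\lambda\Re\int_{\R^2}\overline{\widetilde Hu}\,u|u|^p\,dx\Big]=-2\lambda\Re\int_{\R^2}\overline{\widetilde Hu}\,\partial_t(u|u|^p)\,dx,
\]
and your identification $\int|\widetilde Hu|^2\,dx=\int|\Delta v|^2e^{-2A}\,dx+\mathcal F_{A,V}(v)$ is correct (it is in fact a purely algebraic expansion of $\int|H_{A,V}v|^2e^{-2A}\,dx$, no integration by parts needed, reading the paper's $(\nabla A\cdot\nabla v)^2$ as $|\nabla A\cdot\nabla v|^2$). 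The gap is the final matching claim: $2\Re\int\overline{\widetilde Hu}\,u|u|^p\,dx$ is \emph{not} equal to the stated $\mathcal G_{A,V}(v)$. Writing $\overline{\widetilde Hu}\,u|u|^p=\overline{H_{A,V}v}\,v|v|^p e^{-(p+2)A}$ and integrating the $\Delta\bar v$ contribution by parts in space yields
\begin{align*}
-2\int_{\R^2}|\nabla v|^2|v|^p e^{-(p+2)A}\,dx
-\frac p2\int_{\R^2}|v|^{p-2}\big|\nabla(|v|^2)\big|^2 e^{-(p+2)A}\,dx\qquad&\\
{}+2\int_{\R^2} V|v|^{p+2}e^{-(p+2)A}\,dx
+2p\,\Re\int_{\R^2} v|v|^p\,\nabla A\cdot\nabla\bar v\, e^{-(p+2)A}\,dx,&
\end{align*}
whereas $\mathcal G_{A,V}(v)$, after combining its second and third terms via $\nabla(|v|^p)=\tfrac p2|v|^{p-2}\nabla(|v|^2)$, carries the coefficients $-1$, $-\tfrac p4$ and $\tfrac{2}{p+2}$ on the first three integrals. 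Three of the four coefficients disagree, and correspondingly $2\Re\int\overline{\widetilde Hu}\,\partial_t(u|u|^p)\,dx\neq\mathcal H_{A,V}(v)$.

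The obstruction is structural, not mere bookkeeping: the stated $\mathcal H_{A,V}$ contains no occurrence of $\partial_t\nabla v$, while your remainder, after the unavoidable spatial integration by parts on $\int\Delta\bar u\,\partial_t(u|u|^p)\,dx$, produces terms such as $\int 2\Re(\nabla\bar u\cdot\partial_t\nabla u)|u|^p\,dx=\int\partial_t(|\nabla u|^2)|u|^p\,dx$. Eliminating these requires a \emph{second round} of integrations by parts in time (e.g.\ $\int\partial_t(|\nabla u|^2)|u|^p=\frac{\dd}{\dd t}\int|\nabla u|^2|u|^p-\int|\nabla u|^2\partial_t(|u|^p)$), and the total time derivatives so extracted must be absorbed back into the energy; this is exactly what halves the first two coefficients above, generates the $\tfrac{2}{p+2}$, and converts your remainder into the stated $\mathcal H_{A,V}$. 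Your identity is a valid energy identity, but a different one from the proposition, and the step you dismiss as routine is precisely where the missing manipulation lives. For comparison, the paper never gauges to $u$: it differentiates $\int|\Delta v|^2e^{-2A}\,dx$ directly, substitutes $\Delta v\,e^{-2A}$ from the equation, and performs this iterated extraction of time derivatives at the level of $v$; your reorganization is conceptually cleaner and could be completed, but only after carrying out that second round of time integrations by parts.
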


\begin{proof} In the sequel, we denote by $(\cdot,\cdot)$ the usual scalar product in $L^2$. We start with the following computation:
\begin{align}\label{co}
\begin{split}
\frac {\dd}{\dd t} (\Delta v, \Delta v e^{-2A}) &= 2\Re (\partial_t \Delta v,  \Delta v e^{-2A}) \\
&= 2\Re (\partial_t \Delta v, \ii\partial_t v e^{-2A} ) + 4\Re (\partial_t \Delta v, \nabla A \cdot \nabla ve^{-2A}) \\
&\quad -2\Re (\partial_t \Delta v, v Ve^{-2A})
+2\lambda \Re (\partial_t \Delta v,  v|v|^p e^{-(p+2)A})
\\&=I+II+III+IV.
\end{split}
\end{align}
Notice that
\begin{align}\label{I}
\begin{split}
I&=-2\Re (\partial_t \nabla v, \ii\partial_t \nabla v e^{-2A} )-2\Re (\partial_t \nabla v, \ii\partial_t v \nabla (e^{-2A}) )\\
&=-2 \Im  (\partial_t \nabla v, \partial_t v \nabla (e^{-2A}) ).
\end{split}
\end{align}
Moreover we have 
\begin{align*}
II &= 4\Re (\partial_t \Delta v, \nabla A \cdot \nabla ve^{-2A})\\
&=2\Re (\Delta v, \partial_t \nabla v \cdot \nabla (e^{-2A})) + 4 \frac \dd{\dd t}\Re (\Delta v,  \nabla A \cdot \nabla ve^{-2A})
\end{align*}
and using again the equation solved by $v$, we obtain
\begin{align*}
II&=4 \frac \dd{\dd t}\Re (\Delta v, \nabla A \cdot \nabla ve^{-2A})-2\Im(\partial_t v, \partial_t \nabla v \cdot \nabla (e^{-2A})) 
\\\nonumber&
\quad +4\Re (\nabla A \cdot \nabla v, \partial_t \nabla v \cdot \nabla (e^{-2A})) -
2\Re (V v , \partial_t \nabla v \cdot \nabla (e^{-2A})) \\
&\quad + 2\lambda \Re (e^{-pA}v|v|^p , \partial_t \nabla v \cdot \nabla (e^{-2A}))
\\
&=4\frac \dd{\dd t}\Re (\Delta v, \nabla A \cdot \nabla ve^{-2A})+2 \Im (\partial_t \nabla v, \partial_t v \nabla (e^{-2A})) \\\nonumber&
\quad +4\Re (\nabla A \cdot \nabla v, \partial_t \nabla v \cdot \nabla (e^{-2A}))-
2\Re (V v , \partial_t \nabla v \cdot \nabla (e^{-2A})) \\\nonumber&
\quad + 2\lambda \Re (e^{-pA}  v|v|^p , \partial_t \nabla v \cdot \nabla (e^{-2A})).
\end{align*}
Hence by \eqref{I} we get
\begin{align*}
II  &=  4 \frac \dd{\dd t}\Re (\Delta v, \nabla A \cdot \nabla ve^{-2A})
 -I 
+4\Re (\nabla A \cdot \nabla v, \partial_t \nabla v \cdot \nabla (e^{-2A}))
\\\nonumber&
\quad -
2\Re (V v, \partial_t \nabla v \cdot \nabla (e^{-2A})) 
+2\lambda \Re (e^{-pA} v|v|^p , \partial_t \nabla v \cdot \nabla (e^{-2A}))
\\\nonumber&
= 4\frac \dd{\dd t}\Re (\Delta v, \nabla A \cdot \nabla ve^{-2A}) 
-I 
+4\Re (\nabla A \cdot \nabla v, \partial_t \nabla v \cdot \nabla (e^{-2A}))
\\\nonumber&
\quad -2 \frac \dd{\dd t} \Re (Vv  , \nabla v \cdot \nabla (e^{-2A})) 
+2\Re (V \partial_tv, \nabla v \cdot \nabla (e^{-2A})) 
\\\nonumber &
\quad +2\lambda \Re (e^{-pA}v|v|^p , \partial_t \nabla v \cdot \nabla (e^{-2A})).
\end{align*}
Namely, we have
\begin{align*}
I+II &= 2\Re (V \partial_tv, \nabla v \cdot \nabla (e^{-2A})) + 4 \frac \dd{\dd t}\Re (\Delta v, \nabla A \cdot \nabla ve^{-2A}) \\
&\quad +4\Re (\nabla A \cdot \nabla v, \partial_t \nabla v \cdot \nabla (e^{-2A})) -
2\frac \dd{\dd t} \Re (V v , \nabla v \cdot \nabla (e^{-2A})) \\
&\quad +2\lambda \Re (e^{-pA} v|v|^p , \partial_t \nabla v \cdot \nabla (e^{-2A})).
\end{align*}
On the other hand, we can compute the last term in the above identity
\begin{align*}
 &2 \lambda \Re (e^{-pA} v|v|^p , \partial_t \nabla v \cdot \nabla (e^{-2A}))\\\nonumber&
 = - 2\lambda \Re (\nabla ( e^{-pA} v|v|^p) , \partial_t \nabla v e^{-2A})
 -2\lambda \Re 
(e^{-pA} v|v|^p , \partial_t \Delta v e^{-2A})
\\\nonumber&
= -2\lambda \Re (\nabla ( e^{-pA}v|v|^p) , \partial_t \nabla v e^{-2A})
-IV
\\\nonumber&
= -2\lambda \Re (e^{-pA} \nabla v|v|^p, \partial_t \nabla v e^{-2A})
-2\lambda \Re (e^{-pA}v\nabla (|v|^p), \partial_t \nabla v e^{-2A})\\\nonumber&
\quad -2\lambda \Re (\nabla(e^{-pA})
v |v|^p, \partial_t \nabla v e^{-2A})
-IV\\\nonumber&
=-\lambda (\partial_t (|\nabla v|^2)|v|^p,  e^{-(p+2)A})
-2\lambda \frac \dd{\dd t} \Re (v\nabla (|v|^p), \nabla v  e^{-(p+2)A}) \\
&\quad +2\lambda \Re (\partial_t v\nabla (|v|^p), \nabla v  e^{-(p+2)A}) + 2\lambda \Re (v\nabla \partial_t (|v|^p), \nabla v  e^{-(p+2)A}) \\
&\quad -2\lambda \Re (\nabla(e^{-pA})v |v|^p, \partial_t \nabla v e^{-2A})
-IV\\\nonumber&
=-\lambda \frac \dd{\dd t}  (|\nabla v|^2|v|^p,  e^{-(p+2)A})
+\lambda (|\nabla v|^2 \partial_t (|v|^p),  e^{-(p+2)A}) \\
&\quad -2\lambda \frac \dd{\dd t} \Re (v\nabla (|v|^p), \nabla v  e^{-(p+2)A})
+2\lambda \Re (\partial_t v\nabla (|v|^p), \nabla v  e^{-(p+2)A}) \\
&\quad +\frac{\lambda p}{2} (\partial_t (\nabla(|v|^2)|v|^{p-2}), \nabla (|v|^2)  e^{-(p+2)A}) \\
&\quad -2\lambda \Re (\nabla(e^{-pA})v |v|^p, \partial_t \nabla v e^{-2A})
-IV
\\\nonumber&
=-\lambda \frac \dd{\dd t}  (|\nabla v|^2|v|^p,  e^{-(p+2)A})
+\lambda (|\nabla v|^2 \partial_t (|v|^p),  e^{-(p+2)A}) \\
&\quad -2\lambda \frac \dd{\dd t} \Re (v\nabla (|v|^p), \nabla v  e^{-(p+2)A})
+2\lambda \Re (\partial_t v\nabla (|v|^p), \nabla v  e^{-(p+2)A}) \\
&\quad +\frac {\lambda p}4 (\partial_t (|\nabla(|v|^2)|^2)|v|^{p-2},  e^{-(p+2)A})
+ \frac{\lambda p}2 (\nabla(|v|^2) \partial_t (|v|^{p-2}), \nabla (|v|^2)  e^{-(p+2)A}) \\
&\quad -2\lambda \Re (\nabla(e^{-pA})v |v|^p, \partial_t \nabla v e^{-2A})
-IV
\\\nonumber&
=-\lambda \frac \dd{\dd t}  (|\nabla v|^2|v|^p,  e^{-(p+2)A})
+\lambda (|\nabla v|^2 \partial_t (|v|^p), e^{-(p+2)A}) \\
&\quad -2\lambda \frac \dd{\dd t} \Re (v\nabla (|v|^p), \nabla v  e^{-(p+2)A})
+2\lambda \Re (\partial_t v\nabla (|v|^p), \nabla v  e^{-(p+2)A}) \\
&\quad +\frac {\lambda p} 4 \frac \dd{\dd t}(|\nabla(|v|^2)|^2|v|^{p-2},  e^{-(p+2)A})
+\frac {\lambda p}4 (|\nabla(|v|^2)|^2\partial_t (|v|^{p-2}),  e^{-(p+2)A}) \\
&\quad -2\lambda \Re (\nabla(e^{-pA})v |v|^p, \partial_t \nabla v e^{-2A})-IV\,. 
\end{align*}
By combining the identities above we get
\begin{align*}I&+II+IV \\
&=2\Re (\partial_tvV, \nabla v \cdot \nabla (e^{-2A}))
+ 2 \frac \dd{\dd t}\Re (\Delta v, 2 \nabla A \cdot \nabla ve^{-2A}) \\
&\quad +4\Re (\nabla A \cdot \nabla v, \partial_t \nabla v \cdot \nabla (e^{-2A}))
-2\frac \dd{\dd t} \Re (Vv, \nabla v \cdot \nabla (e^{-2A}))\\\nonumber&
\quad -\lambda \frac \dd{\dd t}  \Re (|\nabla v|^2|v|^p, e^{-(p+2)A})
+\lambda \Re (|\nabla v|^2 \partial_t (|v|^p), e^{-(p+2)A}) \\
&\quad -2\lambda \frac \dd{\dd t} \Re (v\nabla (|v|^p), \nabla v  e^{-(p+2)A})
+2\lambda \Re (\partial_t v\nabla (|v|^p), \nabla v  e^{-(p+2)A}) \\
&\quad +\frac {\lambda p} 4 \frac \dd{\dd t}(|\nabla(|v|^2)|^2|v|^{p-2},  e^{-(p+2)A}) +\frac {\lambda p}4 (|\nabla(|v|^2)|^2\partial_t (|v|^{p-2}),  e^{-(p+2)A}) \\
&\quad -2\lambda \Re (\nabla(e^{-pA})v |v|^p, \partial_t \nabla v e^{-2A}).
\end{align*}
Since
$III= -2\frac \dd {\dd t} \Re (\Delta v, V v e^{-2A})+ 
2 \Re (\Delta v, \partial_t v  V e^{-2A})
$,
we obtain the following identity:
\begin{align}\label{plual}
\begin{split}
I&+II+III+IV \\
&=2\Re (V \partial_tv, \nabla v \cdot \nabla (e^{-2A})) + 4\frac \dd{\dd t}\Re (\Delta v, \nabla A \cdot \nabla ve^{-2A}) \\
&\quad +
4\Re (\nabla A \cdot \nabla v, \partial_t \nabla v \cdot \nabla (e^{-2A})) - 2\frac \dd{\dd t} \Re (V v , \nabla v \cdot \nabla (e^{-2A})) \\
&\quad -\lambda \frac \dd{\dd t}  \Re (|\nabla v|^2|v|^p,  e^{-(p+2)A}) +\lambda \Re (|\nabla v|^2 \partial_t (|v|^p),  e^{-(p+2)A}) \\
&\quad -2\lambda \frac \dd{\dd t} \Re (v\nabla (|v|^p), \nabla v  e^{-(p+2)A}) +2\lambda \Re (\partial_t v\nabla (|v|^p), \nabla v  e^{-(p+2)A}) \\
&\quad +\frac {\lambda p} 4 \frac \dd{\dd t}(|\nabla(|v|^2)|^2|v|^{p-2},  e^{-(p+2)A}) +\frac {\lambda p}4 (|\nabla(|v|^2)|^2\partial_t (|v|^{p-2}),  e^{-(p+2)A}) 
\\
&
\quad -2\lambda \frac \dd{\dd t} \Re (\nabla(e^{-pA})v |v|^p, \nabla v e^{-2A})
+2\lambda \Re (\nabla(e^{-pA})\partial_t (v |v|^p), \nabla v e^{-2A})\\
&
\quad -2\frac \dd{\dd t}\Re (\Delta v, v V e^{-2A})+ 
2 \Re (\Delta v, \partial_t v V e^{-2A}).
\end{split}
\end{align}
Next, by using the equation solved by $v$, we compute the first, the third, and the last term on the r.h.s. above as
\begin{align*}
2&\Re (\partial_tv V, \nabla v \cdot \nabla (e^{-2A}))
+ 4\Re (\nabla A \cdot \nabla v, \partial_t \nabla v \cdot \nabla (e^{-2A})) \\
&\quad +2 \Re (\Delta v, \partial_t v V e^{-2A})
\\\nonumber&= 2\Re (\partial_tv V, \nabla v \cdot \nabla (e^{-2A}))-8\Re (\nabla A \cdot \nabla v, \partial_t \nabla v \cdot \nabla A e^{-2A}) \\
&\quad +4 \Re (\nabla v \cdot \nabla A, 
\partial_t v Ve^{-2A}) - 2 \Re (V v, \partial_t v V e^{-2A}) \\
&\quad+ 2 \lambda \Re ( e^{-pA}v|v|^p, \partial_t v V  e^{-2A})
 \\\nonumber
&= -4\Re (\partial_t v V, \nabla v \cdot \nabla A e^{-2A}) - 4 \frac \dd{\dd t} ((\nabla A\cdot \nabla v)^2, e^{-2A}) \\
&\quad + 4 \Re (\nabla v\cdot  \nabla A, 
\partial_t v V e^{-2A}) - 2 \Re (Vv, \partial_t v V e^{-2A}) \\
&\quad + 2 \lambda \Re (v|v|^p, \partial_t v V   e^{-(p+2)A})
\\\nonumber&=- 4 \frac \dd{\dd t} ((\nabla A\cdot \nabla v)^2, e^{-2A})-\frac \dd{\dd t} (V |v|^2, V e^{-2A}) \\
&\quad + \frac {2 \lambda}{p+2} \frac \dd{\dd t}\Re (|v|^{p+2}, V e^{-(p+2)A}).
\end{align*}
The proof of Proposition~\ref{modifen} easily follows by combining this identity with \eqref{co} and 
\eqref{plual}.
\end{proof}

Next, we provide some useful estimates on the energies
${\mathcal F}_{A,V}, {\mathcal G}_{A,V}, {\mathcal H}_{A,V}$.
We recall that the quantity $|(A,V)|_{\delta,r}$ has been introduced in \eqref{AVdeltar}.
\begin{prop}\label{FAV}
For every $\delta>0$ and $r\in (2, \infty)$, we have 
\begin{equation}\label{intermF}|{\mathcal F}_{A,V} (w)|\leq \mathcal P\Big(|(A,V)|_{\delta,r}\Big) \Big (\|e^{-A} \Delta w\|_{L^2} \|w\|_{W_\delta^{1,\frac{2r}{r-2}}}
+\|w\|_{W_\delta^{1,\frac{2r}{r-2}}}^2\Big)\end{equation}
for any generic time-independent function $w$.
Moreover, there exists $\bar \delta>0$ such that for any given $T>0$ and for every $\delta\in (0, \bar \delta)$ and
$r\in [4, \infty)$, we have
\begin{align}\label{FAV2}
\begin{split}
\sup_{t\in (0,T)} &|{\mathcal F}_{A,V} (v(t))|\leq \mathcal P\Big(|(A,V)|_{\delta,r}\Big) {\mathcal P}\Big( \|v(0)\|_{H^1_{\delta_0}}\Big)\\
&\times \Big (\|e^{-A} \Delta v\|_{L^\infty((0,T);L^2)}\|v\|_{L^\infty((0,T);H^2_{-\delta})}^{\frac{r+2}{2r}}
+ \|v\|_{L^\infty((0,T);H^2_{-\delta})}^{\frac{r+2}{r}}\Big).
\end{split}
\end{align}
\end{prop}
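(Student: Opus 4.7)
The proof splits into the two estimates. For \eqref{intermF}, my plan is to estimate each of the five terms in the definition of ${\mathcal F}_{A,V}$ by H\"older's inequality, inserting the factorization $\langle x\rangle^{-\delta}\cdot\langle x\rangle^{\delta}=1$ so that the ``coefficient pieces'' ($\nabla A\,e^{-A}$, $Ve^{-A}$, $V\nabla A\, e^{-2A}$, each weighted by a suitable power of $\langle x\rangle^{-\delta}$) are placed in the appropriate $L^r$ or $L^{r/2}$ space and recognized as factors of $|(A,V)|_{\delta,r}$, while the remaining $w$-derivatives (carrying the weight $\langle x\rangle^\delta$) are placed in $L^{2r/(r-2)}$ and yield contributions of $\|w\|_{W^{1,2r/(r-2)}_{\delta}}$. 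Since $\frac2r+\frac{r-2}{r}=1$, the exponents match. For the two terms that contain $\Delta w$, I extract instead the factor $e^{-A}\Delta w\in L^2$ using $e^{-2A}=e^{-A}\cdot e^{-A}$, and pair it via Cauchy--Schwarz with $\|e^{-A}(\nabla A\cdot\nabla w)\|_{L^2}$ or $\|e^{-A}V w\|_{L^2}$, which are in turn estimated via the $L^r\times L^{2r/(r-2)}$ split. The $V\,w\,\nabla A\,\nabla w\,e^{-2A}$ piece (obtained after integrating $\nabla(e^{-2A})=-2e^{-2A}\nabla A$) is best handled using the factor $\|\langle x\rangle^{-2\delta}V\nabla A\, e^{-2A}\|_{L^{r/2}}$ appearing in $|(A,V)|_{\delta,r}$. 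Summing the five resulting contributions, the purely quadratic ones scale as $|(A,V)|_{\delta,r}^2\|w\|^2_{W^{1,2r/(r-2)}_\delta}$ and the mixed ones as $|(A,V)|_{\delta,r}\|e^{-A}\Delta w\|_{L^2}\|w\|_{W^{1,2r/(r-2)}_\delta}$, which are both dominated by the right-hand side of \eqref{intermF} after absorbing constants into the polynomial $\mathcal P$.

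For \eqref{FAV2}, I apply \eqref{intermF} pointwise in $t\in[0,T]$ with $w=v(t)$ and use the weighted Gagliardo--Nirenberg inequality \eqref{*****} with $q=\frac{2r}{r-2}$, giving
\[
\|v(t)\|_{W^{1,q}_\delta}\leq C\,\|v(t)\|_{H^2_{-\delta}}^{\frac{r+2}{2r}}\,\|v(t)\|_{L^2_{(2q-1)\delta}}^{\frac{r-2}{2r}}.
\]
I then bound the $L^2_{(2q-1)\delta}$ factor uniformly in $t$ by $\mathcal P(\|v(0)\|_{H^1_{\delta_0}})$ via the nonlinear energy estimate \eqref{nonloin}, which requires $(2q-1)\delta<1/9$ together with $4(2q-1)\delta\leq\delta_0$ in order to embed $H^1_{4(2q-1)\delta}\subset H^1_{\delta_0}$. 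Since $r\in[4,\infty)$ gives $q\in(2,4]$ and hence $2q-1\leq 7$, both conditions hold uniformly in $r$ once I set $\bar\delta=\min\{\delta_0/28,\,1/63\}$. Substituting the resulting bound on $\|v(t)\|_{W^{1,q}_\delta}$ into \eqref{intermF}, the linear-in-$\|v\|_{W^{1,q}_\delta}$ term produces the exponent $(r+2)/(2r)$, while squaring yields the exponent $(r+2)/r$ for the quadratic term, giving exactly \eqref{FAV2} after taking the supremum in $t$.

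The only delicate point is the bookkeeping of weights and H\"older exponents so that each factor matches precisely one of the constituents of $|(A,V)|_{\delta,r}$, and the verification that $\bar\delta$ can be chosen independent of $r\in[4,\infty)$ and $T>0$; no conceptual obstruction is expected beyond this, since both \eqref{*****} and \eqref{nonloin} are already available.
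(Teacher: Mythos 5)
Your proposal is correct and follows essentially the same route as the paper: the first estimate is obtained by the identical term-by-term H\"older splitting matching each coefficient factor to a constituent of $|(A,V)|_{\delta,r}$, and the second by combining \eqref{intermF} with the bound $\|v\|_{W^{1,2r/(r-2)}_\delta}\lesssim \|v\|_{H^2_{-\delta}}^{(r+2)/(2r)}\mathcal P(\|v(0)\|_{H^1_{\delta_0}})$, which the paper packages as \eqref{soblweider} (itself exactly your combination of \eqref{*****} and \eqref{nonloin}). Your verification that $\bar\delta$ can be chosen independently of $r\in[4,\infty)$ via $2q-1\le 7$ is the same observation the paper makes in the form $4\bigl(\tfrac{3r+2}{r-2}\bigr)\delta\le 28\delta$.
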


\begin{proof}
By the H\"older inequality we get:
\begin{align*}
|{\mathcal F}_{A,V} (w)| &\leq C \|e^{-A} \Delta w\|_{L^2} \| \w^\delta  \nabla w(t)\|_{L^{\frac{2r}{r-2}}}
\|\w^{-\delta} \nabla A e^{-A}\|_{L^r}
\\\nonumber&\quad + C  \|\w^{-\delta} e^{-A} \nabla A\|_{L^{r}}^2 \|\w^{\delta} \nabla w\|_{L^{\frac{2r}{r-2}}}^2
\\\nonumber&\quad + C   \|\w^{-2\delta } \nabla A e^{-2A} V\|_{L^\frac r2} \|\w^\delta \nabla w\|_{L^\frac{2r}{r-2}} \|\w^\delta w\|_{L^\frac{2r}{r-2}}
\\\nonumber&\quad +C \|e^{-A}\Delta w\|_{L^2} \|\w^\delta w\|_{L^\frac{2r}{r-2}}\|\w^{-\delta}V e^{-A}\|_{L^r}
\\\nonumber&\quad +C \|\w^{-\delta}V e^{-A}\|_{L^r}^2  \|\w^{\delta} w\|_{L^\frac{2r}{r-2}}^2,
\end{align*}
which in turn implies \eqref{intermF}.
On the other hand, by \eqref{soblweider} for
$\delta\in (0, \frac {r-2}{36(3r+2)})$, we have
\begin{align}\label{usefullateralso}
\begin{split}
\|v\|_{L^\infty((0,T);W^{1,\frac{2r}{r-2}}_{\delta})} &\leq C  \|v\|_{L^\infty((0,T);H^2_{-\delta})}^{\frac{r+2}{2r}} {\mathcal P}\Big ( \|v(0)\|_{H^1_{4(\frac{3r+2}{r-2})\delta}}\Big )
\\&\leq C  \|v\|_{L^\infty((0,T);H^2_{-\delta})}^{\frac{r+2}{2r}} {\mathcal P}\Big( \|v(0)\|_{H^1_{\delta_0}}\Big),
\end{split}
\end{align}
where at the last step we used that
for $r>4$ we have $4(\frac{3r+2}{r-2})\delta\leq 28 \delta$ and hence
$\|v(0)\|_{H^1_{4(\frac{3r+2}{r-2})\delta}} \leq \|v(0)\|_{H^1_{\delta_0}}$
provided that $\delta>0$ is small enough. The proof is complete.
\end{proof}

\begin{prop}\label{GAV}
For any given $T>0$ and $\eta_0\in (0, \frac 13)$, there exists $\bar \delta>0$ such that
for every $r\in [4,\infty)$ and $\delta\in (0, \bar \delta)$, we have the bound
\begin{equation}\label{GAV2}
\sup_{t\in (0,T)}|{\mathcal G}_{A,V} (v(t))|\leq |(A,V)|_{\delta,r} {\mathcal P} \Big(\|v(0)\|_{H^1_{\delta_0}}\Big)
\Big (1+\|v\|_{L^\infty ((0,T);H^2_{-\delta})}^{\frac{1}{r}+\frac 12+\eta_0 (p+2)}
\Big).
\end{equation} 

\end{prop}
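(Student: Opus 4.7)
The plan is to bound separately each of the five integrals defining $\mathcal{G}_{A,V}(v(t))$, applying H\"older's inequality so that one factor of $|(A,V)|_{\delta,r}$ is extracted from the coefficients depending on $A,V$, while the remaining $v$-dependent factors are controlled via the energy bounds \eqref{nonloin}--\eqref{nonlin}, the interpolation estimate \eqref{fundamnew}, and the weighted Gagliardo--Nirenberg bound \eqref{soblweider}. Using $|\nabla(|v|^p)|\leq p|v|^{p-1}|\nabla v|$ and $|\nabla(|v|^2)|^2|v|^{p-2}\leq 4|v|^p|\nabla v|^2$, the first three integrals in $\mathcal{G}_{A,V}$ all reduce, up to a multiplicative constant, to the common model $\int_{\R^2}|v|^p|\nabla v|^2 e^{-(p+2)A}\,dx$, while the fourth and fifth involve respectively $V$ and $\nabla A$ as additional factors.

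For this common model I would use the three-factor H\"older decomposition $\|\nabla v\|_{L^2_{-\delta}}\cdot \|\nabla v\|_{L^2_\delta}\cdot \||v|^p e^{-(p+2)A}\|_{L^\infty}$, assigning the opposite weights $\w^{\mp\delta}$ to the two gradient factors. The first factor is directly bounded by $\mathcal{P}(\|v(0)\|_{H^1_{\delta_0}})$ via \eqref{nonlin}; the second, carrying a positive weight, is treated by interpolating $\|v\|_{H^1_\delta}$ between $\|v\|_{L^2_{3\delta}}$ (controlled through \eqref{nonloin}) and $\|v\|_{H^2_{-\delta}}$ by \eqref{lem:BesovInterpolation}, producing a factor $\|v\|_{H^2_{-\delta}}^{1/2}$. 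The $L^\infty$ factor is split as $\|\w^{-\delta} e^{-(p+2)A}\|_{L^\infty}\,\|v\|_{L^\infty_{\delta/p}}^p$; the first piece is part of $|(A,V)|_{\delta,r}$, and since $p\geq 1$ the second is dominated by $\|v\|_{L^\infty_\delta}^p$, hence, using the Sobolev embedding $H^{2/(2-\eta_0)}_\delta\subset L^\infty_\delta$ together with \eqref{fundamnew}, by $\mathcal{P}(\|v(0)\|_{H^1_{\delta_0}})\|v\|_{H^2_{-\delta}}^{\eta_0 p}$. The total contribution of each of the first three terms is therefore of order $\|v\|_{H^2_{-\delta}}^{1/2+\eta_0 p}$.

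For the fourth term I would pair $\|\w^{-\delta} V e^{-(p+2)A}\|_{L^r}$ with $\||v|^{p+2}\|_{L^{r'}_\delta}$, estimating the latter by Sobolev embedding into a sufficiently high $L^q_{\delta/(p+2)}$ and \eqref{fundamnew}, giving exponent $\eta_0(p+2)$. The fifth integral is the most delicate and produces the dominant $1/2+1/r$ contribution: a three-factor H\"older with $\|\w^{-\delta}\nabla A e^{-(p+2)A}\|_{L^r}$, $\|\nabla v\|_{L^{2r/(r-2)}_{\delta/2}}$, and $\||v|^{p+1}\|_{L^{2r/(r-2)}_{\delta/2}}$ lets one invoke \eqref{soblweider} at $q=\tfrac{2r}{r-2}$ (so that $1-\tfrac{1}{q}=\tfrac{1}{2}+\tfrac{1}{r}$), producing $\|v\|_{H^2_{-\delta}}^{1/2+1/r}$, and to control the last factor through Sobolev embedding and \eqref{fundamnew}, yielding $\|v\|_{H^2_{-\delta}}^{\eta_0(p+1)}$. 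Collecting everything, the maximal exponent appearing on $\|v\|_{L^\infty H^2_{-\delta}}$ is $\max\!\bigl(\tfrac12+\eta_0 p,\,\eta_0(p+2),\,\tfrac12+\tfrac1r+\eta_0(p+1)\bigr)$, each term being dominated by $\tfrac1r+\tfrac12+\eta_0(p+2)$, and the elementary inequality $x^\alpha\leq 1+x^\beta$ valid for $0\leq\alpha\leq\beta$ and $x\geq 0$ gives the claimed bound. The delicate point is the asymmetric weight splitting in the first three terms: a symmetric decomposition $\|\nabla v\|_{L^2_{\delta/2}}^2 \||v|^p e^{-(p+2)A}\|_{L^\infty}$ would square the interpolation factor and produce $\|v\|_{H^2_{-\delta}}^{1+\eta_0 p}$, which overshoots the proposition's exponent precisely when $\eta_0$ is small.
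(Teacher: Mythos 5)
Your proposal is correct in substance and, for the gradient--quadratic part of $\mathcal{G}_{A,V}$, takes a genuinely different (and in fact sharper) route than the paper. The paper controls the first three integrals in one stroke by the H\"older decomposition $\|\nabla v\|_{L^{2r/(r-2)}}^2\,\|\w^\delta |v|^p\|_{L^{r/2}}\,\|\w^{-\delta}e^{-(p+2)A}\|_{L^\infty}$ and then invokes \eqref{soblweider} with $q=\tfrac{2r}{r-2}$, so that each gradient factor costs $\|v\|_{H^2_{-\delta}}^{\frac 12+\frac 1r}$ and the block contributes exponent $1+\frac 2r+\eta_0 p$ as literally written; your asymmetric splitting $\|\nabla v\|_{L^2_{-\delta}}\cdot\|\nabla v\|_{L^2_{\delta}}$, which pays for only one gradient via interpolation of $H^1_{\delta}$ between $L^2_{3\delta}$ and $H^2_{-\delta}$ and absorbs the other into the conserved quantity \eqref{nonlin}, yields $\frac 12+\eta_0 p$, which sits below the claimed exponent $\frac 1r+\frac 12+\eta_0(p+2)$ for every $\eta_0\in(0,\frac 13)$, whereas the count $1+\frac 2r+\eta_0 p$ does so only when $\eta_0\ge \frac 14+\frac 1{2r}$. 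So the remark at the end of your proposal identifies exactly the right subtlety, and your version is the one that delivers the stated exponent uniformly in $\eta_0$. Your treatment of the $V$-term and of the $\nabla A$-term coincides with the paper's. One slip to repair in the fifth term: the three H\"older exponents $\frac 1r+\frac{r-2}{2r}+\frac{r-2}{2r}=\frac{r-1}{r}<1$ do not form an admissible triple for bounding the integral over $\R^2$; the factor $\||v|^{p+1}\|$ must be taken in $L^2_{\delta/2}$, so that $\frac 1r+\frac{r-2}{2r}+\frac 12=1$, exactly as in the paper's \eqref{hold1}. Since $\||v|^{p+1}\|_{L^2_{\mu}}=\|v\|_{L^{2(p+1)}_{\mu/(p+1)}}^{p+1}$ is controlled through the embedding $H^{2/(2-\eta_0)}_{\rho}\subset L^q_{\rho}$ and \eqref{fundamnew} in the same way as your $L^{2r/(r-2)}$ choice, the exponent $\eta_0(p+1)$ and the final bound are unaffected.
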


\begin{proof}
We have by the H\"older inequality
\begin{align}\label{hold1}
\begin{split}
|{\mathcal G}_{A,V} (v(t))| &\leq C\|\nabla v(t)\|_{L^{\frac{2r}{r-2}}}^2 \|\w^\delta |v(t)|^p\|_{L^\frac r2} \|\w^{-\delta}e^{-(p+2)A}\|_{L^\infty}
\\& \quad +C\|\w^\delta |v(t)|^{p+2}\|_{L^\frac r{r-1}} \|\w^{-\delta} V e^{-(p+2)A}\|_{L^r}\\
&\quad +C\|\nabla v(t)\|_{L^\frac{2r}{r-2}} \|\w^\delta |v(t)|^{p+1}\|_{L^2} \|\w^{-\delta}\nabla A e^{-(p+2)A}\|_{L^r}.
\end{split}
\end{align}
Next, notice that by combining the Sobolev embedding
$L^q_\rho \subset H^\frac 2{2-\eta_0}_\rho$ for $ q\in [2, \infty]$ with \eqref{fundamnew}, we get 
\begin{align*}&\|\w^\delta |v(t)|^p\|_{L^\frac r2} = \|v(t)\|^p_{L^\frac{rp}2_\frac \delta p}\leq 
{\mathcal P} \Big(\|v(0)\|_{H^1_{\delta_0}}\Big)\|v(t)\|_{H^{2}_{-\delta}}^{\eta_0 p}
\\\nonumber& \|\w^\delta |v(t)|^{p+2}\|_{L^\frac r{r-1}} = \|v(t)\|_{L^\frac{r(p+2)}{r-1}_{\frac \delta{p+2}}}^{p+2}\leq 
{\mathcal P} \Big(\|v(0)\|_{H^1_{\delta_0}}\Big)\|v(t)\|_{H^{2}_{-\delta}}^{\eta_0 (p+2)}
\\\nonumber& \|\w^\delta |v(t)|^{p+1}\|_{L^2}= \|v(t)\|^{p+1}_{L^{2(p+1)}_{\frac \delta{p+1}}} \leq 
{\mathcal P} \Big (\|v(0)\|_{H^1_{\delta_0}}\Big)\|v(t)\|_{H^{2}_{-\delta}}^{\eta_0 (p+1)}.
\end{align*}
By combining the estimates above with \eqref{hold1} and \eqref{soblweider},
where we choose $q=\frac{2r}{r-2}$, we conclude the proof.
\end{proof}

\begin{prop}\label{HAV}
For any given $T>0$ and $\eta_0\in (0, \frac 13)$,  there exists $\bar \delta>0$ such that for every $r\in [4,\infty)$ and $\delta\in (0, \bar \delta)$, we have the bound
\begin{align}\label{HAV2}
\begin{split}
\int_0^T |{\mathcal H}_{A,V} (v(\tau))|d\tau &\leq |(A,V)|_{r,\delta} {\mathcal P} \Big(\|v(0)\|_{H^1_{\delta_0}}\Big) \|\partial_t v e^{-A}\|_{L^\infty ((0,T);L^2)} \\
&\quad \times \Big (1+ \|\nabla v\|_{L^2((0,T);L^4)}^2\Big) \Big
(1+ \|v\|_{L^\infty((0,T);H^{2}_{-\delta})}\Big)^{\eta_0 p}.
\end{split}
\end{align}

\end{prop}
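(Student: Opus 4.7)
The plan is to estimate each of the four integrals defining $\mathcal H_{A,V}(v)$ separately, using H\"older's inequality in space to place $\partial_t v$ (together with $e^{-A}$) in $L^2$, the gradient $\nabla v$ in $L^4$, and the remaining factors (powers of $|v|$ with $\nabla A$ and exponentials in $A$) in $L^\infty$ or $L^4$. A preliminary pointwise computation reduces all four integrands to one of two canonical forms. Expanding $\partial_t(|v|^p) = \tfrac p2|v|^{p-2}(v\,\overline{\partial_t v}+\bar v\partial_t v)$, $\partial_t(|v|^{p-2}) = \tfrac{p-2}{2}|v|^{p-4}(v\,\overline{\partial_t v}+\bar v\partial_t v)$, and using the identity $|\nabla(|v|^2)|^2 \le 4|v|^2|\nabla v|^2$, the first three terms of $\mathcal H_{A,V}(v)$ are each pointwise dominated by a constant times $|v|^{p-1}|\nabla v|^2|\partial_t v|\,e^{-(p+2)A}$, while the fourth term is dominated by $|v|^{p}|\nabla v||\nabla A||\partial_t v|\,e^{-(p+2)A}$. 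The artificial singularities $|v|^{p-3},|v|^{p-4}$ cancel against the $|v|^2$ coming from $|\nabla(|v|^2)|^2$, so the analysis is valid in the regime $p \geq 1$.

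For the first canonical form, I split the weight as $e^{-(p+2)A} = e^{-A}\cdot e^{-(p+1)A}$ and apply H\"older with exponents $(2,4,4,\infty)$ to get
\begin{align*}
\int_{\R^2} |v|^{p-1}|\nabla v|^2|\partial_t v|\,e^{-(p+2)A} dx
\leq \|\partial_t v\,e^{-A}\|_{L^2} \|\nabla v\|_{L^4}^2 \|\w^{-\delta} e^{-(p+1)A}\|_{L^\infty} \|\w^{\delta}|v|^{p-1}\|_{L^\infty}.
\end{align*}
The third factor is part of $|(A,V)|_{\delta,r}$. For the fourth, since $p\geq 1$ one has $|v|^{p-1}\leq 1+|v|^p$, and the weighted Sobolev embedding $H^{2/(2-\eta_0)}_\mu \subset L^\infty_\mu$ (valid because $\eta_0>0$) combined with \eqref{fundamnew} yields, for $\delta<\bar\delta$ small enough,
\begin{align*}
\|\w^{\delta}|v|^{p-1}\|_{L^\infty}
\leq C + C\|v\|_{L^\infty_{\delta/p}}^{p}
\leq \mathcal P(\|v(0)\|_{H^1_{\delta_0}})\bigl(1+\|v\|_{H^2_{-\delta}}^{\eta_0 p}\bigr).
\end{align*}
The fourth term is treated by H\"older with exponents $(2,4,4)$ and $r=4$, producing the factor $\|\w^{-\delta}\nabla A\,e^{-(p+1)A}\|_{L^4}\in |(A,V)|_{\delta,4}$ in place of the weighted $L^\infty$ factor above, and using the same bound on $\|\w^{\delta}|v|^p\|_{L^\infty}$.

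Finally, I integrate in time: terms of the first type contribute $\int_0^T\|\nabla v\|_{L^4}^2 dt = \|\nabla v\|_{L^2((0,T);L^4)}^2$, while the fourth term gives $\int_0^T \|\nabla v\|_{L^4} dt \leq T^{1/2}\|\nabla v\|_{L^2((0,T);L^4)} \leq C_T(1+\|\nabla v\|_{L^2((0,T);L^4)}^2)$. Pulling the time-independent quantities $\|\partial_t v\,e^{-A}\|_{L^\infty((0,T);L^2)}$ and $\|v\|_{L^\infty((0,T);H^2_{-\delta})}^{\eta_0 p}$ out of the integral produces exactly the right-hand side of \eqref{HAV2}. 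The main obstacle is not analytical but rather the choice of $\bar\delta$: it must be small enough that the weights $\delta/p$ and $\delta/(p-1)$ on the powers of $|v|$ stay below the threshold required by \eqref{fundamnew}, and that all resulting weighted $H^1$-norms of the initial datum are majorised by $\|v(0)\|_{H^1_{\delta_0}}$ up to a constant depending on $p,\eta_0,\delta_0$; this is possible because $\delta_0$ is fixed once and for all and $\mathcal P$ is allowed to depend on $p$ and $\eta_0$.
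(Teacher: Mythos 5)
Your argument is essentially the paper's proof: the same H\"older splitting in space (putting $\partial_t v\,e^{-A}$ in $L^2$, $\nabla v$ in $L^4$, the $A$-dependent weights into $|(A,V)|_{\delta,r}$), the same use of the weighted Sobolev embedding $H^{2/(2-\eta_0)}_\mu\subset L^q_\mu$ together with \eqref{fundamnew} to absorb the powers of $|v|$, and the same integration in time. Two small inaccuracies are worth fixing. First, the inequality $\|\w^{\delta}|v|^{p-1}\|_{L^\infty}\leq C+C\|v\|_{L^\infty_{\delta/p}}^{p}$ obtained from $|v|^{p-1}\leq 1+|v|^p$ is false as written, since $\sup_x \w^{\delta}=\infty$ for $\delta>0$; the paper instead writes $\|\w^\delta|v|^{p-1}\|_{L^\infty}=\|v\|^{p-1}_{L^\infty_{\delta/(p-1)}}$ for $p>1$ and treats $p=1$ separately (where no weight on this factor is needed), which is the clean way to land on the factor $(1+\|v\|_{L^\infty H^2_{-\delta}})^{\eta_0 p}$. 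Second, you fix $r=4$ in the $\nabla A$ term, producing $\|\w^{-\delta}\nabla A\,e^{-(p+1)A}\|_{L^4}$, whereas the statement (and its later use, where one needs $r$ large so that $\delta r>2$ with $\delta$ small, cf.\ \eqref{firstnoiseLp} and \eqref{AVepsilonn}) requires general $r\in[4,\infty)$; the paper keeps $r$ free by placing $\w^\delta|v|^p$ in $L^{4r/(r-4)}$ rather than $L^\infty$, and your exponents adjust to this with no further change.
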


\begin{proof}
We only focus on the case when $p > 1$. The case $p = 1$ will follow in a similar (and easier) manner. Notice that by the H\"older inequality, one can estimate 
\begin{align*}
|{\mathcal H}_{A,V} &(v(t))|
\leq C \|\nabla v(t)\|_{L^4}^2 \|\partial_t v e^{-A}\|_{L^2} \|\w^\delta |v(t)|^{p-1}\|_{L^\infty} \|\w^{-\delta} e^{-(p+1)A}\|_{L^\infty}
\\\nonumber&\quad +C \|\nabla v(t)\|_{L^4} \|\partial_t v e^{-A}\|_{L^2} \|\w^\delta |v(t)|^{p}\|_{L^\frac{4r}{r-4}} \|\w^{-\delta} |\nabla A|e^{-(p+1)A}\|_{L^r}.
\end{align*}
By combining \eqref{fundamnew} with the
Sobolev embedding
$H^\frac 2{2-\eta_0}_\rho\subset L^q_\rho$ for $ q\in [2, \infty]$, we get
$$\|\w^\delta |v(t)|^{p}\|_{L^\frac{4r}{r-4}}= \|v(t)\|_{L^\frac{4rp}{r-4}_\frac \delta p}^p\leq 
{\mathcal P} \Big(\|v(0)\|_{H^1_{\delta_0}}\Big)\|v(t)\|_{H^{2}_{-\delta}}^{\eta_0 p}$$
and
$$\|\w^\delta |v(t)|^{p-1}\|_{L^\infty}= \|v(t)\|_{L^\infty_{\frac \delta{p-1}}}^{p-1}\leq {\mathcal P} \Big(\|v(0)\|_{H^1_{\delta_0}}\Big)\|v(t)\|_{H^{2}_{-\delta}}^{\eta_0 (p-1)}.$$
After integration in time and the H\"older inequality w.r.t. time variable, we obtain \eqref{HAV2}.
\end{proof}

\section{$H^2$ a-priori bound}
\label{sec:H2}
We introduce the following family of regularized and localized potentials, for $\varepsilon>0$ and $n\in \N$:
$$A_{\varepsilon, n}=\theta_n Y_\varepsilon, \quad V_{\varepsilon, n}= \theta_n \tDYe$$
where $\theta_n (x) = \theta (\frac{x}{n})$, $\theta\in C^\infty_0(\R^2)$, $\theta\geq 0$, and $\theta(0)=1$.
We first notice that due to \eqref{exponential}, if we choose $A=A_{\varepsilon, n}$ and 
$V=V_{\varepsilon, n}$, then the condition \eqref{exponentialAV} holds uniformly w.r.t. $n$ and $\varepsilon$ with the constant $C$ replaced by a random 
constant $C(\omega)$. The same for \eqref{functionalDMAV} which is satisfied uniformly w.r.t. $n, \varepsilon$ 
with a random constant $C(\omega)$. In order to prove this fact combine \eqref{lem:ConvergenceYnew},
\eqref{lem:BoundNoise}, \eqref{convexpY}, \eqref{eq:BoundNoiseP}, and \cite{DM} (more precisely, see at page 1160 three lines below (33)).

We introduce, following the notation \eqref{eq:HAV}, the operators
$$H_{\varepsilon, n}= H_{A_{\varepsilon, n},V_{\varepsilon, n}}$$
as well as the associated nonlinear Cauchy problem
\begin{align}
\label{eq:IntroductionMollifiedEquationAVepsilonn}
\ii \partial_t v_{\varepsilon, n}=H_{\varepsilon, n} v_{\varepsilon, n}+\lambda e^{-pA_{\varepsilon, n}}v_{\varepsilon,n} |v_{\varepsilon,n}|^{p}, \quad v_{\varepsilon, n}(0)=v_0, \quad \lambda\geq 0
\end{align}
where $v_0=u_0e^{Y}\in H^2_{\delta_0}$ for some fixed $\delta_0 > 0$.
The main point in this section is to establish the following a-propri bound for any sufficiently small $\delta > 0$:
\begin{equation}\label{fundBound}\sup_{n\in \N} \|v_{\varepsilon, n}\|_{L^\infty((0,T);H^2_{-\delta})}\leq C(\omega) |\ln \varepsilon|^{C} \mathcal{P} \Big( \| v_0 \|_{H_{\delta_0}^2} \Big), \quad
\hbox{ a.s. } \omega\end{equation}
for any given $T>0$.

We now focus on proving the bound \eqref{fundBound}. Recall the quantity $|(A, V)|_{\delta, r}$ defined in \eqref{AVdeltar}. Assume that $\delta>0$ has been fixed in such a way that  \eqref{FAV2}, \eqref{GAV2} and \eqref{HAV2} are satisfied with the choice $A =A_{\varepsilon, n}, V=V_{\varepsilon, n}$.
On the other hand, one can show that once $\delta$ is fixed, one can choose 
$r \geq 4$ large enough in such a way 
that 
\begin{equation}\label{AVepsilonn}
\sup_{n\in \N} |(A_{\varepsilon, n},V_{\varepsilon, n})|_{\delta,r}\leq C(\omega) O(|\ln \varepsilon|^{C}), \quad \hbox{ a.s. } \omega.
\end{equation}
This bound follows from \eqref{AVdeltar}, \eqref{firstnoiseLp}, and \eqref{exponential} (the cut-off $\theta\big( \frac xn \big) $ can be easily handled by an elementary argument and the bounds are uniform in $n \in \N$).
Moreover, by \eqref{MODIFIED} we get
\begin{align}\label{naifra}
\begin{split}
\int_{\R^2} &|\Delta v_{\varepsilon, n}(t)|^2 e^{-2A_{\varepsilon, n}} dx \\
&=\int_{\R^2} |\Delta v_{\varepsilon, n}(0)|^2 e^{-2A_{\varepsilon, n}}dx
- {\mathcal F}_{_{\varepsilon, n}}(v_{\varepsilon, n}(t)) + {\mathcal F}_{_{\varepsilon, n}}(v_{\varepsilon, n}(0)) \\
&\quad - \lambda {\mathcal G}_{\eps,n}(v_{\varepsilon, n}(t))+ \lambda {\mathcal G}_{\eps,n}(v_{\varepsilon, n}(0))+\int_0^t {\mathcal H}_{\eps,n}(v_{\varepsilon, n}(\tau))d\tau
\end{split}
\end{align}
where $${\mathcal F}_{\varepsilon,n}={\mathcal F}_{A_{\varepsilon, n},V_{\varepsilon, n}}, \quad {\mathcal G}_{\varepsilon,n}={\mathcal G}_{A_{\varepsilon, n},V_{\varepsilon, n}}, 
\quad {\mathcal H}_{\varepsilon,n}={\mathcal H}_{A_{\varepsilon, n},V_{\varepsilon, n}}$$
are the energies defined along Proposition \ref{modifen} with $A =A_{\varepsilon, n}$ and $V=V_{\varepsilon, n}$.
Next, we apply \eqref{FAV2}, \eqref{GAV2} and \eqref{HAV2} 
with $A =A_{\varepsilon, n}$ and $V=V_{\varepsilon, n}$.
By using \eqref{FAV2} in conjunction with \eqref{AVepsilonn} and by choosing $r$ large enough, we deduce
\begin{align}\label{FAV1}
\begin{split}
&\sup_{t\in [0,T]} |{\mathcal F}_{\varepsilon, n}(v_{\varepsilon, n}(t))| \leq 
C(\omega)|\ln \varepsilon|^C  {\mathcal P}\Big( \|v_{\varepsilon, n}(0)\|_{H^1_{\delta_0}}\Big) \\ 
&\times \Big (\|e^{-A_{\varepsilon, n}} \Delta v_{\varepsilon, n}\|_{L^\infty((0,T);L^2)}\|v_{\varepsilon, n}\|_{L^\infty((0,T);H^2_{-\delta})}^{1^-}
+ \|v_{\varepsilon, n}\|_{L^\infty((0,T);H^2_{-\delta})}^{2^-}\Big)
\end{split}
\end{align}
for suitable $1^-\in (0,1)$ and $2^-\in (1,2)$.
Moreover, we can choose $\eta_0$ in such a way that
\begin{equation}\label{eta02}\frac{1}{r}+\frac 12+\eta_0 (p+2)\in (0,1);\end{equation}
\begin{equation}\label{eat01}\frac 7{6}(p+1)\eta_0+\frac{155}{192}+\eta_0 p\in (0,1);
\end{equation}
\begin{equation}\label{paolt}
\eta_0(p+1)\in (0,1).
\end{equation}
Hence by \eqref{GAV2}, \eqref{AVepsilonn}, and \eqref{eta02}, we obtain
\begin{align}\label{GAV1}
\begin{split}
\sup_{t\in [0,T]} &|{\mathcal G}_{\varepsilon, n}(v_{\varepsilon, n}(t))|\\
& \leq C(\omega) |\ln \varepsilon|^C
{\mathcal P} \Big(\|v_{\varepsilon, n}(0)\|_{H^1_{\delta_0}}\Big)
\Big ( 1+\|v_{\varepsilon, n}(t)\|_{L^\infty((0,T);H^{2}_{-\delta})}^{1^-}
\Big),
\end{split}
\end{align}
where $1^-\in (0,1)$.
Notice also that we have (by choosing $r$ even larger than above)
$$\sup_n |(A_{\varepsilon, n},V_{\varepsilon, n})|_{\frac{\delta^2}4,r}\leq C(\omega)|\ln \varepsilon|^C, \quad 
\sup_n |(A_{\varepsilon, n},V_{\varepsilon, n})|_{\tilde \delta,r}\leq C(\omega)|\ln \varepsilon|^C \hbox{ a.s. } \omega,$$
where $\tilde \delta>0$ is the one that appears in \eqref{strigi}.
Moreover, by \eqref{firstnoiseLp} in conjunction with the fact that in \eqref{strigi} we can assume $\frac{\delta_1}{s_1}>1$, we also have (see \eqref{AVdeltakappa})
\begin{equation}\label{nonsig}\sup_n \|(A_{\varepsilon, n},V_{\varepsilon, n})\|_{\delta_1, \frac 2{s_1}}\leq C(\omega)O(|\ln \varepsilon|^C) \hbox{ a.s. } \omega
\end{equation}
and hence by combining \eqref{HAV2} with \eqref{strigi} and \eqref{eat01}, we get
\begin{align}\label{HAVpao}
\begin{split}
\int_0^T |{\mathcal H}_{\varepsilon, n} &(v_{\varepsilon, n}(\tau))|d\tau \leq C(\omega)|\ln \varepsilon|^C {\mathcal P}\Big( \|v_{\varepsilon, n}(0)\|_{H^1_{\delta_0}}\Big)  \\
& \times \|\partial_t v_{\varepsilon, n} e^{-A_{\varepsilon, n}}\|_{L^\infty((0,T);L^2)}
\Big (1+ \|v_{\varepsilon, n}\|_{L^\infty((0,T);H^{2}_{-\delta})}\Big)^{1^-},
\end{split}
\end{align}
where $1^-\in (0,1)$.
Next we notice that by using the equation solved by $v_{\varepsilon, n}$, we get by the H\"older inequality and the Sobolev embedding 
$H^\frac 2{2-\eta_0}_\delta\subset L^q_\delta$, $q\in [2, \infty]$, in conjunction with \eqref{soblweider}, 
\eqref{fundamnew}, \eqref{nonsig}, and \eqref{paolt},
\begin{align}\label{HAVeq}
\begin{split}
\| &\partial_t v_{\varepsilon, n}(t) e^{-A_{\varepsilon, n}}\|_{L^2}\\
&\leq \|\Delta v_{\varepsilon, n}(t) e^{-A_{\varepsilon, n}}\|_{L^2} + C(\omega)|\ln \varepsilon|^C {\mathcal P} \Big (\|v_{\varepsilon, n}(0)\|_{H^1_{\delta_0}}\Big )\|v_{\varepsilon, n}\|_{L^\infty((0,T);H^2_{-\delta})}^{1^-}\\
&\quad +C(\omega) |\ln \varepsilon|^C  \|v_{\varepsilon, n}(t)|v_{\varepsilon, n}(t)|^p\|_{L^2_\delta} \\
&\leq \|\Delta v_{\varepsilon, n}(t) e^{-A_{\varepsilon, n}}\|_{L^2} + C(\omega) |\ln \varepsilon|^C {\mathcal P} \Big (\|v_{\varepsilon, n}(0)\|_{H^1_{\delta_0}}\Big ) \|v_{\varepsilon, n}\|_{L^\infty((0,T);H^2_{-\delta})}^{1^-}
\\ &\quad + C(\omega) |\ln \varepsilon|^C  {\mathcal P} \Big (\|v_{\varepsilon, n}(0)\|_{H^1_{\delta_0}}\Big )\|v_{\varepsilon, n}\|_{L^\infty((0,T);H^2_{-\delta})}^{\eta_0(p+1)}
\\& \leq \|\Delta v_{\varepsilon, n}(t) e^{-A_{\varepsilon, n}}\|_{L^2}+ C(\omega) |\ln \varepsilon|^C 
{\mathcal P} \Big (\|v_{\varepsilon, n}(0)\|_{H^1_{\delta_0}}\Big)\|v_{\varepsilon, n}\|_{L^\infty((0,T);H^2_{-\delta})}^{1^-}
\end{split}
\end{align}
for a suitable $1^{-}\in (0,1)$.
Hence, we can gather together \eqref{naifra}, \eqref{FAV1}, \eqref{GAV1}, \eqref{HAVpao}, and \eqref{HAVeq} and we get by elementary manipulations that
\begin{align*}
\int_{\R^2} &|\Delta v_{\varepsilon, n}(t)|^2 e^{-2A_{\varepsilon, n}} dx\leq C\int_{\R^2} |\Delta v_{\varepsilon, n}(0)|^2 e^{-2A_{\varepsilon, n}}dx
\\\nonumber&+ C \|v_{\varepsilon, n}\|_{L^\infty ((0,T);H^2_{-\delta})}^{2^-}+C(\omega)|\ln \varepsilon|^C+ {\mathcal P} \Big (\|v_{\varepsilon, n}(0)\|_{H^1_{\delta_0}}\Big),
\end{align*}
which in turn by \eqref{equivH2A} implies
\begin{align*} \|v_{\varepsilon, n} &\|_{L^\infty ((0,T);H^2_{-\delta})}^2\leq {\mathcal P}\Big( \|v_{\varepsilon, n}(0)\|_{H^2_{\delta_0}}\Big) +C \|v_{\varepsilon, n}\|_{L^\infty((0,T); H^2_{-\delta})}^{2^-}+ C(\omega)|\ln \varepsilon|^C.
\end{align*}
By an elementary continuity argument we get \eqref{fundBound}.

\section{Proof of the main result}
\label{sec:thm}
In this section, we first prove Theorem \ref{thmreg}, global well-posedness of the mollified equation \eqref{eq:NLSve}. After that, we prove Theorem \ref{mainthm}, the convergence of the solutions of the mollified problem to a unique solution of \eqref{eq:NLSv}.

\subsection{Proof of Theorem \ref{thmreg}}
We first follow the steps from \cite[Proposition 2.11]{DM} to show the existence of a solution $v_\eps$ to \eqref{eq:NLSve}, where $\eps \in (0, \frac 12)$ is fixed. Fix $\delta_0 > 0$, $T > 0$, and let $v_0 \in H_{\delta_0}^2$. By \eqref{fundBound}, \eqref{fundamnew}, and \eqref{lem:BesovInterpolation}, we have the following bound for any $\gamma \in (1, 2)$ and for some $\delta > 0$:
\begin{align}
\label{Hgbdd}
\sup_{n \in \N} \| v_{\eps, n} \|_{\mathcal{C}([0, T]; H^\gamma_\delta)} \leq C(\omega) |\ln \eps|^C \mathcal{P} \Big( \| v_0 \|_{H^2_{\delta_0}} \Big), \quad \hbox{ a.s. } \omega
\end{align}
where the bound is uniform in $n \in \N$. Also, by \eqref{convexpY}, \eqref{lem:ConvergenceYnew}, and \eqref{exponential}, for any $\alpha \in (0, 1)$, $0 < \delta^- < \delta$, and $\eps \in (0, \frac 12)$ we have the following bounds: 
\begin{align}
\label{bounds}
\sup_{n \in \N} \Big( \| \theta_n Y_\eps \|_{\mathcal{C}^\alpha_{-\delta^-}} + \| \theta_n \widetilde{\Wick{\D Y^2_{\eps}}} \|_{\mathcal{C}^{\alpha - 1}_{- \delta^-}} + \| e^{-p \theta_n Y_\eps} \|_{L^\infty_{-\delta^-}} \Big) \leq C(\omega), \quad \hbox{ a.s. } \omega.
\end{align}

\noindent
Using the equation \eqref{NLSfurthersmoothing}, \eqref{Hgbdd}, \eqref{bounds}, and \eqref{Sobolevweighted}, we can easily deduce that $\{ \partial_t v_{\eps, n} \}_{n \in \N}$ is bounded (uniformly in $n \in \N$) in $\mathcal{C} ([0, T]; H^{\gamma -2}_{\delta'})$ for any $0 < \delta' < \delta$. By the Arzel\`a-Ascoli theorem along with the compact embedding \eqref{inclusionHsdelta}, we obtain a convergent subsequence $\{v_{\eps, n_k}\}_{k \in \N}$ in $\mathcal{C} ([0, T]; H^{\gamma_1 - 2}_{\delta_1})$ for any $\gamma_1 < \gamma$ and $\delta_1 < \delta$, and we denote the limit as $v_\eps$. By \eqref{fundBound} and \eqref{lem:BesovInterpolation}, the convergence also holds in $\mathcal{C} ([0, T]; H^s_{\bar \delta})$ for any $s \in (1, 2)$ and some $\bar \delta > 0$. Also, by \eqref{fundBound}, the Banach-Alaoglu theorem, and taking a further subsequence if necessary, we obtain the following bound:
\begin{align}
\label{fundBound2}
\| v_\eps \|_{L^\infty ((0, T); H^2_{- \tilde \delta})} \leq C(\omega) |\ln \eps|^C \mathcal{P} \Big( \| v_0 \|_{H^2_{\delta_0}} \Big),\quad \hbox{ a.s. } \omega
\end{align} 
for some $\tilde \delta > 0$. Furthermore, $v_\eps$ satisfies the equation \eqref{eq:NLSve}.

Next, we show the uniqueness of $v_\eps$ in $\mathcal{C} ([0, T]; H^s_{\bar \delta})$. Assume $v_{\eps}$ and $w_{\eps}$ are two solutions to \eqref{eq:NLSve}. Define
\begin{align*}
r_\eps (t) = v_\eps (t) - w_\eps (t), \quad t \in [0,T].
\end{align*} 
Then, $r_\eps$ satisfies the equation:
\begin{align*}
\ii \partial_t r_\eps = \Delta r_\eps + r_\eps \widetilde{\Wick{\D Y^2_{\eps}}} - 2 \D r_\eps \D Y_\eps - \lambda e^{-p Y_\eps} (|v_\eps|^p v_\eps - |w_\eps|^p w_\eps), \quad r_\eps (0) = 0.
\end{align*}

\noindent
Using the equation for $r$, we can deduce that
\begin{align*}
\frac 12 \frac{\dd}{\dd t} \int_{\R^2} |r_\eps (t)| e^{-2 Y_\eps} dx = \lambda \Im \int_{\R^2} \bar r_\eps (t) \big( |v_\eps (t)|^p v_\eps(t) - |w_\eps (t)|^p w_\eps(t) \big) e^{-(p + 2) Y_\eps} dx.
\end{align*}
Thus, using the embedding $H^s_{\bar \delta} \subset L_{\bar \delta}^\infty$ and \eqref{exponential}, there exists $\delta > 0$ small enough such that
\begin{align*}
\frac 12 \frac{\dd}{\dd t} \int_{\R^2} C |r_\eps (t)| e^{-2 Y_\eps} dx &\leq \| r_\eps e^{- Y_\eps} \|_{L^2}^2 \Big( \| v_\eps (t) \|_{L_{p \delta}^\infty}^p + \| w_\eps (t) \|_{L_{p \delta}^\infty}^p \Big) \| e^{- Y_\eps} \|_{L_{- \delta}^\infty}^p \\
&\leq C(\omega) \| r_\eps e^{- Y_\eps} \|_{L^2}^2 \Big( \| v_\eps (t) \|_{L_{p \delta}^\infty}^p + \| w_\eps (t) \|_{L_{p \delta}^\infty}^p \Big).
\end{align*}
By the Gronwall inequality, we obtain $r_\eps (t) = 0$, so the uniqueness result follows. This implies that the whole sequence $\{v_{\eps, n}\}_{n \in \N}$ converges to $v_\eps$ in $\mathcal{C} ([0, T]; H^s_{\bar \delta})$.

\subsection{Proof of Theorem \ref{mainthm}}
The scheme of the proof is similar to the previous
works on NLS equation with white noise potential.

We first note that by taking $$A = Y_\eps,\quad V = \tDYe,$$ 
the conditions \eqref{exponentialAV} and \eqref{functionalDMAV} hold almost surely uniformly in $\eps \in (0, \frac 12)$ by using the same reasoning as in the beginning of Section \ref{sec:H2}. In particular, we can use \eqref{nonloin}, \eqref{nonlin}, and \eqref{fundamnew} with $v$ replaced by $v_\eps$.

Fix $\delta_0 > 0$ and assume that $v_0 \in H^2_{\delta_0}$. Let $0 < \eps_2 < \eps_1 < \frac 12$ and define
$$
r(t)=v_{\eps_1}(t)-v_{\eps_2}(t), \quad t\in [0,T].
$$
Then, $r$ satisfies the equation:
\begin{align*}
\ii \partial_t r&=\Delta r+r{\widetilde{\Wick{\D Y^2_{\eps_1}}}}+v_{\eps_2}({\widetilde{\Wick{\D Y^2_{\eps_1}}}}-{\widetilde{\Wick{\D Y^2_{\eps_2}}}}) - 2\D r \D Y_{\eps_1} \\
&\quad -2\D v_{\eps_2}
(\D Y_{\eps_1}-\D Y_{\eps_2}) -\lambda |v_{\eps_1} e^{-Y_{\eps_1}}|^{p} v_{\eps_1}+\lambda |v_{\eps_2} e^{-Y_{\eps_2}}|^{p} v_{\eps_2} ,\\
r(0)&=0.
\end{align*}

\noindent
Using the equation for $r$, we can deduce that
\begin{align}
\label{main}
\begin{split}
\frac12 \frac{\dd}{\dd t} &\int_{\R^2} |r(t)|^2 e^{-2Y_{\eps_1}} d x \\
&=   \Im\int_{\R^2} \bar r(t) v_{\eps_2}(t)({\widetilde{\Wick{\D Y^2_{\eps_1}}}}-{\widetilde{\Wick{\D Y^2_{\eps_2}}}})
e^{-2Y_{\eps_1}} d x \\
&\quad -2 \Im\int_{\R^2} \bar r(t) \D v_{\eps_2}(t)(\D Y_{\eps_1}-\D Y_{\eps_2})e^{-2Y_{\eps_1}} d x \\
&\quad -\lambda \Im\int_{\R^2} \bar r(t) \left( |v_{\eps_1}(t) e^{-Y_{\eps_1}}|^{p } v_{\eps_1}(t) - |v_{\eps_2}(t) e^{-Y_{\eps_2}}|^{p } v_{\eps_2}(t)\right) e^{-2Y_{\eps_1}} d x\\
&=: I + II + III.
\end{split}
\end{align}

Using \eqref{dual}, \eqref{prod1}, \eqref{tY2_conv}, \eqref{exponential}, and interpolation with \eqref{nonloin} and \eqref{nonlin}, we get that for $\alpha \in (0, \frac{1}{100})$, there exists $\delta > 0$ small enough and $\kappa > 0$ such that
\begin{align}
\label{main1}
\begin{split}
|I| &\le C \|\bar r(t) v_{\eps_2}(t) e^{-2Y_{\eps_1}}\|_{\mathcal{B}^{\alpha}_{1,1, \delta}}
 \| {\widetilde{\Wick{\D Y^2_{\eps_1}}}}-{\widetilde{\Wick{\D Y^2_{\eps_2}}}}\|_{\mathcal{C}^{-\alpha}_{- \delta}}\\
 &\le C(\omega) \eps_1^\kappa \| r(t) \|_{H^{2\alpha}_{\delta}} \| v_{\eps_2}(t)\|_{H^{3\alpha}_{\delta}} \|e^{-2Y_{\eps_1}}\|_{\mathcal{C}^{3\alpha}_{- \delta}}\\
 &\le C(\omega)\eps_1^\kappa \mathcal{P} \Big( \|v_0\|_{H^1_{\delta_0}} \Big).
\end{split}
\end{align}

\noindent
Similarly, using \eqref{dual}, \eqref{prod1}, \eqref{lem:ConvergenceYnew}, \eqref{exponential}, \eqref{fundamnew}, and \eqref{fundBound2}, there exists $\delta > 0$ small enough and $\kappa > 0$ such that
\begin{align}
\label{main2}
\begin{split}
|II| &\le C \|\bar r(t) \D v_{\eps_2}(t) e^{-2Y_{\eps_1}}\|_{\mathcal{B}^{\alpha}_{1,1, \delta}}
 \|\D Y_{\eps_1}-\D Y_{\eps_2} \|_{\mathcal{C}^{-\alpha}_{-\delta}}\\
 &\le C(\omega) \eps_1^\kappa \| r(t) \|_{H^{2\alpha}_\delta} \| \D v_{\eps_2}(t) \|_{H^{3\alpha}_\delta} \|e^{-2Y_{\eps_1}}\|_{\mathcal{C}^{3\alpha}_{-\delta}}\\
 &\le C(\omega) \eps_1^\kappa \mathcal{P} \Big( \|v_0\|_{H^1_{\delta_0}} \Big) \| v_{\eps_2}(t) \|_{H^2_{-\delta}}^{C} \\
 &\le C(\omega) \eps_1^\kappa |\ln \eps_2|^{C} \mathcal{P} \Big( \|v_0\|_{H^2_{\delta_0}} \Big).
\end{split}
\end{align}

\noindent
Concerning $III$, using the embedding $H^{\frac{2}{2 - \eta}}_{\tilde \delta} \subset L^\infty_{\tilde \delta}$ ($\eta, \tilde \delta > 0$ small enough), \eqref{exponential}, \eqref{nonloin}, \eqref{exp_conv}, \eqref{fundamnew}, and \eqref{fundBound2}, there exists $\delta > 0$ small enough and $\kappa > 0$ such that
\begin{align}
\label{main3}
\begin{split}
|III| &\le  C \|r e^{-Y_{\eps_1}} \|_{L^2}^2 \left( \|v_{\eps_1}(t)\|_{L^\infty_{p \delta}}^{p}+\|v_{\eps_2}(t)\|_{L^\infty_{p \delta}}^{p}\right)\|e^{-Y_{\eps_1}}\|_{L^\infty_{-\delta}}^{p}\\
&\quad + \|r\|_{L^2_\delta} \|v_{\eps_2}(t)\|_{L^{2p + 2}_\delta}^{p+1} \|e^{-p Y_{\eps_1}}-
e^{-p Y_{\eps_2}}\|_{L^\infty_{- p\delta}} \| e^{- Y_{\eps_1}} \|_{L^\infty_{-\delta}} \\
&\le C(\omega) \|r e^{-Y_{\eps_1}} \|_{L^2}^2 \Big( \|v_{\eps_1}(t)\|_{H^{\frac{2}{2 - \eta}}_{p \delta}}^{p}+\|v_{\eps_2}(t)\|_{H^{\frac{2}{2 - \eta}}_{p \delta}}^{p} \Big) \\
&\quad + C(\omega) \eps_1^\kappa \| v_{\eps_2}(t) \|_{H_\delta^{\frac{2}{2 - \eta}}} \mathcal{P} \Big( \|v_0\|_{H^1_{\delta_0}} \Big) \\
&\le C(\omega) |\ln \eps_2|^{\gamma} \mathcal{P} \Big( \|v_0\|_{H^2_{\delta_0}} \Big) \|r e^{-Y_{\eps_1}} \|_{L^2}^2  +C(\omega)\eps_1^\kappa |\ln \eps_2|^{C} \mathcal{P} \Big( \|v_0\|_{H^2_{\delta_0}} \Big),
\end{split}
\end{align}

\noindent
for some $\gamma \in (0, 1)$.

Now, letting $\eps_1 = 2^{-k}$ and $\eps_2 = 2^{- (k+1)}$ for $k \in \N$, we combine \eqref{main}, \eqref{main1}, \eqref{main2}, and \eqref{main3} and apply the Gronwall inequality to obtain
\begin{align*}
\sup_{t \in [0, T)} &\int_{\R^2} |r (t)|^2 e^{- 2 Y_{2^{-k}}} dx \\
&\leq C(\omega) 2^{-k \kappa} (k+1)^C \mathcal{P} \Big( \|v_0\|_{H^2_{\delta_0}} \Big) e^{ C(\omega) T |\ln 2^{- (k + 1)}|^\gamma \mathcal{P} ( \|v_0\|_{H^2_{\delta_0}} )} \\
&\leq C(\omega) 2^{-\frac{k \kappa}{2}} 2^{\tilde{\gamma} (k+1)C(\omega) T \mathcal{P} ( \|v_0\|_{H^2_{\delta_0}} )} \mathcal{P} \Big( \|v_0\|_{H^2_{\delta_0}} \Big),
\end{align*}

\noindent
where we used $e^{|\ln (1 + x)|^\gamma} \leq C x^{\tilde{\gamma}}$ for $\gamma \in (0, 1)$, $\tilde{\gamma} > 0$ arbitrarily small, and $x > 0$ large. Thus, by \eqref{exponential}, for any $\delta > 0$ we obtain
\begin{align*}
\| v_{2^{-k}} - v_{2^{- (k+1)}} \|_{\mathcal{C}([0, T); L^2_{- \delta})}^2 \leq C(\omega) 2^{-\frac{k \kappa}{4}}.
\end{align*}

\noindent
Using interpolation along with the bounds \eqref{fundamnew} and \eqref{fundBound2}, it is not hard to deduce that for any $s \in (1, 2)$, there exists $\bar \delta > 0$ such that $\{ v_{2^{-k}} \}_{k \in \N}$ is a Cauchy sequence in $\mathcal{C}([0, T); H^s_{\bar \delta})$ and converges to some function $v \in \mathcal{C}([0, T); H^s_{\bar \delta})$. By using similar steps as above, we can also deduce that
\begin{align*}
\sup_{\eps \in (2^{-(k+1)}, 2^{-k}]} \| v_\eps - v_{2^{-k}} \|_{\mathcal{C} ([0, T); H^s_{\bar \delta})} \leq C(\omega) 2^{-k \tilde \kappa}  \mathcal{P} \Big( \|v_0\|_{H^2_{\delta_0}} \Big),
\end{align*}
for some $\tilde \kappa > 0$, so that the whole sequence $\{ v_\eps \}_{\eps \in (0, \frac 12)}$ converges to $v$ in $\mathcal{C} ([0, T); H^s_{\bar \delta})$ as $\eps \to 0$. This finishes the convergence part of the theorem. The uniqueness of the solution $v$ to the equation \eqref{eq:NLSv} follows from a similar (and easier) manner.

\begin{ackno} \rm
A.D. was supported by the ANR grant ``ADA'' and the French government ``Investissernents d'Avenir'' program ANR-11-LABX-0020-01. R.L. thanks Tadahiro Oh, Guangqu Zheng, and Younes Zing for helpful conversations. R.L. was supported by the European Research Council (grant no. 864138 ``SingStochDispDyn''). N.T. was partially supported by the ANR projet Smooth ``ANR-22-CE40-0017''. N.V. was supported by PRIN 2020XB3EFL from MIUR and PRA\_2022\_11 from University of Pisa.
\end{ackno}

\end{document}